\documentclass[10pt,dvipsnames]{amsart}

\usepackage[utf8]{inputenc}
\usepackage{lmodern}
\usepackage[T1]{fontenc}
\usepackage[english]{babel}
\usepackage{fullpage}

\usepackage{latexsym}
\usepackage{amsmath,amssymb,amsthm}
\usepackage{mathrsfs}
\usepackage{stmaryrd}
\usepackage{enumerate}

\usepackage{rotating}

\usepackage{todonotes}

\usepackage{longtable}
\usepackage{array}
\usepackage{multicol}
\usepackage{supertabular}
\usepackage{paracol}
\usepackage{collcell}
\usepackage{caption}
\newcolumntype{H}{@{}>{\collectcell\HideCell}c<{\endcollectcell}@{}}
\newcommand{\HideCell}[1]{}

\usepackage{tikz-cd}
\usetikzlibrary{shapes}
\usepackage{tikz}
\usetikzlibrary{decorations,shapes,decorations.markings,decorations.pathreplacing}
\tikzset{
	OpenCirc/.style={
		label={[inner sep=0,minimum size=10mm, circle, draw, densely dotted]center:{}}
	},
	closed/.style = {decoration = {markings, mark = at position 0.5 with { \node[transform shape, xscale = .8, yscale=.4] {/}; } }, postaction = {decorate} },
	UpperBar/.style={decoration = {markings, mark = at position 0.25 with{ \arrow[line width=1pt]{|}
		} }, postaction={decorate} },
	LowerBar/.style={decoration = {markings, mark = at position 0.75 with { \arrow[line width=1pt]{|} } }, postaction={decorate} },
	kpm/.style={decoration = {markings, mark = at position 0.75 with {\node[draw=none, below =-1cm ]{$(k^+(u),k^-(u))$}} }, postaction={decorate} },
	10/.style={decoration = {markings, mark = at position 0.75 with { $(1,0)$ } }, postaction={decorate} },
	27/.style={decoration = {markings, mark = at position 0.75 with { $(2,7)$ } }, postaction={decorate} },
	Bullet/.style={circle, fill, draw, inner sep=0, minimum size=4pt}
}
\tikzcdset{scale cd/.style={every label/.append style={scale=#1},
		cells={nodes={scale=#1}}}}

\makeatletter
\DeclareRobustCommand{\rvdots}{%
	\vbox{
		\baselineskip4\p@\lineskiplimit\z@
		\kern-\p@
		\hbox{.}\hbox{.}\hbox{.}
}}
\makeatother

\usepackage{xcolor}
\usepackage{color}
\usepackage{hyperref}
\hypersetup{
	colorlinks=true,
	linkcolor=blue,
	citecolor=blue,
	urlcolor=blue
}

\usepackage{ulem}
\usepackage{soul}

\newtheorem{theoremAlph}{Theorem}
\newtheorem{corollaryAlph}[theoremAlph]{Corollary}

\newtheorem{theorem}{Theorem}[section]
\newtheorem{lemma}[theorem]{Lemma}	
\newtheorem{proposition}[theorem]{Proposition}

\theoremstyle{definition}
\newtheorem{definition}[theorem]{Definition} 
\newtheorem{remark}[theorem]{Remark}	
\newtheorem{example}[theorem]{Example}
\newtheorem{question}[theorem]{Question}

\theoremstyle{definition} 
\newtheorem*{ack}{Acknowledgements}

\numberwithin{equation}{section}
\numberwithin{table}{section}

\newcommand{\C}{\mathbb{C}} % komplexe
\newcommand{\Q}{\mathbb{Q}} % rationale
\newcommand{\Z}{\mathbb{Z}} % ganze
\newcommand{\N}{\mathbb{N}} % natuerliche

\newcommand{\ttimes}{\mathrel{\widetilde{\times} }}

\newcommand{\bigslant}[2]{{\raisebox{.2em}{$#1$}\left/\raisebox{-.2em}{$#2$}\right.}}

\makeatletter
\DeclareRobustCommand*\uell{\mathpalette\@uell\relax}
\newcommand*\@uell[2]{
	\setbox0=\hbox{$#1\ell$}
	\setbox1=\hbox{\rotatebox{10}{$#1\ell$}}
	\dimen0=\wd0 \advance\dimen0 by -\wd1 \divide\dimen0 by 2
	\mathord{\lower 0.1ex \hbox{\kern\dimen0\unhbox1\kern\dimen0}}
}

\makeatletter
\newcommand{\mylabel}[2]{#2\def\@currentlabel{#2}\label{#1}}
\makeatother

\begin{document}
	\title[\rmfamily Free circle actions and positive Ricci curvature on manifolds with the cohomology ring of $S^2\times S^5$]{\rmfamily Free circle actions and positive Ricci curvature on manifolds with the cohomology ring of $S^2\times S^5$}
	% DATE
	\date{\today}
	% MATH SUBJECT CLASSIFICATION AND KEYWORDS
	\subjclass[2020]{}
	\keywords{}
	\author{Philipp Reiser}
	\address{}
	\email{\href{mailto:philipp.reiser.math@mailbox.org}{philipp.reiser.math@mailbox.org}}
	%\thanks{The author acknowledges funding by the SNSF-Project 200020E\textunderscore 193062 and the DFG-Priority programme SPP 2026.}
	
	\normalem
	
	\begin{abstract}
		We classify which of the 672 oriented diffeomorphism types of closed, simply-connected spin 7-manifolds with the cohomology ring of $S^2\times S^5$ admit a free circle action. In addition, we show that whenever such an action exists, there exist infinitely many pairwise non-equivalent free circle actions. Finally, in almost all cases where such an action exists, we construct invariant Riemannian metrics of positive Ricci curvature.
	\end{abstract}

	\maketitle
	
	\section{Introduction and main results}
	
	In this article, we study topological and geometric properties of free circle actions on closed manifolds. More precisely, the first question we consider is the following.
	\begin{question}\label{Q:free_circ_ex}
		Given a closed, simply-connected manifold $M$, does $M$ admit a free circle action?
	\end{question}
	Here and throughout the article all manifolds and actions on manifolds are assumed to be smooth.
	
	By a result of Kobayashi \cite{Ko58}, a closed manifold that admits a free circle action has vanishing Euler characteristic. Thus, the $3$-sphere $S^3$ is the only manifold in dimensions at most $4$ for which the answer to Question \ref{Q:free_circ_ex} is affirmative. In dimensions $5$ and $6$, Question \ref{Q:free_circ_ex} has been answered by Duan--Liang \cite{DL05} and by Goldstein--Lininger \cite{GL71} and Duan \cite{Du22}, respectively.
	
	In dimension $7$, Montgomery--Yang \cite{MY68} showed that precisely $12$ out of the $28$ oriented diffeomorphism types of homotopy $7$-spheres admit a free circle action. Closed, $2$-connected $7$-manifolds that admit free circle actions were classified by Jiang \cite{Ji14} and manifolds homeomorphic to a connected sum of copies of $(S^2\times S^5)$ and $(S^3\times S^4)$ were considered by Xu \cite{Xu25}. A special case of the latter, that will be relevant for this article, is the following.
	\begin{theorem}[{\cite[Proposition 1]{Xu25}}]\label{T:S2xS5_hom}
		All $28$ oriented diffeomorphism types of closed $7$-manifolds that are homeomorphic to $S^2\times S^5$ admit a free circle action.
	\end{theorem}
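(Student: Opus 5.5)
The plan is to realize every manifold homeomorphic to $S^2\times S^5$ as the total space of a principal circle bundle over a suitable simply-connected $6$-manifold. First observe which classes we must hit. Let $M$ be smooth and homeomorphic to $S^2\times S^5$. Since $M$ has the homotopy type of $S^2\times S^5$, it is a homotopy $S^2\times S^5$, and smoothings are classified by the action of the group $\Theta_7\cong\Z/28$ of homotopy spheres via connected sum. I expect these $28$ diffeomorphism types to be pairwise distinct, which is consistent with the count $28$ in the statement. So the task is to show that for every homotopy sphere $\Sigma\in\Theta_7$ the manifold $(S^2\times S^5)\#\Sigma$ carries a free $S^1$-action.

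The construction uses circle bundles over $6$-manifolds of the form $N=(S^2\times S^4)\#\,\Sigma'$-twisted objects, or more concretely over $N=S^2\times S^4$ or $N=\C P^1\times S^4$ modified by suitable plumbings. A principal $S^1$-bundle $P\to N$ with Euler class $e\in H^2(N)$ primitive on a base with $H^2(N)\cong\Z^2$ (for example $N=S^2\times S^2\times S^2$ minus handles, or $N=\C P^3\#(S^3\times S^3)$-type bases) has simply-connected total space with the cohomology of $S^2\times S^5$ whenever the cup-product structure forces $H^4(P)=0$. Concretely, I would take $N=(S^2\times S^4)\#(S^2\times S^4)$ or a $\C P^2$-bundle over $S^2$ and choose Euler classes $e=(1,k)$. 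Gysin sequence computations then show that $P$ has the cohomology ring of $S^2\times S^5$. I will check the spin and homeomorphism conditions via the classification of such manifolds by $w_2$ and $p_1$ (Kreck–Stolz-type invariants). The key step is computing the generalized Eells–Kuiper / Kreck–Stolz invariant $s_1(P)\in\Q/\Z$ as a function of $k$ and of the base, which is possible since $P$ bounds the associated disk bundle $W\to N$. In that setting the invariant is a characteristic-number expression $\frac{1}{2^5\cdot7}\,\mathrm{sign}(W)$ plus a $p_1^2$ term, which is a polynomial in $k$.

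The main obstacle is showing that, as $k$ (and possibly the base, e.g. different $\C P^2$-bundles or twisted $S^4$-bundles over $S^2$) varies, this polynomial invariant takes all $28$ values modulo $1$ while $P$ remains homeomorphic to $S^2\times S^5$. First I would try the family of total spaces of disk bundles over $S^2\times S^4$ twisted by an element of $\pi_3(SO(5))\cong\Z$ in the $S^4$-direction. This gives a second parameter $l$ entering $p_1$ linearly, so that $s_1$ becomes a quadratic expression in $(k,l)$. I would then check that its image generates $\frac{1}{28}\Z/\Z$, using the known formula $\mu(\Sigma_{a})=a/28$ for Milnor spheres to identify the residue.
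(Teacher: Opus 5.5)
There is a genuine gap. Your overall strategy is the right one: realize each $(S^2\times S^5)\#\Sigma$ as the total space of a principal circle bundle over a simply-connected $6$-manifold $N$, and compute $s_1$ from the associated disc bundle. This is Xu's route, and in this paper it is carried out in Theorem~\ref{T:free_circle_s} with the manifolds $N_A$. The problem is that none of the bases you actually propose can produce the required manifolds.

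By Lemma~\ref{L:N_orbit_space} (essentially the Gysin sequence), $N$ must have torsion-free homology with $b_2(N)=2$ and $b_3(N)=0$. In addition, the form $(x,y)\mapsto\mu_N(x,y,e)$ on $H^2(N)$ must be unimodular, i.e.\ $AC-B^2=\pm1$. Your candidates all violate this:
\begin{itemize}
\item For $N=(S^2\times S^4)\#(S^2\times S^4)$ the trilinear form vanishes. So $\cup e\colon H^2(N)\to H^4(N)$ is zero and $H^3(P)\cong H^4(P)\cong\Z^2$.
\item For $S^2\times S^4$, twisted or not in the $S^4$-direction, $b_2=1$. Hence $H^2(P)\cong H^2(N)/\langle e\rangle$ is finite, and $e=0$ gives $N\times S^1$. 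Also, a $D^2$-bundle is determined by its Euler class, so there is no extra $\pi_3(\mathrm{SO}(5))$-parameter to vary.
\item For a projective bundle $N=\mathbb{P}(E)\to S^2$ one gets $\det(N,e)=-\lambda^2$, where $\lambda$ is the fibre coefficient of $e$. So every admissible $e$ restricts to a generator on the fibre $\C P^2$. Then $P$ is the unit sphere bundle of $E\otimes L$ for a line bundle $L$, a linear $S^5$-bundle over $S^2$. Hence $P\cong S^2\times S^5$ in the spin case, and $s_1(P)=0$ for every $k$.
\item Your parenthetical examples also fail: $S^2\times S^2\times S^2$ has $b_2=3$, and $\C P^3\#(S^3\times S^3)$ has $b_2=1$, $b_3=2$, with $H^3(N)$ injecting into $H^3(P)$.
\end{itemize}
So the decisive step, ``vary the parameters until $s_1$ takes all $28$ values'', has no family to run on.

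What you need instead is a family of bases with $b_2=2$, $b_3=0$, unimodular $\mu_N(\cdot,\cdot,e)$, and freely adjustable trilinear form and $p_1$. By Jupp's theorem, every $(A,B,C,D,u,v)$ with $AC-B^2=\pm1$ and the parity conditions of Lemma~\ref{L:N_orbit_space} is realized. In the spin case $p_1(N)=(24u+4A)e^*+(24v+4D)f^*$; explicit examples are the plumbings $N_A$ of Lemma~\ref{L:N_A}. The integers $u,v$ are the correct version of your ``second parameter entering $p_1$ linearly''.

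You must also control the homeomorphism type through $s_2$ and $\tilde{s}_3$, not through $w_2$ and $p_1$. A spin cohomology $S^2\times S^5$ has $w_2=0$ and $p_1\in H^4(P)=0$, yet there are $24$ oriented homeomorphism types (Theorem~\ref{T:class}). For spin $N$ with $\det(N,e)=-1$, Lemma~\ref{L:s-inv} shows that $P$ is homeomorphic to $S^2\times S^5$ if and only if $D$ is odd and $24\mid A(2C^2-2BD+D^2)$. Only within these constraints does one search for $28s_1=0,\dots,27$. The paper exhibits explicit solutions in the first block of Table~\ref{TA:det=-1}.

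Finally, checking that the values of $s_1$ \emph{generate} $\frac{1}{28}\Z/\Z$ would not suffice. The image of a quadratic expression modulo $28$ can generate without being everything; the squares are an example. Nor can you pass between classes by connected sum with $\Sigma$, since this does not preserve free circle actions: only $12$ of the $28$ homotopy $7$-spheres admit one. Each residue has to be hit individually.
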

	
	Further results related to Question \ref{Q:free_circ_ex} are given by Hsiang \cite{Hs66}, Schultz \cite{Sc71} and Bauer--Quigley \cite{BQ26} for homotopy spheres, Jiang--Su \cite{JS25} for highly-connected manifolds and Galaz-García and the author \cite{GR25} for connected sums of products of spheres.
	
	In light of Theorem \ref{T:S2xS5_hom}, we will consider Question \ref{Q:free_circ_ex} for the following more general class of manifolds.	
	\begin{definition}
		A \emph{spin cohomology $S^2\times S^5$} is a smooth, closed, simply-connected spin $7$-manifold whose cohomology ring is isomorphic to that of $S^2\times S^5$.
	\end{definition}
	
	We will see in Section \ref{S:class} below that there are precisely $672$ oriented diffeomorphism types of spin cohomology $S^2\times S^5$s, which fall into $24$ oriented homeomorphism types (each one containing $28$ oriented diffeomorphism types), and $18$ oriented homotopy types ($6$ of which contain $2$ oriented homeomorphism types, and $12$ contain a single oriented homeomorphism type).
	
	Our first main result is given as follows.
	\begin{theoremAlph}\label{T:free_circle}
		\begin{enumerate}
			\item Among the $672$ oriented diffeomorphism types of spin cohomology $S^2\times S^5$s, there are precisely $462$ that admit a free circle action. These cover $18$ out of the $24$ oriented homeomorphism types and $12$ out of the $18$ oriented homotopy types of spin cohomology $S^2\times S^5$s.
			\item Whenever a spin cohomology $S^2\times S^5$ admits a free circle action, it admits infinitely many pairwise non-equivalent free circle actions.
		\end{enumerate}
	\end{theoremAlph}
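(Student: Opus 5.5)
The plan is to reduce the existence of a free circle action to the classification of the orbit spaces. If $S^1$ acts freely on a spin cohomology $S^2\times S^5$ $M$, then $N=M/S^1$ is a closed, simply-connected $6$-manifold and $M\to N$ is a principal circle bundle with Euler class $e\in H^2(N)$. The Gysin sequence, with $H^2(M)\cong\Z$, $H^3(M)=0$ and $H^4(M)=0$, forces $H^2(N)\cong\Z^2$ and $H^3(N)=0$ (torsion-free, $b_3=0$), with $e$ primitive. The ring structure of $M$ then translates into conditions on the cubic form $\mu\colon H^2(N)^{\otimes 3}\to\Z$. Concretely, cup product with $e$ must induce an isomorphism $H^2(N)/\langle e\rangle\to H^4(N)/(e\cdot H^2(N))$ in the appropriate sense, so that $H^4(M)=0$.

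Conversely, any simply-connected $6$-manifold $N$ with torsion-free homology, $b_2=2$, $b_3=0$, and a primitive class $e$ satisfying these algebraic conditions gives a free circle action on the total space. Such $N$ are classified by Wall and Jupp via the invariants $(H^2(N),\mu,w_2,p_1)$. So the first step is to write down, for each admissible system of invariants $(\mu,w_2,p_1,e)$, the invariants of the total space $M$, and read off which $M$ arise. The spin condition on $M$ means $w_2(N)\equiv 0$ or $w_2(N)\equiv e \pmod 2$.

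The second step is to identify $M$ up to diffeomorphism using the classification from Section \ref{S:class}, which I take as given. For spin cohomology $S^2\times S^5$s this is presumably a Kreck--Stolz type classification: a homotopy invariant given by the order/linking data of $p_1$ or of the generator $x^2$, a homeomorphism invariant given by $p_1(M)\in H^4(M)$ modulo something, and a smooth invariant in $\Z/28$ measured by an $s_1$-type Eells--Kuiper invariant. The Gysin sequence gives $p_1(M)=\pi^*p_1(N)$ and $x^2=\pi^*(\text{class})$, so the homotopy and homeomorphism invariants of $M$ are explicit functions of $(\mu,p_1(N),e)$. The Eells--Kuiper or $s$-invariant can be computed from the disk bundle $W$ of the associated $2$-plane bundle, a spin or suitably spin$^c$ coboundary whose characteristic numbers are integrals over $N$ of polynomials in $e,x,p_1$. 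This yields a number-theoretic description of the realized invariants. Showing that exactly $18$ homeomorphism types are hit, and that all $28$ smooth structures occur in each, gives $18\cdot 28=504$. Since the target is $462=16.5\cdot 28$, some homeomorphism types must be hit only partially. So for certain homotopy types only some residues in $\Z/28$ arise, and part (1) will require a careful congruence analysis of the $s$-invariant there.

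For the non-realization direction, I would show that for the $6$ excluded homeomorphism types (equivalently, homotopy types, presumably via a torsion/linking obstruction on $p_1$ or on $x^2$), the required equation in $(\mu,e)$ has no integral solution. For the partially realized types, the excluded Eells--Kuiper residues must be shown to be obstructed by a congruence forced by integrality, namely the index theorem on $N$. The realization direction needs explicit families of $N$, for instance $\C P^1$-bundles over $\C P^2$, connected sums, or the manifolds given by Wall's realization theorem with $\mu$ chosen freely subject to $p_1\equiv 4w_2^3$-type congruences. Simply substituting into the Eells--Kuiper formula should then show surjectivity onto the claimed set. Alternatively, for the smooth structure one can use Theorem \ref{T:S2xS5_hom}-style arguments, taking the equivariant connected sum with homotopy spheres admitting free actions (Montgomery--Yang) along free orbits to shift the $\Z/28$ invariant by the $12$ realizable classes. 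Combining these determines which classes can be added.

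For part (2), given one realizing pair $(N,e)$, I would produce infinitely many pairs $(N_k,e_k)$ with the same total space invariants but with pairwise non-isomorphic orbit spaces, for example with distinct cubic forms or distinct $p_1(N_k)$ divisibility. Non-equivalent actions then follow because equivalent actions have diffeomorphic quotients. Typically the solutions of the congruence system form an infinite arithmetic family (varying an integer parameter in $\mu$ while keeping the derived invariants fixed modulo $28$-periodic data), and the $\Z/28$ invariant is periodic in that parameter, so infinitely many solutions land on each realized diffeomorphism type.

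\textbf{Main obstacle.} The main difficulty is the exact Eells--Kuiper-type computation and the resulting number theory. That computation is where the count $462$ (rather than $504$) has to come out, and it needs both an explicit enough realization family and a sharp obstruction.
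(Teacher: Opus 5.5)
Your overall architecture is the paper's: constrain the orbit space $N$ via the Gysin sequence, use Jupp's classification, compute Kreck--Stolz invariants of $M$ from the disk bundle over $N$, then use congruences for the obstruction and explicit families for realization. Your part (2) sketch is also fine in outline; for instance, shifting the free parameter $u$ by multiples of $14$ fixes all $s_i$ while changing the pair invariant $|p_1(N)(e)|$.

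\textbf{Gap in the obstruction half of part (1).} For a spin cohomology $S^2\times S^5$ one has $H^4(M)=0$. So $p_1(M)=\pi^*p_1(N)$ and $x^2$ vanish, and there is no linking form. Pulling characteristic classes back along $\pi$ gives no information, and the ``torsion/linking obstruction'' you invoke to exclude the six types $(s_2,\tilde s_3)=(\tfrac{2k+1}{12},0)$ does not exist. Those types are distinguished only by $s_2$ and $\tilde s_3$ (Theorem~\ref{T:class}), which, like $s_1$, must be computed from characteristic numbers of the coboundary.

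The condition you extract from $H^4(M)=0$ is also not the right one. Exactness of $0=H^3(M)\to H^2(N)\xrightarrow{\smile e}H^4(N)\to H^4(M)=0$ says that $\smile e$ is an isomorphism. Equivalently, the form $(x,y)\mapsto\mu_N(x,y,e)$ is unimodular, $AC-B^2=\pm1$ in a basis $(e,f)$. This unimodularity drives every congruence:
\begin{itemize}
\item For $N$ spin ($A$ even, $B,C$ odd) with $AC-B^2=-1$, it forces $8\mid A$, hence $s_2\in\tfrac13\Z$ whenever $\tilde s_3=\tfrac{D+1}{2}=0$.
\item For $N$ spin with $AC-B^2=1$, a mod~$4$ computation gives the same conclusion.
\item For $N$ non-spin ($w_2(N)=e$), one gets $\tilde s_3=0$, $s_2\in\tfrac16\Z$, and, by a parity count in the $s_1$-formula, $s_1\in\tfrac1{14}\Z$.
\end{itemize}
This is where $462=15\cdot 28+3\cdot 14$ comes from, and your proposal contains no mechanism that would produce it.

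\textbf{Gap in the realization half.} This half is not supplied, and your suggested shortcut fails. An equivariant connected sum of $M$ with a free $S^1$-homotopy sphere $\Sigma$ along free orbits is a circle bundle over $N\#(\Sigma/S^1)$, which has $b_2=3$. By the Gysin sequence the new total space then has $b_2=2$, so it is neither $M\#\Sigma$ nor a cohomology $S^2\times S^5$ (compare the twisted-suspension results of \cite{GR25}).

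Realization therefore has to be done directly at the level of pairs $(N,e)$. The paper uses the plumbings $N_A$ and the $S^2$-bundles $N_{(\alpha,0)}$ over $\C P^2$, together with a computer search. The triples $(\tfrac{i}{7},\tfrac{1+2j}{6},0)$ are missed by these families. For those it takes $B=1$, $C=0$, $D=1$, $v=0$, $A\equiv 2\bmod 4$ and $u$ odd in Xu's parametrization, realized via Jupp's theorem.
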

	
	We will give a precise classification of the manifolds in (1) of Theorem \ref{T:free_circle} in terms of their Kreck--Stolz $s$-invariants in Theorem \ref{T:free_circle_s} below. We note that part (2) of Theorem \ref{T:free_circle} appears to be new even in the simplest case of $S^2\times S^5$.
	
	\begin{remark}
		Since the existence of a free circle action does not depend on the choice of orientation, it is not necessary to restrict to diffeomorphisms, homeomorphisms and homotopy equivalences that are orientation-preserving. In this case, if follows from Remark \ref{R:class_unor} and Theorem \ref{T:free_circle_s} below that precisely $235$ out of $340$ (unoriented) diffeomorphism types of spin cohomology $S^2\times S^5$s admit a free circle action. However, we will only consider the oriented category since in this case it will be more convenient to analyse the $s$-invariants.
	\end{remark}
	
	The second question we consider in this article is the following.
	\begin{question}\label{Q:free_circ_Ric}
		Given a closed, simply-connected manifold that admits a free circle action, does it admit a Riemannian metric of positive Ricci curvature that is invariant under this action?
	\end{question}
	
	For spin cohomology $S^2\times S^5$s, Question \ref{Q:free_circ_Ric} is especially of interest since it was shown by Wang \cite{Wa22}, that \emph{all} spin cohomology $S^2\times S^5$s admit a Riemannian metric of positive Ricci curvature.
	
	For scalar curvature, Bérard-Bergery \cite{BB83} showed that a closed manifold with a free circle action admits an invariant Riemannian metric of positive scalar curvature if and only if the quotient space of the action admits a Riemannian metric of positive scalar curvature. It is open whether an analogous equivalence holds for positive Ricci curvature, although one half of it has been established by Gilkey--Park--Tuschmann \cite{GPT98}, namely if the manifold has finite fundamental group and the quotient space admits a Riemannian metric of positive Ricci curvature, then there exists an invariant Riemannian metric of positive Ricci curvature. We will use this fact, in combination with the existence of Riemannian metrics of positive Ricci curvature on certain $6$-manifolds established by the author in \cite{Re24b} to prove the following.	
	\begin{theoremAlph}\label{T:Ric>0}
		There exist $441$ oriented diffeomorphism types of spin cohomology $S^2\times S^5$s that admit infinitely many pairwise non-equivalent free circle actions and for each of these actions an invariant Riemannian metric of positive Ricci curvature. These include all $28$ oriented diffeomorphism types of spin cohomology $S^2\times S^5$s that are homeomorphic to $S^2\times S^5$.
	\end{theoremAlph}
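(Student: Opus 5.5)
The strategy is to pass to quotients. A free circle action on a spin cohomology $S^2\times S^5$ $M$ has quotient $N=M/S^1$, a closed simply-connected $6$-manifold, and $M$ is the total space of the principal circle bundle over $N$ with some Euler class $e\in H^2(N)$. The Gysin sequence together with $H^*(M)\cong H^*(S^2\times S^5)$ forces $H^2(N)\cong\Z^2$, with $e$ primitive and $e\cup(\cdot)\colon H^2(N)\to H^4(N)$ behaving appropriately. So the quotients $N$ are simply-connected $6$-manifolds with $b_2=2$ and $b_3=0$ (and possibly torsion-free). Conversely, each such $N$ with a suitable primitive $e$ yields a spin cohomology $S^2\times S^5$, spin precisely when $w_2(N)\equiv e$ mod $2$ or $w_2(N)=0$ (more precisely, when $w_2(N)\in\{0,\bar e\}$). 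The plan is as follows. (i) Fix a family of such $6$-manifolds $N$ that are known from \cite{Re24b} to admit metrics of positive Ricci curvature; natural candidates are the total spaces of $S^2$-bundles over $\C P^2$, or of $\C P^2$-bundles over $S^2$, which are handled there. (ii) Compute, for each primitive $e=(a,b)\in H^2(N)$, the Kreck--Stolz invariants $s_1,s_2,s_3$ of the total space $M_{N,e}$ in terms of $a$, $b$, the cup product form on $H^2(N)$ and $p_1(N)$. Here one uses that $M_{N,e}$ bounds the associated disc bundle $W$, and the invariants are given by characteristic numbers of $W$. (iii) Determine the set of values attained, and check that it covers $441$ of the $672$ classes listed in Section \ref{S:class}, including all $28$ with the $s$-invariants of $S^2\times S^5$. (iv) Apply \cite{GPT98}: since $M$ is simply-connected and $N$ has $\mathrm{Ric}>0$, the circle action admits an invariant metric of positive Ricci curvature.

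For infinitely many non-equivalent actions, I would show that for each target $s$-invariant triple there are infinitely many pairs $(N,e)$ realizing it. Two actions are equivalent only if their quotients are diffeomorphic and the Euler classes correspond under an induced isomorphism. So it suffices to produce infinitely many pairwise non-diffeomorphic quotients $N$, distinguished for example by the divisibility of $p_1$ or by the cubic form on $H^2$. Since the $s$-invariants are periodic in $a,b$ and in the bundle parameters modulo numbers like $28$ and the order of $H^4(M)$, a periodicity argument should give infinitely many parameters with the same invariants.

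The main obstacle is step (iii): the explicit $s$-invariant computation and the number-theoretic bookkeeping needed to show the image hits exactly the claimed $441$ classes. The classes must be controlled modulo $H^4(M)$ torsion (the linking form) and the $\Z/28$ exotic-sphere component. I would first handle the homeomorphism types, via $s_2,s_3$ and the linking form, and then show that within each realized homeomorphism type one can vary a parameter, for instance by adding multiples of a class that changes $p_1(N)$ or by summing with exotic $7$-spheres arising through plumbing. This should shift $s_1$ through all residues mod $28$ and hence give all $28$ smooth structures. That last point is the most delicate one, especially for $S^2\times S^5$ itself.
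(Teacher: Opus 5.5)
Your overall framework matches the paper's: quotients $N$ with $\det(N,e)=\pm1$, Kreck--Stolz invariants of the total space taken from \cite{Xu25}, then \cite{GPT98}. The gap is that step (iii), which is the real content of the theorem, is only announced. You never say which $441$ classes are meant, and your candidate quotients are not shown to realize them. By \eqref{EQ:N_A_diff}, the spin $S^2$-bundles over $\C P^2$ are exactly the $N_{(-1,-1,\alpha)}$, a thin subfamily. The paper instead uses the whole family $N_A$ of plumbing boundaries (Lemma~\ref{L:N_A}), which carry $\mathrm{Ric}>0$ via core metrics \cite[Theorem B]{Re23}. For each of the $420$ triples $(\tfrac{i}{28},\tfrac{j}{3},0)$ and $(\tfrac{i}{28},\tfrac{j}{12},\tfrac12)$ it exhibits an explicit pair $(N_A,e)$ by computer search (Tables~\ref{TA:det=-1} and \ref{TA:det=1}). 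The non-spin bundles $N_{(\alpha,0)}$ are used only for the remaining $21$ triples $(\tfrac{1+2i}{14},\tfrac{1+2j}{6},0)$, which by Lemma~\ref{L:s_inv_nec} cannot arise from spin quotients at all.

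Your mechanism for reaching all $28$ values of $s_1$, namely summing with exotic spheres, fails. The operation $M\mapsto M\#\Sigma$ does not preserve the existence of free circle actions. For example, by Theorem~\ref{T:free_circle_s} the manifold with invariants $(0,\tfrac16,0)$ admits one, but its sum with $\Sigma_1$, with invariants $(\tfrac1{28},\tfrac16,0)$, admits none. Nor is there a matching operation on the base: the quotient must have $b_2=2$ and $b_3=0$, and homotopy $6$-spheres are standard, so any nontrivial connected sum on $N$ leaves the class. Each value of $s_1$ has to come from a different pair $(N,e)$. (Also, $H^4(M)=0$ here, so there is no linking form to control.)

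For the infinitely-many part, periodicity is the right idea, but as stated it breaks. Shifting $(\lambda,\mu,\alpha_i)$ by multiples of $T$ destroys the condition $\det(N,e)=\pm1$, which is a quadratic Diophantine equation. The missing ingredient is Lemma~\ref{L:number_theory}. It gives explicit polynomial families showing that every solution of $\lambda\mu\alpha_1\alpha_2-\lambda(\lambda+\mu)\alpha_1\alpha_3-\mu(\lambda+\mu)\alpha_2\alpha_3=\pm1$, respectively $\mu^2\alpha-\lambda^2=-1$, has infinitely many others in its congruence class mod $T$. Since the $s$-invariants are polynomials whose coefficient denominators divide $T$, these solutions give the same $s$-invariants. \cite[Proposition 4.12]{Re24b} then shows that only finitely many triples give diffeomorphic $N_A$, and that $N_{(\alpha,0)}\not\cong N_{(\alpha',0)}$ for $\alpha\neq\pm\alpha'$. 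This yields infinitely many non-diffeomorphic quotients, hence non-equivalent actions by Proposition~\ref{P:action=bdl}. Your suggestion to separate the quotients by the divisibility of $p_1$ is not shown to do this job.
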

	
	Note that in Theorem \ref{T:Ric>0} we do not claim that for these manifolds \emph{any} free circle action admits an invariant Riemannian metric of positive Ricci curvature.
	
	We will again give a precise characterisation of the manifolds in Theorem \ref{T:Ric>0} in terms of their $s$-invariants, see Theorem \ref{T:Ric>0_s} below. These include all $420$ oriented diffeomorphism types that admit a free circle action with spin orbit space. The remaining $21$ oriented diffeomorphism types that admit a free circle action by Theorem \ref{T:free_circle} but are not covered by Theorem \ref{T:Ric>0} therefore only have free circle actions with non-spin orbit space. It remains open whether these manifolds admit invariant Riemannian metrics of positive Ricci curvature, cf.\ Remark \ref{R:Ric>0_missing} below.
	\begin{question}\label{Q:Ric>0_missing}
		Which of the 21 oriented diffeomorphism types of spin cohomology $S^2\times S^5$s that admit a free circle action by Theorem \ref{T:free_circle} but are not covered by Theorem \ref{T:Ric>0} admit an invariant Riemannian metric of positive Ricci curvature?
	\end{question}
	
	We will see in Proposition \ref{P:pi4} below that the $672$ oriented diffeomorphism types of spin cohomology $S^2\times S^5$s can be divided into two subfamilies, each one containing $336$ oriented diffeomorphism types: Those with $\pi_4(M)\cong\Z/2$ (such as $M=S^2\times S^5$) and those with $\pi_4(M)=0$. A direct consequence of Proposition~\ref{P:pi4} and Theorems \ref{T:free_circle_s} and \ref{T:Ric>0_s} is as follows.
	\begin{corollaryAlph}
		Let $M$ be a spin cohomology $S^2\times S^5$ with $\pi_4(M)=0$. Then $M$ admits infinitely many pairwise non-equivalent free circle actions and for each of these actions an invariant Riemannian metric of positive Ricci curvature.
	\end{corollaryAlph}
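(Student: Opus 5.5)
The plan is to derive the corollary by comparing the $s$-invariant descriptions. Proposition~\ref{P:pi4} describes the subfamily with $\pi_4(M)=0$ in terms of the Kreck--Stolz invariants; Theorems~\ref{T:free_circle_s} and~\ref{T:Ric>0_s} describe the manifolds admitting free circle actions, and those admitting actions with invariant metrics of positive Ricci curvature. So I will show that the $336$ oriented diffeomorphism types with $\pi_4(M)=0$ all lie among the $441$ types of Theorem~\ref{T:Ric>0}. After that, the conclusion (infinitely many pairwise non-equivalent free actions, each with an invariant Ricci positive metric) is just the statement of Theorem~\ref{T:Ric>0_s} for these manifolds.

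I expect the two sets to match structurally as follows. By Proposition~\ref{P:pi4}, $\pi_4(M)=0$ should correspond to a condition on the Pontryagin/$p_1$-type data, for instance a divisibility or parity condition on the first Pontryagin class or on the linking form data. Equivalently, it is a condition on the homotopy type: $\pi_4$ is detected by whether the attaching data of the $5$-cell over $S^2$ involves $\eta^2\in\pi_4(S^2)\cong\Z/2$. The manifolds realised in Theorem~\ref{T:Ric>0_s} arise as total spaces of circle bundles over simply-connected $6$-manifolds $N$ with $H_2(N)\cong\Z^2$ and positive Ricci curvature, from \cite{Re24b} via \cite{GPT98}. Their $s$-invariants are computed from the characteristic classes of $N$ and the Euler class $e\in H^2(N)$. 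So the key step is to check that every triple of $s$-invariants permitted by the $\pi_4(M)=0$ condition occurs in the list of Theorem~\ref{T:Ric>0_s}.

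The main obstacle is the bookkeeping. One must confirm that the parameters for which the orbit space is spin (or the special non-spin family covered in Theorem~\ref{T:Ric>0_s}) realise all $s_1$ values modulo the relevant order, for each of the homotopy types with $\pi_4=0$. Concretely, I would go through the homeomorphism types with $\pi_4(M)=0$ one by one. The count of $336$ suggests $12$ such homeomorphism types times $28$ exotic structures. For each of them I would check that the restrictions on $s_2,s_3$ in Theorem~\ref{T:Ric>0_s} are satisfied and that $s_1$ is unrestricted. This is consistent with the intro's counts: of the $462$ types with free actions, the $126$ with $\pi_4=\Z/2$ account for the rest, and the $21$ missing types all have $\pi_4\cong\Z/2$. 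Since everything is finite modular arithmetic on stated formulas, the argument then closes by direct comparison.
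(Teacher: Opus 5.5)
\textbf{Gap: the key input from Proposition~\ref{P:pi4} is never identified.} Your overall reduction is the right one, and it is also the paper's: the corollary is read off from Proposition~\ref{P:pi4} and Theorem~\ref{T:Ric>0_s}. But you never state what Proposition~\ref{P:pi4} actually gives, and your guesses for it are not usable. Since $H^4(M)=0$ and $H^*(M)$ is torsion-free, $p_1(M)=0$ and there is no linking form. So no divisibility or parity condition of that type on $M$ can separate the two subfamilies. Your cell-structure remark is correct: the $5$-cell is attached by $0$ or by $\eta^2\in\pi_4(S^2)$, and $\pi_4(M)=0$ exactly in the second case. But it gives no link to the Kreck--Stolz invariants, which is what you need to compare with Theorem~\ref{T:Ric>0_s}. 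The agreement of the counts $336$, $441$, $462$, $126$, $21$ is only a consistency check; compatible cardinalities do not prove containment.

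The missing fact is the following. By Theorem~\ref{T:class}(1), $\tilde{s}_3(M)\in\{0,\tfrac12\}$, and by Proposition~\ref{P:pi4}(1), $\tilde{s}_3(M)=0$ forces $\pi_4(M)\cong\Z/2$. Hence $\pi_4(M)=0$ implies $\tilde{s}_3(M)=\tfrac12$. Then $(s_1(M),s_2(M),\tilde{s}_3(M))=(\tfrac{i}{28},\tfrac{j}{12},\tfrac12)$ for some $i,j$, which is exactly the third family in Theorem~\ref{T:Ric>0_s}, with $i$ and $j$ unrestricted. With this in place, the ``bookkeeping'' you call the main obstacle disappears at the level of the corollary. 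You need no case-by-case check over the $12$ homeomorphism types and no separate realisation of $s_1$-values. That work is already inside Theorem~\ref{T:Ric>0_s}, via Proposition~\ref{P:s_inv_real} and Tables~\ref{TA:det=-1} and~\ref{TA:det=1}, all with spin orbit spaces $N_A$. The non-spin family you mention plays no role here, since it has $\tilde{s}_3=0$.
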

	
	Finally, we will use the results of Theorems \ref{T:free_circle} and \ref{T:Ric>0} in combination with the suspension construction of \cite{Du22} and \cite{GR25} to prove the following.
	\begin{theoremAlph}\label{T:conn_sums}
		Let $\Sigma$ be a homotopy $7$-sphere and let
		\[ M=\#_\ell(S^2\times S^5)\#_m(S^3\times S^4)\#\Sigma. \]
		If $\ell\geq 2$, or $\ell=1$ and $m$ is even, then $M$ admits infinitely many pairwise non-equivalent free circle actions and for each of these actions an invariant Riemannian metric of positive Ricci curvature.
	\end{theoremAlph}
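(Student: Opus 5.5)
The plan is to realise $M$ as a ``suspension'' in the sense of \cite{Du22} and \cite{GR25}. Let $P\to B$ be a principal circle bundle over a closed $6$-manifold. A circle bundle over a connected sum of $B$ with copies of $S^2\times S^4$ (respectively $S^3\times S^3$) changes the total space by connected summing with copies of $S^2\times S^5$ (respectively $S^3\times S^4$ or $S^2\times S^5$), provided the Euler class is primitive and is extended suitably over the new summands. I would use the relevant statement from \cite{GR25} in roughly this form: if $P^7\to B^6$ is a free circle action with orbit space $B$, then for every $k\ge 0$ the manifold $P\#_k(S^2\times S^5)$, or $P\#(S^3\times S^4)$-type sums, again admits a free circle action whose orbit space is $B\#$(copies of $S^2\times S^4$, $S^3\times S^3$). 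I would also record how the exotic sphere summand can be absorbed via the diffeomorphism type of $B$.

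\textbf{Step 1: the base case $\ell=1$, $m=0$.} Here $M=(S^2\times S^5)\#\Sigma$ ranges over the $28$ manifolds homeomorphic to $S^2\times S^5$. Theorem~\ref{T:Ric>0} gives infinitely many non-equivalent free circle actions on each of them, each with an invariant metric of positive Ricci curvature. From the proof of Theorem~\ref{T:Ric>0} I would extract that the orbit spaces $B$ are simply connected (spin) $6$-manifolds of the type in \cite{Re24b} carrying $\mathrm{Ric}>0$. I would also extract that their construction is compatible with taking connected sums with $S^2\times S^4$ and with pairs $S^3\times S^3\#S^3\times S^3$. The $6$-manifolds of \cite{Re24b} include connected sums of such pieces.

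\textbf{Step 2: adding summands.} Take the circle bundle $P\to B$ with $P\cong (S^2\times S^5)\#\Sigma$ and form $B'=B\#_{a}(S^2\times S^4)\#_{b}(S^3\times S^3)$. Choose the Euler class on $B'$ to restrict to the old one on $B$ and to be zero (or suitably primitive) on the new $H^2$ summands. The suspension formula then identifies the total space as $M$, with $\ell=1+a'$ and $m=2b$ or similar:
\begin{itemize}
\item each $S^3\times S^3$ contributes $(S^3\times S^4)\#(S^3\times S^4)$, which gives the parity restriction for $m$ when $\ell=1$;
\item each $S^2\times S^4$ contributes $S^2\times S^5\#S^3\times S^4$.
\end{itemize}
This explains why $\ell\ge2$ allows arbitrary $m$: one extra $S^2\times S^4$ summand (or an $S^2$-summand with non-zero Euler class, yielding $S^2\times S^5$ alone) adjusts the parity. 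I will make this bookkeeping precise via the Gysin sequence and the classification of the total spaces in \cite{Du22,GR25}. Infinitely many pairwise non-equivalent actions come from the infinitely many choices in Step 1, distinguished by invariants of the orbit spaces such as the cohomology ring or $p_1$ of $B'$, which are preserved under the sum.

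\textbf{Step 3: curvature.} $B'$ carries $\mathrm{Ric}>0$ by \cite{Re24b} (connected sums of the relevant building blocks), so Gilkey--Park--Tuschmann \cite{GPT98} gives an invariant metric of positive Ricci curvature on $M$, since $\pi_1(M)=0$. The main obstacle is Step 2's identification of the total space, and in particular tracking the exotic sphere summand $\Sigma$ through the suspension so that every $\Sigma$ is realised. I would handle this by fixing $\Sigma$ already in Step 1 and showing that the suspension only adds standard summands.
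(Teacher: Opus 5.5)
\textbf{The main gap: Step 2 cannot reach all pairs $(\ell,m)$.} You start from $P\cong(S^2\times S^5)\#\Sigma$ and sum the base with $a$ copies of $S^2\times S^4$ and $b$ copies of $S^3\times S^3$. Each $S^2\times S^4$ adds $(S^2\times S^5)\#(S^3\times S^4)$, and each $S^3\times S^3$ adds $(S^3\times S^4)\#(S^3\times S^4)$. So you only reach $\ell=1+a$ and $m=a+2b$, that is, $m\geq \ell-1$ with $m\equiv \ell-1\pmod 2$. For instance, $\#_2(S^2\times S^5)\#\Sigma$ is never produced.

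Your parenthetical fix, a summand with non-zero Euler class ``yielding $S^2\times S^5$ alone'', fails for $S^2\times S^4$. This is also why the ``roughly this form'' statement you attribute to \cite{GR25} is false. Let $y$ generate $H^2(S^2\times S^4)$ and take the Euler class $e'=(e,ty)$ on $B'=B\#(S^2\times S^4)$, for any $t$. Then $y\cup e'=ty^2=0$ in $H^4(B')$. The Gysin sequence gives $0\to H^3(B')\to H^3(P')\to\ker(\cup e'\colon H^2(B')\to H^4(B'))\to 0$. Since the kernel gains $y$, $b_3$ of the total space still goes up by one, and an extra $S^3\times S^4$ summand is unavoidable.

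To add a lone $S^2\times S^5$, you need a summand $X$ with $b_2(X)=1$, $b_3(X)=0$ and $\mu_X(x,x,e_X)=\pm1$, i.e.\ a homotopy $\C P^3$. The paper uses $\C P^3$ with Euler class restricting to a generator $a$. The circle bundle there is the Hopf bundle $S^7$, and $\widetilde{\Sigma}_a\C P^3\cong S^2\times S^5$ by \cite[Theorem B]{GR25}. The bases are then:
\begin{itemize}
\item $N\#_{(\ell-1)}\C P^3\#_{m/2}(S^3\times S^3)$ for $m$ even;
\item $N\#(S^2\times S^4)\#_{(\ell-2)}\C P^3\#_{(m-1)/2}(S^3\times S^3)$ for $m$ odd and $\ell\geq 2$ (Proposition~\ref{P:tw_susp}).
\end{itemize}
This is where the hypothesis ``$\ell\geq 2$, or $m$ even'' comes from: the single $S^2\times S^4$ contributes one $S^2\times S^5$ together with one $S^3\times S^4$.

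\textbf{Non-equivalence is asserted, not proved.} Knowing $B_i\not\cong B_j$ does not give $B_i\#X\not\cong B_j\#X$. Cancellation fails for $6$-manifolds in general; see the remark after Proposition~\ref{P:splitting_diffeo}, where $(S^2\times S^4)\#N\cong N\#N$. The paper proves cancellation for summands $X$ with $b_2(X)=1$ and either $\mu_X\neq 0$ or $X=S^2\times S^4$ (Proposition~\ref{P:splitting_diffeo}). The $S^3\times S^3$ summands only shift $b_3/2$ and are handled by Theorem~\ref{T:Jupp}. Proposition~\ref{P:action=bdl} then gives non-equivalence. Alternatively, an invariant of the pair $(B',e')$ such as $|p_1(B')(e')|$ would work. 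But then you must show it is unbounded on the infinite family from Step~1, which you have not done.

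\textbf{Positive Ricci curvature on $B'$ is asserted, not proved.} Positive Ricci curvature is not preserved under connected sums in general, so citing \cite{Re24b} for $B'$ is not enough. The paper uses that $N_A$, $\C P^3$, $S^2\times S^4$ and $S^3\times S^3$ carry core metrics \cite{Re23,Bu19,Bu20}. Burdick's theorem \cite[Theorem B]{Bu19} then gives $\mathrm{Ric}>0$ on their connected sums, and only then is Theorem~\ref{T:Ric>0_bdl} applied.
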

	Again we do not claim that $M$ admits an invariant Riemannian metric of positive Ricci curvature for \emph{any} free circle action on $M$.
	
	The existence of a free circle action on each of these manifolds in Theorem \ref{T:conn_sums} has already been established by Xu \cite{Xu25}. Moreover, by the work of Wraith \cite{Wr97}, Burdick \cite[Corollary 1.3.5]{Bu19a}, \cite[Theorem B]{Bu19}, \cite[Theorem B]{Bu20} and the author \cite[Theorem C]{Re23}, they also admit Riemannian metrics of positive Ricci curvature. If $\Sigma$ is the standard sphere, invariant Riemannian metrics of positive Ricci curvature have been shown to exist in \cite[Corollary H]{GR25}. However, to the best of our knowledge, when $\Sigma$ is not the standard sphere, the existence of an invariant Riemannian metric of positive Ricci curvature, even under a single free circle action, is new in the literature.
	
	This article is organised as follows. In Section \ref{S:prel}, we recall basic facts on free circle actions and on closed, simply-connected 6-manifolds. In Section \ref{S:class}, we classify spin cohomology $S^2\times S^5$s up to orientation-preserving diffeomorphism, homeomorphism and homotopy equivalence. In Section \ref{S:circ_bund}, we prove Theorems~\ref{T:free_circle} and \ref{T:Ric>0} by first determining which $6$-manifolds can appear as the orbit space of a free circle actions on a spin cohomology $S^2\times S^5$ following \cite{Xu25}, and then calculating the $s$-invariants of total spaces of principal circle bundles over these $6$-manifolds to analyse which spin cohomology $S^2\times S^5$s we obtain in this way. Since the formulae for the latter are highly complicated, we used a computer to explicitly show that each one of the manifolds claimed in Theorem \ref{T:free_circle} can be realised in this way, the results of which are summarised in Appendix \ref{S:Tables}. Finally, in Section \ref{S:further}, we prove Theorem \ref{T:conn_sums} and further related results.
	
	\begin{ack}
		The author would like to thank Ruizhi Huang for helpful conversations on his article \cite{Hu25}.
	\end{ack}

	\section{Preliminaries}\label{S:prel}
	
	Unless stated otherwise, all manifolds, bundles and maps between manifolds are assumed to be smooth, and we assume that all manifolds are connected. Further, if no coefficients are indicated, we consider \text{(co-)homology} with coefficients in $\Z$.
	
	\subsection{Free circle actions}
	
	In this subsection, we recall basic facts on free circle actions that we will use. We first recall the definition of equivalence of two actions.
	
	\begin{definition}
		Let $M$ be a manifold. Then two free circle actions $\rho_1,\rho_2\colon S^1\times M\to M$ are \emph{equivalent} if there is a diffeomorphism $\phi\colon M\to M$ such that
		\[ \phi(\rho_1(\lambda,x))=\rho_2(\lambda,\phi(x)) \]
		for all $(\lambda,x)\in S^1\times M$.
	\end{definition}
	
	Free circle actions can equivalently be described through principal circle bundles as we will see in Proposition \ref{P:action=bdl} below. For that, recall that a principal circle bundle $M\xrightarrow{\pi}N$ is, up to isomorphism of principal bundles, uniquely determined by its Euler class $e(\pi)\in H^2(N)$. The latter corresponds, under the identification 
	\[H^2(N)\cong[N,K(\Z,2)]\cong [N,\mathrm{B}S^1],\]
	to the classifying map $N\to \mathrm{B}S^1$ of the bundle $\pi$ (see e.g.\ \cite[Sections 4.10--4.13]{Hu94} for more details on classifying spaces). As a consequence, any two principal circle bundles $M_1\xrightarrow{\pi_1}N_1$ and $M_2\xrightarrow{\pi_2}N_2$ are isomorphic if and only if there is a diffeomorphism $\phi\colon N_1\to N_2$ with $\phi^*(e(\pi_2))=e(\pi_1)$.
	
	\begin{proposition}\label{P:action=bdl}
		Let $M$ be a manifold of dimension $n$.
		\begin{enumerate}
			\item The following are equivalent.
			\begin{enumerate}[(i)]
				\item $M$ admits a free circle action.
				\item $M$ is the total space of a principal circle bundle $M\xrightarrow{\pi} N$, where $N$ is a manifold of dimension $(n-1)$.
			\end{enumerate}
			\item Suppose $M$ admits two free circle actions and let $M\xrightarrow{\pi_1} N_1$ and $M\xrightarrow{\pi_2} N_2$ be the corresponding principal circle bundles as in (ii). Then the two actions are equivalent if and only if there is a diffeomorphism $\phi\colon N_1\to N_2$ with $\phi^*(e(\pi_2))=e(\pi_1)$.
		\end{enumerate}
	\end{proposition}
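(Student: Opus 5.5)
The plan is to prove both parts through the slice theorem for free compact group actions. For (1), (ii)$\Rightarrow$(i) is immediate: a principal circle bundle comes with a free right $S^1$-action on its total space, and $S^1$ is abelian, so this is a free circle action on $M$. For (i)$\Rightarrow$(ii), note that $S^1$ is compact, so the action is proper. A free, proper smooth action of a Lie group $G$ has a smooth manifold $N=M/G$ as orbit space. The dimension of $N$ is $\dim M-\dim G=n-1$, and $M\to N$ is a smooth principal $G$-bundle; this is the quotient manifold theorem, proved using slices. Since $M$ is connected, so is $N$, in line with our standing convention that all manifolds are connected.

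For (2), suppose first that the actions $\rho_1,\rho_2$ are equivalent via a diffeomorphism $\phi\colon M\to M$. Equivariance of $\phi$ means it sends $\rho_1$-orbits to $\rho_2$-orbits, so it descends to a map $\bar\phi\colon N_1\to N_2$. The map $\bar\phi$ is smooth because $\pi_1$ is a submersion, and its inverse is induced by $\phi^{-1}$ in the same way, so $\bar\phi$ is a diffeomorphism. The pair $(\phi,\bar\phi)$ is then an isomorphism of principal bundles from $\pi_1$ to $\bar\phi^*\pi_2$. Naturality of the Euler class (equivalently, of classifying maps) gives $\bar\phi^*(e(\pi_2))=e(\pi_1)$.

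Conversely, suppose $\bar\phi\colon N_1\to N_2$ is a diffeomorphism with $\bar\phi^*e(\pi_2)=e(\pi_1)$. Then $\pi_1$ and $\bar\phi^*\pi_2$ are principal circle bundles over $N_1$ with the same Euler class. By the classification recalled just before the proposition, there is an isomorphism of principal bundles $\psi\colon M\to \bar\phi^*M$ over $N_1$. Composing $\psi$ with the canonical bundle map $\bar\phi^*M\to M$ covering $\bar\phi$ gives a diffeomorphism $\phi\colon M\to M$. It is $S^1$-equivariant from $\rho_1$ to $\rho_2$, because both factors are maps of principal bundles. Hence $\rho_1$ and $\rho_2$ are equivalent.

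Most of this is routine. The point needing the most care is the smoothness of the isomorphism in the converse direction: the classification via $[N,\mathrm{B}S^1]$ a priori yields only a continuous isomorphism. To handle this, I would invoke the standard fact that continuous isomorphism classes of smooth principal bundles coincide with smooth ones. Concretely, smooth a continuous section of the associated bundle of isomorphisms $\mathrm{Iso}(\pi_1,\bar\phi^*\pi_2)$, which is a smooth principal $S^1$-bundle. Alternatively, one can use that smooth principal circle bundles are classified by $H^2(N;\Z)$ through smooth \v{C}ech cocycles with values in the sheaf of smooth $S^1$-valued functions, whose cohomology agrees with $H^2(N;\Z)$ via the exponential sequence.
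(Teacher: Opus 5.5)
Your proof is correct and follows essentially the same route as the paper, which cites Bredon's corollary of the slice theorem for (1) and, for (2), observes that equivariant diffeomorphisms of $M$ are exactly isomorphisms of principal circle bundles covering a diffeomorphism of the bases, combined with the Euler class classification recalled just before the proposition. Your write-up fills in details the paper leaves implicit: the descent of $\phi$ to a diffeomorphism of the orbit spaces, the pullback construction, and the passage from continuous to smooth bundle isomorphisms.
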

	\begin{proof}
		For (1) see e.g.\ \cite[Corollary VI.2.5]{Br72a}. Part (2) follows from the fact, that, by definition, two principal circle bundles are isomorphic if and only if the corresponding circle actions are equivalent.
	\end{proof}
	
	We will use the following result to endow manifolds with free circle actions with a Riemannian metric of positive Ricci curvature, which is due to Gilkey--Park--Tuschmann \cite{GPT98}, with a preliminary version due to Bérard-Bergery \cite{BB78}.
	\begin{theorem}[{\cite{BB78},\cite{GPT98}}]\label{T:Ric>0_bdl}
		Let $M$ be a closed manifold that admits a free circle action. Suppose that $M$ has finite fundamental group and the quotient space $N$ (which is a closed manifold) admits a Riemannian metric of positive Ricci curvature. Then $M$ admits a Riemannian metric of positive Ricci curvature that is invariant under the free circle action.
	\end{theorem}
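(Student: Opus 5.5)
This is a cited result (Bérard-Bergery, Gilkey--Park--Tuschmann), and the proof I would give follows their strategy: deform a connection metric on the principal bundle. By Proposition \ref{P:action=bdl}, $M\to N$ is a principal circle bundle. Let $g_N$ be a metric of positive Ricci curvature on $N$, and choose a principal connection $\theta$, which is an $S^1$-invariant $\mathfrak{u}(1)$-valued $1$-form on $M$ with curvature $F$, where $d\theta=\pi^*F$. For a constant $t>0$ consider the invariant metric
\[ g_t=\pi^*g_N+t^2\,\theta\otimes\theta, \]
so that $\pi$ is a Riemannian submersion with totally geodesic fibres of length $2\pi t$. O'Neill's formulas give, for horizontal $X$ and a unit vertical $V$,
\[ \mathrm{Ric}(X,X)=\mathrm{Ric}_N(X,X)-\tfrac{t^2}{2}|\iota_XF|^2, \qquad \mathrm{Ric}(V,V)=\tfrac{t^2}{4}|F|^2, \qquad \mathrm{Ric}(X,V)=\tfrac{t}{2}(\delta F)(X). \]
These hold up to normalisation constants.

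Shrinking $t$ makes the horizontal part positive. The difficulty is that $\mathrm{Ric}(V,V)$ vanishes wherever $F=0$, so a na\"ive choice of connection fails. This is where the hypothesis $|\pi_1(M)|<\infty$ enters. From the Gysin/long exact sequence $\pi_2(N)\to\pi_1(S^1)\to\pi_1(M)$, finiteness of $\pi_1(M)$ forces the Euler class $e\in H^2(N;\Z)$ to be nonzero rationally. In fact, finite $\pi_1(M)$ implies that $\pi_1(N)$ is finite, since $N$ has positive Ricci curvature. Passing to universal covers, one sees that the fibre cannot have infinite order in $\pi_1(M)$ unless $e$ is torsion.

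The key step, and the main obstacle, is the choice of a good curvature form. I would take $F$ to be the $g_N$-harmonic representative of $e$, which is nonzero but may still vanish at points. The idea of Gilkey--Park--Tuschmann is to use the freedom to vary the metric on the base as well: pick a representative $F$ of $e$ and modify $g_N$ within positive Ricci curvature so that the mixed terms are controlled. Then, if $F$ vanishes somewhere, one perturbs using the conformal/warping freedom: replace the constant $t$ by a function $t=e^{f}$ pulled back from $N$. The resulting terms $-\Delta f$ and $-|\nabla f|^2$ in $\mathrm{Ric}(V,V)$ can be made positive where $F$ is small, while only $O(t^2)$ and $O(|\nabla f|)$ perturbations appear horizontally. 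I expect this balancing to be the hardest point. I would try choosing $f=\varepsilon h$ with $h$ superharmonic near the zero set of $F$, which is possible locally.

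Finally, a compactness argument finishes the proof. Choose $\varepsilon$ first and then $t$ small so that the $2\times2$ block matrix of $\mathrm{Ric}$ on $\mathrm{span}(X,V)$ is positive definite uniformly on the compact manifold $M$. The mixed term is $O(t)$ while the diagonal terms are of order $1$ and $t^2$ respectively, so positivity requires $|\delta F|$ to be controlled by $|F|$. Choosing $F$ harmonic (so that $\delta F=0$) handles this on the region where $F\neq0$. The resulting metric is $S^1$-invariant by construction, since all ingredients are pulled back from $N$ or built from the invariant form $\theta$.
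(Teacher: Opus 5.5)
The paper gives no proof of this statement; it is quoted from \cite{BB78} and \cite{GPT98}, so your outline has to stand on its own. You have the right ingredients: the $g_N$-harmonic representative $F$ of $e$, which is $\not\equiv 0$ exactly because $\pi_1(M)$ is finite, and a fibre-length function $e^{f}$. There is, however, a genuine gap at the decisive step, and the order of constants you propose fails.

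Use the convention $\Delta=\mathrm{tr}\,\mathrm{Hess}$. With $f=\varepsilon h$ and $h$ fixed, the unit vertical Ricci curvature is
\[ \mathrm{Ric}(V,V)=\tfrac{t^2}{4}e^{2\varepsilon h}|F|^2-\varepsilon\Delta h-\varepsilon^2|\nabla h|^2 . \]
Since $N$ is closed, $\int_N\Delta h=0$, so $h$ cannot be superharmonic everywhere. In fact $\int_N(-\varepsilon\Delta h-\varepsilon^2|\nabla h|^2)<0$. Wherever $\Delta h>0$, the only positive contribution is $\tfrac{t^2}{4}e^{2\varepsilon h}|F|^2$. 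So if you fix $\varepsilon$ first and then let $t\to 0$, $\mathrm{Ric}(V,V)$ becomes negative there. Making $h$ superharmonic near the zero set is not the difficulty, and ``possible locally'' misses the point. The difficulty is global: the subharmonic region that $h$ is forced to have must lie where $|F|$ is bounded below, and $\tfrac{t^2}{4}|F|^2$ must dominate $\varepsilon\Delta h$ there. This forces $\varepsilon\ll t^2$, the opposite of your ordering.

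The fix is to solve the Poisson equation $\Delta h=|F|^2-\overline{|F|^2}$, where $\overline{|F|^2}>0$ is the mean of $|F|^2$. This is solvable since the right-hand side has mean zero, and it is the only place where $e\neq0$ in $H^2(N;\mathbb{R})$ is used. Then take $\varepsilon=c\,t^2$ with a fixed $c<\tfrac18$, so $\varepsilon$ is chosen after $t$ and much smaller than it. This gives
\[ \mathrm{Ric}(V,V)=t^2\Bigl(\bigl(\tfrac14e^{2\varepsilon h}-c\bigr)|F|^2+c\,\overline{|F|^2}-c^2t^2|\nabla h|^2\Bigr)\geq \tfrac{c}{2}\,\overline{|F|^2}\,t^2 \]
for small $t$. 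The horizontal part is $\mathrm{Ric}_N-\varepsilon\,\mathrm{Hess}\,h-\varepsilon^2\,dh\otimes dh-O(t^2)\geq\rho/2$.

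You also need to recompute the mixed term after warping. Up to normalisation it becomes $\tfrac{t}{2}e^{f}\bigl(\delta F-3\,\iota_{\nabla f}F\bigr)$. The new $\iota_{\nabla f}F$ contribution is absent from your discussion. For harmonic $F$ this term is $O(t\varepsilon|\nabla h||F|)=O(t^3)$. Its square is therefore negligible against $\tfrac{\rho}{2}\cdot\tfrac{c}{2}\overline{|F|^2}t^2$, and the Schur complement of the vertical entry stays positive.

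Modifying $g_N$ is unnecessary; keep it fixed. Your sentence about universal covers proves the converse of what is needed. Only ``finite $\pi_1(M)$ $\Rightarrow$ $e$ non-torsion'' is used, and your Gysin/long exact sequence remark already gives it.
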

	
	Finally, the following lemma will be useful for the homotopy classification of spin cohomology $S^2\times S^5$s.
	\begin{lemma}\label{L:princ_circ_hom}
		Let $N$ be a manifold and let $M\to N$ be a principal circle bundle. Then $\pi_i(N)\cong\pi_i(M)$ for all $i\geq 3$.
	\end{lemma}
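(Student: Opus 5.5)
The plan is to use the long exact sequence of homotopy groups of the fibration $S^1\to M\to N$. A principal circle bundle is a locally trivial fibre bundle with fibre $S^1$, hence a Serre fibration. It therefore induces a long exact sequence
\[ \cdots\to\pi_i(S^1)\to\pi_i(M)\xrightarrow{\pi_*}\pi_i(N)\to\pi_{i-1}(S^1)\to\cdots \]
for any choice of basepoints. The inclusion of the fibre and the boundary map are the other two maps.

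Next, recall that $\pi_k(S^1)=0$ for all $k\geq 2$, since the universal cover $\R\to S^1$ is contractible. For $i\geq 3$ both $i\geq 2$ and $i-1\geq 2$ hold, so the relevant segment of the sequence reads
\[ 0=\pi_i(S^1)\to\pi_i(M)\xrightarrow{\pi_*}\pi_i(N)\to\pi_{i-1}(S^1)=0. \]
Exactness then shows that $\pi_*\colon\pi_i(M)\to\pi_i(N)$ is an isomorphism for all $i\geq 3$.

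There is essentially no obstacle. The only point worth remarking is basepoint independence: $M$ and $N$ are connected by the standing convention, so the groups are well-defined up to isomorphism. For $i=2$ the argument only gives an injection $\pi_2(M)\hookrightarrow\pi_2(N)$, which explains the restriction $i\geq 3$ in the statement.
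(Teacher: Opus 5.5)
Your proof is correct and is the same argument the paper gives: the long exact sequence of homotopy groups of the fibration $M\to N$, combined with $\pi_i(S^1)=0$ for all $i\geq 2$. Your side remarks on basepoints and on injectivity for $i=2$ are accurate but not needed for the statement.
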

	\begin{proof}
		This is a direct consequence of the long exact sequence of homotopy groups of the fibration $M\to N$ (see e.g.\ \cite[Corollary 6.44]{DK01}), in combination with the fact that $\pi_i(S^1)=0$ for all $i\geq 2$.
	\end{proof}
	
	\subsection{Simply-connected 6-manifolds}
	
	Closed, simply-connected $6$-manifolds were classified under additional assumptions by Wall \cite{Wa66} (spin, torsion-free homology) and Jupp \cite{Ju73} (torsion-free homology) and in the general case by Zhubr \cite{Zu75,Zh00}. Only the case of torsion-free homology will be relevant in this article, so we will focus on Jupp's classification.
	
	For that, let $N$ be a closed, simply-connected $6$-manifold with torsion-free homology. Then $H^2(N)$ is a free abelian group on which we have the symmetric trilinear form $\mu_N\colon H^2(N)\times H^2(N)\times H^2(N)\to \Z$ defined by
	\[ \mu_N(x_1,x_2,x_3)=\langle x_1\smile x_2\smile x_3,[N]\rangle, \]
	where we chose an arbitrary orientation on $N$.
	
	Further, we have the linear form $p_1(N)\colon H^2(N)\to \Z$ defined by the first Pontryagin class
	\[ p_1(N)(x)=\langle p_1(N)\smile x,[N]\rangle \]
	and the second Stiefel--Whitney class $w_2(N)$ defines an element of $H^2(N;\Z/2)\cong H^2(N)\otimes\Z/2$. Finally, the third Betti number $b_3(N)$, which is always even, defines a non-negative integer $\frac{b_3(N)}{2}$.	The \emph{system of invariants assigned to $N$} is the $5$-tuple $(H^2(N),\mu_N,p_1(N),w_2(N),\frac{b_3(N)}{2})$.
	
	More abstractly, we call a $5$-tuple $(H,\mu,p,w,r)$ consisting of a finitely generated free abelian group $H$, a symmetric trilinear form $\mu\colon H\times H \times H\to \Z$, a linear form $p\colon H\to \Z$, an element $w\in H\otimes\Z/2$ and a non-negative integer $r$, a \emph{system of invariants}. Two systems of invariants $(H,\mu,p,w,r)$ and $(H',\mu',p',w',r')$ are \emph{equivalent}, if $r=r'$ and there exists an isomorphism $\phi\colon H\to H'$ such that $\phi^*\mu'=\mu$, $\phi^* p'=p$ and $\phi(w)=w'$.
	
	Note that the definition of the system of invariants $(H^2(N),\mu_N,p_1(N),w_2(N),\frac{b_3(N)}{2})$ depends on a choice of orientation of $N$. Reversing the orientation of $N$ results in replacing $\mu_N$ and $p_1(N)$ by $-\mu_N$ and $-p_1(N)$, respectively. Since the systems of invariants
	\[\left(H^2(N),\mu_N,p_1(N),w_2(N),\tfrac{b_3(N)}{2}\right)\quad \text{and}\quad \left(H^2(N),-\mu_N,-p_1(N),w_2(N),\tfrac{b_3(N)}{2}\right)\]
	are equivalent via the isomorphism $-\mathrm{id}_{H^2(N)}$, the choice of orientation does not need to be part of the data.
	
	\begin{theorem}[{\cite{Ju73}}]\label{T:Jupp}
		Two closed, simply-connected $6$-manifolds $N_1$ and $N_2$ with torsion-free homology are diffeomorphic if and only if their systems of invariants are equivalent.
		
		Moreover, a system of invariants $(H,\mu,p,w,r)$ is realised by a closed, simply-connected $6$-manifold with torsion-free homology if and only if
		\begin{equation}\label{EQ:Jupp}
			\mu(W,W,W)\equiv p(W)\mod 48
		\end{equation}
		for all $W\in H$ with $W\equiv w\mod 2$.
	\end{theorem}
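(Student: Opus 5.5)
This is Jupp's classification theorem, which the paper quotes rather than proves, so the plan follows the standard surgery-theoretic route of Wall and Jupp, extended to the non-spin case.

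\emph{Reduction to $b_3=0$.} First I would split off the middle-dimensional homology. Since $N$ is simply connected with torsion-free homology, $H_3(N)$ is free. The intersection form on $H_3(N)$ is skew-symmetric and unimodular, which is why $b_3(N)$ is even. Each class in $H_3(N)$ is represented by an embedded $S^3$ with trivial normal bundle: the Hurewicz map is onto, and Haefliger/Whitney embedding works in dimension $6$. The normal bundle is classified by $\pi_2(SO(3))=0$, so it is trivial. Taking a symplectic basis, a neighbourhood of each dual pair is $S^3\times S^3$ minus a disc. This gives
\[ N\cong N_0\#_{r}(S^3\times S^3), \qquad b_3(N_0)=0, \]
and the connected sum with $S^3\times S^3$ does not change $(H^2,\mu,p_1,w_2)$. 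So it suffices to treat manifolds with $b_3=0$.

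\emph{Uniqueness for $b_3=0$.} Given $N_1,N_2$ with an isomorphism $\phi$ of their systems of invariants, I would do the following.
\begin{enumerate}
\item Use $\phi$ to build a map $f\colon N_1\to N_2$ that is an isomorphism on $H^2$. Obstruction theory through the $4$-skeleton should work because both manifolds are built from $2$-cells, $4$-cells and a top cell. The cup-product data $\mu$ guarantees that the $4$-cells attach compatibly, and the top-cell obstruction is controlled by degree.
\item Show that $f$ is a homotopy equivalence, and that it is covered by a bundle map of stable normal bundles. Over the $4$-skeleton, a stable vector bundle is determined by $w_2$ and $p_1$, since the relevant obstruction groups are torsion-free or detected by $w_2$ and $p_1$.
\item Obtain a normal bordism between $N_1$ and $N_2$ over $N_2$. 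The simply connected surgery obstruction in dimension $6$ (a Kervaire--Arf invariant) can be killed by changing the normal map on the top cell, possibly at the cost of summands of $S^3\times S^3$, which are harmless. Then the $h$-cobordism theorem gives $N_1\cong N_2$.
\end{enumerate}
I would instead run a direct bordism argument: compute $\Omega_6$ of the relevant $B$-structure and perform surgery below the middle dimension until the bordism is an $h$-cobordism.

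\emph{Realisation.} Given $(H,\mu,p,w,r)$, I would construct $N_0$ by taking a $6$-dimensional handlebody $W$ with $2$-handles indexed by a basis of $H$. Each $2$-handle is attached along a circle with a framing chosen so that $w_2(W)=w$. I would then attach $4$-handles whose intersection behaviour realises $\mu$ and whose normal data realises $p$, arranging the boundary to be $S^5$ and capping off with $D^6$. This is essentially Wall's construction.

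Necessity of \eqref{EQ:Jupp} comes from index theory. If $W$ is an integral lift of $w_2$, then $W$ is characteristic, and
\[ \tfrac{1}{48}\big(W^3 - p_1 W\big) \]
equals the index of the $\mathrm{Spin}^c$ Dirac operator twisted appropriately, by the Riemann--Roch computation $\hat{A}\,e^{W/2}$. It is therefore an integer, which gives $\mu(W,W,W)\equiv p(W)\bmod 48$. Sufficiency is the main obstacle. The congruence is exactly what is needed for the last boundary to be a homotopy $5$-sphere that bounds, so that it is $S^5$. Equivalently, it ensures that the required stable bundle over the $4$-skeleton extends. I would first handle the spin case, $w=0$, via Wall's argument with the mod $24$ condition $4\mu(x,x,x)\equiv p(x)$. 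I would then reduce the general case by passing to the associated $\mathrm{Spin}^c$ structure.
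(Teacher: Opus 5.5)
The paper does not prove this theorem. It is quoted from Jupp (and from Wall in the spin case), so your proposal has to stand on its own. Two parts are fine: splitting off $\#_r(S^3\times S^3)$, and the necessity argument via the $\mathrm{Spin}^c$ index $\langle e^{W/2}\hat A(N),[N]\rangle=\tfrac{1}{48}\bigl(\mu(W,W,W)-p(W)\bigr)\in\Z$.

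\emph{Uniqueness, step (1).} This step has a genuine gap. Once the $4$-skeleta are mapped compatibly with $\mu$, the obstruction to extending over the top cell is an element of $\pi_5(N_2)$, modulo an indeterminacy from $\pi_4(N_2)$. It is typically a torsion class that the cohomology ring does not see, and it is not ``controlled by degree''. Your justification never uses $p$ or $w$, so it would show that any two such manifolds with isomorphic $(H,\mu)$ are homotopy equivalent. That is false. Take the spin manifold $N$ with invariants $(\Z x,0,24x^*,0,0)$ from the remark after Proposition~\ref{P:splitting_diffeo}. It has the same $H^2$, $\mu$, $w_2$ and (up to homotopy) $4$-skeleton $S^2\vee S^4$ as $S^2\times S^4$. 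For a homotopy equivalence $f\colon N\to S^2\times S^4$, the stable bundle $\nu_N-f^*\nu_{S^2\times S^4}$ is fibre-homotopically trivial. Its restriction to the $S^4$ of the $4$-skeleton therefore lies in $\ker\bigl(J\colon\pi_3(SO)\to\pi_3^s\bigr)=24\,\pi_3(SO)$. A generator of $\pi_3(SO)\cong\pi_4(BSO)$ has $p_1=\pm2$, so this forces $p(x)\in 48\Z$, whereas $N$ has $p(x)=24$. So whether $\phi$ is realised by a homotopy equivalence depends on $p$ and $w$. Establishing it is a real homotopy-classification problem that your sketch skips.

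\emph{Uniqueness, step (3).} A bundle map covering a homotopy equivalence $f$ does not yield a normal bordism from $f$ to $\mathrm{id}_{N_2}$. It only places the normal invariant $\eta(f)\in[N_2,G/O]$ in the image of $[N_2,G]$. That image is in general non-zero, for instance through classes from $H^2(N_2;\pi_2^s)$. You must kill it by re-choosing $f$ via self-equivalences of $N_2$. The Kervaire--Arf invariant in $L_6$ is not the relevant obstruction here: for a $7$-dimensional normal bordism rel boundary the obstruction lies in $L_7(1)=0$. Extra $S^3\times S^3$ summands are also not harmless, since removing them requires a cancellation theorem. Your bordism alternative hits the same wall. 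After surgery below the middle dimension on a $7$-dimensional bordism over the normal $2$-type, a middle-dimensional obstruction remains. In general (Kreck) one only obtains $N_1\#_r(S^3\times S^3)\cong N_2\#_r(S^3\times S^3)$.

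\emph{Sufficiency.} The role you assign to \eqref{EQ:Jupp} is not right. Every homotopy $5$-sphere is standard ($\Theta_5=0$), and extending a stable bundle over the $6$-cell is unobstructed because $\pi_5(BSO)=0$. The handle sketch does not say how $p$ is prescribed or where the congruence enters. Note also that the $4$-handles are attached along $3$-spheres in a $5$-manifold, which is a codimension-$2$ knotting problem.

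What has to be shown is the following: under \eqref{EQ:Jupp}, there is a closed $6$-manifold $M$ with a homomorphism $\iota\colon H\to H^2(M)$ such that $\iota(w)=w_2(M)$, $\mu_M(\iota\,\cdot,\iota\,\cdot,\iota\,\cdot)=\mu$ and $p_1(M)(\iota\,\cdot)=p$. Equivalently, the index congruence is the only constraint on these characteristic numbers in the relevant (twisted) spin bordism group of $K(\Z^{\mathrm{rk} H},2)$. After that, surgery below the middle dimension makes $M$ simply connected with $\iota$ an isomorphism, and the splitting adjusts $r$. The passage from the spin case to the general case ``via $\mathrm{Spin}^c$'' is also left unexplained.
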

	Note that, when $w=0$, by setting $W=2x$ for any $x\in H$, \eqref{EQ:Jupp} becomes
	\begin{equation}\label{EQ:Wall}
		4\mu(x,x,x)\equiv p(x)\mod 24,
	\end{equation}
	which is precisely the condition identified by Wall \cite[Theorem 5]{Wa66} in the spin case.
		
	The sphere $S^6$ is the unique manifold with trivial system of invariants. Further manifolds we will consider are as follows:
	
	\begin{lemma}\label{L:inv_examples}
		\begin{enumerate}
			\item The system of invariants of the product $S^2\times S^4$ is equivalent to $(\Z,0,0,0,0)$.
			\item The system of invariants of the complex projective space $\C P^3$ is given by $(\Z a,\mu_{\C P^3},p_1(\C P^3),0,0)$, where $a\in H^2(\C P^3)\cong \Z$ is a generator, and $\mu_{\C P^3}$ and $p_1(\C P^3)$ are given by
			\[ \mu_{\C P^3}(a,a,a)=1,\quad p_1(\C P^3)(a)=4. \]
			\item If $N_1$ and $N_2$ are closed, simply-connected 6-manifolds with torsion-free homology, then system of invariants of the connected sum $N_1\# N_2$ is equivalent to
			\[ \left(H^2(N_1)\oplus H^2(N_2),\mu_{N_1}\oplus\mu_{N_2},p_1(N_1)\oplus p_1(N_2),(w_2(N_1),w_2(N_2)),\tfrac{b_3(N_1)+b_3(N_2)}{2}  \right). \]
		\end{enumerate}
	\end{lemma}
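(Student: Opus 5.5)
Each of the three parts is checked directly against the definition of the system of invariants $(H^2(N),\mu_N,p_1(N),w_2(N),\tfrac{b_3(N)}{2})$, using standard facts about the cohomology and characteristic classes of the manifolds involved.

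For (1), the Künneth formula gives $H^2(S^2\times S^4)\cong\Z$, generated by the pullback $x$ of the generator of $H^2(S^2)$, and $H^3(S^2\times S^4)=0$, so $b_3=0$. The cup product $x^3$ is pulled back from $H^6(S^2)=0$, hence $\mu=0$. Since $S^2\times S^4$ bounds $D^3\times S^4$, its tangent bundle is stably $\mathrm{pr}_1^*TS^2\oplus\mathrm{pr}_2^*TS^4$. Both factors are stably trivial spheres, so the product is stably parallelizable. Therefore $p_1=0$ and $w_2=0$.

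For (2), the cohomology ring is $H^*(\C P^3)=\Z[a]/(a^4)$, with $a^3$ the orientation class for the complex orientation. This gives $\mu(a,a,a)=1$, and since $H^3=0$ we get $b_3=0$. From $T\C P^3\oplus\underline{\C}\cong 4\gamma^*$ we obtain total Chern class $(1+a)^4$. Hence $c_1=4a$, so $w_2=c_1\bmod 2=0$, and the total Pontryagin class is $(1+a^2)^4$, giving $p_1=4a^2$. Consequently $p_1(\C P^3)(a)=\langle 4a^3,[\C P^3]\rangle=4$.

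For (3), the main tool is the collapse map $c\colon N_1\# N_2\to N_1\vee N_2$. Since $n=6>2$, Mayer--Vietoris yields $H^k(N_1\# N_2)\cong H^k(N_1)\oplus H^k(N_2)$ for $0<k<6$, induced by $c^*$. The ring structure in degrees below $6$ is the direct product, and the fundamental class of the sum maps to $([N_1],[N_2])$. Cross terms $x_1\smile x_2$ vanish because they pull back from $N_1\vee N_2$, where such products are zero. This gives $\mu=\mu_{N_1}\oplus\mu_{N_2}$ and $b_3=b_3(N_1)+b_3(N_2)$.

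For the characteristic classes, restricting $T(N_1\# N_2)$ to $N_i\setminus D^6$ recovers $TN_i$ restricted to that set. The inclusion $N_i\setminus D^6\hookrightarrow N_i$ is an isomorphism on $H^2$ and $H^4$, and under the identification above the classes $p_1$ and $w_2$ restrict componentwise. Finally, $\langle p_1\smile x_i,[N_1\#N_2]\rangle=\langle p_1(N_i)\smile x_i,[N_i]\rangle$ because $p_1\smile x_i$ is supported on the $N_i$ side.

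The only mildly delicate step is this naturality of $p_1$ and $w_2$ under the splitting, together with orientation compatibility. The connected sum must be formed orientation-compatibly, which the phrase ``equivalent to'' absorbs: one can always replace $\mu_{N_2},p_1(N_2)$ by their negatives via $-\mathrm{id}$ on the second summand.
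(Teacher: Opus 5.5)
Your proof is correct and is essentially the standard argument the paper defers to by citation (the Milnor--Stasheff computation $p(\C P^3)=(1+a^2)^4$, and the Mayer--Vietoris/collapse-map splitting of the cohomology ring and characteristic classes of a connected sum), here written out in full, including the correct observation that the orientation choice on the summands is absorbed by $-\mathrm{id}$ on one factor. One cosmetic point: in (1) the tangent bundle of a product splits as $\mathrm{pr}_1^*TS^2\oplus\mathrm{pr}_2^*TS^4$ on the nose, so the appeal to $S^2\times S^4=\partial(D^3\times S^4)$ is not needed, although either argument alone gives stable parallelizability.
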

	\begin{proof}
		This is well-known, see e.g.\ \cite[Example 15.6]{MS74} for the Pontryagin classes of $\C P^3$ and \cite[Lemma 4.1.4]{Re22} for the statement on connected sums.
	\end{proof}
	
	We will need the following result for the proof of Theorem \ref{T:conn_sums}.

	\begin{proposition}\label{P:splitting_diffeo}
		Let $N$, $N_1$ and $N_2$ be closed, simply-connected 6-manifolds with torsion-free homology such that $b_2(N)=1$ and either $\mu_N$ is non-trivial or $N=S^2\times S^4$. Suppose that $N_1\# N$ is diffeomorphic to $N_2\# N$. Then $N_1$ and $N_2$ are diffeomorphic.
	\end{proposition}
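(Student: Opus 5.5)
The plan is to reduce the statement to algebra via Theorem \ref{T:Jupp}. Write $(H_i,\mu_i,p_i,w_i,r_i)$ for the system of invariants of $N_i$, and $(\Z c,\mu_N,p_1(N),w_N,r)$ for that of $N$. By Lemma \ref{L:inv_examples}(3) and Theorem \ref{T:Jupp}, there is an isomorphism
\[\phi\colon H_1\oplus\Z c\to H_2\oplus\Z c\]
preserving the trilinear forms, the linear forms and the $w$-classes, and $r_1+r=r_2+r$. Hence $r_1=r_2$. It therefore suffices to produce an isomorphism $H_1\to H_2$ preserving $\mu$, $p$ and $w$; Theorem \ref{T:Jupp} then gives $N_1\cong N_2$. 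The approach is to characterise the summand $\Z c$ intrinsically in terms of $(\mu,p,w)$, possibly after composing $\phi$ with an automorphism of the target.

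\emph{Case $k:=\mu_N(c,c,c)\neq0$.} Call $x$ a \emph{cubic line} if the bilinear form $\mu(x,\cdot,\cdot)$ has rank one and $\mu(x,x,x)\neq0$.
\begin{enumerate}
\item Using symmetry of $\mu$, show that a cubic line satisfies $\mu(x,y,z)=a f(y)f(z)$ for some nonzero integer $a$ and a linear form $f$ with $f(x)\neq0$.
\item Show that two non-proportional cubic lines $x,x'$ satisfy $\mu(x,x',\cdot)=0$. The idea is to compare $\mu(x,x,x')$ computed via $x$ and via $x'$.
\item Conclude that the primitive cubic lines form a finite set of pairwise $\mu$-orthogonal lines. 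This set is preserved by any isomorphism of forms, and $c$ is one of these lines.
\end{enumerate}
Consequently $\phi(c)=\pm c''$ for a cubic line $c''$ of $H_2\oplus\Z c$. The line $c''$ carries the same data as $c$, namely the same value of $\mu$, the same value of $p$, and the same $w$-component. If $c''\neq\pm c$, compose $\phi$ with the automorphism swapping $c''$ and $c$ and fixing their joint orthogonal complement. This automorphism preserves $\mu$, $p$ and $w$ precisely because the data agree; checking that this swap is well defined and preserves $w$ is a small bookkeeping step. We may therefore assume $\phi(c)=\pm c$. Then
\[H_i=\{x:\mu(x,c,c)=0\},\]
since $\mu(h+tc,c,c)=tk$ and $k\neq0$. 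Hence $\phi$ restricts to an isomorphism $H_1\to H_2$. This restriction preserves $\mu$ and $p$. It also preserves $w$, because the $H_i$-component of $w$ is determined by projection along $c$.

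\emph{Case $N=S^2\times S^4$.} Here $c$ has $\mu(c,\cdot,\cdot)=0$, $p(c)=0$ and $w$-component $0$.
\begin{enumerate}
\item Let $R(H)=\{x:\mu(x,\cdot,\cdot)=0,\ p(x)=0\}$. This is a saturated, hence direct, summand, and $R(H_i\oplus\Z c)=R(H_i)\oplus\Z c$.
\item Show that a system $(H,\mu,p,w)$ is determined up to equivalence by the following data: the induced system on $H/R(H)$; the image $\bar w$ of $w$; the rank of $R(H)$; and whether $w$ admits a lift with zero $R$-component.
\item To prove step 2, choose a complement $S$ with $H=S\oplus R$. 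Use shears $x\mapsto x+\rho(x)$ with $\rho\colon S\to R$ linear, which preserve $\mu$ and $p$ because $R$ is $\mu$- and $p$-null, together with $\mathrm{GL}(R)$. Together these normalise $w$ to either $w_S$ or $w_S+e_1$ mod $2$.
\item Adding $\Z c$ changes only the rank of $R$, increasing it by one. Therefore equal data for $H_1\oplus\Z c$ and $H_2\oplus\Z c$ force equal data for $H_1$ and $H_2$.
\end{enumerate}

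I expect the main obstacle to be the second case, specifically the careful $w$-bookkeeping. One subtlety is whether the normal form really only depends on the indicated discrete datum. Another is that when $R(H_1)=0$ the option $w_R\neq0$ disappears. The first case hinges on the rank-one lemma (steps 1 and 2 above), which I expect to be short but needs care over $\Z$ rather than over $\Q$.
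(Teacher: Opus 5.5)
\textbf{There is a genuine gap in your first case.} Your treatment of $N=S^2\times S^4$ is fine. It is essentially the paper's argument: split off a complement of $\ker(\mu,p)$ and normalise $w$ by shears, here packaged as a list of invariants. The problem is Step~2 of the first case, which fails as soon as $\mu$ has a non-zero radical $\mathrm{Rad}(\mu)=\{x\mid \mu(x,\cdot,\cdot)=0\}$.

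\begin{itemize}
\item If $0\neq r\in\mathrm{Rad}(\mu)$, then $\mu(c+nr,\cdot,\cdot)=\mu(c,\cdot,\cdot)$. So $c+nr$ is a primitive cubic line for every $n\in\Z$, while $\mu(c,c+nr,c)=k\neq0$.
\item Your comparison of $\mu(x,x,x')$ only forces orthogonality when $f$ and $f'$ are non-proportional. If $f'=\pm f$, it merely gives $a'x-ax'\in\mathrm{Rad}(\mu)$.
\item Consequently the cubic lines are neither finitely many nor pairwise orthogonal, and the swap is undefined. Moreover $\phi(c)$ can genuinely equal $c+r$ with $0\neq r\in\ker(\mu_{N_2},p_1(N_2))$. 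For example, take $N_1=N_2=S^2\times S^4$, $N=\C P^3$ and $\phi\colon x\mapsto x$, $c\mapsto c+x$.
\end{itemize}

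This is not a corner case. Such a radical appears whenever $N_i$ has an $S^2\times S^4$-summand, which is exactly the situation in the iterated application in Theorem~\ref{T:conn_sums_s}. If $\mu_{N_1}$ has trivial radical, your argument does go through.

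\textbf{In the generality you attempt, the gap cannot be closed, because the statement fails for non-spin $N$.}
\begin{itemize}
\item Let $N$ have system $(\Z c,2,2,c,0)$; this is realised, e.g., by the quadric $Q^3\subset\C P^4$.
\item Let $N_1=S^2\times S^4$, with system $(\Z x,0,0,0,0)$.
\item Let $N_2$ be the non-spin manifold with system $(\Z x,0,0,x,0)$.
\end{itemize}
The automorphism $x\mapsto x$, $c\mapsto c+x$ preserves $\mu$ and $p$, since $x$ lies in the kernel of both. It sends $w=c$ to $c+x$. Hence $N_1\# N\cong N_2\# N$ by Theorem~\ref{T:Jupp}, although $N_1$ is spin and $N_2$ is not. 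Any correct proof must therefore use $w_2(N)=0$. This hypothesis holds in the paper's applications, where $N=\C P^3$ or $N=S^2\times S^4$.

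\textbf{How the paper avoids the problem.} The paper writes $a$ for your $c$ and never tries to single out $\phi(a)$ as a distinguished line. Since $\mu(u,a,\cdot)=0$ for $u\in H_1$, evaluating $\mu(\phi(u_i),\phi(a),a)$ gives $\lambda_i\lambda\alpha=0$. This leaves two cases.
\begin{itemize}
\item All $\lambda_i=0$. Then $\phi(H_1)=H_2$ and one restricts $\phi$. Here $\phi(a)=a+h$ with $h\in\ker(\mu_{N_2},p_1(N_2))$ is allowed. The restriction preserves $w$ exactly because $w_2(N)=0$, a point the paper's Case~1 uses tacitly.
\item $\lambda=0$. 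A change of basis together with $\nu\alpha=0$ produces $u_1\in H_1$ with $\phi(u_1)=a$. So a copy of $N$ splits off from both $N_1$ and $N_2$, with isomorphic complements.
\end{itemize}
This argument tolerates a non-trivial radical, which your cubic-line characterisation of $c$ cannot.
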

	For the proof we need the following lemma. For that, given a closed, simply-connected 6-manifold $N$ with torsion-free homology, we define the kernel $\ker(\mu_N,p_1(N))\subseteq H^2(N)$ of the pair $(\mu_N,p_1(N))$ by
	\[ \ker(\mu_N,p_1(N))=\{ x\in H^2(N)\mid \mu_{N}(x,y,z)=p_1(N)(x)=0\text{ for all }y,z\in H^2(N)\}. \]
	\begin{lemma}\label{L:ker_dir_sum}
		The kernel $\ker(\mu_N,p_1(N))$ is a direct summand inside $H^2(N)$.
	\end{lemma}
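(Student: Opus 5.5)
The plan is to exhibit $K:=\ker(\mu_N,p_1(N))$ as the kernel of a homomorphism from $H:=H^2(N)$ into a torsion-free group. Since $H$ is a finitely generated free abelian group, it then follows that $K$ is a direct summand.

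First, consider the homomorphism
\[ \Phi\colon H\to \mathrm{Hom}(H\otimes H,\Z)\oplus \Z,\qquad x\mapsto \bigl(\,y\otimes z\mapsto \mu_N(x,y,z),\; p_1(N)(x)\bigr). \]
It is linear because $\mu_N$ is trilinear and $p_1(N)$ is linear. By definition, $\ker\Phi=K$.

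Second, the target of $\Phi$ is torsion-free. Indeed, $\mathrm{Hom}(A,\Z)$ is always torsion-free, and $\Z$ is torsion-free. Hence $H/K\cong \Phi(H)$ is a subgroup of a torsion-free group, so it is torsion-free. Alternatively, one can argue directly: if $nx\in K$ with $n\neq 0$, then $n\mu_N(x,y,z)=0$ and $n\,p_1(N)(x)=0$ in $\Z$, so $x\in K$.

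Third, $H/K$ is finitely generated (as a quotient of $H$) and torsion-free, so it is free abelian. The short exact sequence $0\to K\to H\to H/K\to 0$ therefore splits, and $H\cong K\oplus H/K$. This shows that $K$ is a direct summand of $H$.

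There is no real obstacle here. The only point to check is that $K$ is saturated: $nx\in K$ with $n\neq 0$ must imply $x\in K$. This holds because all the defining conditions are integer-valued linear equations in $x$.
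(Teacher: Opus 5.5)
Your proof is correct and uses the same key point as the paper's: $K$ is saturated in $H^2(N)$, that is, $nx\in K$ with $n\neq 0$ implies $x\in K$. The only difference is the last step: the paper uses the elementary divisor theorem (a basis $(u_i)$ of $H^2(N)$ with $(\lambda_i u_i)$ a basis of $K$, where saturation forces $\lambda_i=1$), while you use that the finitely generated torsion-free quotient $H/K$ is free, so the sequence splits — equivalent standard routes.
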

	\begin{proof}
		It follows from the multilinearity of $\mu_N$ and the linearity of $p_1(N)$ that $K=\ker(N)$ is a subspace of $H^2(N)$. Further, by the elementary divisor theorem, there exists a basis $(u_1,\dots,u_k)$ of $H^2(N)$ and $\lambda_1,\dots,\lambda_\ell\in \N$ such that $(\lambda_1 u_1,\dots,\lambda_\ell u_\ell)$ is a basis of $K$. By definition, since $\lambda_i u_i\in K$, we have $u_i\in K$ and so $\lambda_i=1$. Hence, $K$ is a direct summand in $H^2(N)$.
	\end{proof}
	
	\begin{proof}[Proof of Proposition \ref{P:splitting_diffeo}]
		Set $H_1=H^2(N_1)$ and $H_2=H^2(N_2)$ and let $a\in H^2(N)\cong\Z$ be a generator. Then, by Lemma~\ref{L:inv_examples}, we can identify $H^2(N_i\# N)\cong H_i\oplus \Z a$ and the trilinear form $\mu_{N_i\# N}$ splits according to this direct sum. A diffeomorphism $N_1\# N\to N_2\# N$ induces an isomorphism $\phi\colon H_1\oplus \Z a\to H_2\oplus \Z a$ such that $\phi^*\mu_{N_2\# N}=\mu_{N_1\# N}$. In particular, $H_1$ and $H_2$ have the same rank. Further, since $b_3(N_i\# N)=b_3(N_i)+b_3(N)$, we have $b_3(N_1)=b_3(N_2)$.
		
		Now assume that $\mu_N$ is non-trivial. Let $(u_1,\dots,u_k)$ be a basis of $H_1$ and $(v_1,\dots,v_k)$ a basis of $H_2$. We write
		\[ \phi(u_i)=\sum_j \mu_{ij} v_j+\lambda_i a,\quad \phi(a)=\sum_j\mu_j v_j+\lambda a \]
		with $\mu_{ij},\mu_j,\lambda_i,\lambda\in\Z$.
		
		In the following, we will identify a trilinear form $\mu\colon H\times H\times H\to\Z$ with a bilinear map $H\times H\to H^*=\mathrm{Hom}(H,\Z)$ via $(x,y)\mapsto (z\mapsto\mu(x,y,z))$ and we will write $xy\in H^*$ for the image of $(x,y)$ under this map. For example, we have $a^2=\mu_N(a,a,a)a^*\in (\Z a)^*$, where $a^*(a)=1$. We set $\alpha=\mu_N(a,a,a)$, which, by assumption, is non-zero.
		
		With this notation, we have $u_i a=0$, and hence
		\[ 0=\phi(u_i)\phi(a)=\sum_{j,\ell}\mu_{ij}\mu_\ell v_j v_\ell +\lambda_i\lambda a^2=\sum_{j,\ell}\mu_{ij}\mu_\ell v_j v_\ell +\lambda_i\lambda\alpha a^*.  \]
		
		Since $v_j v_\ell\in H_2^*$, it follows that either $\lambda_i=0$ for all $i$ or $\lambda=0$. We distinguish these two cases:
		\begin{itemize}
			\item[Case 1:] $\lambda_i=0$ for all $i$. In this case, $\phi(u_i)\in H_2$ for all $i$. Hence, $\phi$ restricts to an isomorphism $\phi\colon H_1\to H_2$, which, by assumption, preserves $\mu$ and $p_1$. Thus, $(N_1,\mu_{N_1},p_1(N_1),w_2(N_1),\tfrac{b_3(N_1)}{2})$ and $(N_2,\mu_{N_2},p_1(N_2),w_2(N_2),\tfrac{b_3(N_2)}{2})$ are equivalent via $\phi$ and so $N_1$ and $N_2$ are diffeomorphic.
			\item [Case 2:] $\lambda=0$. Since $(\phi(u_1),\dots,\phi(u_k),\phi(a))$ is a basis of $H_2\oplus\Z a$ and $\phi(a)$ has no $a$-component, we can choose the basis $(u_1,\dots,u_k)$ such that $\lambda_1=1$ and $\lambda_i=0$ for all $i\in\{2,\dots,k\}$. In particular, $(\phi(u_2),\dots,\phi(u_k),\phi(a))$ is a basis of $H_2$. Hence, there are $\nu_2,\dots,\nu_k,\nu\in\Z$ with
			\[ \phi(u_1)-a=\nu_2\phi(u_2)+\dots+\nu_k\phi(u_k)+\nu\phi(a) \]
			and so
			\[ \phi(u_1-\nu_2 u_2 -\dots-\nu_k u_k )-a=\nu\phi(a). \]
			Thus, if we replace $u_1$ by $u_1-\nu_2 u_2 -\dots-\nu_k u_k$, which still results in a basis $(u_1,\dots,u_k)$ of $H_1$, we have
			\[ \phi(u_1)=a+\nu\phi(a). \]
			
			Hence,
			\[ 0=\mu_{N_2}(\phi(u_1),\phi(a),\phi(a))=\mu_{N_2}(a,\phi(a),\phi(a))+\nu\mu_{N_2}(\phi(a),\phi(a),\phi(a))=\nu\alpha, \]
			so $\nu=0$ and $\phi(u_1)=a$. Since $\mu_{N_2\# N}$ splits according to the direct sum decomposition $H_2\oplus\Z a$, the decomposition $H_1\oplus \Z a=\Z u_1\oplus \mathrm{span}(u_2,\dots,u_k)\oplus \Z a$ is therefore a splitting of $\mu_{N_1\# N}$.
			
			By defining the basis $v_1,\dots,v_k$ so that $\phi(a)=v_1$ and $\phi(u_i)=v_i$ for $i=2,\dots,k$ (recall that $(\phi(u_2),\dots,\phi(u_k),\phi(a))$ is a basis of $H_2$), we therefore obtain that the isomorphism
			\[ \phi\colon \Z u_1\oplus H_1'\oplus \Z a\to \Z a\oplus H_2'\oplus \Z v_1 \]
			splits according to this decomposition, where $H_1'=\mathrm{span}(u_2,\dots,u_k)$ and $H_2'=\mathrm{span}(v_2,\dots,v_k)$. If $N_i'$ denotes the closed simply-connected $6$-manifold with torsion-free homology with invariants $(H_i',\mu_{N_i}|_{H_i'},p_1(N_i)|_{H_i'},w_2(N_i)|_{H_i'},\tfrac{b_3(N_i)}{2})$, we therefore obtain that there is a diffeomorphism
			\[ N\# N_1'\# N\to N\# N_2'\# N \]
			that preserves this connected sum decomposition. In particular, $N_1\cong N_1'\# N$ is diffeomorphic to $N_2\cong N_2'\# N$.
		\end{itemize}
		
		Finally, assume that $N=S^2\times S^4$. We set $K_i=\ker(N_i)$. Then, by Lemma \ref{L:inv_examples},
		\[\ker(N_i\# N)=K_i\oplus\Z a.\]
		By Lemma \ref{L:ker_dir_sum}, there exists a complement $H_i'$ of $K_i$ inside $H_i$. We choose the complement $H_i'$ so that $w_2(N_i)\in H_i'\mod 2$ whenever $w_2(N_i)\not\in K_i\mod 2$. Hence,
		\[ \phi\colon H_1'\oplus K_1\oplus\Z a\to H_2'\oplus K_2\oplus\Z a \]
		is an isomorphism that maps $K_1\oplus\Z a$ to $K_2\oplus\Z a$ since $\phi$ preserves $\mu_{N_i\# N}$ and $p_1(N_i\# N)$. Since $\phi$ also preserves $w_2(N_i)$, we have $w_2(N_1)\in H_1'\mod 2$ if and only if $w_2(N_2)\in H_2'\mod 2$.
		
		If $\mathrm{pr}_{H_2'}$ denotes the projection to $H_2'$ along this splitting, the composition
		\[ H_1'\xrightarrow{\phi|_{H_1'}}H_2'\oplus K_2\oplus\Z a\xrightarrow{\mathrm{pr}_{H_2'}}H_2' \]
		is therefore an isomorphism that preserves $w_2(N_i)$ if $w_2(N_1)\in H_1'$. We claim that it also preserves $\mu_{N_i}|_{H_i'}$ and $p_1(N_i)|_{H_i'}$. Indeed, if we write $\phi(x)=x'+x_K$ with $x'\in H_2'$ and $x_K\in K_2\oplus\Z a$ for $x\in H_1'$, we have for $x,y,z\in H_1'$,
		\begin{align*}
			\mu_{N_1\# N}(x,y,z)&=\mu_{N_2\# N}(\phi(x),\phi(y),\phi(z))\\
			&=\mu_{N_2\# N}(x'+x_K,y'+y_K,z'+z_K)\\
			&=\mu_{N_2\# N}(x',y',z')\\
			&=\mu_{N_2\# N}(\mathrm{pr}_{H_2'}(\phi(x)),\mathrm{pr}_{H_2'}(\phi(y)),\mathrm{pr}_{H_2'}(\phi(z)))
		\end{align*}
		and
		\[ p_1(N_1\# N)(x)=p_1(N_2\# N)(\phi(x))=p_1(N_2\# N)(x'+x_K)=p_1(N_2\# N)(x')=p_1(N_2\# N)(\mathrm{pr}_{H_2'}(\phi(x))). \]
		
		Hence, $H_1'$ and $H_2'$ are isomorphic via an isomorphism that preserves $\mu_{N_i}|_{H_i'}$ and $p_1(N_i)|_{H_i'}$. Since both the trilinear and linear form are trivial on $K_i$ and $w_2(N_1)\in K_1$ if and only if $w_2(N_2)\in K_2$, this isomorphism extends to an isomorphism between $H_1=H_1'\oplus K_1$ and $H_2=H_2'\oplus K_2$ that preserves $\mu_{N_i}$, $p_1(N_i)$ and $w_2(N_i)$. It follows that the systems of invariants $(H_1,\mu_{N_1},p_1(N_1),w_2(N_1),\tfrac{b_3(N_1)}{2})$ and $(H_2,\mu_{N_2},p_1(N_2),w_2(N_2),\tfrac{b_3(N_2)}{2})$ are equivalent and hence, by Theorem \ref{T:Jupp}, $N_1$ and $N_2$ are diffeomorphic.
	\end{proof}
	
	\begin{remark}
		The conclusion of Proposition \ref{P:splitting_diffeo} does not hold if we merely assume $b_2(N)=1$. For example, let $N$ denote the unique closed, simply-connected spin $6$-manifold with torsion-free homology and invariants $(\Z x,0,24x^*,0,0)$ (which exists by Theorem \ref{T:Jupp} and \eqref{EQ:Wall}). Then, if $y\in H^2(S^2\times S^4)$ denotes a generator and $x_i\in H^2(N)$ is a generator of the $i$-th $N$-copy in $N\# N$, the map
		\[ y\mapsto x_1-x_2,\quad x\mapsto x_2 \]
		induces a diffeomorphism between $(S^2\times S^4)\# N$ and $N\#N$, but $N$ is not diffeomorphic to $S^2\times S^4$ since $p_1(N)\neq 0$.
	\end{remark}

	\section{Classification of spin cohomology $S^2\times S^5$s}\label{S:class}
	
	In this section, we classify spin cohomology $S^2\times S^5$s up to orientation-preserving diffeomorphism, homeomorphism and homotopy equivalence. For that we will use Kreck--Stolz $s$-invariants. These were introduced and used by Kreck and Stolz in \cite{KS88,KS91,KS98} to analyse closed, simply-connected $7$-manifolds $M$ with $H^2(M)\cong \Z$, $H^3(M)=0$ and $H^4(M)$ generated by the square of a generator of $H^2(M)$. In particular, a spin cohomology $S^2\times S^5$ satisfies these conditions. The invariants, denoted by $s_1$, $s_2$ and $s_3$, are computed from characteristic numbers of a cobounding manifold and take values in $\Q/\Z$. The oriented diffeomorphism type of a manifold is then uniquely determined by $s_1$, $s_2$ and $s_3$, and the oriented homeomorphism type by $28s_1$, $s_2$ and $s_3$ (see \cite[Section 3]{KS88}). We refer to \cite{KS88,KS91,KS98} and \cite[Section 2.2]{Xu25} for more details and basic properties.
	
	In our case, it will be more convenient to consider the modified invariant $\tilde{s}_3=s_3-4s_2$ instead of $s_3$. Since $s_3$ can be recovered from $\tilde{s}_3$ and $s_2$, there is no loss in information when replacing $s_3$ by $\tilde{s}_3$. We obtain the following classification result.
%	We will then see in Proposition \ref{P:s_i_values} below that the invariants $s_1$, $s_2$ and $\tilde{s}_3$ take the following values:
%	\[  s_1\in \left(\tfrac{1}{28}\Z\right)/\Z, \quad s_2\in \left(\tfrac{1}{12}\Z\right)/\Z, \quad \tilde{s}_3\in \left(\tfrac{1}{2}\Z\right)/\Z. \]
%	In fact, all of these values can be realised and the invariants $s_1$, $s_2$, $\tilde{s}_3$ fully classify spin cohomology $S^2\times S^5$'s:
	\begin{theorem}\label{T:class}
		\begin{enumerate}
			\item For spin cohomology $S^2\times S^5$s, the invariants $s_1,s_2,\tilde{s}_3$ take the following values: 
			\[  s_1\in \bigslant{\left(\tfrac{1}{28}\Z\right)}{\Z}, \quad s_2\in \bigslant{\left(\tfrac{1}{12}\Z\right)}{\Z}, \quad \tilde{s}_3\in \bigslant{\left(\tfrac{1}{2}\Z\right)}{\Z}. \]
			Moreover, all of these values can be realised. 
			\item Let $M$ and $M'$ be two spin cohomology $S^2\times S^5$s, each one equipped with an orientation.
			\begin{enumerate}
				\item $M$ and $M'$ are orientation-preserving diffeomorphic if and only if $s_1(M)=s_1(M')$, $s_2(M)=s_2(M')$ and $\tilde{s}_3(M)=\tilde{s}_3(M')$. In particular, there are $28\cdot 12\cdot 2=672$ oriented diffeomorphism types of spin cohomology $S^2\times S^5$s.
				\item $M$ and $M'$ are orientation-preserving homeomorphic if and only if $s_2(M)=s_2(M')$ and $\tilde{s}_3(M)=\tilde{s}_3(M')$. In particular, there are $12\cdot 2=24$ oriented homeomorphism types of spin cohomology $S^2\times S^5$s.
				\item $M$ and $M'$ are orientation-preserving homotopy equivalent if and only if $s_2(M)=s_2(M')$ and $\tilde{s}_3(M)=\tilde{s}_3(M')=0$, or $2s_2(M)=2s_2(M')$ and $\tilde{s}_3(M)=\tilde{s}_3(M')=\frac{1}{2}$. In particular, there are $12+6=18$ oriented homotopy types of spin cohomology $S^2\times S^5$s.
			\end{enumerate}
		\end{enumerate}
	\end{theorem}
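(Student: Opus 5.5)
The plan is to specialise the Kreck--Stolz machinery to spin cohomology $S^2\times S^5$s. Such an $M$ satisfies $H^4(M)=0$, generated trivially by $u^2$ for a generator $u\in H^2(M)$. Hence $M$ is of the type considered in \cite{KS88,KS91}, and the general classification theorem applies. In that theorem, oriented diffeomorphism is detected by $(s_1,s_2,s_3)$, oriented homeomorphism by $(28s_1,s_2,s_3)$, and homotopy type by $s_{22}=2s_2$ together with suitable linking-form data. What remains is to determine the value sets and translate everything into $\tilde s_3$.

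\textbf{Formulae.} First I would write down the defining formulae. Choose a spin coboundary $W$ with $\partial W=M$ over which $u$ extends to $z\in H^2(W)$. Since $H^4(M)=0$, the classes $p_1(W)$ and $z^2$ lift to relative classes, so all characteristic numbers are defined. The formulae have the form
\[ s_1=\tfrac{1}{2^5\cdot 7}\operatorname{sign}(W)-\tfrac{1}{2^7\cdot 7}p_1^2,\quad s_2=-\tfrac{1}{2^4\cdot 3}z^2p_1+\tfrac{1}{2^3\cdot 3}z^4,\quad s_3=-\tfrac{1}{2^2\cdot 3}z^2p_1+\tfrac{2}{3}z^4. \]
This gives $\tilde s_3=s_3-4s_2=\tfrac12 z^4 \bmod \Z$, so $\tilde s_3\in\tfrac12\Z/\Z$ is immediate.

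\textbf{Value sets.} For $s_1$ I would use $28 s_1$ equal to an Eells--Kuiper-type integrality statement: the $\hat A$-genus of the closed manifold obtained by gluing shows that $s_1$ changes by multiples of $1/28$ under the choice of $W$. Concretely, $s_1-\tfrac{1}{28}\mu$ lies in $\Z$ by index theory on $W\cup$ a disc bundle. For $s_2$, I would show $12s_2\in\Z$ using the integrality of the twisted $\hat A$-genus $\langle \hat A(X)\mathrm{ch}(L),[X]\rangle$ on closed $8$-manifolds, which gives the usual $\tfrac{1}{24}(z^4-z^2p_1/2)$-type integrality. I would do the bookkeeping carefully, as this is where factors of $2$ appear.

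\textbf{Realisation.} I would produce explicit examples: circle bundles over $S^2\times S^4$ and $\C P^3\#$ other manifolds (via Lemma~\ref{L:inv_examples}), or plumbings and disc bundles $W$ over $S^2\times S^4$-type bases with $z^4$ odd. Changing $W$ by connected sum with a $\mathrm{HP}^2$-type or $E_8$-plumbing, and taking connected sum of $M$ with exotic spheres, shifts $s_1$ through $\tfrac1{28}\Z/\Z$ while leaving $s_2$ and $\tilde s_3$ fixed. I expect a computer-free choice of a two-parameter family of bundles to realise all $12\cdot 2$ pairs $(s_2,\tilde s_3)$. This realisation step is the main obstacle. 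What I would try first is disc bundles of rank-$3$ spin vector bundles over $S^2\times S^2$ or $\C P^2$-like $4$-manifolds with prescribed $p_1$ and Euler class, computing $z^4$ and $z^2p_1$ directly.

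\textbf{Classification statements.} Parts (2a) and (2b) then follow directly from \cite[Theorem 3.1]{KS88}, since $s_3$ is recovered from $s_2$ and $\tilde s_3$. For (2c), the homotopy invariants of \cite{KS91} are $2s_2$ and the mod-$2$ information $\tilde s_3$; equivalently, $w$-type data determining whether $M$ is homotopy equivalent to $S^2\times S^5$-like versus the other fibration type. I would check which $s_2$ values with the same $2s_2$ are homotopy equivalent by testing against the homotopy invariant $\bar s_{2}$ in \cite{KS98}, which is only defined when $\tilde s_3=\tfrac12$. This yields the count $12+6=18$.
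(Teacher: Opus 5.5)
Your treatment of (2a) and (2b) via the Kreck--Stolz classification is fine (cite the corrected version \cite[Theorem 1]{KS98}), and so is your computation $\tilde s_3=s_3-4s_2=\frac12 z^4$. However, part (2c) has a genuine gap, and part (1) is not actually established.

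\textbf{Part (2c).} As you first state it, the homotopy classification by $2s_2$ plus linking-form data cannot be right here. Since $H^4(M)=0$, the linking form is trivial, so you would get only $6$ homotopy types. For instance, $S^2\times S^5$ and $S^5\ttimes S^2\cong\mathrm{SU}(3)/S^1$ both have $s_2=0$, yet $\pi_4(S^2\times S^5)\cong\Z/2$ while $\pi_4(\mathrm{SU}(3)/S^1)\cong\pi_4(\mathrm{SU}(3))=0$.

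Adding $\tilde s_3$ as a homotopy invariant is then only asserted. Your refinement also goes in the wrong direction. In (2c) the finer invariant $s_2$ is a homotopy invariant exactly when $\tilde s_3=0$, whereas for $\tilde s_3=\frac12$ only $2s_2$ survives. An extra invariant `defined only when $\tilde s_3=\frac12$' would refine the other half. That gives the same count $18$, but the mirror image of the statement. I also know of no invariant $\bar s_2$ in \cite{KS98}, which is a one-page correction to the diffeomorphism classification.

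The missing ingredients are the following:
\begin{itemize}
\item Kruggel's theorem \cite[Theorem 0.1]{Kr98}: the complete invariant is $2s_2$ if $\pi_4(M)=0$, and $s_2$ if $\pi_4(M)\cong\Z/2$.
\item The identification of $\pi_4(M)$ by $\tilde s_3$ (Proposition~\ref{P:pi4}).
\end{itemize}
The paper proves the identification in two steps. For $\tilde s_3=\frac12$, it realises all twelve homeomorphism types as circle bundles over the linear $S^2$-bundles $N_{(-1,-1,\alpha)}$ over $\C P^2$ with $\alpha$ odd. These bases also carry a circle bundle with total space $\mathrm{SU}(3)/S^1$, so $\pi_4(M)\cong\pi_4(\mathrm{SU}(3))=0$. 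For $\tilde s_3=0$, if $\pi_4(M)=0$ then Kruggel's theorem would make $M$ homotopy equivalent to the manifold with the same $s_1,s_2$ and $\tilde s_3=\frac12$. This contradicts the homotopy invariance of $2s_2$ and $s_3$ \cite[Corollary 4.3]{EM16}, hence of $\tilde s_3=s_3-2\cdot(2s_2)$.

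Your proposal supplies neither direction. It does not show that equal $2s_2$ suffices when $\tilde s_3=\frac12$. Nor does it show that $s_2$ (rather than only $2s_2$) and $\tilde s_3$ are homotopy invariants.

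\textbf{Part (1).} Integrality of (twisted) $\hat A$-genera of closed spin $8$-manifolds only shows that $s_1$ and $s_2$ are well defined modulo $\Z$. Under a change of $W$, $s_1$ changes by an integer, not by a multiple of $\frac1{28}$; so this does not give the denominators. What you need is the following:
\begin{itemize}
\item Since $H^3(M)=H^4(M)=0$, the map $H^4(W,M)\to H^4(W)$ is an isomorphism, so the intersection form on $H^4(W,M)$ is unimodular (modulo torsion).
\item $p_1(W)/2$ is characteristic for this form.
\item Van der Blij's lemma then gives $p_1^2/4\equiv\sigma(W)\bmod 8$, i.e.\ $28s_1\in\Z$.
\item The Wu formula gives $z^4\equiv z^2\,p_1/2\bmod 2$, i.e.\ $12s_2\in\Z$.
\end{itemize}

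More seriously, you leave as an expectation the realisation of all $24$ pairs $(s_2,\tilde s_3)$, on which the counts $672$ and $24$ depend. The paper takes these examples from \cite{Wa22}, then shifts $s_1$ by connected sums with exotic spheres, as you also propose. Your candidate families would not work as stated:
\begin{itemize}
\item A $D^3$-bundle over a $4$-manifold is $7$-dimensional and cannot serve as $W$.
\item Circle bundles over $S^2\times S^4$ never have a simply-connected total space with $b_2=1$.
\item Circle bundles over a non-trivial connected sum such as $\C P^3\#\C P^3$ only produce manifolds homeomorphic to $S^2\times S^5$ (last proposition of Section~\ref{S:further}).
\end{itemize}
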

	While the diffeomorphism and homeomorphism classification was essentially shown by Wang \cite{Wa22}, to the best of our knowledge, Theorem \ref{T:class} provides the first complete homotopy classification of spin cohomology $S^2\times S^5$'s. It makes use of work of Kruggel \cite{Kr97,Kr98} on the homotopy type of certain simply-connected 7-manifolds.
	
	\begin{remark}\label{R:class_unor}
		Reversing the orientation has the effect of multiplying the invariants $s_1$, $s_2$ and $\tilde{s}_3$ by $(-1)$. Hence, we can derive the unoriented diffeomorphism, homeomorphism and homotopy classification of spin cohomology $S^2\times S^5$s from Theorem \ref{T:class}. In particular, there are precisely $8$ diffeomorphism types, $4$ homeomorphism types and $4$ homotopy types that admit an orientation-reversing diffeomorphism, homeomorphism and homotopy equivalence, respectively. Consequently there exist $340$ diffeomorphism types, $14$ homeomorphism types and $11$ homotopy types of spin cohomology $S^2\times S^5$s.
	\end{remark}
	
	\begin{example}\label{EX:spin_cohom_S2xS5}
		\begin{enumerate}
			\item Clearly, the manifold $S^2\times S^5$ is a spin cohomology $S^2\times S^5$. Its $s$-invariants are given by
			\[ s_1(S^2\times S^5)=s_2(S^2\times S^5)=\tilde{s}_3(S^2\times S^5)=0, \]
			see e.g.\ \cite[Lemma 4]{Xu25}.
			\item Since $\pi_4(\mathrm{SO}(3))\cong\Z/2$, there exists a unique non-trivial linear $S^2$-bundle over $S^5$, which we denote by $S^5\ttimes S^2\to S^5$. By applying the Gysin sequence, it can be seen that the total space $S^5\ttimes S^2$ is a spin cohomology $S^2\times S^5$.
			
			To calculate its invariants, we first note that $S^5\ttimes S^2$ is diffeomorphic to the Aloff--Wallach space $\mathrm{SU}(3)/S^1$, where $S^1$ is embedded into $\mathrm{SU}(3)$ via
			\[ z\mapsto\begin{pmatrix}
				z & & \\ & z^{-1} & \\ & & 1
			\end{pmatrix}. \]
			Indeed, if we embedded $\mathrm{SU}(2)$ into $\mathrm{SU}(3)$ via $A\mapsto \left(\begin{smallmatrix}
				A &\\ & 1
			\end{smallmatrix}\right)$ and identify $\mathrm{SU}(2)\cong S^3$, the homogeneous fibration
			\[ S^3\cong \mathrm{SU}(2)\hookrightarrow\mathrm{SU}(3)\to\bigslant{\mathrm{SU}(3)}{\mathrm{SU}(2)}\cong S^5 \]
			is a principal $S^3$-bundle over $S^5$. Hence, we obtain a linear $S^2$-bundle over $S^5$ by taking the quotient of the fibre $\mathrm{SU}(2)$ by the circle subgroup
			\[ z\mapsto  \begin{pmatrix}
				z &\\&z^{-1}
			\end{pmatrix}. \]
			
			This shows that $\mathrm{SU}(3)/S^1$ is either diffeomorphic to $S^2\times S^5$ or to $S^5\ttimes S^2$. By \cite[Lemma 4.4]{KS91}\footnote{Note that there is a typo in the formula for $s_3$ in \cite[Lemma 4.4]{KS91}. The correct expression is $s_3(N_{k,l})=\frac{-4P+NS}{6N}$.}, its $s$-invariants are given as follows:
			\[ s_1(\mathrm{SU}(3)/S^1)=s_2(\mathrm{SU}(3)/S^1)=0,\quad \tilde{s}_3(\mathrm{SU}(3)/S^1)=\frac{1}{2}. \]
			Hence, $\mathrm{SU}(3)/S^1$ is not diffeomorphic to $S^2\times S^5$ and consequently it is diffeomorphic to $S^5\ttimes S^2$. In particular,
			\[ s_1(S^5\ttimes S^2)=s_2(S^5\ttimes S^2)=0,\quad \tilde{s}_3(S^5\ttimes S^2)=\frac{1}{2}. \]
			\item Let $\Theta_7$ be the group of orientation-preserving diffeomorphism types of homotopy $7$-spheres. In \cite{EK62} an isomorphism
			\[\mu\colon\Theta_7\to\bigslant{\left(\frac{1}{28}\Z\right)}{\Z}\]
			of groups is constructed. If $\Sigma_r$ denotes the homotopy $7$-sphere with $\mu(\Sigma_r)=\frac{r}{28}$, then we have
			\[ s_1(M\#\Sigma_r)=s_1(M)+\frac{r}{28} \]
			for any spin cohomology $S^2\times S^5$ $M$ since the invariants $\mu$ and $s_1$ coincide for both homotopy $7$-spheres and spin cohomology $S^2\times S^5$s, see \cite[Section 3]{KS88} and \cite[Lemma 4]{Xu25}. Further, since $M$ and $M\#\Sigma_r$ are homeomorphic, we have
			\[ s_2(M\#\Sigma_r)=s_2(M),\quad \tilde{s}_3(M\#\Sigma_r)=\tilde{s}_3(M). \]
		\end{enumerate}
	\end{example}
	
	\subsection{The diffeomorphism and homeomorphism classifications}
	
	Items (1), (2a) and (2b) of Theorem~\ref{T:class} follow from the following.
	
	\begin{proposition}\label{P:s_i_values}
		For a spin cohomology $S^2\times S^5$, the invariants $s_1$, $s_2$, $\tilde{s}_3$ take values in $(\frac{1}{28}\Z)/\Z$, $(\frac{1}{12}\Z)/\Z$ and $(\frac{1}{2}\Z)/\Z$, respectively, and all values can be realised. %In particular, there are precisely $28\cdot 12\cdot 2=672$ oriented diffeomorphism types and $12\cdot 2=24$ oriented homeomorphism types of spin cohomology $S^2\times S^5$s.
	\end{proposition}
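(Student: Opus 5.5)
The plan has two parts: first bound the values of the invariants using the Kreck--Stolz formulas, then realise every admissible value by explicit manifolds.

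\emph{Constraints on the values.} Recall the definitions. Choose a spin coboundary $W$ of $M$ such that the generator $u\in H^2(M)$ extends to a class $z\in H^2(W)$. Kreck--Stolz express $s_1,s_2,s_3$ as $\Q/\Z$-reductions of characteristic numbers of $(W,z)$:
\begin{itemize}
\item $s_1$ is built from $p_1^2$ and the signature;
\item $s_2$ is built from $z^2p_1$ and $z^4$;
\item $s_3$ is built from $z^2p_1$ and $z^4$ with different coefficients.
\end{itemize}
In each case one uses the relative lifts, which exist because $H^4(M)\cong\Z/1$ is torsion-free of rank zero, i.e.\ $H^4(M)=0$. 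Since $H^4(M;\Q)=0$, the relative classes are honest integral classes in $H^4(W,M)$, up to the rational corrections built into the definitions. Concretely, I would:
\begin{enumerate}
\item Substitute the standard integrality relations on a closed spin $8$-manifold into these formulas: the $\hat A$-genus integrality $(7p_1^2-4p_2)/5760\in\Z$ combined with the signature formula, and the twisted indices $\hat A(W;e^{z})\in\Z$ and $\hat A(W;e^{2z})\in\Z$ (more precisely, their relative versions via the Atiyah--Patodi--Singer correction, which is what makes $s_i$ well defined).
\item Read off that $28s_1$, $12s_2$ and $2\tilde s_3$ are integers. For $s_1$ this is exactly the Eells--Kuiper computation giving $\tfrac1{28}\Z$. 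For $s_2$ one gets $\tfrac{1}{24}(z^4+\dots)$ combined with the spin integrality of $\hat A(e^{z})$, leaving denominator $12$. For $\tilde s_3=s_3-4s_2$, the subtraction cancels the $z^2p_1$ term up to a multiple of $\tfrac12$.
\end{enumerate}
The main obstacle is getting the precise arithmetic right; I would follow \cite[Section 2.2]{Xu25} and \cite{Wa22}, where these formulas are written out for cohomology $S^2\times S^5$'s.

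\emph{Realisation.} By Example~\ref{EX:spin_cohom_S2xS5}(3), connected sum with $\Sigma_r$ realises every value of $s_1$ without changing $(s_2,\tilde s_3)$. So it suffices to realise all $24$ pairs $(s_2,\tilde s_3)$. Taking the fibrewise sum with $S^5\ttimes S^2$ is one way to shift $\tilde s_3$ by $\tfrac12$, but additivity is not clear, so I would instead produce explicit families. The natural candidates are total spaces of principal circle bundles over simply-connected spin $6$-manifolds $N$ with $b_2=2$, $b_3=0$, for instance $N$ with invariants of the form $(\Z^2,\mu,p,0,0)$ as allowed by Theorem~\ref{T:Jupp} and~\eqref{EQ:Wall}. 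Choose the Euler class $e$ primitive with $H^*(M)$ of the right form; this requires $e\smile x$ to generate $H^4(N)$ appropriately, hence a condition on $\mu$. Equivalently one can use Wang's examples \cite{Wa22}: $S^2$-bundles and $S^1$-quotients with a cobounding disc bundle $W$ (a $D^2$-bundle over $N$, or a linear $D^3$-bundle over an appropriate $5$- or $4$-manifold). There the characteristic numbers are explicit functions of integer parameters, e.g.\ $\mu(a,a,b)$ and $p(b)$. Along such a family, $s_2$ is linear in one parameter with slope $\tfrac1{12}$ modulo $\Z$, and $\tilde s_3$ can be toggled by the parity of another. Combining this with the two base points $S^2\times S^5$ and $S^5\ttimes S^2$ from Example~\ref{EX:spin_cohom_S2xS5} covers all $12\cdot 2$ pairs.

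The step I expect to be hardest is exhibiting a family in which $s_2$ attains all twelve residues. My first attempt would be the circle bundles over $N$ with $\mu=0$ except $\mu(a,a,b)=1$ and $p(a)=0$, $p(b)=4k$ (subject to~\eqref{EQ:Wall}), with Euler class $b$. I would then check the resulting $s_2=\pm k/12$ directly from the $D^2$-bundle cobordism.
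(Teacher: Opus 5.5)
\textbf{The realisation step has a genuine gap.} You propose to realise all $24$ pairs $(s_2,\tilde s_3)$ by total spaces of principal circle bundles over simply-connected (spin) $6$-manifolds with $b_2=2$, $b_3=0$, and you call Wang's examples ``equivalent'' to these. This route cannot succeed. By Lemma~\ref{L:s_inv_nec}, such a total space has $(s_2,\tilde s_3)\in\{(\tfrac j3,0),(\tfrac j{12},\tfrac12)\}$ if the base is spin, and $(s_2,\tilde s_3)=(\tfrac j6,0)$ if it is non-spin. So you get at most $15$ pairs over spin bases and $18$ in total. The six homeomorphism types with $(s_2,\tilde s_3)=(\tfrac{2j+1}{12},0)$ admit no free circle action at all; this is exactly the ``$18$ out of $24$'' in Theorem~\ref{T:free_circle}. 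Your heuristic that $s_2$ moves with slope $\tfrac1{12}$ while $\tilde s_3$ is toggled independently is therefore false for circle bundles, and Wang's examples are not equivalent to them. The paper's proof simply cites \cite[Section~5]{Wa22} for examples realising every pair, and then uses $M\#\Sigma_r$ for $s_1$, as you do.

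Your concrete first attempt also fails on its own terms. Condition \eqref{EQ:Wall} at $x=b$ forces $4k\equiv 0\bmod 24$, while at $x=a+b$ it forces $12\equiv 4k\bmod 24$. So no spin $N$ with that trilinear form exists; the spin parity relation $\mu(x,x,y)\equiv\mu(x,y,y)\bmod 2$ fails. Even formally, $\det(N,b)=\det\left(\begin{smallmatrix}1&0\\0&0\end{smallmatrix}\right)=0$, so by the Gysin sequence the total space has $H^4\cong\Z$ and is not a cohomology $S^2\times S^5$.

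For the bounds on the values, your route differs from the paper's, which never computes denominators. The paper combines Wang's upper bound of $672$ oriented diffeomorphism types with the $672$ distinct triples it realises. A direct computation is a fine alternative, but you name the wrong input. Integrality of $\hat A(X)$, $\hat A(X;e^{z})$, $\hat A(X;e^{2z})$ for closed spin $X$ (or its APS version) only shows that the $s_i$ are well defined modulo $\Z$. It says nothing about denominators, since $W$ has boundary. What works is the following:
\begin{itemize}
\item Choose $W$ spin, which is possible since $\Omega_7^{\mathrm{Spin}}(\C P^\infty)=0$.
\item Since $H^3(M)=H^4(M)=0$, the map $H^4(W,M)\to H^4(W)$ is an isomorphism. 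Hence the intersection form on $H^4(W,M)$ is unimodular modulo torsion.
\item The class $\tfrac{p_1}{2}$ reduces to $w_4=v_4$, so it is characteristic for this form.
\item Van der Blij then gives $p_1^2\equiv 4\sigma(W)\bmod 32$, i.e.\ $28s_1\in\Z$.
\item The Wu relation $z^2\cdot\tfrac{p_1}{2}\equiv z^4\bmod 2$ gives $12s_2=\tfrac{2z^4-z^2p_1}{4}\in\Z$.
\item In $s_3-4s_2$ the $z^2p_1$ terms cancel exactly, leaving $\tilde s_3=\tfrac{z^4}{2}$, so $2\tilde s_3\in\Z$.
\end{itemize}
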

	\begin{proof}
		It is shown in \cite[Section 5]{Wa22} that there are at most $672$ oriented diffeomorphism types (resp.\ $24$ oriented homeomorphism types) of spin cohomology $S^2\times S^5$s. Further, explicit examples are constructed where $s_2$ can take all possible values in $(\frac{1}{12}\Z)/\Z$, and $\tilde{s}_3$ can take all possible values in $(\frac{1}{2}\Z)/\Z$, while $s_1$ takes values in $(\frac{1}{28}\Z)/\Z$. %This already shows that there are precisely $24$ oriented homeomorphism types.
		
		Finally, we can realise all values of $(\frac{1}{28}\Z)/\Z$ for $s_1$ since, if $M$ is an arbitrary spin cohomology $S^2\times S^5$, we have 
		\[ s_1(M\#\Sigma_r)=s_1(M)+\frac{r}{28}, \quad s_2(M\#\Sigma_r)=s_2(M)\quad  \text{and}\quad \tilde{s}_3(M\#\Sigma_r)=\tilde{s}_3(M) \]
		by (3) of Example \ref{EX:spin_cohom_S2xS5}. Thus, these examples cover all possible values of $(\frac{1}{28}\Z)/\Z$, $(\frac{1}{12}\Z)/\Z$ and $(\frac{1}{2}\Z)/\Z$ for $s_1$, $s_2$ and $\tilde{s}_3$, respectively. In particular, since there exist at most $28\cdot 12\cdot 2=672$ oriented diffeomorphism types of spin cohomology $S^2\times S^5$s, these examples cover all oriented diffeomorphism types.
		%Finally, there are $28$ oriented diffeomorphism types for each oriented homeomorphism type, since $s_1(M\#\Sigma_r)=s_1(M)+\frac{r}{28}$, $s_2(M\#\Sigma_r)=s_2(M)$ and $\tilde{s}_3(M\#\Sigma_r)=\tilde{s}_3(M)$ by (3) of Example \ref{EX:spin_cohom_S2xS5}. This also shows that $s_1$ takes all values in $(\frac{1}{28}\Z)/\Z$.
	\end{proof}
	\begin{proof}[Proof of Items (1), (2a) and (2b) of Theorem \ref{T:class}]
		Part (1) is precisely the content of Proposition \ref{P:s_i_values}. The diffeomorphism classification then follows from the fact that, by \cite[Theorem 1]{KS98}, the invariants $s_1,s_2,\tilde{s}_3$ uniquely determine the oriented diffeomorphism type. Similarly, for the homeomorphism classification, we use that, by \cite[Theorem 1]{KS98}, the invariants $28s_1,s_2,\tilde{s}_3$ uniquely determine the oriented homeomorphism type. Since $s_1$ takes values in $(\frac{1}{28}\Z)/\Z$, we have $28s_1=0$ and hence it suffices to consider the invariants $s_2$ and $\tilde{s}_3$.
	\end{proof}
	
	\subsection{The homotopy classification}
	
	To prove item (2c) of Theorem \ref{T:class}, we will use the following:
	\begin{theorem}[{\cite[Theorem 0.1]{Kr98}}]\label{T:hom_class}
		Let $M$ and $M'$ be two spin cohomology $S^2\times S^5$s.
		\begin{enumerate}
			\item If $\pi_4(M)=\pi_4(M')=0$, then $M$ and $M'$ are orientation-preserving homotopy equivalent if and only if $2s_2(M)=2s_2(M')$.
			\item If $\pi_4(M)\cong\pi_4(M')\cong\Z/2$, then $M$ and $M'$ are orientation-preserving homotopy equivalent if and only if $s_2(M)=s_2(M')$.
		\end{enumerate}
	\end{theorem}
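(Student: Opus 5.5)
Since this is Kruggel's result specialised to our class, I would not reprove the homotopy classification from scratch. Instead I would verify that spin cohomology $S^2\times S^5$s satisfy the hypotheses of \cite[Theorem 0.1]{Kr98} and translate Kruggel's invariants into the Kreck--Stolz invariant $s_2$.

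\emph{Hypotheses.} Kruggel works with closed, simply-connected $7$-manifolds $M$ such that:
\begin{itemize}
\item $H^2(M)\cong\Z$ is generated by some $u$;
\item $H^3(M)=0$;
\item $H^4(M)$ is finite cyclic and generated by $u^2$;
\item $M$ is spin.
\end{itemize}
For a spin cohomology $S^2\times S^5$ we have $u^2=0$ and $H^4(M)=0$, so this is the degenerate order-one case. The other hypotheses hold by definition.

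\emph{Cell structure and the $\pi_4$ dichotomy.} Up to homotopy, $M$ is built from $S^2$ by attaching cells in dimensions $5$ and $7$ only. The $5$-cell is attached by an element of $\pi_4(S^2)\cong\Z/2$. Using $\pi_4(S^2)\cong\pi_4(S^3)$ (via the Hopf fibration), the $5$-skeleton is either $S^2\vee S^5$ or the complex $S^2\cup_{\eta^2}e^5$. Cellular approximation and the long exact sequence of the pair then give $\pi_4(M)\cong\Z/2$ in the first case and $\pi_4(M)=0$ in the second. This is exactly the dichotomy in the statement. Kruggel's classification of the remaining $7$-cell attaching data, modulo self-equivalences of the $5$-skeleton, then yields a single $\Q/\Z$-valued invariant: it is a full invariant when $\pi_4(M)\cong\Z/2$, and it is determined only up to the extra indeterminacy $\tfrac12$ when $\pi_4(M)=0$.

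\emph{Identification with $s_2$.} Kruggel's invariant is defined via a spin coboundary $W$ with $u$ extended over $W$, using $\hat A$-type characteristic numbers. I would compare it with the Kreck--Stolz formula for $s_2(M)$, which is built from $\langle \hat A(W)\,\mathrm{ch}(\lambda(z)-1),[W,M]\rangle$ for the line bundle $\lambda(z)$ associated to the extended class $z$. I would then show that the two expressions agree in $\Q/\Z$, up to an overall sign fixed by the orientation convention.

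\emph{Main obstacle.} I expect the hardest step to be matching conventions: Kruggel's normalisations and orientation choices against those of Kreck--Stolz, and checking that in the $\pi_4(M)=0$ case the extra self-equivalence of $S^2\cup_{\eta^2}e^5$ changes the invariant by exactly $\tfrac12$. Once these are settled, part (1) says that the invariant is well-defined only modulo $\tfrac12$, i.e.\ that $2s_2$ is the complete invariant. Part (2) says that no such ambiguity arises, so $s_2$ itself is the complete invariant.
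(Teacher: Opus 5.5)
Your plan is the same as the paper's: the paper gives no proof and simply invokes \cite[Theorem 0.1]{Kr98} for spin cohomology $S^2\times S^5$s. Like you, it treats these as the order-one case $r=1$ of Kreck--Stolz type, where $H^4(M)=0$ and any linking-form condition is vacuous. Kruggel's theorem is already stated in terms of the Kreck--Stolz invariant $s_2$, as the title of \cite{Kr98} indicates, so the ``identification with $s_2$'' and convention-matching step you single out as the main obstacle is not actually needed.
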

	Hence, it remains to determine the fourth homotopy group of each homeomorphism type. We will show that it is determined by $\tilde{s}_3$.
	\begin{proposition}\label{P:pi4}
		Let $M$ be a spin cohomology $S^2\times S^5$.
		\begin{enumerate}
			\item If $\tilde{s}_3(M)=0$, then $\pi_4(M)\cong\Z/2$.
			\item If $\tilde{s}_3(M)=\frac{1}{2}$, then $\pi_4(M)=0$.
		\end{enumerate}
	\end{proposition}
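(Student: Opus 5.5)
Since $\pi_4$ is a homotopy invariant, and since by Theorem~\ref{T:class}(2b) the oriented homeomorphism type is determined by $(s_2,\tilde{s}_3)$, it suffices to show that $\pi_4(M)$ depends only on $\tilde{s}_3(M)$. Then we compute it on one example for each value of $\tilde{s}_3$. For the examples I take $S^2\times S^5$ (with $\tilde{s}_3=0$) and $S^5\ttimes S^2$ (with $\tilde{s}_3=\frac12$), from Example~\ref{EX:spin_cohom_S2xS5}.

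\textbf{The two examples.} For $S^2\times S^5$ we have $\pi_4=\pi_4(S^2)\oplus\pi_4(S^5)=\pi_4(S^2)\cong\Z/2$. For $S^5\ttimes S^2\cong\mathrm{SU}(3)/S^1$, the fibration $S^1\to\mathrm{SU}(3)\to\mathrm{SU}(3)/S^1$ gives $\pi_4(\mathrm{SU}(3)/S^1)\cong\pi_4(\mathrm{SU}(3))=0$; this is the argument of Lemma~\ref{L:princ_circ_hom}. Alternatively, the long exact sequence of $S^2\to S^5\ttimes S^2\to S^5$ reads
\[\pi_5(S^5)\xrightarrow{\partial}\pi_4(S^2)\to\pi_4(M)\to \pi_4(S^5)=0.\]
Here $\partial$ is the image of the clutching class in $\pi_4(\mathrm{SO}(3))\cong\Z/2$ under the map to $\pi_4(S^2)$, and this map is an isomorphism, so $\partial$ is onto.

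\textbf{Reduction to homotopy invariants.} To handle the remaining homeomorphism types, I must show that $\pi_4(M)$ is constant on each $\tilde{s}_3$-class. The most direct route is to relate $\pi_4$ to a homotopy-theoretic invariant. Let $x$ generate $H^2(M)$, and let $f\colon M\to\C P^\infty$ classify $x$. Its homotopy fibre is the principal circle bundle $P\to M$ with Euler class $x$. Then $P$ is $2$-connected with $H^*(P)$ that of $S^3\times S^4$-like, and $\pi_4(M)\cong\pi_4(P)$. The Gysin sequence shows that $H^4(P)$ is generated from $H^4(M)=\Z x^2$ modulo $x\cdot H^2$, which gives trivial torsion and rank behaviour comparable to $S^2\times S^5\to$ its $S^1$-cover $S^3\times S^5$... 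This route is getting unwieldy. So instead I would use the $4$-skeleton directly: $M$ has a minimal cell structure $S^2\cup e^4\cup e^5\cup e^7$. Then $\pi_4(M)$ is $\pi_4$ of the $5$-skeleton $X=S^2\cup_{\alpha} e^4\cup_\beta e^5$ with $\alpha=\eta$ (cup square generator, so the Hopf invariant is $1$). Hence $S^2\cup_\eta e^4=\C P^2$, and $\pi_4(\C P^2)=0$, $\pi_5(\C P^2)=\pi_5(S^5)=\Z$. The $5$-cell is attached by $\beta\in\pi_4(\C P^2)=0$, so $X\simeq\C P^2\vee S^5$ as far as homotopy type goes. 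This would force $\pi_4=0$ always, which is wrong. The error is that the attaching map of the $4$-cell need not be $\eta$: $H^4$ is generated by $x^2$ but $H^5=\Z$ too, and the $4$-cell attaching map is $k\eta$ with cup square $x^2=\pm$ generator, forcing $k=\pm1$... Since $S^2\times S^5$ contradicts this, the minimal cell structure must instead be $S^2\cup e^5\cup e^7$, because $H^4(M)=0$ for a cohomology $S^2\times S^5$. I had mistakenly imported the Kreck--Stolz setting. Redoing it: the $5$-skeleton is $S^2\cup_\gamma e^5$ with $\gamma\in\pi_4(S^2)\cong\Z/2$, and $\pi_4(M)=\pi_4(S^2)/\langle\gamma\rangle$. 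So $\pi_4(M)\cong\Z/2$ iff $\gamma=0$, and $\pi_4(M)=0$ iff $\gamma=\eta^2$.

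\textbf{Main obstacle.} The crux is showing that $\gamma$ is detected by $\tilde{s}_3$, which is defined via characteristic numbers of a coboundary. My plan is to identify $\gamma$ with a stable invariant: $\eta^2$ is detected by $Sq^2Sq^2$ (a secondary operation on $\Sigma$), or equivalently by $w_4$ / the spin characteristic class $\frac{p_1}{2}$ mod $2$ of a suitable coboundary. Then I would match this with the formula for $\tilde{s}_3=s_3-4s_2$ mod $\Z$, in which the $\hat{A}$-twisted contributions cancel except for a term like $\frac12\langle z^2\frac{p_1}{2},[W]\rangle$. If that identification proves hard, the fallback is homotopy invariance. Since $\gamma$ is a homotopy invariant, and since by Theorem~\ref{T:hom_class} together with the existence of realisations (Wang's examples), I would instead compute $\pi_4$ for Wang's explicit family realising each $(s_2,\tilde{s}_3)$: for instance linear $S^2$-bundles over $S^5$ twisted by connected sum with $S^2\times S^5$-type surgeries, or circle quotients whose $\pi_4$ follows from Lemma~\ref{L:princ_circ_hom}. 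That would settle the claim case by case.
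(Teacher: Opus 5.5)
Your reduction is correct once the abandoned false start is removed. Since $H^4(M)=0$ and $H_*(M)$ is torsion-free, $M\simeq S^2\cup_\gamma e^5\cup e^7$ with $\gamma\in\pi_4(S^2)\cong\Z/2$, so $\pi_4(M)\cong\pi_4(S^2)/\langle\gamma\rangle$. Together with your correct computations for $S^2\times S^5$ and $S^5\ttimes S^2$, this gives $\pi_4(M)\in\{0,\Z/2\}$ and settles $2$ of the $24$ oriented homeomorphism types.

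\textbf{The gap.} The actual content of the proposition, namely that $\gamma$ is determined by $\tilde{s}_3$ for all $24$ types, is never proved. Both of your routes are only intentions.
\begin{itemize}
\item In the first route there is no argument linking $\gamma$ to a characteristic number of a coboundary. Note that $\gamma$ is seen by no primary operation, since $Sq^3$ vanishes on $H^2$; only a secondary operation detects $\eta^2$. The formula you guess is also off. By the Kreck--Stolz formulas for spin $W$, the $z^2p_1$ terms cancel exactly in $s_3-4s_2$, so $\tilde{s}_3=\frac12\langle z^4,[W,M]\rangle$. Showing that this mod $2$ number detects $\gamma$ would be a real theorem, not bookkeeping.
\item The fallback cannot work as stated either. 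Circle quotients only help if you know $\pi_4$ of the base, and the ``Wang examples'' are never specified. More seriously, the six types with $\tilde{s}_3=0$ and $12s_2$ odd admit no free circle action at all by Theorem \ref{T:free_circle_s}, so Lemma \ref{L:princ_circ_hom} gives nothing there.
\end{itemize}

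\textbf{What the paper does instead.} It closes the gap with two ideas you are missing.
\begin{itemize}
\item For $\tilde{s}_3=\frac12$: by Lemma \ref{L:princ_circ_hom}, $\pi_4$ of a circle bundle depends only on the base. The base $N_{(-1,-1,\alpha)}$ with $\alpha$ odd also carries a circle bundle with total space $S^5\ttimes S^2$ (Lemma \ref{L:s_inv_S2-bdls}). Hence every circle bundle over it has $\pi_4\cong\pi_4(\mathrm{SU}(3))=0$. The tables realise all $12$ types with $\tilde{s}_3=\frac12$ over such bases.
\item For $\tilde{s}_3=0$, no realisation is needed. Suppose $\pi_4(M)=0$. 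Let $M'$ be the manifold with invariants $(s_1(M),s_2(M),\frac12)$; then $\pi_4(M')=0$ by the first case. Theorem \ref{T:hom_class} therefore makes $M$ orientation-preserving homotopy equivalent to $M'$. This contradicts the homotopy invariance of $s_3$ and $2s_2$ (Escher--Montagantirud \cite{EM16}), hence of $\tilde{s}_3=s_3-2(2s_2)$.
\end{itemize}

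Your dichotomy $\pi_4(M)\in\{0,\Z/2\}$ is exactly the input this contradiction argument needs. So that part of your work is a useful ingredient, but it is not yet a proof.
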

	Proposition \ref{P:pi4} together with Theorem \ref{T:hom_class} directly implies item (3) of Theorem \ref{T:class}. It therefore remains to prove Proposition \ref{P:pi4}. Since its proof requires the results of Section \ref{S:circ_bund}, we postpone it to Subsection \ref{SS:pi4} below.

	\section{Principal circle bundles}\label{S:circ_bund}
	
	In this section, we prove the following two theorems, which directly imply Theorems \ref{T:free_circle} and \ref{T:Ric>0}.
	
	\begin{theorem}\label{T:free_circle_s}
		Let $M$ be a spin cohomology $S^2\times S^5$.
		\begin{enumerate}
			\item $M$ admits a free circle action with spin quotient space if and only if
			\begin{align*}
				(s_1(M),s_2(M),\tilde{s}_3(M))=\left(\tfrac{i}{28}, \tfrac{j}{3},0 \right)\quad\text{or}\quad &(s_1(M),s_2(M),\tilde{s}_3(M))=\left(\tfrac{i}{28}, \tfrac{j}{12},\tfrac{1}{2} \right). %(s_1(M),s_2(M),\tilde{s}_3(M))=\left(\tfrac{i}{28}, \tfrac{1+2j}{12},0 \right)\\
				%\quad\text{or}\quad & (s_1(M),s_2(M),\tilde{s}_3(M))=\left(\tfrac{i}{28}, \tfrac{j}{6},\tfrac{1}{2} \right).
			\end{align*}
			\item $M$ admits a free circle action with non-spin quotient space if and only if
			\begin{align*}
				(s_1(M),s_2(M),\tilde{s}_3(M))=\left(\tfrac{i}{14}, \tfrac{j}{6},0 \right).
			\end{align*}
			\item Whenever $M$ admits a free circle action, it admits infinitely many pairwise non-equivalent free circle actions.
		\end{enumerate}
	\end{theorem}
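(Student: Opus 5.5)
By Proposition \ref{P:action=bdl}, a free circle action on a spin cohomology $S^2\times S^5$ $M$ is the same as a principal circle bundle $M\to N$ over a closed $6$-manifold $N$. So the plan has three parts: determine all possible orbit spaces $N$ together with their Euler classes $e$; compute $(s_1,s_2,\tilde s_3)$ of the total spaces; and compare the resulting sets of values with Theorem \ref{T:class}.

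\emph{Step 1: the possible orbit spaces.} Following \cite{Xu25}, I first pin down $N$. The long exact homotopy sequence shows that $N$ is simply connected and that $\pi_2(N)\cong\pi_2(M)\oplus\Z$. The Gysin sequence of $M\to N$ gives more. The map $H^0(N)\to H^2(N)$, $1\mapsto e$, must have free cokernel of rank $1$ with $e$ primitive, because $H^2(M)\cong\Z$ and $H^1(M)=0$. One also gets $H^3(N)=0$, $H^4(N)\cong\Z^2$, and that cup product with $e$ is injective in the relevant degrees. The outcome is that $N$ has torsion-free homology with $b_2=2$ and $b_3=0$. By Theorem \ref{T:Jupp}, $N$ is then determined by a symmetric trilinear form $\mu$ on $\Z^2$, a linear form $p$, and a class $w\in H^2(N;\Z/2)$, subject to \eqref{EQ:Jupp}.

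The cohomology-ring condition on $M$ becomes algebraic conditions on $(\mu,e)$. In a basis $(e,x)$ of $H^2(N)$, $H^4(M)\cong H^4(N)/(e\smile H^2(N))$ has to be $\Z$, generated by the square of the generator $\pi^*x$. Moreover, $M$ is spin exactly when $w_2(N)\equiv 0$ or $w_2(N)\equiv e \bmod 2$. This gives a two-parameter or three-parameter family of admissible data: the coefficients $\mu(e,e,e),\mu(e,e,x),\mu(e,x,x),\mu(x,x,x)$, the values $p(e),p(x)$, and the spin type.

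\emph{Step 2: the $s$-invariants.} Next I compute $s_1,s_2,s_3$ for each admissible datum. The natural coboundary is the disc bundle $W=D^2\times_{S^1} M\to N$, a $D^2$-bundle with Euler class $e$ and $\partial W=M$. Its cohomology is $H^*(N)$, with $p_1(W)=\pi^*(p_1(N)+e^2)$ and $w_2(W)=\pi^*(w_2(N)+e)$. The class $z$ needed in the Kreck--Stolz formulas restricts to a generator of $H^2(M)$, so it can be taken as $x$ pulled back.

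The characteristic numbers $z^4$, $z^2p_1$, $p_1^2$ and the signature are computed on $(W,\partial W)$ by the usual integration-over-the-fibre trick: a number on $W$ relative to its boundary becomes a number on $N$ after dividing the integrand by $e$. Here one has to invert $e\smile$ on $H^4$ rationally, which is where the $\mu$-coefficients enter. The signature is that of the form $(a,b)\mapsto\mu(a,b,e)$ on $H^2(N)$.

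When $w_2(N)\equiv e$, so that $W$ is not spin but $N$ is, I will use the spin$^c$/non-spin versions of the Kreck--Stolz formulas. When $w_2(N)\not\equiv 0,e$ the manifold $W$ is not spin, and I would instead replace $W$ by the disc bundle of a suitable twisted bundle, or use the $\mathbb{Z}/2$-modified formula, as in \cite{KS91}. This step yields explicit rational expressions in the integer parameters.

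\emph{Step 3: realisation and exclusion.} For the ``if'' directions, I search over the parameters by computer, subject to \eqref{EQ:Jupp}, and find data realising every triple in the stated lists. For the ``only if'' directions, I reduce the formulas modulo the appropriate denominators to show that nothing else occurs. In the spin case, $s_2\in\frac13\Z/\Z$ is forced when $\tilde s_3=0$. In the non-spin case, $s_1\in\frac1{14}\Z/\Z$, $s_2\in\frac16\Z/\Z$ and $\tilde s_3=0$ are forced. These congruences should come from integrality, namely Wall's condition \eqref{EQ:Wall} and the divisibility of $\mu(W,W,W)-p(W)$ by $48$.

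For part (3), I exhibit infinitely many distinct admissible $N$ with the same total space. Typically this is a one-parameter subfamily along which the $s$-invariants are periodic in the parameter, while invariants of $N$ change without bound, for instance $|p(e)|$ or the discriminant-type invariants of $\mu$. Distinct $N$ give non-equivalent actions by Proposition \ref{P:action=bdl}(2).

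The main obstacle I expect is the exclusion half of Step 3 in the non-spin case. There the formulas are the most involved, and proving that $s_1$ always has order dividing $14$ needs a careful number-theoretic argument rather than just a computer enumeration.
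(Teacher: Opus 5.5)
You have the same architecture as the paper: Xu's description of the orbit spaces, $s$-invariants from the disc-bundle coboundary, a computer search for realisation, and congruences for exclusion. But Step~1 misstates the cohomology-ring condition, and this deletes exactly the constraint the exclusion half depends on.

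\textbf{The ring condition.} For a cohomology $S^2\times S^5$ one has $H^4(M)=0$, and the square of the generator of $H^2(M)$ vanishes. A cokernel $H^4(M)\cong\Z$ is also incompatible with your own observation that $e\smile$ is injective from $H^2(N)\cong\Z^2$ to $H^4(N)\cong\Z^2$. What $H^3(M)=H^4(M)=0$ actually gives via the Gysin sequence is that $e\smile\colon H^2(N)\to H^4(N)$ is an isomorphism. Equivalently, the form $(x,y)\mapsto\mu_N(x,y,e)$ is unimodular: $\det(N,e)=AC-B^2=\pm1$ in a basis $(e,f)$, as in Lemma~\ref{L:N_orbit_space}.

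\textbf{Where the congruences come from.} The integrality conditions \eqref{EQ:Jupp} and \eqref{EQ:Wall} that you invoke only fix the shape of $p_1(N)$ (the parameters $u,v$). They also give mod-$2$ relations such as $B\equiv C$ for spin $N$. The divisibilities that cut the values down come from combining these with $AC-B^2=\pm1$:
\begin{itemize}
\item For spin $N$ with $AC-B^2=-1$: $B,C$ are odd and $AC=B^2-1\equiv0\bmod 8$, so $8\mid A$. Hence $s_2=A(2C^2-2BD+D^2)/24\in\frac13\Z/\Z$ when $\tilde s_3=0$.
\item For spin $N$ with $AC-B^2=1$: $AC\equiv2\bmod 8$ gives $\frac A2\equiv C\bmod 4$, which again forces $s_2\in\frac13\Z/\Z$ when $\tilde s_3=0$.
\item For non-spin $N$: the parities force $AC-B^2=-1$, hence $8\mid AC$. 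Then every term of $28s_1$ is an even integer, so $s_1\in\frac1{14}\Z/\Z$. This takes a few lines, not the delicate number theory you anticipate.
\end{itemize}
Without unimodularity, neither your exclusion argument nor the parameter space for your computer search is the right one.

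\textbf{Two smaller points.} First, in Step~2 you have the spin cases reversed. Since $w_2(W)=\pi^*(w_2(N)+e)$, the disc bundle is spin exactly when $N$ is non-spin ($w_2(N)\equiv e$). For spin $N$ one has $w_2(W)=\pi^*e\neq0$. The case $w_2(N)\not\equiv0,e$ does not occur, by the spin condition on $M$ you derived yourself. The paper simply imports the resulting formulas, Lemma~\ref{L:s-inv}, from \cite{Xu25}.

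Second, your plan for part~(3) is sound and can be made simpler than the paper's. In the parametrisation $(A,B,C,D,u,v)$ of Lemma~\ref{L:N_orbit_space}, $u$ is unconstrained, and the formulas of Lemma~\ref{L:s-inv} are polynomials in $u$ with bounded denominators. So $u\mapsto u+Tm$ for suitable $T$ fixes $(s_1,s_2,\tilde s_3)$, while $|p_1(N)(e)|$, an invariant of the pair $(N,e)$, becomes unbounded. The paper instead uses Lemma~\ref{L:number_theory} together with the finiteness result of \cite{Re24b}. It does so because Theorem~\ref{T:Ric>0_s} needs the infinitely many quotients to stay inside the Ricci-positive families $N_A$ and $N_{(\alpha,0)}$.
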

	
	\begin{theorem}\label{T:Ric>0_s}
		Let $M$ be a spin cohomology $S^2\times S^5$ whose $s$-invariants are of the form
		\begin{align*}
			(s_1(M),s_2(M),\tilde{s}_3(M))&= \left(\tfrac{i}{28}, \tfrac{j}{3},0 \right)\quad\text{or}\\
			(s_1(M),s_2(M),\tilde{s}_3(M))&= \left(\tfrac{1+2i}{14}, \tfrac{1+2j}{6},0 \right)\quad\text{or}\\
			%(s_1(M),s_2(M),\tilde{s}_3(M))&= \left(\tfrac{i}{28}, \tfrac{1+2j}{12},0 \right)\quad\text{or}\\
			(s_1(M),s_2(M),\tilde{s}_3(M))&= \left(\tfrac{i}{28}, \tfrac{j}{12},\tfrac{1}{2} \right).%(s_1(M),s_2(M),\tilde{s}_3(M))&= \left(\tfrac{i}{28}, \tfrac{j}{6},\tfrac{1}{2} \right).
		\end{align*}
		Then $M$ admits infinitely many pairwise non-equivalent free circle actions and for each of these actions an invariant Riemannian metric of positive Ricci curvature.
	\end{theorem}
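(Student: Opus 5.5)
The plan is to realise each listed $M$ as the total space of a principal circle bundle over a closed, simply-connected $6$-manifold $N$ with torsion-free homology which admits a metric of positive Ricci curvature. Theorem \ref{T:Ric>0_bdl} then gives an invariant metric of positive Ricci curvature; $\pi_1(M)=0$ is finite, so the hypothesis is satisfied. For the base manifolds I take $N=\#_{k}(S^2\times S^4)\#\C P^3$-type connected sums, or more generally the manifolds $N$ with $b_3(N)=0$, $b_2(N)=2$ for which positive Ricci curvature is established in \cite{Re24b}. Note that $b_2(M)=1$ and $b_3(M)=0$ force $b_2(N)=2$, and the Gysin sequence then requires a primitive Euler class $e\in H^2(N)$ such that cup product with $e$ maps $H^2(N)$ onto $H^4(N)$ appropriately.

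First, following the description of possible orbit spaces (as in \cite{Xu25}), I would fix a system of invariants $(\Z^2,\mu,p,w,0)$ and a primitive $e$ satisfying the Gysin conditions, so that $M_e$ is a cohomology $S^2\times S^5$. The spin condition on $M_e$ means $w\equiv 0$ or $w\equiv e\bmod 2$. Second, I would compute $s_1,s_2,\tilde s_3$ of $M_e$. The associated disc bundle $D_e\to N$ is a spin (or suitably twisted) coboundary, and the Kreck--Stolz formulas reduce the computation to characteristic numbers on $N$, namely expressions in $\mu(e,e,e)$, $\mu(e,e,x)$, $p(e)$ and $p(x)$. This yields explicit rational polynomials in the parameters of $(\mu,p,e)$.

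Third, I would show that the realised triples cover exactly the three families listed. This uses infinite parameter families whose base $N$ is known to carry positive Ricci curvature, e.g.\ those in \cite{Re24b} covering spin $N$ and, for the second family, certain non-spin $N$. The first and third families are exactly the spin-quotient cases of Theorem \ref{T:free_circle_s}(1), so I only need that every spin $N$ arising there lies in the class treated in \cite{Re24b}. For the second family I choose non-spin bases such as connected sums with $\C P^3$-like pieces, whose $s_1$ and $s_2$ have odd numerators. As in the paper, this case check is done by computer over a finite range of parameters.

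For "infinitely many" actions, I would exhibit, for fixed $(s_1,s_2,\tilde s_3)$, infinitely many pairs $(N,e)$ giving the same invariants but with inequivalent pairs. Equivalence is tested via Proposition \ref{P:action=bdl}(2): by Theorem \ref{T:Jupp}, distinct unordered values of, e.g., $|\mu_N|$ or the divisibility of $p_1(N)$ distinguish the bases, and the $s$-invariants depend only on residues of the parameters mod $28\cdot 12$ (the $s_i$ are periodic in the parameters). Shifting a parameter by a multiple of the period keeps the invariants fixed while changing $N$. The main obstacle is ensuring that all these bases, in particular the non-spin ones needed for the second family, really lie in the class admitting positive Ricci curvature from \cite{Re24b}. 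It is exactly this restriction that cuts the non-spin case of Theorem \ref{T:free_circle_s}(2) down to the odd-numerator subfamily.
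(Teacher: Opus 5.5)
Your overall plan (realise $M$ as a circle bundle over a Ricci-positive $6$-manifold, apply Theorem \ref{T:Ric>0_bdl}, find the bases by computer search, and get infinitely many actions from periodicity) is the paper's. However, both places where the real work lies have genuine gaps.

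\emph{The bases.} Connected sums cannot serve. If an orbit space splits non-trivially as $N_1\# N_2$, the form $(x,y)\mapsto\mu_N(x,y,e)$ is diagonal in the summand basis. Then $\det(N,e)=\pm1$ forces both summands to be spin homotopy $\C P^3$s and $M$ to be homeomorphic to $S^2\times S^5$ (final proposition of Section \ref{S:further}). For example, $(S^2\times S^4)\#\C P^3$ has $\det(N,e)=0$ for every $e$. So connected sums reach only $s_2=\tilde s_3=0$, and the non-spin ``connected sums with $\C P^3$-like pieces'' you want for the second family are never orbit spaces. Since spin bases with $\tilde s_3=0$ only give $s_2\in\tfrac13\Z/\Z$ (Lemma \ref{L:s_inv_nec}), the second family genuinely needs a non-spin Ricci-positive base, and your proposal leaves exactly this open. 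The missing idea is the family of non-spin linear $S^2$-bundles over $\C P^2$, namely $N_{(\alpha,0)}$ of Lemma \ref{L:N_ab}, which are Ricci-positive by the bundle construction \cite[Theorem 2.7.3]{GW09}. A search over $\mu^2\alpha-\lambda^2=-1$ with $\lambda$ odd and $\mu$ even realises all of $(\tfrac{1+2i}{14},\tfrac{1+2j}{6},0)$ (Table \ref{TA:N_alpha}). For the spin families you do not need, and the paper does not establish, that every spin orbit space is Ricci-positive. It suffices that the plumbing family $N_A$ of Lemma \ref{L:N_A}, Ricci-positive by \cite[Theorem B]{Re23}, already realises all $420$ spin values (Tables \ref{TA:det=-1} and \ref{TA:det=1}).

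\emph{Infinitely many actions.} Shifting a parameter by a multiple of the period $T$ keeps the $s$-invariants but in general destroys unimodularity. In the parameters, unimodularity is a Diophantine condition: $\lambda\mu\alpha_1\alpha_2-\lambda(\lambda+\mu)\alpha_1\alpha_3-\mu(\lambda+\mu)\alpha_2\alpha_3=\pm1$ for $N_A$, resp.\ $\mu^2\alpha-\lambda^2=-1$ for $N_{(\alpha,0)}$. What you need is infinitely many solutions in a fixed residue class mod $T$ that stay inside the Ricci-positive family. This is Lemma \ref{L:number_theory}, which writes down explicit one-parameter families of such solutions, e.g.\ $\lambda\mapsto\lambda+\mu^2mT$, $\alpha\mapsto\alpha+(2\lambda+\mu^2mT)mT$ in the second case. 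Finally, to get non-equivalence from Proposition \ref{P:action=bdl} you need an actual diffeomorphism invariant separating the new bases. ``Unordered values of $|\mu_N|$'' is not an invariant of a trilinear form. The paper instead uses \cite[Proposition 4.12]{Re24b}: only finitely many triples give diffeomorphic $N_A$, and $N_{(\alpha,0)}\cong N_{(\alpha',0)}$ forces $\alpha'=\pm\alpha$.
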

	
	\begin{remark}\label{R:Ric>0_missing}
		By comparing Theorems \ref{T:free_circle_s} and \ref{T:Ric>0_s}, we see that the spin cohomology $S^2\times S^5$s that admit a free circle action but are not covered by Theorem \ref{T:Ric>0_s} are those with $s$-invariants
		\[ (s_1(M),s_2(M),\tilde{s}_3(M))=\left(\tfrac{i}{7},\tfrac{1+2j}{6},0\right), \]
		which consist of $7\cdot3=21$ oriented diffeomorphism types. It remains open whether these manifolds admit Riemannian metrics of positive Ricci curvature that are invariant under a free circle action, cf.\ Question \ref{Q:Ric>0_missing}.
	\end{remark}
	
	To prove Theorems \ref{T:free_circle_s} and \ref{T:Ric>0_s}, we first consider in Subsection \ref{SS:quot_space} the possible $6$-manifolds that can arise as an orbit space of a free circle action on a spin cohomology $S^2\times S^5$. We then restrict to two types of manifolds: manifolds obtained as boundaries of certain plumbings of linear disc bundles over spheres (which are spin) and linear $S^2$-bundles over $\C P^2$. These are especially useful for the proof of Theorem \ref{T:Ric>0_s} since both classes of manifolds admit Riemannian metrics of positive Ricci curvature. In Subsection \ref{SS:s-inv}, we then compute the $s$-invariants of principal circle bundles over these manifolds and finish the proofs of Theorems~\ref{T:free_circle_s} and \ref{T:Ric>0_s}.
	
	\subsection{Topology of the quotient space} \label{SS:quot_space}
	
	The following result was established in \cite{Xu25}.

	\begin{lemma}[{\cite[Lemmas 1 and 2]{Xu25}}]\label{L:N_orbit_space}
		A closed $6$-manifold $N$ is the orbit space of a free circle action on a spin cohomology $S^2\times S^5$ with corresponding Euler class $e\in H^2(N)$ if and only if the following holds:
		\begin{enumerate}
			\item $N$ is simply-connected, has torsion-free homology and the only non-zero Betti numbers are $b_0(N)=b_6(N)=1$ and $b_2(N)=b_4(N)=2$.
			\item The class $e$ can be extended to a basis $(e,f)$ of $H^2(N)$ such that for $A=\mu_N(e,e,e)$, $B=\mu_N(e,e,f)$, $C=\mu_N(e,f,f)$ we have $AC-B^2=\pm 1$.
			\item $w_2(N)=e\mod 2$ when $N$ is non-spin.
		\end{enumerate}
		In this case, we additionally know the following. Here we set $D=\mu_N(f,f,f)$.
		\begin{enumerate}
			\item[(a)] When $N$ is spin, $p_1(N)=(24u+4A)e^*+(24v+4D)f^*$ for some $u,v\in\Z$, and $A$ is even and $B,C$ are odd.
			\item[(b)] When $N$ is non-spin, $p_1(N)=(48u+A)e^*+(24v+3B+6C+4D)f^*$ for some $u,v\in\Z$, and $A,C$ are even and $B$ is odd.
		\end{enumerate}
		Conversely, any set of integers $(A,B,C,D,u,v)$ satisfying $AC-B^2=\pm1$ and the parity conditions in (a) (resp.\ (b)) defines a unique closed, simply-connected spin (resp.\ non-spin) 6-manifold $N$ with the properties (1)--(3) and (a) (resp.\ (b)).
	\end{lemma}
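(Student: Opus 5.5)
The plan is to translate each condition on $M$ into a condition on the pair $(N,e)$, using the Gysin sequence of the principal circle bundle $S^1\to M\xrightarrow{\pi}N$, the long exact homotopy sequence, and the formula $TM\oplus\underline{\R}\cong\pi^*TN\oplus\underline{\R}$. This gives $w_2(M)=\pi^*w_2(N)$ and $p_1(M)=\pi^*p_1(N)$.

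\emph{Necessity.} From the homotopy sequence, $\pi_1(N)=0$, since $\pi_1(M)=0$ and the fibre is connected. Also $\pi_2(N)\to\pi_1(S^1)=\Z$ is onto, so $e$ is primitive. Next I run the Gysin sequence
\[ \cdots\to H^{k-2}(N)\xrightarrow{\smile e}H^k(N)\xrightarrow{\pi^*}H^k(M)\to H^{k-1}(N)\xrightarrow{\smile e}\cdots \]
together with $H^*(M)\cong H^*(S^2\times S^5)$. In low degrees, $H^1(M)=0$ and $H^2(M)\cong\Z$ give $H^2(N)\cong\Z\oplus\Z$, with $e$ primitive, and $H^3(N)=0$. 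Poincaré duality and universal coefficients then force torsion-free homology with $b_2=b_4=2$ and $b_3=0$. Completing $e$ to a basis $(e,f)$, the classes $\pi^*f$ generate $H^2(M)$ and $H^4(M)\cong\Z$. The latter must be generated by $(\pi^*f)^2$, so $H^4(N)/(e\smile H^2(N))\cong\Z$ is generated by $f^2$. In the dual basis, $\smile e\colon H^2(N)\to H^4(N)\cong H^2(N)^*$ is given by the matrix $\begin{pmatrix}A&B\\B&C\end{pmatrix}$. Its cokernel is $\Z$ with $f^2$ generating exactly when $AC-B^2=\pm1$; the surjectivity bookkeeping should give precisely this. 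The spin condition on $M$ says $\pi^*w_2(N)=0$. Since $\ker\pi^*$ on $H^2(\,\cdot\,;\Z/2)$ is spanned by $e \bmod 2$, this gives (3).

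\emph{Sufficiency.} Conversely, given (1)--(3), the same Gysin computation run backwards shows that the total space $M$ is simply connected, has the cohomology ring of $S^2\times S^5$, and is spin. The ring structure holds because $(\pi^*f)^2$ generates $H^4$ and $H^2\smile H^5\to H^7$ is dual to this.

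\emph{Parity and $p_1$ constraints.} The unimodularity $AC-B^2=\pm1$ forces $B$ odd or $A,C$ of mixed parity. I then apply Jupp's condition \eqref{EQ:Jupp}. In the spin case I use $4\mu(x,x,x)\equiv p(x)\bmod 24$ at $x=e,f$, which gives the stated forms of $p_1$. Applying Wu's formula $x^2\smile w_2\equiv x^3\bmod 2$, with $w_2=0$, yields $A\equiv D\equiv 0$ on cubes and, on mixed terms, $B,C$ odd. In the non-spin case I evaluate \eqref{EQ:Jupp} at $W=e$, $W=e+2f$ and $W=-e+2f$ to extract $p(e)\equiv A\bmod 48$ and the form of $p(f)$. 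I also use the Wu relation $x^2w_2=x^3$ mod $2$ with $w_2=e$, which gives $A$ even and $C$ even, and then $B$ odd. The converse statement follows directly from the realisation part of Theorem \ref{T:Jupp}.

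The main obstacle is this last step: isolating the exact congruences (mod $48$ versus mod $24$) and checking that every integer tuple satisfying them really satisfies \eqref{EQ:Jupp} for all characteristic $W=e+2x$. That requires expanding $\mu(e+2x,e+2x,e+2x)$ and reducing using the parities.
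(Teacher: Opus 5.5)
The paper does not prove this lemma; it cites \cite[Lemmas 1 and 2]{Xu25}. Your overall architecture is the natural one: the Gysin sequence for the topology of $N$, $w_2(M)=\pi^*w_2(N)$ with $\ker\pi^*=\{0,e\bmod 2\}$ for (3), Jupp's congruence at $W=e$ and $W=e+2f$ for the shape of $p_1(N)$, and Jupp's realisation theorem for the converse. However, two steps are false as written.

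First, $H^4(S^2\times S^5)=0$, not $\Z$, and $(\pi^*f)^2=0$. Your criterion that the cokernel of $\left(\begin{smallmatrix}A&B\\B&C\end{smallmatrix}\right)$ is $\Z$ exactly when $AC-B^2=\pm1$ cannot hold. An integer $2\times 2$ matrix with determinant $\pm1$ has trivial cokernel, and its cokernel is infinite only if the determinant is $0$. The correct step is that $H^3(M)=H^4(M)=0$ makes $\smile e\colon H^2(N)\to H^4(N)\cong H^2(N)^*$ an isomorphism. In the bases $(e,f)$ and $(e^*,f^*)$ this is precisely $AC-B^2=\pm1$. For sufficiency you also need $\smile e\colon H^4(N)\to H^6(N)\cong\Z$ to be onto, which is Poincar\'e dual to primitivity of $e$; this gives $H^5(M)\cong\Z$ and $H^6(M)=0$. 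The ring structure is then automatic, since $H^4(M)=0$ and $H^2(M)\otimes H^5(M)\to H^7(M)$ is a perfect pairing. It does not come from $(\pi^*f)^2$ generating $H^4(M)$.

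Second, the Wu relation $x^2\smile w_2\equiv x^3\bmod 2$ is false: $\C P^3$ is spin with $\mu(a,a,a)=1$. Since $Sq^1$ vanishes on reductions of integral classes, Wu's formula $Sq^2=w_2\smile(\cdot)$ on $H^4(N;\Z/2)$ combined with the Cartan formula gives $w_2\,xy\equiv x^2y+xy^2\bmod 2$ for $x,y\in H^2(N)$. In particular $w_2x^2\equiv 0$.

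Consequently, your conclusion that $D$ is even in the spin case is wrong. The lemma imposes no parity on $D$; for example $(A,B,C,D,u,v)=(0,1,-1,1,0,0)$ satisfies \eqref{EQ:Wall}. In the non-spin case your relation gives nothing at $x=e$ and only $C\equiv D$ at $x=f$, not that $A$ and $C$ are even.

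With the correct relation the parities come out as stated:
\begin{itemize}
\item If $w_2(N)=0$, then $B+C$ is even, so $AC-B^2=\pm1$ forces $B,C$ odd and $A$ even.
\item If $w_2(N)=e\bmod 2$, then $x=y=e$ and $x=y=f$ give $A$ and $C$ even. Hence $B$ is odd and $AC-B^2\equiv-1\bmod 4$, so $\det(N,e)=-1$.
\end{itemize}
Alternatively, the same parities drop out of Jupp's condition at further vectors. Taking $x=e+f$ in \eqref{EQ:Wall} gives $12(B+C)\equiv 0\bmod 24$, and taking $W=3e$ and $W=-e+2f$ in \eqref{EQ:Jupp} gives $24A\equiv 24C\equiv 0\bmod 48$.

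With these parities, the converse check you single out as the main obstacle is routine:
\begin{itemize}
\item Spin case, $x=me+nf$: the discrepancy $4\mu(x,x,x)-p(x)$ reduces to $4A(m^3-m)+12mn(mB+nC)+4D(n^3-n)\equiv 0\bmod 24$, because $B\equiv C\bmod 2$.
\item Non-spin case, $W=ae+2nf$ with $a$ odd: the discrepancy reduces to $A(a^3-a)+6nB(a^2-1)+12nC(an-1)+8D(n^3-n)\equiv 0\bmod 48$, because $A$ and $C$ are even.
\end{itemize}
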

	%The class $e\in H^2(N)$ in Lemma \ref{L:N_orbit_space} is precisely the Euler class of the principal circle bundle over $N$ corresponding to the free circle action as in Proposition \ref{P:action=bdl}.
	
	Note that the value $AC-B^2$ is the determinant of the bilinear form on $H^2(N)$ defined by $(x,y)\mapsto \mu_N(x,y,e)$. It therefore does not depend on the choice of $f$, and we will denote it by $\det(N,e)$.
	
	We will focus on two specific families of $6$-manifolds. The first one consists of manifolds constructed in \cite{Re24b} that are obtained by plumbing, and the second one consists of linear $S^2$-bundles over $\C P^2$.
	
	\begin{lemma}\label{L:N_A}
		For a triple $A=(\alpha_1,\alpha_2,\alpha_3)\in\Z^3$ there exists a closed, simply-connected spin $6$-manifold $N_A$ with the following properties:
		\begin{enumerate}
			\item $N_A$ has torsion-free homology and the only non-trivial Betti numbers are $b_0(N_A)=b_6(N_A)=1$ and $b_2(N_A)=b_4(N_A)=2$.
			\item There exists a basis $(e_1,e_2)$ of $H^2(N_A)$ such that
			\begin{align*}
				\mu_{N_A}(e_1,e_1,e_1)&=\alpha_1-\alpha_3,\\
				\mu_{N_A}(e_1,e_1,e_2)&=-\alpha_3,\\
				\mu_{N_A}(e_1,e_2,e_2)&=-\alpha_3,\\
				\mu_{N_A}(e_2,e_2,e_2)&=\alpha_2-\alpha_3,\\
				p_1(N_A)&=4(\alpha_1-\alpha_3)e_1^*+4(\alpha_2-\alpha_3)e_2^*.
			\end{align*}
		\end{enumerate}
	\end{lemma}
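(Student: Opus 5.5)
The quickest route is Jupp's realisation theorem (Theorem \ref{T:Jupp}). Take $H=\Z e_1\oplus\Z e_2$, let $\mu$ be the symmetric trilinear form with the values listed in (2), let $p=4(\alpha_1-\alpha_3)e_1^*+4(\alpha_2-\alpha_3)e_2^*$, $w=0$ and $r=0$. Any manifold realising this system is spin, simply-connected, has torsion-free homology, satisfies $b_3=2r=0$ and $b_2=2$, and by Poincaré duality $b_4=2$. So (1) and (2) both hold. Since $w=0$, condition \eqref{EQ:Jupp} reduces to Wall's condition \eqref{EQ:Wall}: $4\mu(x,x,x)\equiv p(x)\mod 24$ for all $x=ae_1+be_2$.

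To check this I would expand the cubic:
\[ \mu(x,x,x)=(\alpha_1-\alpha_3)a^3-3\alpha_3a^2b-3\alpha_3ab^2+(\alpha_2-\alpha_3)b^3=\alpha_1a^3+\alpha_2b^3-\alpha_3(a+b)^3. \]
The linear form can be written the same way: $p(x)=\alpha_1a+\alpha_2b-\alpha_3(a+b)$. Hence $4\mu(x,x,x)-p(x)=4\bigl[\alpha_1(a^3-a)+\alpha_2(b^3-b)-\alpha_3((a+b)^3-(a+b))\bigr]$. Since $n^3-n$ is always divisible by $6$, this difference is divisible by $24$. Theorem \ref{T:Jupp} then gives the manifold $N_A$, unique up to diffeomorphism.

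This construction is not the one the paper has in mind: the text says the $N_A$ come from plumbings in \cite{Re24b}, where the point is that they carry metrics of positive Ricci curvature. For the statement as written, Jupp's theorem suffices, and the plumbing picture explains the formulas. Concretely, $N_A$ should be the boundary of a $7$-dimensional plumbing of three linear $D^4$-bundles over $S^3$, or a similar plumbing, over a tree or triangle. Equivalently, it is obtained by gluing $D^4$-bundles over $S^2$ with Euler numbers $\alpha_i$ along three classes $e_1$, $e_2$ and $-(e_1+e_2)$. The cubic $\sum_i\alpha_i\ell_i(x)^3$ with $\ell_1=a$, $\ell_2=b$, $\ell_3=-(a+b)$ is exactly the shape one gets from three sphere-bundle pieces.

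If one wants this identification, the main obstacle is computing $\mu$ and $p_1$ of the boundary from the plumbing data. For the purpose of proving the lemma as stated, the only real step is the divisibility check above, which is elementary once the cubic is recognised as $\sum\alpha_i\ell_i^3$ and $p$ as $\sum\alpha_i\ell_i$.
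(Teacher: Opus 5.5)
Your argument is correct and is the paper's primary route: the paper's proof just says the system of invariants satisfies \eqref{EQ:Wall}, and you carry out that check by writing $\mu(x,x,x)=\alpha_1a^3+\alpha_2b^3-\alpha_3(a+b)^3$ and $p(x)=4\bigl(\alpha_1a+\alpha_2b-\alpha_3(a+b)\bigr)$ and using $6\mid n^3-n$ (your displayed $p(x)$ drops the factor $4$, though your next line uses it correctly). The plumbing the paper gives as an alternative is not the one you guess --- it plumbs three linear $D^3$-bundles over $S^4$ with $p_1=4\alpha_i[S^4]^*$ onto a central trivial bundle $S^3\times D^4$ along a star-shaped graph and takes the boundary, citing \cite{Re24b} --- but, as you note, it is not needed for the lemma as stated.
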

	\begin{proof}
		This can be shown by verifying that the corresponding system of invariants satisfies \eqref{EQ:Wall}. Alternatively, we can give an explicit geometric construction as follows.
		
		We consider the following bipartite graph:
		\begin{center}
			\begin{tikzpicture}
				\begin{scope}[every node/.style={circle,draw,minimum height=2.35em}]
					\node(U11) at (4,1){$\alpha_1$};
					\node(U21) at (3.133975,-0.5){$\alpha_2$};
					\node(U31) at (4.866025,-0.5){$\alpha_3$};
					\node[Bullet](V) at (4,0){};
				\end{scope}
				\path[-](V) edge (U11);
				\path[-](V) edge (U21);
				\path[-](V) edge (U31);
			\end{tikzpicture}
		\end{center}
		We now replace each labelled vertex with label $\alpha_i$ with the linear $D^3$-bundle over $S^4$ with first Pontryagin class $4\alpha_i[S^4]^*$, and we replace the unlabelled vertex with the trivial $D^4$-bundle $S^3\times D^4$ over $S^3$. Plumbing these bundles according to this graph results in a $7$-manifold whose boundary $N_A$ satisfies the conditions as claimed, see \cite[Lemma 4.6 and Proposition 4.12]{Re24b}.
	\end{proof}
	We note that the manifold $N_A$ remains unchanged under permutation (which corresponds to a graph automorphism) and simultaneous multiplication by $(-1)$ (which corresponds to a change of orientation) of the triple $(\alpha_1,\alpha_2,\alpha_3)$. We will therefore call two triples $A,A'\in\Z^3$ \emph{equivalent} if one can be obtained from the other by a permutation possibly followed by a simultaneous multiplication by $(-1)$. It was shown in \cite[Proposition 4.12]{Re24b} that for any equivalence class of triples $[A]$ there can be at most one other equivalence class of triples $[A']$ such that $N_A$ and $N_{A'}$ are diffeomorphic. We will use this fact for the proof of part (2) of Theorem \ref{T:free_circle}.

	\begin{lemma}\label{L:N_A_inv}
		Let $A=(\alpha_1,\alpha_2,\alpha_3)\in\Z^3$ and let $e=\lambda e_1+\mu e_2\in H^2(N_A)$ be primitive. Then $e$ can be extended to a basis $(e,f)$ of $H^2(N_A)$ such that (1)--(3) of Lemma \ref{L:N_orbit_space} holds if and only if
		\[ \det(N_A,e)=-\lambda^2\alpha_1\alpha_3-\lambda\mu(\alpha_1\alpha_3+\alpha_2\alpha_3-\alpha_1\alpha_2)-\mu^2\alpha_2\alpha_3=\pm1. \]
		In this case, the values of $A,B,C,D,u,v$ are given as follows:
		\begin{align*}
			A&=\lambda^3(\alpha_1-\alpha_3)-3\lambda\mu(\lambda+\mu)\alpha_3+\mu^3(\alpha_2-\alpha_3),\\
			B&=\lambda^2(-t(\alpha_1-\alpha_3)-s\alpha_3)+2\lambda\mu(t-s)\alpha_3+\mu^2(t\alpha_3+s(\alpha_2-\alpha_3)),\\
			C&=\lambda(t^2(\alpha_1-\alpha_3)+2ts\alpha_3-s^2\alpha_3)+\mu(-t^2\alpha_3+2ts\alpha_3+s^2(\alpha_2-\alpha_3)),\\
			D&=-t^3(\alpha_1-\alpha_3)-3ts(t-s)\alpha_3+s^3(\alpha_2-\alpha_3),\\
			u&=\frac{\lambda(1-\lambda)(1+\lambda)}{6}(\alpha_1-\alpha_3)+\frac{\lambda\mu(\lambda+\mu)}{2}\alpha_3+\frac{\mu(1-\mu)(1+\mu)}{6}(\alpha_2-\alpha_3),\\
			v&=-\frac{t(1-t)(1+t)}{6}(\alpha_1-\alpha_3)+\frac{ts(t-s)}{2}\alpha_3+\frac{s(1-s)(1+s)}{6}(\alpha_2-\alpha_3).
		\end{align*}
		Here $s,t\in\Z$ are integers such that $s\lambda+t\mu=1$.
	\end{lemma}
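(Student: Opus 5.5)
Since $e=\lambda e_1+\mu e_2$ is primitive, $\gcd(\lambda,\mu)=1$, so we can choose $s,t\in\Z$ with $s\lambda+t\mu=1$ and put $f=-te_1+se_2$. The change-of-basis matrix $\left(\begin{smallmatrix}\lambda&\mu\\-t&s\end{smallmatrix}\right)$ has determinant $s\lambda+t\mu=1$, so $(e,f)$ is a basis of $H^2(N_A)$. Condition (1) of Lemma \ref{L:N_orbit_space} is item (1) of Lemma \ref{L:N_A}, and condition (3) holds vacuously because $N_A$ is spin. As noted after Lemma \ref{L:N_orbit_space}, $AC-B^2=\det(N_A,e)$ is independent of the choice of $f$. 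So $e$ extends to a basis satisfying (1)--(3) exactly when $\det(N_A,e)=\pm1$, and the statement reduces to computing this determinant and the invariants $A,B,C,D,u,v$ in the basis $(e,f)$.

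For the determinant, I will use that it is the determinant of the bilinear form $(x,y)\mapsto\mu_{N_A}(x,y,e)$. This determinant is the same in the basis $(e_1,e_2)$, since the base change is unimodular. By trilinearity and the values in Lemma \ref{L:N_A}, the Gram matrix in that basis is
\[
\begin{pmatrix}\lambda(\alpha_1-\alpha_3)-\mu\alpha_3 & -(\lambda+\mu)\alpha_3\\ -(\lambda+\mu)\alpha_3 & -\lambda\alpha_3+\mu(\alpha_2-\alpha_3)\end{pmatrix}.
\]
Expanding its determinant, the $\alpha_3^2$ terms $\lambda^2+2\lambda\mu+\mu^2$ cancel against $(\lambda+\mu)^2\alpha_3^2$, leaving exactly the stated quadratic expression.

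Next I obtain $A,B,C,D$ by expanding $\mu_{N_A}(e,e,e)$, $\mu_{N_A}(e,e,f)$, $\mu_{N_A}(e,f,f)$ and $\mu_{N_A}(f,f,f)$ trilinearly with $e=(\lambda,\mu)$ and $f=(-t,s)$. Writing $\mu(x,x,x)$ as a cubic in the coordinates $(x_1,x_2)$, I get $x_1^3(\alpha_1-\alpha_3)-3x_1^2x_2\alpha_3-3x_1x_2^2\alpha_3+x_2^3(\alpha_2-\alpha_3)$. This gives $A$ and $D$ directly, and $B,C$ follow by polarization. For $p_1(N_A)$ I have
\[
p_1(N_A)(e)=4\lambda(\alpha_1-\alpha_3)+4\mu(\alpha_2-\alpha_3),\qquad p_1(N_A)(f)=-4t(\alpha_1-\alpha_3)+4s(\alpha_2-\alpha_3).
\]
Setting these equal to $24u+4A$ and $24v+4D$ respectively gives
\[
u=\frac{p_1(N_A)(e)-4A}{24},\qquad v=\frac{p_1(N_A)(f)-4D}{24}.
\]
Substituting the expression for $A$ and regrouping yields the stated formula for $u$, using $\lambda-\lambda^3=\lambda(1-\lambda)(1+\lambda)$. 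The same regrouping with $(-t,s)$ in place of $(\lambda,\mu)$ gives $v$.

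The only delicate point is checking that $u,v$ are integers and that the parity conditions of Lemma \ref{L:N_orbit_space}(a) hold, which are needed for $N_A$ to fit that lemma. Integrality follows because $x(1-x)(1+x)$ is divisible by $6$ and $xy(x+y)$ is even for all integers $x,y$. The parity conditions ($A$ even, $B,C$ odd) are automatic consequences of Lemma \ref{L:N_orbit_space} once $\det=\pm1$ and $N_A$ is spin, since that lemma asserts them for every orbit space. I expect the main obstacle to be the careful bookkeeping in the polarized expansions for $B$ and $C$ with the signs coming from $f=-te_1+se_2$.
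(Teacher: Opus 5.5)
Your proposal is correct and follows the paper's proof. The paper also treats the lemma as a direct calculation from Lemmas~\ref{L:N_orbit_space} and~\ref{L:N_A}, computing $AC-B^2$ as the determinant of the Gram matrix of $(x,y)\mapsto\mu_{N_A}(x,y,e)$ in the basis $(e_1,e_2)$; your choice $f=-te_1+se_2$, the trilinear expansions, and $u=\bigl(p_1(N_A)(e)-4A\bigr)/24$, $v=\bigl(p_1(N_A)(f)-4D\bigr)/24$ reproduce exactly the stated formulas.
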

	\begin{proof}
		The proof is a tedious but straightforward calculation using Lemmas \ref{L:N_orbit_space} and \ref{L:N_A}. The calculation for the value of $\det(N_A,e)=AC-B^2$ can be simplified as follows: Since $AC-B^2$ is the determinant of the bilinear form $(x,y)\mapsto \mu_{N_A}(x,y,e)$ on $H^2(N_A)$, we can also compute it from the basis $(e_1,e_2)$ instead of $(e,f)$, i.e.\
		\[ AC-B^2=\mu_{N_A}(e_1,e_1,e)\mu_{N_A}(e_2,e_2,e)-\mu_{N_A}(e_1,e_2,e)^2. \]
	\end{proof}
	
	The second family of manifolds we consider is the following.
	
	\begin{lemma}\label{L:N_ab}
		For a pair $(\alpha,\beta)\in\Z\times\{\pm1\}$ there exists a closed, simply-connected $6$-manifold $N_{(\alpha,\beta)}$ with the following properties:
		\begin{enumerate}
			\item $N_{(\alpha,\beta)}$ has torsion-free homology and the only non-trivial Betti numbers are $b_0(N_{(\alpha,\beta)})=b_6(N_{(\alpha,\beta)})=1$ and $b_2(N_{(\alpha,\beta)})=b_4(N_{(\alpha,\beta)})=2$.
			\item There exists a basis $(e_1,e_2)$ of $H^2(N_{(\alpha,\beta)})$ such that
			\begin{align*}
				\mu_{N_{(\alpha,\beta)}}(e_1,e_1,e_1)&=0,\\
				\mu_{N_{(\alpha,\beta)}}(e_1,e_1,e_2)&=1,\\
				\mu_{N_{(\alpha,\beta)}}(e_1,e_2,e_2)&=\beta,\\
				\mu_{N_{(\alpha,\beta)}}(e_2,e_2,e_2)&=\alpha+\beta,\\
				w_2(N_{(\alpha,\beta)})&=(1-\beta)e_1\mod 2,\\
				p_1(N_{(\alpha,\beta)})&=(4\alpha+3+\beta)e_2^*.
			\end{align*}
		\end{enumerate}
	\end{lemma}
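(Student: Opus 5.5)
The plan is to argue exactly as in the first proof offered for Lemma~\ref{L:N_A}: write down the system of invariants and invoke Jupp's realisation criterion, Theorem~\ref{T:Jupp}. Set $H=\Z e_1\oplus\Z e_2$. Let $\mu$ be the symmetric trilinear form with the four listed values. Let $p=(4\alpha+3+\beta)e_2^*$, $w=(1-\beta)e_1 \bmod 2$ and $r=0$. Once \eqref{EQ:Jupp} is verified, Theorem~\ref{T:Jupp} provides a closed, simply-connected $6$-manifold $N_{(\alpha,\beta)}$ with torsion-free homology. It has $H^2\cong\Z^2$ and $b_3=2r=0$, so by Poincaré duality the Betti numbers are as claimed in (1), and (2) holds by construction. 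The geometric picture is a linear $S^2$-bundle over $\C P^2$: $e_1$ is the pullback of the hyperplane class, and $\beta=\pm1$ distinguishes the two parities of $w_2$. I would mention this only as motivation, since the algebraic route avoids computing characteristic classes of the bundle.

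The main step is checking \eqref{EQ:Jupp}. For $W=xe_1+ye_2$ we have
\[ \mu(W,W,W)=3x^2y+3\beta xy^2+(\alpha+\beta)y^3,\qquad p(W)=(4\alpha+3+\beta)y. \]
\emph{Case $\beta=1$ (so $w=0$).} It suffices to check Wall's condition \eqref{EQ:Wall}, because it is equivalent to \eqref{EQ:Jupp} for even $W$: write $W=2W'$, so both sides of \eqref{EQ:Jupp} for $W$ are $8\mu(W',W',W')$ and $2p(W')$. The condition reads
\[ 4\bigl(3x^2y+3xy^2+(\alpha+1)y^3\bigr)\equiv(4\alpha+4)y \pmod{24}. \]
Since $x^2y+xy^2=xy(x+y)$ is even, the first part is $12xy(x+y)\equiv0 \pmod{24}$. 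What remains is $4(\alpha+1)(y^3-y)$, and this is divisible by $24$ because $6\mid y^3-y$.

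\emph{Case $\beta=-1$ (so $w=e_1$).} We need the congruence modulo $48$ for all $x$ odd and $y$ even. Write $y=2z$. Then
\[ \mu(W,W,W)=6x^2z-12xz^2+8(\alpha-1)z^3,\qquad p(W)=(4\alpha+2)\cdot 2z. \]
The difference is
\[ 6z(x^2-1)-12xz^2+8(\alpha-1)(z^3-z)-8z\cdot 0 \]
after regrouping, using $6z\cdot1+8(\alpha-1)z=(8\alpha-2)z$ and $p(W)=(8\alpha+4)z$. I expect the bookkeeping of these constant terms to be the one delicate point. It may force a sign change in the stated $p_1$, or a different basis convention, and I would recheck it carefully. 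The approach is to reduce using $x^2\equiv1 \pmod 8$, which gives $6z(x^2-1)\equiv0 \pmod{48}$, and $8(z^3-z)\equiv0 \pmod{48}$. The term $-12xz^2$ pairs with the leftover linear term $-6z$, and the aim is to show that $12xz^2+6z$ vanishes modulo $48$ (or in the correct combination) by separating $z$ even from $z$ odd.

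If the parity bookkeeping does not close, the fallback is the explicit construction. Take $N_{(\alpha,\beta)}=\mathbb{P}(L\oplus\underline{\C})$-type bundles, or more precisely the sphere bundle of a rank-$3$ real vector bundle $E\to\C P^2$ with $w_2(E)$ and $p_1(E)$ chosen appropriately. Then $\mu$, $w_2$ and $p_1$ are computed by the Leray--Hirsch relation $e_2^2=\ldots$ in $H^*(S(E))$ together with the splitting $TS(E)\oplus\underline{\R}\cong\pi^*T\C P^2\oplus\pi^*E$. This yields the stated formulas and realises the manifold directly.
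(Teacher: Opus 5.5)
Your route (write down the system of invariants and apply Theorem~\ref{T:Jupp}) is the paper's first suggested argument, and your $\beta=1$ verification via \eqref{EQ:Wall} is correct. The gap is the case $\beta=-1$. It cannot be closed by more careful bookkeeping, by a basis change, or by your fallback.

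\begin{itemize}
\item First, you misread $w$. For $\beta=-1$ one has $(1-\beta)e_1=2e_1\equiv 0 \bmod 2$, so literally $w=0$. But then \eqref{EQ:Wall} evaluated on $e_2$ demands $4(\alpha-1)\equiv 4\alpha+2 \pmod{24}$, which is false; in the spin case $p$ must be divisible by $4$.
\item Under your reading $w=e_1$ it fails as well. The residual $-12xz^2-6z$ that you isolate is not divisible by $48$; for $x=z=1$ it equals $-18$.
\item More structurally, since $\mu(e_1,e_1,e_1)=0=p(e_1)$, adding the congruences \eqref{EQ:Jupp} for $W=e_1+2e_2$ and $W=e_1-2e_2$ gives $24\,\mu(e_1,e_2,e_2)\equiv 0 \pmod{48}$. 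So $w=e_1$ forces $\mu(e_1,e_2,e_2)$ to be even, while here it is $-1$.
\end{itemize}
Since Theorem~\ref{T:Jupp} is an equivalence, no closed simply-connected $6$-manifold has these invariants for $\beta=-1$. In particular, your sphere-bundle fallback cannot produce one either.

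What you are missing is that $\{\pm1\}$ in the statement is a typo for $\{0,1\}$. This is how the lemma is used afterwards:
\begin{itemize}
\item $N_{(\alpha,0)}$ is the non-spin manifold of Lemma~\ref{L:N_ab_inv}, whose determinant $\mu^2\alpha-\lambda^2$ comes from $\mu(e_1,e_2,e_2)=0$.
\item $N_{(\alpha,1)}\cong N_{(-1,-1,\alpha)}$ is spin by \eqref{EQ:N_A_diff}.
\item It also matches the paper's explicit construction as the $S^2$-bundle over $\C P^2$ with $p_1=4\alpha+\beta$ and $w_2=\beta x$.
\end{itemize}
The case you still need is $\beta=0$, where genuinely $w=e_1$. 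For $W=xe_1+ye_2$ with $x$ odd and $y=2z$,
\[ \mu(W,W,W)-p(W)=3y(x^2-1)+\alpha(y^3-4y)=6z(x^2-1)+8\alpha(z^3-z)\equiv 0 \pmod{48}, \]
using $x^2\equiv 1\pmod 8$ and $6\mid z^3-z$. Together with your $\beta=1$ case, this completes the Jupp-criterion proof exactly as in the paper.
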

	\begin{proof}
		As in Lemma \ref{L:N_A}, this can be shown by verifying that the corresponding system of invariants satisfies \eqref{EQ:Jupp}. We give the following alternative construction.
		
		We define $N_{\alpha,\beta}$ as the total space of the linear $S^2$-bundle over $\C P^2$ with first Pontryagin class $(4\alpha+\beta)[\C P^2]^*$ and second Stiefel--Whitney class $\beta x\mod 2$, where $x\in H^2(\C P^2)\cong \Z$ is a generator. The claim then follows from \cite[Corollary 5.8]{Re23}.
	\end{proof}
	By \cite[Proposition 4.12]{Re24b}, two different pairs $(\alpha,\beta)\neq (\alpha',\beta')$ define non-diffeomorphic manifolds $N_{(\alpha,\beta)}\not\cong N_{(\alpha',\beta')}$. Moreover, there are diffeomorphisms
	\begin{equation}\label{EQ:N_A_diff}
		N_{(\alpha,1)}\cong N_{(-1,-1,\alpha)} 
	\end{equation}
	for all $\alpha\in\Z$, since the invariants coincide in this case (see also \cite[Lemma 4.17]{Re24b}). Therefore, for the purpose of defining orbit spaces for free circle actions on spin cohomology $S^2\times S^5$s, it will suffice to consider the non-spin manifolds $N_{(\alpha,0)}$. Similarly as in Lemma \ref{L:N_A_inv}, we obtain the following:
	\begin{lemma}\label{L:N_ab_inv}
		Let $\alpha\in\Z$ and let $e=\lambda e_1+\mu e_2\in H^2(N_{(\alpha,0)})$ be primitive. Then $e$ can be extended to a basis $(e,f)$ of $H^2(N_{(\alpha,0)})$ such that (1)--(3) of Lemma \ref{L:N_orbit_space} holds if and only if $\lambda$ is odd, $\mu$ is even, and
		\[ \det(N_{(\alpha,0)})=\mu^2\alpha-\lambda^2=-1. \]
		In this case, the values of $A,B,C,D,u,v$ are given as follows:
		\begin{align*}
			A&=\mu(4\lambda^2-1),\\
			B&=(4\lambda^2-1)s-2\lambda,\\
			C&=-3ts\lambda+t+\mu s^2\alpha,\\
			D&=3t^2s+s^3\alpha,\\
			u&= \frac{\mu(1-\mu^2)\alpha}{12},\\
			v&=\frac{\alpha s(1-s^2)}{6}-\frac{s\alpha\mu(2\mu-s)}{4}+\frac{\lambda-s+2t-2t^2s-3t^2\mu}{4}.
			%\frac{s(\alpha (1-s^2)-3t^2+3\lambda^2)}{6}+\frac{s+\lambda+3ts\lambda-t-\mu s^2\alpha}{4}
		\end{align*}
		Here $s,t\in\Z$ are integers such that $s\lambda+t\mu=1$.
	\end{lemma}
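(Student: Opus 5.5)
We read $N_{(\alpha,0)}$ as the non-spin member of the family in Lemma \ref{L:N_ab} with $\beta$ formally set to $0$; this reading is an assumption, since the lemma only allows $\beta=\pm1$. It is the only reading consistent with the displayed determinant, and we check that in the second paragraph. Its invariants in the basis $(e_1,e_2)$ are
\[ \mu(e_1,e_1,e_1)=0,\ \mu(e_1,e_1,e_2)=1,\ \mu(e_1,e_2,e_2)=0,\ \mu(e_2,e_2,e_2)=\alpha,\ w_2=e_1 \bmod 2,\ p_1=(4\alpha+3)e_2^*. \]
The plan is to verify conditions (1)--(3) of Lemma \ref{L:N_orbit_space} one at a time, and then compute the invariants in an explicit adapted basis. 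Condition (1) holds automatically by Lemma \ref{L:N_ab}. Since $N_{(\alpha,0)}$ is non-spin, condition (3) requires $e\equiv w_2 = e_1 \bmod 2$, i.e.\ $\lambda$ odd and $\mu$ even.

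For condition (2), we compute $\det(N_{(\alpha,0)},e)$ in the basis $(e_1,e_2)$, as in the proof of Lemma \ref{L:N_A_inv}. We have $\mu(e_1,e_1,e)=\mu$, $\mu(e_1,e_2,e)=\lambda$ and $\mu(e_2,e_2,e)=\mu\alpha$, so $\det=\mu^2\alpha-\lambda^2$, which confirms the reading above. Under the parity conditions, $\mu^2\alpha\equiv 0$ and $\lambda^2\equiv 1 \pmod 4$, so $\det\equiv -1 \pmod 4$. Hence the value $+1$ is impossible and condition (2) is equivalent to $\mu^2\alpha-\lambda^2=-1$. This gives the stated equivalence.

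Next we pick $s,t$ with $s\lambda+t\mu=1$ and set $f=-te_1+se_2$. The transition matrix has determinant $1$, so $(e,f)$ is a basis. We compute $A,B,C,D$ by expanding the trilinear form, keeping only the monomials containing $e_1^2e_2$ or $e_2^3$. For instance, $A=3\lambda^2\mu+\mu^3\alpha$, and substituting $\mu^2\alpha=\lambda^2-1$ gives $A=\mu(4\lambda^2-1)$. For $B,C,D$ we use the same substitution together with $s\lambda+t\mu=1$ to reach the stated forms; for example, $B=\lambda^2 s-2\lambda\mu t+\mu^2 s\alpha$, and after substitution this becomes $(4\lambda^2-1)s-2\lambda$.

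Finally we solve for $u,v$ from the normal form (b) of Lemma \ref{L:N_orbit_space}. We have $p_1(e)=\mu(4\alpha+3)$ and $p_1(f)=s(4\alpha+3)$. From $48u+A=p_1(e)$ we get $u=\mu(4\alpha+4-4\lambda^2)/48=\mu\alpha(1-\mu^2)/12$. From $24v+3B+6C+4D=p_1(f)$ we get $v=(p_1(f)-3B-6C-4D)/24$.

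The main obstacle is this last step: rewriting $v$ in the stated closed form and checking that the expression is an integer. Integrality is guaranteed abstractly by Lemma \ref{L:N_orbit_space}, so only the algebraic identity needs care. For that we would substitute the formulas for $B,C,D$ and repeatedly use $\lambda s=1-t\mu$ and $\mu^2\alpha=\lambda^2-1$, checking numerically on small cases such as $\alpha=0$, $\lambda=1$, $\mu=0$.
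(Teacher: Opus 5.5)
Your approach is the paper's own. The paper proves this lemma by direct calculation, and its key points are the ones you give: $\det(N_{(\alpha,0)},e)=\mu^2\alpha-\lambda^2$ computed in the basis $(e_1,e_2)$, the parity conditions coming from $w_2=e_1$, and the observation that these conditions force $\det\equiv -1 \bmod 4$. Your reading $\beta\in\{0,1\}$ is the intended one. Your choice $f=-te_1+se_2$ reproduces the printed formulas for $A,B,C,D$, and your derivation of $u$ is correct.

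The step you postponed does not go through, because the printed closed form for $v$ is wrong. Make exactly the substitutions you propose, $\lambda^2=1+\mu^2\alpha$ and $\lambda s=1-t\mu$, in $24v=s(4\alpha+3)-3B-6C-4D$. This gives
\[ 24v=4\alpha s(1-s^2)-6s\alpha\mu(2\mu+s)+6\left(\lambda-s+2t-2t^2s-3t^2\mu\right). \]
That is the stated expression with $(2\mu-s)$ replaced by $(2\mu+s)$. The printed formula therefore differs from the true $v$ by $\frac{\mu s^2\alpha}{2}$, which is nonzero unless $\mu s\alpha=0$.

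A concrete check: take $\alpha=3$, $\lambda=7$, $\mu=-4$, $s=-1$, $t=-2$. Then $f=2e_1-e_2$, $(B,C,D)=(-209,-56,-15)$ and $p_1(f)=-15$, so $v=(-15+1023)/24=42$. The printed formula gives $36$ instead. The data behind the row $(3,7,-4)$ of Table \ref{TA:N_alpha} also use $36$, so the sign error appears to have propagated there. Your planned sanity check $(\alpha,\lambda,\mu)=(0,1,0)$ has $\mu=0$ and cannot detect this.

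Completed honestly, your argument proves the lemma with the corrected formula for $v$, not the statement as printed. You should record that correction explicitly, since $v$ enters $s_1$ in Lemma \ref{L:s-inv}(3). The paper's proof only calls this "a straightforward calculation", so it contains nothing that would rescue the printed version.
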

	\begin{proof}
		The proof is again a straightforward calculation. Here we have $AC-B^2=\mu^2\alpha-\lambda^2$. Since (3) of Lemma \ref{L:N_orbit_space} is equivalent to $\mu$ being even and $\lambda$ being odd, we have $\mu^2\alpha-\lambda^2\equiv -1\mod4$, hence we cannot have $\mu^2\alpha-\lambda^2=1$.
	\end{proof}

	\subsection{$s$-invariants of the total space}\label{SS:s-inv}
	
	Given a closed $6$-manifold $N$ that satisfies (1)--(3) of Lemma \ref{L:N_orbit_space}, the principal circle bundle over $N$ with Euler class $e$ is a spin cohomology $S^2\times S^5$. Its $s$-invariants are given as follows.
	
	\begin{lemma}\label{L:s-inv}
		Let $N$ be a manifold as in Lemma \ref{L:N_orbit_space} and let $N(e)$ be the spin cohomology $S^2\times S^5$ that is the total space of the principal circle bundle over $N$ with Euler class $e$. Then the $s$-invariants of $N(e)$ are given as follows:
		\begin{enumerate}
			\item If $N$ is spin with $\det(N,e)=AC-B^2=-1$, then
			\begin{align*}
				s_1(N(e))=&-\frac{9}{14}(Cu^2-2Buv+Av^2)+\frac{2-3B(B-D)}{14}u+\frac{3A(B-D)}{14}v\\
				&+\frac{A}{224}-\frac{A}{56}(B-D)^2\mod 1,\\
				s_2(N(e))=&\frac{D+1}{2}u+A\frac{2C^2-2BD+D^2}{24}\mod 1,\\
				\tilde{s}_3(N(e))=&\frac{D+1}{2}\mod 1.
			\end{align*}
			\item If $N$ is spin with $\det(N,e)=AC-B^2=1$, then
			\begin{align*}
				s_1(N(e))=&-\frac{\mathrm{sgn}(A)}{112}+\frac{9}{14}(Cu^2-2Buv+Av^2)+\frac{2+3B(B-D)}{14}u-\frac{3A(B-D)}{14}v\\
				&+\frac{A}{224}+\frac{A}{56}(B-D)^2\mod 1,\\
				s_2(N(e))=&\frac{D+1}{2}u-A\frac{2C^2-2BD+D^2}{24}+C\frac{1+C^2}{24}\mod 1,\\
				\tilde{s}_3(N(e))=&\frac{D+1}{2}\mod 1.
			\end{align*}
			\item If $N$ is non-spin, then $\det(N,e)=AC-B^2=-1$ and
			\begin{align*}
				s_1(N(e))=&-\frac{9}{14}(4Cu^2-4Buv+Av^2)+\frac{3}{14}B(B+3C+2D)u-\frac{3}{28}A(B+3C+2D)v\\
				&-\frac{1}{224}A(AC+6BC+4BD+9C^2+4D^2+12CD)\mod 1,\\
				s_2(N(e))=&-\frac{1}{24}(B^2C+3BC^2-ABD-3ACD-AD^2+C+C^3)\mod 1,\\
				\tilde{s}_3(N(e))=&0\mod 1.
			\end{align*}
		\end{enumerate}
	\end{lemma}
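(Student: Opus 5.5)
The plan is to compute the Kreck--Stolz invariants directly from their definition, using the disc bundle as the cobounding manifold. Let $W\to N$ be the linear $D^2$-bundle associated with the principal circle bundle with Euler class $e$, so that $\partial W=N(e)$.

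\textbf{Step 1: topology of $W$.} Since $W$ deformation retracts onto $N$, we have $H^*(W)\cong H^*(N)$. The restriction $H^2(W)\to H^2(N(e))\cong\Z$ sends $e\mapsto 0$ and $f\mapsto$ a generator $x$. Hence the class $z=\pi^*f$ restricts to the generator $x$. Moreover:
\begin{itemize}
\item $TW\cong\pi^*(TN\oplus L_e)$, so $p_1(W)=\pi^*(p_1(N)+e^2)$.
\item $w_2(W)=\pi^*(w_2(N)+e)$. By condition (3) of Lemma~\ref{L:N_orbit_space}, $W$ is spin if $N$ is spin, and also if $N$ is non-spin (since then $w_2(N)=e$ mod $2$).
\end{itemize}
So in every case $W$ is a spin coboundary. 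In the non-spin-$N$ case, however, the relevant conditions on how $w_2$ relates to $z$ change, which is why a different formula appears.

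\textbf{Step 2: the intersection form of $W$.} The Kreck--Stolz invariants are defined as $\Q/\Z$-combinations of the characteristic numbers
\[
\langle \hat p_1^2,[W,\partial W]\rangle,\quad \langle \hat z^2\hat p_1\rangle,\quad \langle \hat z^4\rangle,\quad \langle \hat z\,\hat p_1\rangle\ (\text{suitably}),
\]
together with the signature $\sigma(W)$. Here $\hat{\ }$ denotes lifts to $H^4(W,\partial W;\Q)$, which exist because $x^2$ vanishes rationally appropriately; in fact $H^4(N(e))$ is torsion-free of rank $0$ rationally.

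Via the Thom isomorphism, products on $W$ relative to the boundary become $\mu_N(\cdot,\cdot,e)$. The intersection form on $H^4(W,\partial W)\cong H^2(N)$ is therefore the bilinear form $(a,b)\mapsto\mu_N(a,b,e)$, with matrix $\begin{pmatrix}A&B\\B&C\end{pmatrix}$ and determinant $\pm1$. This unimodularity is what makes the lifts integral-ish and the computation finite. Consequently:
\begin{itemize}
\item If $\det=-1$, then $\sigma(W)=0$.
\item If $\det=1$, then the form is definite and $\sigma(W)=\mathrm{sgn}(A)\cdot 2$. This explains the $\mathrm{sgn}(A)/112$ term in case (2).
\end{itemize}

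\textbf{Step 3: the characteristic numbers.} To compute them, write the classes $f^2$, $p_1(W)$, $fe$, etc. in $H^4(W)\cong H^4(N)$ in the dual basis using $\mu_N$ and $p_1(N)$, as given by (a)/(b) of Lemma~\ref{L:N_orbit_space} with parameters $u,v$. Then:
\begin{enumerate}
\item Invert the unimodular matrix to find the preimages $\hat z^2$ and $\hat p_1$ in $H^4(W,\partial W;\Q)$. These preimages are integral because $\det=\pm1$.
\item Evaluate the products using the inverse form.
\item Substitute into the Kreck--Stolz formulas, e.g.
\[
s_1=-\tfrac{1}{2^5\cdot7}\sigma(W)+\tfrac{1}{2^7\cdot 7}\hat p_1^2,\qquad s_2=-\tfrac{1}{2^4\cdot3}\hat z^2\hat p_1+\tfrac{1}{2^3\cdot 3}\hat z^4,
\]
and similarly for $s_3$. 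In the non-spin case, use the variant formulas for $W$ with $w_2(W)$ related to $z$.
\item Finally, reduce modulo $1$, using the parity conditions on $A,B,C$ and the congruences from \eqref{EQ:Jupp}/\eqref{EQ:Wall} to simplify.
\end{enumerate}
For example, $\tilde s_3=s_3-4s_2$ should collapse to $(D+1)/2$.

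\textbf{Main obstacle.} The main obstacle is purely computational. Expanding $\hat p_1^2$ with $p_1(N)=(24u+4A)e^*+(24v+4D)f^*$, plus the $e^2$ correction, through the inverse matrix produces large polynomials in $A,B,C,D,u,v$. These must then be reduced mod $1$ with denominators $224$ and $24$, and one must check carefully that terms vanish mod $1$ using the parity constraints and $AC-B^2=\pm1$. I would do this symbolically (by computer algebra), treating the three cases separately. As a sanity check, I would verify the output against the known values for $S^2\times S^5$ and for $S^5\ttimes S^2$ from Example~\ref{EX:spin_cohom_S2xS5}, realised over suitable $N_A$.
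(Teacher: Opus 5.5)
You are following the same route as the computation behind Xu's Lemma~3, which the paper quotes for (1) and (3) and specializes for (2): the $D^2$-bundle $W$ as coboundary, the Thom isomorphism identifying the relative intersection form with $M_e=\begin{pmatrix}A&B\\B&C\end{pmatrix}$, inverting $M_e$, and $\sigma(W)=2\,\mathrm{sgn}(A)$ when $\det=1$. However, Step~1 contains an error that breaks the computation.

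From your own formula $w_2(W)=\pi^*(w_2(N)+e)$: if $N$ is spin, then $w_2(W)=\pi^*e \bmod 2$. This is nonzero because $e$ is primitive, since it extends to a basis $(e,f)$. So $W$ is \emph{not} spin in cases (1) and (2), and the spin Kreck--Stolz formulas you write down do not apply there. In case (3) it is the other way round. There $w_2(W)=\pi^*(e+e)=0$, so $W$ is spin and the plain formulas are the right ones. The variant for $w_2(W)=z \bmod 2$ cannot be used at all, since it would force $w_2(N(e))\neq 0$. In short, you have the two cases swapped.

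This is not cosmetic, and your own sanity check would expose it. Take $N_{(-1,-1,\alpha)}$ with $e=e_1-e_2$, i.e.\ $(A,B,C,D,u,v)=(0,-1,-1,-\alpha-1,0,0)$. The untwisted formulas give $\hat z^4=-2\alpha-1$ and $\hat z^2\hat p_1=-4\alpha-5$, hence $s_2=\tfrac{1}{16}$, which is not even in $\tfrac{1}{12}\Z/\Z$. More generally, the untwisted formulas give $\tilde s_3=\tfrac12\hat z^4=\pm\tfrac12(C^3-2BCD+AD^2)\equiv\tfrac12$ for every spin $N$, never $\tfrac{D+1}{2}$.

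The missing idea is to use spin$^c$-twisted characteristic numbers. Set $c=\pi^*e$; it is the image of the Thom class $U\in H^2(W,\partial W)$, so it restricts to $0$ on $\partial W$. Replace $\hat A(W)$ by $\hat A(W)e^{c/2}$ in the defining expressions. This is well-defined mod $1$ by the usual gluing argument together with integrality of twisted $\hat A$-genera on closed spin$^c$ manifolds. Writing $k=p_1(N)(e)$ and $l=p_1(N)(f)$, the twist adds
\[
-\tfrac{1}{192}\langle c^2p_1\rangle+\tfrac{1}{384}\langle c^4\rangle=-\tfrac{k+A}{192}+\tfrac{A}{384}
\]
to $s_1$, and adds
\[
\tfrac{1}{12}\langle cz^3\rangle+\tfrac{1}{16}\langle c^2z^2\rangle+\tfrac{1}{48}\langle c^3z\rangle-\tfrac{1}{48}\langle c\,p_1z\rangle=\tfrac{D}{12}+\tfrac{C}{16}-\tfrac{l}{48}
\]
to $s_2$, with a similar correction to $s_3$. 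These are exactly the extra terms in the intermediate formulas the paper quotes from Xu's proof. They are what produce the $\tfrac{D}{2}$ in $\tilde s_3=\tfrac{D+1}{2}$, and they make your test example come out as $s_2=0$, $\tilde s_3=\tfrac{\alpha}{2}$.

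With this correction, and with the untwisted formulas used in case (3), your linear-algebra plan does go through.
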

	\begin{proof}
		Items (1) and (3) are the content of \cite[Lemma 3]{Xu25}. For item (2), it is shown in the proof of \cite[Lemma 3]{Xu25} that
		\begin{align*}
			s_1(N(e))&=-\frac{1}{224}\sigma(M_e)+\frac{1}{896}\left(A+2k+\begin{pmatrix}
				k & l
			\end{pmatrix}M_e^{-1}\begin{pmatrix}
			k \\ l
			\end{pmatrix}\right)-\frac{1}{192}(k+A)+\frac{1}{384}A\mod 1,\\
			s_2(N(e))&=\frac{1}{48}\left(-\begin{pmatrix}
				C & D
			\end{pmatrix}M_e^{-1}\begin{pmatrix}
			k\\l
			\end{pmatrix}-l+2\begin{pmatrix}
			C & D
			\end{pmatrix}M_e^{-1}\begin{pmatrix}
			C\\D
			\end{pmatrix}+4D+2C \right)\mod 1,\\
			\tilde{s}_3(N(e))&=\frac{1}{24}l+\frac{1}{2}\begin{pmatrix}
				C & D
			\end{pmatrix}M_e^{-1}\begin{pmatrix}
			C\\D
			\end{pmatrix}+\frac{1}{3}D\mod 1.
		\end{align*}
		Here, $k=4A+24u$, $l=4D+24v$ and $M_e$ is the matrix
		\[ M_e=\begin{pmatrix}
			A & B\\B & C
		\end{pmatrix}. \]
		When $\det(N,e)=\det(M_e)=1$, the signature $\sigma(M_e)$ equals $\sigma(M_e)=\mathrm{sgn}(A)$ since $A$ and $C$ have the same sign and so
		\[ \sigma(M_e)=2\mathrm{sgn}(\mathrm{tr}(M_e))=2\mathrm{sgn}(A). \]
		Further,
		\[ M_e^{-1}=\begin{pmatrix}
			C & -B\\-B& A\end{pmatrix}. \]
		A tedious but straightforward calculation using that $A$ is even, $B,C$ are odd and $AC-B^2=1$ then finishes the proof.
	\end{proof}	
	
	Lemma \ref{L:s-inv} allows us to prove the following result.
	\begin{lemma}\label{L:s_inv_nec}
		In the setting of Lemma \ref{L:s-inv}, the possible values of $s_2(N(e))$ and $\tilde{s}_3(N(e))$ are as follows:
		\begin{enumerate}
			\item If $N$ is spin with $\det(N,e)=-1$, then 
			\[(s_2(N(e)),\tilde{s}_3(N(e)))=(\tfrac{j}{3},0),\, j=0,1,2,\quad \text{or}\quad (s_2(N(e)),\tilde{s}_3(N(e)))=(\tfrac{j}{6},\tfrac{1}{2}),\, j=0,\dots,5.\]
			\item If $N$ is spin with $\det(N,e)=1$, then 
			\[(s_2(N(e)),\tilde{s}_3(N(e)))=(\tfrac{j}{3},0), \,j=0,1,2,\quad \text{or}\quad (s_2(N(e)),\tilde{s}_3(N(e)))=(\tfrac{1+2j}{12},\tfrac{1}{2}),\, j=0,\dots,5.\]
%			\item If $N$ is spin with $\det(N,e)=-1$, then 
%			\[(s_2(N(e)),\tilde{s}_3(N(e)))=(\tfrac{j}{3},0),\, j=0,1,2,\quad \text{or}\quad (s_2(N(e)),\tilde{s}_3(N(e)))=(\tfrac{j}{6},\tfrac{1}{2}),\, j=0,\dots,5.\]
%			\item If $N$ is spin with $\det(N,e)=1$, then 
%			\[(s_2(N(e)),\tilde{s}_3(N(e)))=(\tfrac{1+2j}{12},0), \,j=0,\dots,5,\quad \text{or}\quad (s_2(N(e)),\tilde{s}_3(N(e)))=(\tfrac{j}{6},\tfrac{1}{2}),\, j=0,\dots,5.\]
			\item If $N$ is non-spin, then
			\[(s_1(N(e)),s_2(N(e)),\tilde{s}_3(N(e)))=(\tfrac{k}{14},\tfrac{j}{6},0),\, k=0,\dots,13,\, j=0,\dots,5.\]% Moreover, when $s_2(N(e))=\tfrac{j}{3}$, then $s_1(N(e))=\frac{k}{14}$ for some $k=0,\dots,13$, and when $s_2(N(e))=\tfrac{1+2j}{6}$, then $s_1(N(e))=\frac{k}{7}$ for some $k=0,\dots,6$.
		\end{enumerate}
		In particular, there are at most $28\cdot (3+6+6)=420$ possible values of $(s_1(N(e)),s_2(N(e)),\tilde{s}_3(N(e)))$ when $N$ is spin, $6\cdot 14=84$ when $N$ is non-spin, and $420+3\cdot 14=462$ in total.
		%In particular, there are at most $28\cdot (3+6+6)=420$ possible values of $(s_1(N(e)),s_2(N(e)),\tilde{s}_3(N(e)))$ when $N$ is spin, $6\cdot 14=84$ when $N$ is non-spin, and $420+3\cdot 14=462$ in total.
	\end{lemma}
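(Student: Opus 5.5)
We obtain the lemma by a case-by-case modular analysis of the formulas in Lemma~\ref{L:s-inv}, using only the constraints from Lemma~\ref{L:N_orbit_space}. These constraints are $AC-B^2=\pm1$ and the parity conditions: in the spin case $A$ is even and $B,C$ are odd; in the non-spin case $A,C$ are even and $B$ is odd.

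\emph{The invariant $\tilde{s}_3$.} In all three cases it is immediate: it is $\frac{D+1}{2}\bmod 1$ in the spin cases and $0$ in the non-spin case. So in the spin cases we split according to the parity of $D$.

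\emph{The invariant $s_2$ in the spin cases.} We reduce $24\,s_2$ modulo $24$. This amounts to analysing the integers
\[12(D+1)u\pm A(2C^2-2BD+D^2)\]
together with the extra term $C(1+C^2)$ when $\det=1$. The analysis is done modulo $8$ and modulo $3$ separately.

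Modulo $3$: we use that $x^3\equiv x$. From $AC\equiv B^2\pm1 \pmod 3$ we aim to show that the quantity modulo $3$ is forced into the residues listed. We first try to rewrite $A(2C^2-2BD+D^2)$ via $AC=B^2\pm1$ to eliminate $A$ wherever possible.

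Modulo $8$: we use that $A$ is even and $B,C$ are odd, so $B^2\equiv C^2\equiv 1 \pmod 8$ and $AC\equiv 1\pm 1 \pmod 8$. This pins down $A \bmod 8$ relative to $C$, namely $A\equiv 0$ or $2C^{-1}\pmod 8$.
\begin{itemize}
\item For $D$ odd ($\tilde s_3=0$), $12(D+1)u\equiv 0 \pmod{24}$. The claim is that the remaining term vanishes modulo $8$ when $\det=-1$, and is exactly cancelled by $C(1+C^2)$ when $\det=1$. This gives $s_2\in\frac13\Z$.
\item For $D$ even, the $u$-term contributes $12u$, giving $s_2\in\frac16\Z$ when $\det=-1$. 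When $\det=1$, the term $C(1+C^2)\equiv 2 \pmod 4$ for $C$ odd, which shifts $24s_2$ to be $\equiv 2\pmod 4$, i.e.\ $s_2\in\frac{1+2j}{12}$.
\end{itemize}
I would verify each of these congruences by splitting into the residues of $(A,B,C,D)$ modulo $8$ compatible with the constraints, which is a finite check.

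\emph{The non-spin case.} For $s_2$ we need the bracket $B^2C+3BC^2-ABD-3ACD-AD^2+C+C^3$ to be divisible by $4$. Since $A,C$ are even and $B$ is odd with $AC=B^2-1\equiv 0\pmod 8$, each term can be checked modulo $4$; for instance $C+C^3+B^2C\equiv 3C+C^3\equiv 0\pmod 4$ for $C$ even. This gives $s_2\in\frac16\Z$.

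\emph{The invariant $s_1$ in the non-spin case.} We must show that $28\,s_1$ is an even integer modulo $28$, i.e.\ $14s_1\in\Z$. This is the main obstacle, because of the $\frac1{224}$ term. The plan:
\begin{itemize}
\item Multiply the formula by $224$ and use $A$ even and $C$ even, writing $A=2a$, $C=2c$ with $4ac=B^2+\ldots$, i.e.\ $AC=B^2-1\equiv 0 \pmod 8$.
\item Show that $A(AC+6BC+4BD+9C^2+4D^2+12CD)\equiv 0\pmod{16}$. This suffices, since the remaining terms have denominators dividing $28$, and $\frac{9}{14}(\cdot)$ with $4Cu^2-4Buv+Av^2$ even, as well as the $\frac{3}{28}A(\cdots)$ term with $A$ even, already lie in $\frac1{14}\Z$.
\item For the $\frac{1}{224}$ term, note that $AC\equiv 0\pmod 8$ forces one of $A,C$ to carry a high power of $2$. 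I would split into the subcases by $v_2(A)$ and $v_2(C)$ with $v_2(A)+v_2(C)\ge3$, and check divisibility by $16$ in each.
\end{itemize}

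Finally, the counts follow by taking the union over the cases: $3+6+6$ values of $(s_2,\tilde s_3)$ in the spin cases, each with $28$ values of $s_1$, and in the non-spin case $14\cdot 6$ triples. Of these non-spin triples, those with $s_2\in\frac13\Z$ and $\tilde s_3=0$ already appear in the spin list, which yields $420+3\cdot14=462$.
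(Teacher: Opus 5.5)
You follow the paper's route in the spin cases: $A\equiv 0 \pmod 8$ when $\det(N,e)=-1$, and $A\equiv 2C\pmod 8$ (i.e.\ $\frac{A}{2}\equiv C\pmod 4$) when $\det(N,e)=1$. Together with $2C^2-2BD+D^2\equiv 1\pmod 4$ for $D$ odd and $C(1+C^2)\equiv 2C\pmod 8$, this gives the claimed values. Your modulo-$3$ analysis is superfluous: none of the sets $\frac{j}{3}$, $\frac{j}{6}$, $\frac{1+2j}{12}$ restricts $24s_2$ modulo $3$. In the non-spin case, reducing the $s_1$-claim to $X:=A(AC+6BC+4BD+9C^2+4D^2+12CD)\equiv 0\pmod{16}$ is correct, and is equivalent to the paper's statement that $28s_1$ is even. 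The gap is that your non-spin argument lacks the one observation that makes both divisibility claims true. The terms $-ABD-AD^2$ must be grouped as $-AD(B+D)$, and since $B$ is odd, $D(B+D)$ is even whatever the parity of $D$.

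Without this, the checks you describe fail. For $s_2$, the terms of the bracket do not vanish modulo $4$ individually. Take $(A,B,C,D)=(2,1,0,1)$: it satisfies $AC-B^2=-1$ and the non-spin parity conditions, and is the family used in the proof of Theorem~\ref{T:free_circle_s}. Yet $-ABD\equiv -AD^2\equiv 2\pmod 4$, and only their sum vanishes. Your sample congruence is also off: $C+C^3+B^2C\equiv 2C+C^3\pmod 4$, whereas $3C+C^3\equiv 2\pmod 4$ for $C\equiv 2\pmod 4$.

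For $s_1$, write $X=AC(A+6B+9C+12D)+4AD(B+D)$. The condition $AC\equiv 0\pmod 8$ (your $v_2(A)+v_2(C)\geq 3$), together with $A+6B+9C+12D$ being even, handles only the first summand. When $v_2(A)=1$ and $D$ is odd, $4ABD$ and $4AD^2$ are each $\equiv 8\pmod{16}$; for $(2,1,0,1)$ one gets $X=8+8=16$. So a case split on $v_2(A)$ and $v_2(C)$ alone does not close the argument. Using that $D(B+D)$ is even gives $AD(B+D)\equiv 0\pmod 4$. Hence the $s_2$-bracket is $\equiv 2C-AD(B+D)\equiv 0\pmod 4$, and $4AD(B+D)\equiv 0\pmod{16}$, which is exactly how the paper concludes.
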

	\begin{proof}
		First suppose that $N$ is spin with $AC-B^2=-1$. Since $B$ is odd, we can write it as $B=2B'+1$ and hence
		\[ AC=B^2-1=4B'(B'+1). \]
		Since $C$ is odd, it follows that $A$ is divisible by $8$. Hence, if $\tilde{s}_3(N(e))=0$, we obtain
		\[ s_2(N(e))=\frac{A}{8}\frac{2C^2-2BD+D^2}{3}=\frac{j}{3} \]
		for $j=\frac{A}{8}(2C^2-2BD+D^2)$, and if $\tilde{s}_3(N(e))=\frac{1}{2}$, we obtain
		\[ s_2(N(e))=\frac{3(D+1)u}{6}+2\frac{A}{8}\frac{2C^2-2BD+D^2}{6}=\frac{j}{6} \]
		for $j=3(D+1)u+2\frac{A}{8}(2C^2-2BD+D^2)$.
		
		Next, suppose that $N$ is spin with $AC-B^2=1$. If $\tilde{s}_3(N(e))=0$, we have that $D$ is odd and we obtain
		\[ s_2(N(e))=-A\frac{2C^2-2BD+D^2}{24}+C\frac{1+C^2}{24}=\frac{1}{12}\left( -\frac{A}{2}(2C^2-2BD+D^2)+C\frac{1+C^2}{2} \right). \]
		We now show that the term in the bracket is divisible by $4$ and hence $s_2(N(e))=\frac{j}{3}$. Indeed, since $B,C,D$ are odd, we have $C^2\equiv 1\mod 4$, $BD\equiv\pm 1\mod 4$ and $D^2\equiv 1\mod 4$. Hence,
		\[ 2C^2-2BD+D^2\equiv 2\pm 2+1\equiv 1\mod 4. \]
		Moreover, since $C$ is odd, we have $C^2\equiv 1\mod 8$ and therefore $\frac{1+C^2}{2}\equiv 1\mod 4$. Finally, since $AC=B^2+1\equiv 2\mod 8$, so $\frac{A}{2}C\equiv 1\mod 4$, we have that $\frac{A}{2}\equiv C\mod 4$. It follows that
		\[-\frac{A}{2}(2C^2-2BD+D^2)+C\frac{1+C^2}{2}\equiv -\frac{A}{2}+C\equiv 0\mod 4.  \]
		
		If $\tilde{s}_3(N(e))=\frac{1}{2}$, we have that $D$ is even and again by using that $\frac{1+C^2}{2}\equiv 1\mod 4$ we obtain
		\[ s_2(N(e))=\frac{D+1}{2}u-A\frac{2C^2-2BD+D^2}{24}+C\frac{1+C^2}{24}=\frac{1+2j}{12} \]
		for
		\[j=3(D+1)u-\frac{A}{2}\left(C^2-BD+\frac{D^2}{2}\right)+\frac{\frac{C(1+C^2)}{2}-1}{2}.\]
		
%		Next, suppose that $N$ is spin with $AC-B^2=1$. We again write $B=2B'+1$ and obtain
%		\[ AC=B^2+1=4B'(B'+1)+2=2(2B'(B'+1)+1). \]
%		It follows that $\frac{A}{2}$ is odd. Hence, if $\tilde{s}_3(N(e))=0$, we have that $D$ is odd and we obtain
%		\[ s_2(N(e))=\frac{A}{2}\frac{2C^2-2BD+D^2}{12}=\frac{1+2j}{12} \]
%		for $j=\frac{1}{2}(\tfrac{A}{2}(2C^2-2BD+D^2)-1)$ (note that this is an integer since $\frac{A}{2}D^2-1$ is even). If $\tilde{s}_3(N(e))=\frac{1}{2}$, we have that $D$ is even and we obtain
%		\[ s_2(N(e))=\frac{D+1}{2}u+\frac{A}{2}\frac{C^2-BD+\frac{D^2}{2}}{6}=\frac{j}{6} \]
%		for $j=3(D+1)u+\frac{A}{2}(C^2-BD+\tfrac{D^2}{2})$.
		
		Finally, suppose that $N$ is non-spin. Then $B$ is odd while $A$ and $C$ are even. In particular, $B^2\equiv 1\mod 8$ and all of $A^2,AC,C^2$ are divisible by $4$. It follows that
		\begin{align*}
			B^2C+3BC^2-ABD-3ACD-AD^2+C+C^3&\equiv C-ABD-AD^2+C\\
			&=2C-AD(B+D)\\
			&\equiv -AD(B+D)\mod 4.
		\end{align*}
		Since $B$ is odd, $D(B+D)$ is even, so $-AD(B+D)\equiv 0\mod 4$. Hence, $s_2(N(e))$ is of the form $\frac{j}{6}$.
		
		To analyse $s_1(N(e))$ when $N$ is non-spin, we write $s_1(N(e))=\frac{k}{28}$ and we obtain
		\begin{align*}
			k=&-18(4Cu^2-4Buv+Av^2)+6B(B+3C+2D)u-3A(B+3C+2D)v\\
			&-\frac{1}{2}AD(B+D)-\frac{1}{8}AC(A+6B+9C+12D)
		\end{align*}
		Since we have $AC-B^2=-1$, it follows as in the first case that $AC$ is divisible by $8$. Moreover, since $B$ is odd, $D(B+D)$ is even as above. Hence, every term in this expression is even and therefore $k$ is even.		
%		To analyse $s_1(N(e))$, we first note, as shown in \cite[Proposition 1]{Xu24}, that $s_1(N(e))$ is always of the form $\frac{k}{14}$ when $N$ is non-spin. We now consider the denominator in the expression for $s_2(N(e))$ $\mod 8$. Since we have $AC-B^2=-1$, it follows as in the first case that $AC$ is divisible by $8$. Hence, we obtain
%		\begin{align*}
%			B^2C+3BC^2-ABD-3ACD-AD^2+C+C^3&\equiv 2C+3BC^2-ABD-AD^2\\
%			&=2C(1+3B\tfrac{C}{2})-AD(B+D)\mod 8.
%		\end{align*}
%		The first term is divisible by $8$, since either $C$ is divisible by $4$, in which case $2C$ is divisible by $8$, or $C$ is even but nor divisible by $4$, in which case $1+3B\frac{C}{2}$ is even. Hence, the overall expression is divisible by $8$, which is equivalent to $s_2(N(e))=\frac{j}{3}$ for some $j=0,1,2$, if and only if at least one of $A$ and $D(B+D)$ is divisible by $4$. Assuming this, if we write $s_1(N(e))=\frac{r}{28}$, we have
%		\begin{align*}
%			r=&-2(4Cu^2-4Buv+Av^2)+6B(B+3C+2D)u-3A(B+3C+2D)v\\
%			&-\frac{1}{2}AD(B+D)-\frac{1}{8}AC(A+6B+9C+12D)\\
%			&\equiv 2u-3ABv-\frac{1}{8}AC(A+2B+C)
%		\end{align*}
%		
%		
%		Since $AC$ is divisible by $8$, at least one of $A$ and $C$ is divisible by $4$.
%		
%		and as in the first case, after writing $B=2B'+1$, we obtain
%		\[AC=4B'(B'+1) \]
%		is divisible by $8$. Hence, at least one of $A$ and $C$ (which are both even) are divisible by $4$. Also note that $B^2=4B'(B'+1)+1\equiv 1 \mod 4$.
	\end{proof}
	
	\subsection{Proof of the main results}
	
	We can now give the proof of Theorems \ref{T:free_circle_s} and \ref{T:Ric>0_s}. To show that all $s$-invariants in Lemma \ref{L:s_inv_nec} can in fact be realised, we consider the manifolds $N_A$ and $N_{(\alpha,0)}$.
	\begin{proposition}\label{P:s_inv_real}
		Let $M$ be a spin cohomology $S^2\times S^5$.
		\begin{enumerate}
			\item If
				\[(s_1(M),s_2(M),\tilde{s}_3(M))=\left(\tfrac{i}{28},\tfrac{j}{3},0\right)\quad \text{or}\quad (s_1(M),s_2(M),\tilde{s}_3(M))=\left(\tfrac{i}{28},\tfrac{j}{6},\tfrac{1}{2}\right),\]
			then there exist $A=(\alpha_1,\alpha_2,\alpha_3)$ and $e\in H^2(N_A)$ such that $\det(N_A,e)=-1$ and $M\cong N_A(e)$.
			\item If
			\[(s_1(M),s_2(M),\tilde{s}_3(M))=\left(\tfrac{i}{28},\tfrac{j}{3},0\right)\quad \text{or}\quad (s_1(M),s_2(M),\tilde{s}_3(M))=\left(\tfrac{i}{28},\tfrac{1+2j}{12},\tfrac{1}{2}\right),\]
			then there exist $A=(\alpha_1,\alpha_2,\alpha_3)$ and $e\in H^2(N_A)$ such that $\det(N_A,e)=1$ and $M\cong N_A(e)$.
			%\item If
			%\begin{align*}
			%	(s_1(M),s_2(M),\tilde{s}_3(M))=\left(\tfrac{i}{28},\tfrac{1+2j}{12},0\right)\quad \text{or}\quad&(s_1(M),s_2(M),\tilde{s}_3(M))=\left(\tfrac{i}{28},\tfrac{j}{3},\tfrac{1}{2}\right)\\
			%	\quad\text{or}\quad &(s_1(M),s_2(M),\tilde{s}_3(M))=\left(\tfrac{2i+1}{28},\tfrac{2j+1}{6},\tfrac{1}{2}\right),
			%\end{align*}
			%then there exist $A=(\alpha_1,\alpha_2,\alpha_3)$ and $e\in H^2(N_A)$ such that $\det(N_A,e)=1$ and $M\cong N_A(e)$.
			\item If
			\begin{align*}
				(s_1(M),s_2(M),\tilde{s}_3(M))=\left(\tfrac{i}{14},\tfrac{j}{3},0\right)\quad \text{or}\quad&(s_1(M),s_2(M),\tilde{s}_3(M))=\left(\tfrac{1+2i}{14},\tfrac{1+2j}{6},0\right),
			\end{align*}
			then there exist $\alpha\in\Z$ and $e\in H^2(N_{(\alpha,0)})$ such that $M\cong N_{(\alpha,0)}(e)$.
		\end{enumerate}
	\end{proposition}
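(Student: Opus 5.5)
The plan is to realise each required triple of $s$-invariants directly. By Theorem \ref{T:class}(2a), two spin cohomology $S^2\times S^5$s with equal $s_1,s_2,\tilde{s}_3$ are orientation-preserving diffeomorphic. It therefore suffices, for each target triple listed in (1)--(3), to exhibit one admissible pair $(N,e)$ whose total space $N(e)$ has that triple.

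For (1) and (2) I take $N=N_A$ with $A=(\alpha_1,\alpha_2,\alpha_3)$. I choose a primitive $e=\lambda e_1+\mu e_2$ with $\det(N_A,e)=\mp1$, which Lemma \ref{L:N_A_inv} writes as a quadratic expression in $\lambda,\mu$ with coefficients depending on $A$. The same lemma then gives $(A,B,C,D,u,v)$, and Lemma \ref{L:s-inv}(1) resp.\ (2) gives the $s$-invariants. Since the expressions are explicit polynomials modulo $1$, I would search over small parameters, preferably by computer as indicated in the introduction and Appendix \ref{S:Tables}. Concretely, I would enumerate triples $A$ with $|\alpha_i|\le R$ and small $(\lambda,\mu)$ satisfying the determinant condition. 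For each, I record $(s_1,s_2,\tilde{s}_3)$ and check that all $28\cdot 9$ target values of case (1), and likewise those of case (2), occur.

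A convenient starting family is $\lambda=1$, $\mu=0$, so $e=e_1$, $s=1$, $t=0$. Then $\det=-\alpha_1\alpha_3$, which forces $\alpha_1\alpha_3=\pm1$ and leaves $\alpha_2$ free. This gives a one-parameter family that already hits many values, with $\tilde{s}_3=\frac{D+1}{2}$ controlled by the parity of $D=\alpha_2-\alpha_3$. Varying $\lambda,\mu$ further supplies the remaining values. For the $s_1$-coordinate it helps to observe that the formulas depend on $u,v$ through low-degree polynomials modulo $1/28$. Since $u,v$ change with $\alpha_i$ in a controlled way, I expect a single arithmetic family to sweep through all $28$ residues.

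For (3) I take $N=N_{(\alpha,0)}$ and $e=\lambda e_1+\mu e_2$, with $\lambda$ odd, $\mu$ even and $\mu^2\alpha-\lambda^2=-1$, as required by Lemma \ref{L:N_ab_inv}. The simplest solutions are $\mu=0$, $\lambda=\pm1$, with $\alpha$ arbitrary. For more freedom I would use Pell-type solutions $\lambda^2-\alpha\mu^2=1$ for nonsquare $\alpha>0$. Substituting into Lemma \ref{L:s-inv}(3) gives $s_1,s_2$, and I check that every pair $(\frac{i}{14},\frac{j}{3})$ and $(\frac{1+2i}{14},\frac{1+2j}{6})$ arises; the restricted parity pattern is what the search should reveal.

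The main obstacle is the sheer surjectivity check. The formulas are too complicated to invert by hand, so the proof amounts to exhibiting finitely many explicit parameter choices, one per target triple, verified by direct substitution and tabulated in the appendix. The risk is that small parameter ranges miss some residues of $s_1$. In that case I would enlarge the search or use Pell solutions to obtain more values.
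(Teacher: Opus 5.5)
Your proposal matches the paper's proof. Since $s_1,s_2,\tilde{s}_3$ determine the oriented diffeomorphism type, the paper exhibits one admissible $(N_A,e)$ resp.\ $(N_{(\alpha,0)},e)$ for each target triple, found by a computer search using Lemmas~\ref{L:N_A_inv}, \ref{L:N_ab_inv} and \ref{L:s-inv} (the $N_{(\alpha,0)}$ cases do come from Pell-type solutions of $\lambda^2-\alpha\mu^2=1$ with $\mu$ even), and lists them in Appendix~\ref{S:Tables}.

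One small correction to your heuristic: for $\det(N_A,e)=-1$ the family $e=e_1$ is degenerate. There $\alpha_1=\alpha_3=\pm1$ forces $A=0$ and $u=v=0$, hence $s_1=s_2=0$, so every other value in case (1) must come from other choices of $(\lambda,\mu)$; only for $\det=1$ does this family give nontrivial values, namely $s_1=\pm\alpha_2^2/28$.
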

	\begin{proof}
		To calculate the $s$-invariants of the total spaces $N_A(e)$ and $N_{(\alpha,0)}(e)$, one can combine Lemma \ref{L:s-inv} with Lemmas \ref{L:N_A_inv} and \ref{L:N_ab_inv}. However, the resulting formulae appear to be highly complicated (see e.g.\ \cite[Propositions 5.2 and 7.11]{EZ14} for some special cases). On the other hand, a simple computer search shows that all the values as claimed are realised. The results of such a search are given in Tables \ref{TA:det=-1}--\ref{TA:N_alpha} below.
	\end{proof}
	
	Finally, for part (3) of Theorem \ref{T:free_circle_s}, we need the following algebraic result.
	\begin{lemma}\label{L:number_theory}
		Let $T\in\N$ and $S\in\Z$. Then
		\begin{enumerate}
			\item For any integer solution $(\lambda,\mu,\alpha_1,\alpha_2,\alpha_3)$ of the equation
			\begin{equation}\label{EQ:la-mu}
				\lambda\mu\alpha_1\alpha_2-\lambda(\lambda+\mu)\alpha_1\alpha_3-\mu(\lambda+\mu)\alpha_2\alpha_3=S,
			\end{equation}
			there exist infinitely many integer solutions of this equation in the congruence class of $(\lambda,\mu,\alpha_1,\alpha_2,\alpha_3)\mod T$.
			\item For any integer solution $(\lambda,\mu,\alpha)$ of the equation
			\begin{equation}\label{EQ:la-mu-a}
				\mu^2\alpha-\lambda^2=S,
			\end{equation}
			there exist infinitely many integer solutions of this equation in the congruence class of $(\lambda,\mu,\alpha)\mod T$.
		\end{enumerate}
	\end{lemma}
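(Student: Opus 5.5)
The plan is to exhibit, for each given solution, explicit one-parameter families of solutions that stay in the same congruence class modulo $T$. The template is part (2), where a direct substitution works. Part (1) should then reduce to automorphisms of a binary quadratic form, with the degenerate cases handled separately.

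\emph{Part (2).} If $\mu=0$, the equation reads $-\lambda^2=S$ and does not involve $\alpha$. Hence $(\lambda,0,\alpha+kT)$, $k\in\Z$, are infinitely many solutions in the required class. If $\mu\neq0$, I would set, for $k\in\Z$,
\[ \lambda_k=\lambda+kT\mu^2,\qquad \mu_k=\mu,\qquad \alpha_k=\alpha+2kT\lambda+k^2T^2\mu^2 . \]
Expanding gives $\mu_k^2\alpha_k-\lambda_k^2=\mu^2\alpha-\lambda^2=S$: the terms $2kT\lambda\mu^2$ and $k^2T^2\mu^4$ cancel. Each coordinate is congruent to the original one modulo $T$, and the $\lambda_k$ are pairwise distinct, so this finishes part (2).

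\emph{Part (1).} I would fix $(\alpha_1,\alpha_2,\alpha_3)$ and regard the left-hand side of \eqref{EQ:la-mu} as the binary quadratic form
\[ q(\lambda,\mu)=-\alpha_1\alpha_3\lambda^2+(\alpha_1\alpha_2-\alpha_1\alpha_3-\alpha_2\alpha_3)\lambda\mu-\alpha_2\alpha_3\mu^2 \]
with discriminant $\Delta$. The argument splits by the shape of $\Delta$.
\begin{enumerate}[(i)]
\item \emph{$\Delta>0$ and not a square.} The proper automorphism group of $q$ is infinite, generated up to sign by an element $\gamma\in \mathrm{SL}_2(\Z)$ of infinite order coming from Pell's equation. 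Its image in the finite group $\mathrm{SL}_2(\Z/T)$ has finite order $n$. Then the powers $\gamma^{nk}$ are $\equiv I\bmod T$, preserve $q$, and move $(\lambda,\mu)$ to infinitely many distinct points of the level set $q=S$. The only exception is $(\lambda,\mu)=(0,0)$, which is covered by (ii).
\item \emph{Degenerate cases.} If a coefficient of some $\alpha_i$ vanishes, that $\alpha_i$ is free, since \eqref{EQ:la-mu} is linear in each $\alpha_i$ separately. Shifting it by multiples of $T$ gives infinitely many solutions. For instance, $\lambda=0$ frees $\alpha_1$, and $\lambda+\mu=0$ frees $\alpha_3$.
\item \emph{Remaining case: $\Delta$ a square or $\Delta\le0$, with all coefficients nonzero.} Here I would instead vary the $\alpha$'s. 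Put $a=\lambda\alpha_1$, $b=\mu\alpha_2$ and $\nu=\lambda+\mu$. Then \eqref{EQ:la-mu} becomes
\[ (a-\nu\alpha_3)(b-\nu\alpha_3)=S+\nu^2\alpha_3^2 , \]
which suggests shifts of $\alpha_3$ compensated by shifts of $\alpha_1,\alpha_2$. Alternatively, I would exploit that \eqref{EQ:la-mu} is also a ternary quadratic form in $(\alpha_1,\alpha_2,\alpha_3)$ with isotropic coordinate vectors $e_i$. A secant through the given solution in direction $y=e_i+mTe_j$ meets the quadric $\{\text{form}=S\}$ in a second point. The aim is to choose $m$ so that this second point is integral and congruent to the original modulo $T$, and to iterate.
\end{enumerate}

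The main obstacle is case (iii), the square or nonpositive discriminant with no free variable, where the Pell argument gives no infinite automorphism group. If the secant construction is hard to control, I would first check whether it suffices to prove part (1) only for the solutions that actually arise from Tables \ref{TA:det=-1}--\ref{TA:N_alpha}. There one can often choose the original solution with $\Delta$ positive and non-square, or with a vanishing coefficient, so that only cases (i) and (ii) are ever needed.
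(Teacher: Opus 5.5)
\textbf{Verdict: there is a genuine gap in part (1).} Part (2) is correct and is exactly the paper's family. In part (1), cases (i) (Pell automorphs reduced mod $T$) and (ii) (free variables) are fine.

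\textbf{The gap is case (iii).} You name two ideas, compensating shifts and a secant construction, but carry out neither. The case is not vacuous: $S=-3$, $(\alpha_1,\alpha_2,\alpha_3)=(-1,-1,-1)$, $(\lambda,\mu)=(1,1)$ gives $\Delta=-3$ and $\lambda\mu(\lambda+\mu)\neq0$, and no $\alpha_i$ has vanishing coefficient. The secant idea also does not work as stated. For $y=e_1+mTe_2$ you get $Q(y)=\lambda\mu mT$, and the second intersection parameter $-B(x,y)/(\lambda\mu mT)$ is in general neither an integer nor a multiple of $T$. Your fallback, treating only solutions from the tables, does not prove the lemma, which is stated for arbitrary $S$ and arbitrary solutions.

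\textbf{The missing step is already in your rewriting.} Put $a=\lambda\alpha_1$, $b=\mu\alpha_2$, $c=\nu\alpha_3$ and $d=a-c$; then the equation reads $d(b-c)=S+c^2$. Keep $d$ fixed and shift $c$ by $nd$. Since $S+(c+nd)^2=d\bigl((b-c)+2nc+n^2d\bigr)$, the triple
\[
a_n=a+nd,\qquad b_n=b+n\bigl(2c+(n+1)d\bigr),\qquad c_n=c+nd
\]
is again a solution. Take $n=m\lambda\mu\nu T$. Then $a_n/\lambda$, $b_n/\mu$, $c_n/\nu$ are integers congruent to $\alpha_1,\alpha_2,\alpha_3$ modulo $T$. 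The family is non-constant in $m$ unless $d=c=0$, i.e.\ $\alpha_1=\alpha_3=0$, and in that case $\alpha_2$ is free. This is the paper's argument. It handles every $(\lambda,\mu)$ with $\lambda\mu(\lambda+\mu)\neq0$ at once, so neither the case split on $\Delta$ nor Pell's equation is needed.

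\textbf{Why your case (i) would not serve the paper anyway.} The family above varies $(\alpha_1,\alpha_2,\alpha_3)$ with $(\lambda,\mu)$ fixed. That is what the proof of Theorem~\ref{T:free_circle_s}(3) uses: it needs infinitely many distinct triples, hence pairwise non-diffeomorphic $N_{A'}$ by \cite[Proposition 4.12]{Re24b}. Your Pell family keeps the triple, and so the base $N_A$, fixed. It satisfies the literal statement, but with your fallback it would not give non-equivalent actions by this route.
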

	\begin{proof}
		\begin{enumerate}
			\item First assume that $\lambda,\mu,(\lambda+\mu)\neq 0$. We set $A=\lambda\alpha_1$, $B=\mu\alpha_2$ and $C=(\lambda+\mu)\alpha_3$. Then \eqref{EQ:la-mu} is equivalent to
			\[ AB-AC-BC=S, \]
			which in turn is equivalent to
			\[ (A-C)(B-C)-C^2=S. \]
			For $n\in\Z$ we define $A_n=A+n(A-C)$, $B_n=B+n(2C+(n+1)(A-C))$ and $C_n=C+n(A-C)$. Then
			\begin{align*}
				(A_n-C_n)(B_n-C_n)-C_n^2=&(A-C)(B-C+n(2C+n(A-C)))-(C+n(A-C))^2\\
				=&(A-C)(B-C)-C^2\\&+n(A-C)(2C+n(A-C))-2nC(A-C)+n^2(A-C)^2\\		
				=&S.
				%			A_nB_n-A_nC_n-B_nC_n=&(A+n(A-C))(B+n(2C+(n+1)(A-C)))\\
				%			&-(A+n(A-C))(C+n(A-C))\\
				%			&-(B+n(2C+(n+1)(A-C)))(C+n(A-C))\\
				%			=&AB-AC-BC\\
				%			&+n(A-C)B+nA(2C+(n+1)(A-C))+n^2(A-C)(SC+(n+1)(A-C))\\
				%			&-n(A-C)C-nA(A-C)-n^2(A-C)^2\\
				%			&-nB(A-C)-nC(2C+(n+1)(A-C))-n^2(A-C)(2C+(n+1)(A-C))
			\end{align*}
			Hence, for $n=m\lambda\mu(\lambda+\mu)T$ with $m\in\Z$, we obtain for
			\begin{align*}
				\lambda^{(m)}&=\lambda,\\
				\mu^{(m)}&=\mu,\\
				\alpha_1^{(m)}&=\alpha_1+m\mu(\lambda+\mu)(A-C)T,\\
				\alpha_2^{(m)}&=\alpha_2+m\lambda(\lambda+\mu)(2C+(n+1)(A-C))T,\\
				\alpha_3^{(m)}&=\alpha_3+m\lambda\mu(A-C)T
			\end{align*}
			a sequence of solutions of \eqref{EQ:la-mu} in the same congruence class of $(\lambda,\mu,\alpha_1,\alpha_2,\alpha_3)\mod T$.
			
			Since we assumed that $\lambda,\mu,(\lambda+\mu)\neq 0$, this sequence is constant in $m$ only when $A-C=0$ and $C=0$, i.e.\ $A=C=0$ and therefore $\alpha_1=\alpha_3=0$. In this case the left-hand side of \eqref{EQ:la-mu} does not depend on $\alpha_2$, so we can freely choose its value. In particular we can assign infinitely many values to $\alpha_2$ while leaving it unchanged $\mod T$.
			
			Finally, assume that one of $\lambda,\mu,(\lambda+\mu)$, say $(\lambda+\mu)$, vanishes. In this case the left-hand side of \eqref{EQ:la-mu} does not depend on $\alpha_3$, and as before we can freely choose its value.
			\item First assume $\mu\neq0$. Then, for $m\in\Z$ we set
			\begin{align*}
				\lambda^{(m)}&=\lambda+\mu^2m T,\\
				\mu^{(m)}&=\mu,\\
				\alpha^{(m)}&=\alpha+(2\lambda+\mu^2mT)mT.
			\end{align*}
			A calculation shows that $(\lambda^{(m)},\mu^{(m)},\alpha^{(m)})$ is a sequence of solutions of \eqref{EQ:la-mu-a} in the same congruence class of $(\lambda,\mu,\alpha)\mod T$. Since $\mu\neq 0$, this sequence is non-constant.
			
			Finally, if $\mu=0$, the left-hand side of \eqref{EQ:la-mu-a} does not depend on $\alpha$, so we can freely choose its value while leaving it unchanged $\mod T$.
		\end{enumerate}
	\end{proof}

	\begin{proof}[Proof of Theorem \ref{T:free_circle_s}]
		The fact that the given $s$-invariants are the only possible $s$-invariants of a spin cohomology $S^2\times S^5$ with a free circle action is the content of Lemma \ref{L:s_inv_nec}. These values are also realised by Proposition \ref{P:s_inv_real}, except if
		\[ (s_1(M),s_2(M),\tilde{s}_3(M))=\left( \tfrac{i}{7},\tfrac{1+2j}{6},0 \right). \]
		In this case, we now construct a closed, simply-connected non-spin $6$-manifold $N$ with torsion-free homology such that $M\cong N(e)$ for some $e\in H^2(N)$.
		
		Set $B=1$, $C=0$, $D=1$ and $v=0$, and let $A\in\Z$ with $A\equiv 2\mod4$ and $u\in\Z$ be odd. Then, by Lemma \ref{L:N_orbit_space}, there is a unique closed, simply-connected non-spin $6$-manifold $N$ realising these values. Let $e\in H^2(N)$ as in Lemma \ref{L:N_orbit_space}. Then, by Lemma \ref{L:s-inv}, the manifold $N(e)$ is a spin cohomology $S^2\times S^5$ with invariants
		\begin{align*}
			s_1(N(e))&=\frac{1}{7}\left( \frac{9}{2}u-\frac{A}{4} \right),\\
			s_2(N(e))&=\frac{A}{12},\\
			\tilde{s}_3(N(e))&=0.
		\end{align*}
		Since $\frac{A}{2}$ is odd, $6s_2(N(e))$ is an odd integer, and any odd integer $\mod6$ can be realised by choosing $A$ appropriately. Since $u$ is odd, $7s_1(N(e))$ is an integer, and by choosing $u$ appropriately, any integer $\mod7$ can be realised.
		
		Moreover, we have $p_1(N(e))(e)=48u+A$. Hence, since for each fixed triple $(\tfrac{i}{7},\frac{1+2j}{6},0)$ of invariants we can choose $u$ arbitrarily large, there is an infinite number of values realising these values. This proves Theorem \ref{T:free_circle_s} in this case.
		
		It remains to prove part (3) of Theorem \ref{T:free_circle_s} when the orbit space is of the form $N_A$ or $N_{(\alpha,0)}$. Suppose first that $M$ is a spin cohomology $S^2\times S^5$ that admits a free circle action with orbit space of the form $N_A$. By Lemmas \ref{L:N_A_inv} and \ref{L:s-inv}, the $s$-invariants of $N_A(e)$ are polynomials in $(\lambda,\mu,\alpha_1,\alpha_2,\alpha_3)$ with rational coefficients. Let $T$ be the least common multiple of the denominators of these coefficients. Then, by item (1) of Lemma \ref{L:number_theory}, there exist infinitely many integer tuples $(\lambda',\mu',\alpha_1',\alpha_2',\alpha_3')$ with 
		\[ \lambda'\mu'\alpha_1'\alpha_2'-\lambda'(\lambda'+\mu')\alpha_1'\alpha_3'-\mu'(\lambda'+\mu')\alpha_2'\alpha_3'=\lambda\mu\alpha_1\alpha_2-\lambda(\lambda+\mu)\alpha_1\alpha_3-\mu(\lambda+\mu)\alpha_2\alpha_3=\pm 1 \]
		in the same congruence class $\mod T$, and hence there are infinitely many tuples $A'=(\lambda',\mu',\alpha_1',\alpha_2',\alpha_3')$ defining manifolds $N_{A'}$ and classes $e'\in H^2(N_{A'})$ such that $N_{A'}(e')$ is a spin cohomology $S^2\times S^5$ with the same $s$-invariants as $M$, hence $N_{A'}(e')\cong M$. By \cite[Proposition 4.12]{Re24b}, there are only finitely many triples $(\alpha_1,\alpha_2,\alpha_3)$ defining diffeomorphic manifolds $N_A$. Hence, there are infinitely many pairwise non-diffeomorphic manifold $N_{A'}$ and classes $e'\in H^2(N_{A'})$ with $N_{A'}(e')\cong M$. By Proposition \ref{P:action=bdl}, this proves part (3) in this case.

		Finally, let $M$ be a spin cohomology $S^2\times S^5$ that admits a free circle action with orbit space of the form $N_{(\alpha,0)}$. Here the argument goes along the same lines as in the previous case, where we use item (2) of Lemma~\ref{L:number_theory} to construct infinitely many values of $(\lambda,\mu,\alpha)$, and we use \cite[Proposition 4.12]{Re24b} to deduce that $N_{(\alpha,0)}$ and $N_{(\alpha',0)}$ are not diffeomorphic when $\alpha\neq\pm\alpha'$.
	\end{proof}
	
	\begin{proof}[Proof of Theorem \ref{T:Ric>0_s}]
		As seen in the proof of Theorem \ref{T:free_circle_s}, when $M$ is a spin cohomology $S^2\times S^5$ with $s$-invariants as in Theorem \ref{T:Ric>0_s}, then $M$ admits infinitely many pairwise non-equivalent free circle actions such that each of these actions has quotient space of the form $N_A$ or $N_{(\alpha,0)}$. Since manifolds of this form admit a Riemannian metric of positive Ricci curvature by \cite[Theorem B]{Re23} and \cite[Theorem 2.7.3]{GW09}, the claim follows from Theorem~\ref{T:Ric>0_bdl}.
	\end{proof}
	
	\subsection{Proof of Proposition \ref{P:pi4}}\label{SS:pi4}
	
	Finally, we can give the proof of Proposition \ref{P:pi4}. For that, we first calculate the $s$-invariants of total spaces of principal circle bundles over the manifolds $N_{(-1,-1,\alpha)}$ and $N_{(\alpha,0)}$. Recall that, by \eqref{EQ:N_A_diff}, these are precisely all total spaces of linear $S^2$-bundles over $\C P^2$.
	\begin{lemma}\label{L:s_inv_S2-bdls}
		Let $\alpha\in\Z$.
		\begin{enumerate}
			\item We consider the manifold $N_{A}$ with $A=(-1,-1,\alpha)$ and set $e=e_1-e_2$. Then $N_{A}(e)$ is a spin cohomology $S^2\times S^5$ with $s$-invariants
			\[ s_1(N_A(e))=s_2(N_A(e))=0,\quad \tilde{s}_3(N_A(e))=\frac{\alpha}{2}\mod 1. \]
			In particular, $N_A(e)\cong S^2\times S^5$ when $\alpha$ is even and $N_A(e)\cong S^5\ttimes S^2$ when $\alpha$ is odd.
			\item We consider the manifold $N_{(\alpha,0)}$ and set $e=e_1$. Then $N_{(\alpha,0)}(e)$ is a spin cohomology $S^2\times S^5$ with $s$-invariants
			\[ s_1(N_{(\alpha,0)}(e))=s_2(N_{(\alpha,0)}(e))=\tilde{s}_3(N_{(\alpha,0)})=0. \]
			In particular, $N_{(\alpha,0)}(e)\cong S^2\times S^5$.
		\end{enumerate}
	\end{lemma}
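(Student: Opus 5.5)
Both parts are direct computations: Lemma \ref{L:N_A_inv} (resp.\ Lemma \ref{L:N_ab_inv}) produces the data $(A,B,C,D,u,v)$, and Lemma \ref{L:s-inv} turns it into $s$-invariants. The identification with $S^2\times S^5$ or $S^5\ttimes S^2$ then follows from Theorem \ref{T:class}(2a) and Example \ref{EX:spin_cohom_S2xS5}.

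For (1), take $A=(\alpha_1,\alpha_2,\alpha_3)=(-1,-1,\alpha)$ and $e=e_1-e_2$, so $\lambda=1$, $\mu=-1$, and choose $s=1$, $t=0$ (so $s\lambda+t\mu=1$).
\begin{itemize}
\item Since $\lambda+\mu=0$, the determinant formula of Lemma \ref{L:N_A_inv} becomes $\det(N_A,e)=-\alpha_1\alpha_3+\alpha_1\alpha_2-\alpha_2\alpha_3=\alpha+1-\alpha=1$. So we are in case (2) of Lemma \ref{L:s-inv}, and $N_A(e)$ is a spin cohomology $S^2\times S^5$.
\item Substituting into Lemma \ref{L:N_A_inv} gives $A=(-1-\alpha)-(-1-\alpha)=0$ and $B=-\alpha+(-1-\alpha)\cdot 1\cdot\ldots$; I would compute these carefully. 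With $t=0$, $s=1$ I get $B=-2\alpha-1$ and $C=(-\alpha)-(-1-\alpha)\cdot(-1)=-2\alpha-1$, hence $AC-B^2=-(2\alpha+1)^2$.
\end{itemize}
This does not equal $1$, which means I have made sign slips somewhere. The correct procedure is to evaluate $\mu_{N_A}$ directly on $e=e_1-e_2$ and $f=e_2$ using Lemma \ref{L:N_A}:
\begin{itemize}
\item $A=\mu(e,e,e)=(\alpha_1-\alpha_3)+3\alpha_3-3\alpha_3-(\alpha_2-\alpha_3)=\alpha_1-\alpha_2=0$;
\item $B=\mu(e,e,f)=-\alpha_3+2\alpha_3+(\alpha_2-\alpha_3)=\alpha_2=-1$;
\item $C=\mu(e,f,f)=-\alpha_3-(\alpha_2-\alpha_3)=-\alpha_2=1$;
\item $D=\alpha_2-\alpha_3=-1-\alpha$.
\end{itemize}
Then $AC-B^2=-1$, so the relevant formulas are those of case (1) of Lemma \ref{L:s-inv}, not case (2). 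From $p_1(N_A)=4(\alpha_1-\alpha_3)e_1^*+4(\alpha_2-\alpha_3)e_2^*$ we read off $p_1(e)=4A+24u=0$ and $p_1(f)=4D+24v=4(-1-\alpha)$, giving $u=v=0$.

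Now plug into case (1) of Lemma \ref{L:s-inv}:
\begin{itemize}
\item Since $A=0$ and $u=v=0$, every term of $s_1$ and $s_2$ vanishes.
\item $\tilde{s}_3=\frac{D+1}{2}=-\frac{\alpha}{2}\equiv\frac{\alpha}{2}\mod 1$.
\end{itemize}
Theorem \ref{T:class}(2a) together with Example \ref{EX:spin_cohom_S2xS5}(1),(2) then identifies $N_A(e)$ with $S^2\times S^5$ for $\alpha$ even and with $S^5\ttimes S^2$ for $\alpha$ odd.

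For (2), take the basis $e=e_1$, $f=e_2$ of $H^2(N_{(\alpha,0)})$.
\begin{itemize}
\item By Lemma \ref{L:N_ab}, $A=0$, $B=1$, $C=\beta=0$ and $D=\alpha$, so $AC-B^2=-1$.
\item $w_2=e_1=e\mod 2$, so condition (3) of Lemma \ref{L:N_orbit_space} holds and $N_{(\alpha,0)}(e)$ is a spin cohomology $S^2\times S^5$.
\end{itemize}
Apply case (3) of Lemma \ref{L:s-inv}:
\begin{itemize}
\item In $s_2$, every monomial contains a factor $A$ or $C$, so $s_2=0$.
\item $\tilde{s}_3=0$ holds by that lemma.
\item For $s_1$, since $A=C=0$, only the term $\frac{3}{14}B(B+3C+2D)u=\frac{3}{14}(1+2\alpha)u$ can survive, so we need $u$.
\end{itemize}
From $p_1=(4\alpha+3)e_2^*$ we have $p_1(e)=0=48u+A$, hence $u=0$. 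Therefore $s_1=0$, and $N_{(\alpha,0)}(e)\cong S^2\times S^5$ by Theorem \ref{T:class}(2a) and Example \ref{EX:spin_cohom_S2xS5}(1).

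The main obstacle is bookkeeping. One must choose the basis $(e,f)$ so that the formulas of Lemma \ref{L:N_orbit_space} apply, select the correct determinant case of Lemma \ref{L:s-inv}, and read $u,v$ from $p_1$ in the normalisation of that lemma (including the spin versus non-spin normalisation of $p_1$). The false start in (1) shows how easily a sign slip sends the computation into the wrong case.
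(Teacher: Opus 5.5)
Your final computation is correct and is essentially the paper's proof, which likewise reads off $(A,B,C,D,u,v)=(0,-1,-1,-\alpha-1,0,0)$ (with $f=e_1$, i.e.\ $(s,t)=(0,-1)$) resp.\ $(0,1,0,\alpha,0,0)$, substitutes these into Lemma~\ref{L:s-inv}, and identifies the total spaces via Example~\ref{EX:spin_cohom_S2xS5}; your choice $f=e_2$ (giving $C=1$ instead of $C=-1$) does not matter because $A=u=v=0$. Your discarded first attempt was only an arithmetic slip, since Lemma~\ref{L:N_A_inv} with $(s,t)=(1,0)$ does give $\det(N_A,e)=-\alpha_1\alpha_2=-1$, $B=-1$, $C=1$, so it should simply be deleted; also, in part (2) the term $\tfrac{18}{7}Buv$ of $s_1$ survives $A=C=0$ as well, though it vanishes once $u=0$.
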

	\begin{proof}
		The proof is a tedious but straightforward calculation using Lemmas \ref{L:N_A_inv}, \ref{L:N_ab_inv} and \ref{L:s-inv}. The values of $(A,B,C,D,u,v)$ are given by $(0,-1,-1,-\alpha-1,0,0)$ (with $(s,t)=(0,-1)$) in the first case and by $(0,1,0,\alpha,0,0)$ (with $(s,t)=(1,0)$) in the second case. To identify the total spaces with $S^2\times S^5$ and $S^5\ttimes S^2\cong\mathrm{SU}(3)$, we use Example \ref{EX:spin_cohom_S2xS5}.
	\end{proof}
	\begin{remark}
		In combination with Lemma \ref{L:princ_circ_hom}, Lemma \ref{L:s_inv_S2-bdls} allows us to compute the homotopy groups of the manifolds $N_{(\alpha,\beta)}$, which might be of independent interest. Indeed, since the manifolds $N_{(\alpha,\beta)}$ are simply-connected and $H_2(N_{(\alpha,\beta)})\cong\Z^2$, we have
		\[ \pi_1(N_{(\alpha,\beta)})=0,\quad\pi_2(N_{(\alpha,\beta)})\cong\Z^2. \]
		Further, since, by Example \ref{EX:spin_cohom_S2xS5}, $S^5\ttimes S^2$ is an $S^1$-quotient of $\mathrm{SU}(3)$, it follows from Lemmas \ref{L:princ_circ_hom} and \ref{L:s_inv_S2-bdls} (and using \eqref{EQ:N_A_diff}) that for $i\geq3$,
		\[ \pi_i(N_{(\alpha,\beta)})\cong \pi_i(S^2\times S^5)\cong\pi_i(S^2)\oplus\pi_i(S^5)\text{ for }\beta=0,\text{ or }\beta=1\text{ and }\alpha\text{ even}, \]
		and
		\[ \pi_i(N_{(\alpha,\beta)})\cong \pi_i(\mathrm{SU}(3))\text{ for }\beta=1\text{ and }\alpha\text{ odd}. \]
	\end{remark}
	
	\begin{proof}[Proof of Proposition \ref{P:pi4}]
		We calculate $\pi_4$ for each of the $24$ oriented homeomorphism types of spin cohomology $S^2\times S^5$s.
		
		We first consider the case $\tilde{s}_3=\frac{1}{2}$. Here Tables \ref{TA:det=1} and \ref{TA:det=-1} show that all $12$ oriented homeomorphism types can be realised by manifolds of the form $N_{(-1,-1,\alpha)}(e)$. More precisely, we extract the following from Tables \ref{TA:det=-1} and \ref{TA:det=1}, respectively.
		
		\begin{longtable}{|ccccc|ccc|}
			\hline $\alpha_1$ & $\alpha_2$ & $\alpha_3$ & $\lambda$ & $\mu$ & $28s_1$ & $12s_2$ & $2\tilde{s}_3$ \\ \hline
			-1 & -1 & -1 & 0 & -1 & 0 & 0 & 1 \\
			-1 & -1 & 1 & 1 & -2 & 25 & 2 & 1\\
			-1 & -1 & 1 & 5 & -13 & 11 & 4 & 1\\
			-1 & -1 & 1 & 2 & -5 & 14 & 6 & 1\\
			-1 & -1 & 19 & 4 & -5 & 14 & 8 & 1\\
			-1 & -1 & 1 & 2 & -1 & 3 & 10 & 1\\\hline
		\end{longtable}
		
		\begin{longtable}{|ccccc|ccc|}
			\hline $\alpha_1$ & $\alpha_2$ & $\alpha_3$ & $\lambda$ & $\mu$ & $28s_1$ & $12s_2$ & $2\tilde{s}_3$ \\ \hline
			-1 & -1 & 13 & 4 & -3 & 21 & 1 & 1\\
			-1 & -1 & 3 & 2 & -1 & 22 & 3 & 1\\
			-1 & -1 & 7 & 2 & -3 & 0 & 5 & 1\\
			-1 & -1 & 7 & 3 & -2 & 0 & 7 & 1\\
			-1 & -1 & 21 & 5 & -4 & 21 & 9 & 1\\
			-1 & -1 & 1 & 0 & -1 & 1 & 11 & 1\\ \hline
		\end{longtable}
		
		This shows that all oriented homeomorphism types $M$ with $(s_2(M),\tilde{s}_3(M))=(\frac{j}{12},\frac{1}{2})$ are of the form $N_{(-1,-1,\alpha)}(e)$ with $\alpha$ odd. By Lemmas \ref{L:princ_circ_hom} and \ref{L:s_inv_S2-bdls}, we have $\pi_4(N_{(-1,-1,\alpha)})\cong \pi_4(S^5\ttimes S^2)\cong \pi_4(\mathrm{SU}(3))=0$ whenever $\alpha$ is odd. Hence, $\pi_4(M)=0$ by Lemma \ref{L:princ_circ_hom}.
		
		Now assume that $M$ is a spin cohomology $S^2\times S^5$ with $\tilde{s}_3=0$. It is shown in \cite[p. 471]{Kr98} that either $\pi_4(M)=0$ or $\pi_4(M)\cong\Z/2$. Assume $\pi_4(M)=0$. Let $M'$ be the spin cohomology $S^2\times S^5$ with invariants
		\[ (s_1(M'),s_2(M'),\tilde{s}_3(M'))=\left(s_1(M),s_2(M),\tfrac{1}{2}\right). \]
		By Theorem \ref{T:hom_class}, $M$ and $M'$ are orientation-preserving homotopy equivalent, since, by the first part of the proof, $\pi_4(M')=0=\pi_4(M)$.
		
		By \cite[Corollary 4.3]{EM16}, the invariants $2s_2$ and $s_3$ are oriented homotopy invariants. Hence, also $\tilde{s}_3=s_3-4s_2$ is an oriented homotopy invariant. It follows that $\tilde{s}_3(M)=\tilde{s}_3(M')$, which is a contradiction. Hence, $\pi_4(M)\cong\Z/2$.
		
		Alternatively, if $s_2(M)$ is of the form $\frac{j}{6}$, we can also use a similar proof as in the case $\tilde{s}_3=\frac{1}{2}$ by using the manifolds in Table \ref{TA:N_alpha} in combination with Lemmas \ref{L:princ_circ_hom} and \ref{L:s_inv_S2-bdls}.
	\end{proof}

	\section{Further results}\label{S:further}
	
	First, we prove the following theorem, which directly implies Theorem \ref{T:conn_sums}.
	\begin{theorem}\label{T:conn_sums_s}
		Let $M$ be a spin cohomology $S^2\times S^5$. If $M$ satisfies the hypotheses of (1) of Theorem \ref{T:free_circle_s}, then the manifolds
			\[ M\#_{(\ell-1)}(S^2\times S^5)\#_m(S^3\times S^4) \]
			for all $\ell\geq 2$, or $\ell=1$ and $m$ even, admit infinitely many pairwise non-equivalent free circle actions and for each of these action an invariant Riemannian metric of positive Ricci curvature.
	\end{theorem}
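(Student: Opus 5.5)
The plan is to use the suspension construction of \cite{Du22} and \cite{GR25}: for a principal circle bundle $M\to N$ with Euler class $e$, connected sums downstairs yield circle bundles upstairs whose total spaces are obtained from $M$ by adding copies of $S^2\times S^5$ and $S^3\times S^4$. By (1) of Theorem \ref{T:free_circle_s} and its proof, $M$ admits infinitely many pairwise non-equivalent free circle actions with orbit spaces of the form $N_A$, which are spin and carry Riemannian metrics of positive Ricci curvature. We fix one such action, with orbit space $N=N_A$ and Euler class $e$. We then form $N'=N\#_{k}(S^2\times S^4)\#_r(S^3\times S^3)$ and take $e'=(e,0,\dots,0)\in H^2(N')$, which is primitive.

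The first step is to identify the total space $N'(e')$. Over the $S^2\times S^4$ and $S^3\times S^3$ summands the bundle restricts to the trivial bundle. The suspension/connected-sum formula from \cite{Du22,GR25} then says that the total space of a circle bundle over $N\# P$, with Euler class restricting trivially to $P$, is $N(e)\#\partial((P\setminus D^6)\times D^2)$, up to a correction coming from the boundary gluing. For $P=S^2\times S^4$ one has $\partial((P\setminus D^6)\times D^2)\cong (S^2\times S^5)\#(S^4\times S^3)$, and for $P=S^3\times S^3$ one gets $\#_2(S^3\times S^4)$. This gives
\[ N'(e')\cong M\#_k(S^2\times S^5)\#_{k+2r}(S^3\times S^4). \]
Setting $k=\ell-1$ and $k+2r=m$ covers $m\ge \ell-1$ with $m\equiv \ell-1 \bmod 2$. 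This is not yet all the cases in the statement, so other summands are needed to adjust parity. One option is to use Euler classes $e'=(e,x)$ with $x$ a generator of $H^2(S^2\times S^4)$, which gives an $S^3\times S^4$-type summand and $(S^2\times S^5)$ contributions instead. Another is to use additional $N_{A''}$ summands with primitive class pairings. I expect this bookkeeping, namely verifying exactly which combinations of $\ell,m$ occur with the diffeomorphism type (including $s_1$) unchanged, to be the main obstacle. If the parity cannot be adjusted, I would use \cite[Corollary H]{GR25} style constructions, together with the fact that $M$ already absorbs exotic spheres via $s_1$. The restriction ``$\ell=1$ and $m$ even'' matches exactly the case $k=0$, where only $S^3\times S^3$ summands contribute and these come in pairs. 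For $\ell\ge2$ one extra $S^2\times S^4$ summand with a twisted Euler class should supply the odd $S^3\times S^4$.

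For positive Ricci curvature, $N'$ is a connected sum of $N_A$ with products of spheres. Such connected sums admit positive Ricci curvature metrics by \cite[Theorem B]{Re23}, or by the plumbing description in \cite{Re24b}, since $S^2\times S^4$ and $S^3\times S^3$ also bound plumbings of the required type. The total space is simply connected, so Theorem \ref{T:Ric>0_bdl} gives invariant metrics of positive Ricci curvature.

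For infinitely many non-equivalent actions, we take the infinitely many pairwise non-diffeomorphic $N_{A'}$ from the proof of Theorem \ref{T:free_circle_s}(3) and form $N_{A'}\#P$ with a fixed $P$. The orbit spaces $N_{A'}\#P$ remain pairwise non-diffeomorphic by Proposition \ref{P:splitting_diffeo}, applied repeatedly to cancel the $S^2\times S^4$ summands. The $S^3\times S^3$ summands only change $b_3$ and are handled by Theorem \ref{T:Jupp} directly. Proposition \ref{P:action=bdl} then gives pairwise non-equivalent actions.
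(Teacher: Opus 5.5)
There is a genuine gap, and it is the bookkeeping you flagged yourself: with the summands you allow, $\ell$ and $m$ cannot be decoupled, and neither of your proposed patches helps.

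Take $N'=N\#P$ with primitive $e'=(e,e_P)$. The Gysin sequence gives $b_2(N'(e'))=b_2(N')-1$ and $b_3(N'(e'))=b_3(N')+\operatorname{rk}\ker\bigl(\smile e'\colon H^2(N')\to H^4(N')\bigr)$. Since $\smile e$ is unimodular on $H^2(N)$ (because $\det(N,e)=\pm1$) and mixed cup products vanish in a connected sum, each summand of $P$ contributes independently to the pair $(\ell-1,m)$:
\begin{itemize}
\item An $S^2\times S^4$ summand contributes $(1,1)$ \emph{for every} restriction $e_P=cx$. Indeed $x\smile x=0$, so $\smile e'$ kills $H^2(S^2\times S^4)$. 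In particular your choice $e'=(e,x)$ still produces an extra $S^3\times S^4$.
\item An $S^3\times S^3$ summand contributes $(0,2)$.
\item An extra $N_{A''}$ summand with $\det(N_{A''},e'')=\pm1$ contributes $(2,0)$.
\end{itemize}
Hence you always get $m\equiv\ell-1\pmod 2$, and without $N_{A''}$ summands also $m\geq\ell-1$. For instance, $M\#(S^2\times S^5)$ and $M\#(S^2\times S^5)\#_2(S^3\times S^4)$ are never reached. Appeals to exotic spheres, $s_1$, or ``Corollary H style'' constructions do not change these Betti numbers.

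The missing ingredient is a summand contributing $(1,0)$, i.e.\ one with $b_2=1$, $b_3=0$ and $\smile e'$ injective on it. The paper uses $\C P^3$ with $e'$ restricting to a generator $a$. By \cite[Theorem A]{GR25}, the total space over $N\#\C P^3\#\cdots$ is $M\#\widetilde{\Sigma}_a\C P^3\#\cdots$. By \cite[Theorem B]{GR25}, there are diffeomorphisms $\widetilde{\Sigma}_a\C P^3\cong S^2\times S^5$ and $\widetilde{\Sigma}_0(S^3\times S^3)\cong(S^3\times S^4)\#(S^3\times S^4)$, so no exotic-sphere correction to $s_1$ arises. This is Proposition \ref{P:tw_susp}, with base spaces
\begin{itemize}
\item $N\#_{(\ell-1)}\C P^3\#_{\frac{m}{2}}(S^3\times S^3)$ for $m$ even, and
\item $N\#(S^2\times S^4)\#_{(\ell-2)}\C P^3\#_{\frac{m-1}{2}}(S^3\times S^3)$ for $m$ odd and $\ell\geq 2$.
\end{itemize}
The hypothesis on $(\ell,m)$ is then explained: odd $m$ forces one $S^2\times S^4$ summand, and that summand forces $\ell\geq2$.

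With this change, the rest of your argument goes through. Proposition \ref{P:splitting_diffeo} applies to the $\C P^3$ summands (as $\mu_{\C P^3}\neq 0$) and to $S^2\times S^4$, while $S^3\times S^3$ only alters $b_3$.

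For positive Ricci curvature, do not rely on a plumbing description of the connected sum. Instead use that $N_A$, $\C P^3$, $S^2\times S^4$ and $S^3\times S^3$ carry core metrics, by \cite[Theorem B]{Re23}, \cite[Theorem C]{Bu19}, \cite[Theorem C]{Re23} and \cite[Theorem B]{Bu20} respectively. Connected sums of manifolds with core metrics admit positive Ricci curvature by \cite[Theorem B]{Bu19}. Then apply Theorem \ref{T:Ric>0_bdl}.
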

	
	For the proof we need the following result.
	\begin{proposition}\label{P:tw_susp}
		Let $N$ be a spin manifold of dimension $6$ and let $M\xrightarrow{\pi} N$ be a principal circle bundle such that $M$ is simply-connected. Then there are principal circle bundles
		\[ M\#_{(\ell-1)}(S^2\times S^5)\#_m(S^3\times S^4)\to N\#_{(\ell-1)}\C P^3\#_{\frac{m}{2}}(S^3\times S^3) \]
		for all $\ell\geq 1$ and all $m\geq 0$ even, and
		\[ M\#_{(\ell-1)}(S^2\times S^5)\#_m(S^3\times S^4)\to N\#(S^2\times S^4)\#_{(\ell-2)}\C P^3\#_{\frac{m-1}{2}}(S^3\times S^3) \]
		for all $\ell\geq 2$ and all $m\geq 1$ odd.
	\end{proposition}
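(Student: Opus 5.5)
We will use the \emph{suspension} construction of Duan \cite{Du22} and of Galaz-García and the author \cite{GR25}, and apply it one summand at a time. The basic input we would take from \cite{Du22,GR25} is the following. Let $P\xrightarrow{\pi}X$ be a principal circle bundle over a closed simply-connected $6$-manifold $X$ with $P$ simply-connected. Let $X'$ be one of $\C P^3$, $S^2\times S^4$ or $S^3\times S^3$. Then $P\#_X X'$, meaning the total space of a suitable principal circle bundle over $X\# X'$, is diffeomorphic to an explicit connected sum. The Euler class we use is $e(\pi)$ plus a class on the $X'$-summand, for instance a generator of $H^2(\C P^3)$, a generator of $H^2(S^2)$, or $0$ on $S^3\times S^3$.

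The total space over $X\# X'$ is obtained by removing $\pi^{-1}(D^6)\cong D^6\times S^1$ from $P$ and gluing in the complement of a tubular circle over the $X'$-side. A direct surgery analysis then identifies the result:
\begin{itemize}
\item over $X\#\C P^3$ (Euler class restricting to a generator) we get $P\#(S^2\times S^5)$, since the Hopf bundle $S^7\to\C P^3$ contributes a sphere;
\item over $X\#(S^3\times S^3)$ with zero class on the new summand we get $P\#_2(S^3\times S^4)$, since $b_3$ doubles;
\item over $X\#(S^2\times S^4)$, with Euler class restricting to a generator on $S^2$, we get $P\#(S^3\times S^4)$; this uses $S^3\times S^4\to S^2\times S^4$.
\end{itemize}
The key hypothesis making these identifications valid is simple connectivity of $P$ together with the spin condition. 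Spinness makes the gluing diffeomorphism of $S^1\times S^5$ irrelevant, since the framing ambiguity lies in $\pi_5(\mathrm{SO}(6))=\Z$ or $\pi_1(\mathrm{SO}(6))=\Z/2$ and is absorbed. This is exactly the content of the suspension lemma in \cite{GR25}, which we would quote.

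With this input the proposition follows by induction on $\ell$ and $m$. In the first family, start from $M\to N$ and apply the $\C P^3$-step $(\ell-1)$ times and the $S^3\times S^3$-step $m/2$ times. Each step keeps the total space simply-connected, since the exact sequence $\pi_2(X)\to\pi_1(S^1)\to\pi_1(P)\to 1$ together with primitivity of the Euler class gives this. Each step also keeps the base spin away from the $\C P^3$ summands, or at least keeps $w_2$ equal to the Euler class mod $2$ wherever the lemma needs it. In the second family, $m$ is odd, so one $S^3\times S^4$ must come from the $S^2\times S^4$-step. One $S^2\times S^5$ summand is then traded for this step. Indeed, the new $S^2$-class plus $e(\pi)$ produces an extra $b_2$ in the base but no extra $S^2\times S^5$. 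This explains why the statement has $\ell\geq 2$ and the count $(\ell-2)$ of $\C P^3$'s. The remaining $(m-1)/2$ summands come from $S^3\times S^3$.

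The main obstacle is the precise diffeomorphism identification of the total space after each step, in particular the $S^2\times S^4$ step and the $\C P^3$ step. There one must check that the resulting connected summand is the standard $S^3\times S^4$ (respectively $S^2\times S^5$) and not an exotic or twisted version such as $S^5\ttimes S^2$. I would handle this by relying on the statements in \cite[Section on suspensions]{GR25} and \cite{Du22}. If needed, I would verify it directly by describing the glued piece as the boundary of a plumbing in which the circle bundle restricted to the $X'$-side is trivialised, using spinness of $N$ to choose the trivialisation compatibly.
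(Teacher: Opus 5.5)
Your overall route is the paper's. You use \cite[Theorem A]{GR25} to write the total space over $N\# X_1\#\dots\# X_k$ as $M\#\widetilde{\Sigma}X_1\#\dots\#\widetilde{\Sigma}X_k$, and then identify the suspensions via \cite[Theorem B]{GR25}. The paper does this in one step rather than by induction, which is immaterial. Your first family is fine: $\widetilde{\Sigma}_{a}\C P^3\cong S^2\times S^5$ and $\widetilde{\Sigma}_0(S^3\times S^3)\cong (S^3\times S^4)\#(S^3\times S^4)$ are exactly what the paper uses.

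The gap is your third bullet, and with it the second family. The $S^2\times S^4$-summand does not contribute just $S^3\times S^4$. Its suspension is obtained from the circle bundle $S^3\times S^4\to S^2\times S^4$ by surgery along a fibre. That fibre is null-homotopic, so the surgery adds a copy of $S^2\times S^5$ (the spin one, since everything here is spin). The contribution is therefore $(S^2\times S^5)\#(S^3\times S^4)$. This is the same mechanism as in your own $\C P^3$ bullet, where the circle bundle $S^7$ contributes nothing and the surgery contributes $S^2\times S^5$; the same answer holds if you put the class $0$ on $S^2\times S^4$.

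The Gysin sequence confirms this. For primitive $e$ over a simply-connected $6$-manifold $B$ one has $H^2(P)\cong H^2(B)/\Z e$, so an extra $b_2$ in the base forces an extra $b_2$ in the total space. This contradicts your sentence ``an extra $b_2$ in the base but no extra $S^2\times S^5$''. Moreover, the new class $y$ on $S^2\times S^4$ satisfies $y\smile e=0$, and this produces the extra $b_3$.

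With your identification, the second family would give $M\#_{(\ell-2)}(S^2\times S^5)\#_m(S^3\times S^4)$, which is not the manifold in the statement. So your explanation of the index $\ell-2$ is wrong. The correct bookkeeping is:
\begin{itemize}
\item $\ell-2$ copies of $S^2\times S^5$ from the $\C P^3$-summands;
\item one $S^2\times S^5$ and one $S^3\times S^4$ from the $S^2\times S^4$-summand;
\item $m-1$ copies of $S^3\times S^4$ from the $\frac{m-1}{2}$ copies of $S^3\times S^3$.
\end{itemize}
The condition $\ell\geq 2$ is needed precisely because the $S^2\times S^4$-summand brings an $S^2\times S^5$ along with its $S^3\times S^4$. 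Replacing your third bullet by $\widetilde{\Sigma}_y(S^2\times S^4)\cong (S^2\times S^5)\#(S^3\times S^4)$ repairs the argument.

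Two minor points. $\C P^3$ is spin, so every base occurring here is spin and no $w_2$ bookkeeping is needed. The framing ambiguity for surgery on a circle in a $7$-manifold lies in $\pi_1(\mathrm{SO}(6))\cong\Z/2$, not in $\pi_5(\mathrm{SO}(6))$.
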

	\begin{proof}
		This is essentially the content of \cite[Lemma 11]{Xu25}. For convenience, we give the proof below.
		
		Let $e_\pi\in H^2(N)$ be the Euler class of the principal circle bundle $\pi$ and let $a_i\in H^2(\C P^3)$ denote a generator for the $i$-th $\C P^3$-summand in $N_{\ell,m}=N\#_{(\ell-1)}\C P^3\#_{\frac{m}{2}}(S^3\times S^3)$. We set
		\[ e_{\ell,m}=(e_\pi,a_1,\dots,a_{\ell-1})\in H^2(N_{\ell,m}). \]
		Let $\pi_{\ell,m}\colon M_{\ell,m}\to N_{\ell,m}$ be the unique principal circle bundle with Euler class $e_{\ell,m}$. Then, by \cite[Theorem A]{GR25}, we have a diffeomorphism
		\[ M_{\ell,m}\cong M\#\widetilde{\Sigma}_{a_1}\C P^3\#\dots\#\widetilde{\Sigma}_{a_{\ell-1}}\C P^3\#_{\frac{m}{2}}\widetilde{\Sigma}_0 (S^3\times S^3).  \]
		Here, $\widetilde{\Sigma}_e B$ for a manifold $B$ and a class $e\in H^2(B)$ denotes the suspension operation of \cite{Du22} and \cite{GR25}, that is, $\widetilde{\Sigma}_e B$ is the result of surgery on the total space of the principal circle bundle over $B$ with Euler class $e$ along a fibre using the non-standard framing.
		
		Finally, by \cite[Theorem B]{GR25}, we have diffeomorphisms
		\[ \widetilde{\Sigma}_{a_i}\C P^3\cong S^2\times S^5,\quad \widetilde{\Sigma}_0(S^3\times S^3)\cong (S^3\times S^4)\#(S^3\times S^4), \]
		which proves the first case. The proof of the second case is analogous.
	\end{proof}
	\begin{proof}[Proof of Theorem \ref{T:conn_sums_s}]
		Suppose that $M$ satisfies the hypotheses of Theorem \ref{T:free_circle_s}. Then, it was shown in the proof of Theorem \ref{T:free_circle_s} that there are infinitely many principal circle bundles $\pi_i\colon M\to N_i$, $i\in\N$, where $N_i$ is a family of closed, simply-connected $6$-manifolds that are pairwise non-diffeomorphic. By Proposition \ref{P:tw_susp}, there are infinitely many principal circle bundles
		\[ M\#_{(\ell-1)}(S^2\times S^5)\#_m(S^3\times S^4)\to N_i\#_{(\ell-1)}\C P^3\#_{\frac{m}{2}}(S^3\times S^3) \]
		with $\ell\geq 1$ and $m\geq 0$ even, and
		\[ M\#_{(\ell-1)}(S^2\times S^5)\#_m(S^3\times S^4)\to N_i\#(S^2\times S^4)\#_{(\ell-2)}\C P^3\#_{\frac{m-1}{2}}(S^3\times S^3) \]
		with $\ell\geq 2$ and $m\geq 1$ odd.
		
		By an iterated application of Proposition \ref{P:splitting_diffeo}, for fixed $\ell$ and $m$, the families of manifolds
		\[N_i\#_{(\ell-1)}\C P^3\#_{\frac{m}{2}}(S^3\times S^3)\quad \text{resp.}\quad N_i\#(S^2\times S^4)\#_{(\ell-2)}\C P^3\#_{\frac{m-1}{2}}(S^3\times S^3),\quad i\in\N\]
		are pairwise non-diffeomorphic. Hence, the circle actions on the total spaces are all pairwise non-equivalent by Proposition \ref{P:action=bdl}.
		
		Finally, all the base spaces admit Riemannian metrics of positive Ricci curvature, and hence, by Theorem~\ref{T:Ric>0_bdl}, the total spaces admit invariant Riemannian metrics of positive Ricci curvature. Indeed, the manifolds $N_i$ in the proof of Theorem \ref{T:free_circle_s} are of the form $N_A$, which admit \emph{core metrics} as defined in \cite{Bu19}, see \cite[Theorem B]{Re23}. Further, also the manifolds $\C P^3$, $S^2\times S^4$ and $S^3\times S^3$ admit core metrics by \cite[Theorem C]{Bu19}, \cite[Theorem C]{Re23} and \cite[Theorem B]{Bu20}, respectively. By \cite[Theorem B]{Bu19}, connected sums of manifolds with core metrics admit Riemannian metrics of positive Ricci curvature. Hence, all the base spaces admit Riemannian metrics of positive Ricci curvature.
	\end{proof}
	
	Finally, the following result illustrates why it is difficult in general to find suitable $6$-manifolds as base spaces for principal circle bundles with total space a spin cohomology $S^2\times S^5$.
	\begin{proposition}
		Let $M$ be a spin cohomology $S^2\times S^5$ and let $N$ be the orbit space of a free circle action on $M$. Suppose that $N$ splits non-trivially as a connected sum $N=N_1\# N_2$. Then both $N_1$ and $N_2$ are homotopy $\C P^3$s and $M$ is homeomorphic to $S^2\times S^5$.
	\end{proposition}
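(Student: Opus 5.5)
By Lemma \ref{L:N_orbit_space}, $N$ is simply-connected with torsion-free homology, $b_2(N)=2$ and $b_3(N)=0$. I would first pin down the summands, then determine $e$ and $\mu_N$, and finally compute the $s$-invariants.

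\emph{Step 1: the summands.} Since $H_*(N_1\# N_2)$ is built from the $H_*(N_i)$ in degrees $1,\dots,5$, both $N_i$ are simply-connected with torsion-free homology and $b_3(N_i)=0$. Moreover $b_2(N_1)+b_2(N_2)=2$. If $b_2(N_1)=0$, Theorem \ref{T:Jupp} forces $N_1\cong S^6$, which makes the splitting trivial. Hence $b_2(N_1)=b_2(N_2)=1$.

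Choose generators $x_i\in H^2(N_i)$ and set $a_i=\mu_{N_i}(x_i,x_i,x_i)$. By Lemma \ref{L:inv_examples}(3), $\mu_N$ splits, so $\mu_N(x_1,x_1,x_2)=\mu_N(x_1,x_2,x_2)=0$. Write the primitive class $e=\lambda x_1+\nu x_2$. The bilinear form $(x,y)\mapsto\mu_N(x,y,e)$ is then diagonal in the basis $(x_1,x_2)$, with entries $\lambda a_1$ and $\nu a_2$. Lemma \ref{L:N_orbit_space}(2) gives
\[ \det(N,e)=\lambda\nu a_1a_2=\pm1, \]
so $\lambda,\nu,a_1,a_2\in\{\pm1\}$. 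In particular $\mu_{N_i}(x_i,x_i,x_i)=\pm1$. A simply-connected $6$-manifold with $H^*\cong H^*(\C P^3;\Z)$ as rings is a homotopy $\C P^3$ (Wall's classification / Jupp's theorem with $p$ arbitrary), so $N_1$ and $N_2$ are homotopy $\C P^3$s.

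\emph{Step 2: normalisation.} Replacing $x_i$ by $-x_i$ and possibly reversing the orientation of $N$, I may assume $\lambda=\nu=1$ and $a_1=1$, with $a_2=\pm1$. Then $\det(N,e)=a_2$. Theorem \ref{T:Jupp} determines $w_2(N_i)$ and $p_1(N_i)$ up to the relation \eqref{EQ:Jupp}. If $w_2(N_i)\ne0$ then $w_2(N_i)=x_i \bmod 2$, and $p_1(N_i)(x_i)\equiv a_i \bmod 48$. If $N_i$ is spin then $p_1(N_i)(x_i)\equiv 4a_i\bmod 24$.

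The key constraint is Lemma \ref{L:N_orbit_space}(3): either $N$ is spin, or $w_2(N)=e=x_1+x_2\bmod 2$. In the second case both summands are non-spin, so the two possibilities are "both spin" or "both non-spin".

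\emph{Step 3: the $s$-invariants.} Take $f=x_2$ as the complement of $e$. Then
\[ A=\mu_N(e,e,e)=1+a_2,\quad B=C=D=a_2. \]
The parities in Lemma \ref{L:N_orbit_space}(a),(b) are satisfied: $A$ is even, and $B,C$ are odd in the spin case. For the non-spin case, (b) requires $C$ even; but $C=a_2$ is odd, so the non-spin case cannot occur. Hence $N$ is spin, and $p_1$ is encoded by the integers $u,v$.

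Now apply Lemma \ref{L:s-inv}. Since $D$ is odd, $\tilde s_3=\tfrac{D+1}{2}=0 \bmod 1$. For $s_2$ I treat the two signs of $a_2$ separately:
\begin{itemize}
\item If $a_2=-1$, then $A=0$ and $D+1=0$, so $s_2=0$.
\item If $a_2=1$, then $A=2$ and $B=C=D=1$. The formula in Lemma \ref{L:s-inv}(2) gives
\[ s_2=u-2\cdot\tfrac{1}{24}+\tfrac{2}{24}=0 \bmod 1. \]
\end{itemize}
Thus $s_2=\tilde s_3=0$ in both cases, and Theorem \ref{T:class}(2b) gives that $M$ is homeomorphic to $S^2\times S^5$.

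The main obstacle is bookkeeping rather than substance. I need to handle the orientation and sign normalisation carefully, and check that the parity condition really rules out the non-spin case for every choice of complement $f$. On the second point, I expect this is fine because $C$ changes only by even amounts when $f$ is changed by multiples of $e$.
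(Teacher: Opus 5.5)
Your proof is correct, and its second half follows a different route from the paper's.

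\textbf{Common part.} The opening argument coincides with the paper's. Both summands have $b_2=1$. The determinant of $(y,z)\mapsto\mu_N(y,z,e)$ in the basis $(x_1,x_2)$ then forces $\lambda,\nu,a_1,a_2\in\{\pm1\}$.

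\textbf{Excluding the non-spin case.} The paper applies Jupp's congruence \eqref{EQ:Jupp} to $N_1$ with $W=x_1$ and $W=3x_1$. This shows that $\mu_{N_1}(x_1,x_1,x_1)$ would have to be even. You instead read the contradiction off the parity condition ``$C$ even'' in Lemma \ref{L:N_orbit_space}(b), using the explicit completion $f=x_2$. Your remark about the choice of $f$ settles it: under $f\mapsto\pm f+ke$, $C$ changes by $\pm2kB+k^2A$, which is even.

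\textbf{The final step.} This is where the routes really differ. The paper uses the twisted-suspension theorems of \cite{GR25} to decompose $M\cong M_1\# M_2\#(S^2\times S^5)$. Here each $M_i$ is a circle bundle with primitive Euler class over a homotopy $\C P^3$, hence a homotopy sphere by \cite{MY66}. You instead substitute $(A,B,C,D)=(1+a_2,a_2,a_2,a_2)$ into Lemma \ref{L:s-inv} and obtain $s_2=\tilde s_3=0$. Your arithmetic is right in both cases $a_2=\pm1$. You then conclude by Theorem \ref{T:class}(2b).

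\textbf{What each route buys.} Your route stays entirely inside the paper's own machinery and is a short computation. However, it relies on the lengthy formulas from \cite{Xu25} and on the full Kreck--Stolz classification. The paper's route is more geometric and never uses the $s$-invariant formulas. It also yields the explicit description $M\cong(S^2\times S^5)\#\Sigma$ with $\Sigma=M_1\#M_2$. By Theorem \ref{T:class}, the two conclusions are in the end equivalent.

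\textbf{A small point of order.} In Step 1 you call the $N_i$ homotopy $\C P^3$s via Wall's classification before you have shown they are spin. This is harmless: an odd value of $x^3$ forces $w_2=0$, either by Wu's formula $w_2x^2=Sq^2(x^2)=0$ or by the paper's Jupp argument. Still, the citation reads more cleanly after Step 3, which is where the paper places it.
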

	\begin{proof}
		By Lemma \ref{L:N_orbit_space}, $N$ is simply-connected, has vanishing third Betti number and torsion-free cohomology. Hence, the same holds for $N_1$ and $N_2$. It follows that $N_1$ and $N_2$ satisfy $b_2(N_1)=b_2(N_2)=1$, since otherwise it would follow from Theorem \ref{T:Jupp} that one of $N_1$ and $N_2$ is the $6$-sphere $S^6$.
		
		Let $e=\lambda_1 x_1+\lambda_2 x_2\in H^2(N_1\# N_2)\cong H^2(N)$ be the Euler class corresponding to the principal circle bundle $M\to N$. Since the number $AC-B^2$ in Lemma \ref{L:N_orbit_space} is the determinant of the bilinear form $(y,z)\mapsto \mu_N(y,z,e)$ in the basis $(e,f)$, we can equally compute it by considering the basis $(x_1,x_2)$ instead of $(e,f)$. Here we obtain from (3) of Lemma \ref{L:inv_examples}
		\begin{align*}
			\pm 1=AC-B^2&=\det\begin{pmatrix}
				\mu_N(x_1,x_1,e) & \mu_N(x_1,x_2,e)\\
				\mu_N(x_2,x_1,e) & \mu_N(x_2,x_2,e)
			\end{pmatrix}\\
			&=\det\begin{pmatrix}
				\lambda_1 \mu_{N_1}(x_1,x_1,x_1) & 0\\
				0 & \lambda_2 \mu_{N_2}(x_2,x_2,x_2)
			\end{pmatrix}\\
			&=\lambda_1\lambda_2 \mu_{N_1}(x_1,x_1,x_1)\mu_{N_2}(x_2,x_2,x_2).
		\end{align*}
		Hence, all of $\lambda_1,\lambda_2,\mu_{N_1}(x_1,x_1,x_1),\mu_{N_2}(x_2,x_2,x_2)$ are equal to $\pm1$.
		
		Now suppose that one of $N_i$, say $N_1$, is non-spin. Then, by (3) of Lemma \ref{L:N_orbit_space}, $w_2(N_1)=x_1\mod 2$, so we have
		\[ a_1^3\mu_{N_1}(x_1,x_1,x_1)\equiv a_1 p_1(N_1)(x_1)\mod 48 \]
		for all odd $a_1\in\Z$ by \eqref{EQ:Jupp}. In particular, we have
		\[ 3p_1(N_1)(x_1)\equiv 27\mu_{N_1}(x_1,x_1,x_1)\equiv 27p_1(N_1)(x_1)\mod 48, \]
		so $24p_1(N_1)(x_1)\equiv 0\mod 48$, thus $p_1(N_1)(x_1)$ is even. It follows that $\mu_{N_1}(x_1,x_1,x_1)\equiv p_1(N_1)(x_1)\mod 48$ is even as well, which is a contradiction. Hence, both $N_1$ and $N_2$ are spin.
		
		Since $\mu_{N_1}(x_1,x_1,x_1)=\pm1$, it then follows that both $N_i$ have the cohomology ring of $\C P^3$ and hence are homotopy $\C P^3$s (see e.g.\ \cite[p.\ 41]{Re24b} and (2) of Lemma \ref{L:inv_examples}).
		
		Finally, since the restriction of $e$ to each $N_i$ is primitive as $\lambda_i=\pm1$, it follows from \cite[Theorems A and B]{GR25} that
		\[ M\cong M_1\# M_2\# (S^2\times S^5), \]
		where $M_i$ is the total space of the principal circle bundle over $N_i$ with Euler class $\lambda_i x_i$. Each $M_i$ is a homotopy $7$-sphere by \cite[Lemma 6]{MY66}, so $M$ is of the form $(S^2\times S^5)\#\Sigma$ with $\Sigma $ a homotopy $7$-sphere. Hence, $M$ is homeomorphic to $S^2\times S^5$.
	\end{proof}
	
	\appendix
	
	\section{Tables}\label{S:Tables}
	
	{
	
	\Small

		\centering
		
	\begin{paracol}{3}
		
	\begin{supertabular}{|w{c}{.4cm}w{c}{.4cm}w{c}{.4cm}w{c}{.35cm}w{c}{.35cm}|@{\hspace{\tabcolsep}}HHHHHHw{c}{.25cm}w{c}{.25cm}w{c}{.25cm}|}
		\hline $\alpha_1$ & $\alpha_2$ & $\alpha_3$ & $\lambda$ & $\mu$ & $A$ & $B$ & $C$ & $D$ & $u$ & $v$ & $28s_1$ & $12s_2$ & $2\tilde{s}_3$ \\ \hline
		0 & -1 & -1 & 0 & -1 & 0 & 1 & -1 & 1 & 0 & 0 & 0 & 0 & 0 \\
		19 & 13 & -4 & 1 & -7 & -5304 & 781 & -115 & 17 & 868 & 0 & 1 & 0 & 0 \\
		47 & -7 & -4 & 1 & -7 & 1584 & -199 & 25 & -3 & -252 & 0 & 2 & 0 & 0 \\
		23 & 2 & 5 & 1 & -6 & 216 & -53 & 13 & -3 & -30 & 0 & 3 & 0 & 0 \\
		43 & 33 & -38 & 1 & -3 & -1152 & 449 & -175 & 71 & 170 & 0 & 4 & 0 & 0 \\
		19 & 3 & -8 & 1 & -9 & -6264 & 755 & -91 & 11 & 1032 & 0 & 5 & 0 & 0 \\
		23 & 2 & 17 & 1 & -10 & 10416 & -1177 & 133 & -15 & -1710 & 0 & 6 & 0 & 0 \\
		43 & 2 & 13 & 1 & -18 & 52248 & -3109 & 185 & -11 & -8670 & 0 & 7 & 0 & 0 \\
		13 & 2 & 31 & 1 & -6 & 3456 & -703 & 143 & -29 & -550 & 0 & 8 & 0 & 0 \\
		3 & 2 & -11 & 1 & -2 & -24 & 19 & -15 & 13 & 2 & 0 & 9 & 0 & 0 \\
		1 & -6 & -1 & 1 & -2 & 48 & -23 & 11 & -5 & -6 & 0 & 10 & 0 & 0 \\
		-1 & -1 & 8 & 19 & -27 & 16920 & -6299 & 2345 & -873 & -2808 & 141 & 11 & 0 & 0 \\
		1 & -24 & -1 & 1 & -2 & 192 & -95 & 47 & -23 & -24 & 0 & 12 & 0 & 0 \\
		17 & -3 & -2 & 1 & -5 & 264 & -43 & 7 & -1 & -40 & 0 & 13 & 0 & 0 \\
		27 & -7 & -2 & 1 & -11 & 7344 & -647 & 57 & -5 & -1210 & 0 & 14 & 0 & 0 \\
		47 & 4 & -205 & 1 & -12 & -279720 & 25381 & -2303 & 209 & 46244 & 0 & 15 & 0 & 0 \\
		19 & 9 & -32 & 1 & -3 & -480 & 209 & -91 & 41 & 68 & 0 & 16 & 0 & 0 \\
		37 & 5 & -26 & 1 & -9 & -16920 & 2069 & -253 & 31 & 2784 & 0 & 17 & 0 & 0 \\
		29 & 7 & 38 & 1 & -3 & 144 & -89 & 55 & -31 & -10 & 0 & 18 & 0 & 0 \\
		23 & 14 & -13 & 1 & -4 & -1224 & 341 & -95 & 27 & 192 & 0 & 19 & 0 & 0 \\
		37 & 9 & -52 & 1 & -5 & -4416 & 1057 & -253 & 61 & 700 & 0 & 20 & 0 & 0 \\
		41 & 14 & -51 & 1 & -4 & -2232 & 683 & -209 & 65 & 344 & 0 & 21 & 0 & 0 \\
		1 & -30 & -1 & 1 & -2 & 240 & -119 & 59 & -29 & -30 & 0 & 22 & 0 & 0 \\
		43 & 38 & -99 & 1 & -2 & -360 & 251 & -175 & 137 & 38 & 0 & 23 & 0 & 0 \\
		11 & 1 & 2 & 1 & -3 & 0 & 1 & 1 & -1 & 2 & 0 & 24 & 0 & 0 \\
		41 & 6 & 59 & 1 & -6 & 6120 & -1259 & 259 & -53 & -970 & 0 & 25 & 0 & 0 \\
		1 & 18 & -1 & 1 & -2 & -144 & 73 & -37 & 19 & 18 & 0 & 26 & 0 & 0 \\
		47 & 8 & -451 & 1 & -6 & -58056 & 11563 & -2303 & 459 & 9300 & 0 & 27 & 0 & 0 \\
		43 & -25 & -8 & 1 & -5 & 2656 & -497 & 93 & -17 & -420 & 0 & 0 & 4 & 0 \\
		13 & -4 & -5 & 1 & -2 & 40 & -11 & 3 & 1 & -4 & 0 & 1 & 4 & 0 \\
		1 & 10 & -1 & 1 & -2 & -80 & 41 & -21 & 11 & 10 & 0 & 2 & 4 & 0 \\
		43 & 22 & -1891 & 1 & -2 & -2024 & 1979 & -1935 & 1913 & 22 & 0 & 3 & 4 & 0 \\
		1 & -8 & -1 & 1 & -2 & 64 & -31 & 15 & -7 & -8 & 0 & 4 & 4 & 0 \\
		19 & 10 & -379 & 1 & -2 & -440 & 419 & -399 & 389 & 10 & 0 & 5 & 4 & 0 \\
		1 & -26 & -1 & 1 & -2 & 208 & -103 & 51 & -25 & -26 & 0 & 6 & 4 & 0 \\
		35 & 16 & 373 & 1 & -2 & 280 & -309 & 341 & -357 & 16 & 0 & 7 & 4 & 0 \\
		1 & -44 & -1 & 1 & -2 & 352 & -175 & 87 & -43 & -44 & 0 & 8 & 4 & 0 \\
		19 & 14 & -59 & 1 & -2 & -152 & 115 & -87 & 73 & 14 & 0 & 9 & 4 & 0 \\
		1 & 22 & -1 & 1 & -2 & -176 & 89 & -45 & 23 & 22 & 0 & 10 & 4 & 0 \\
		35 & -8 & -11 & 1 & -2 & 88 & -21 & 5 & 3 & -8 & 0 & 11 & 4 & 0 \\
		1 & 4 & -1 & 1 & -2 & -32 & 17 & -9 & 5 & 4 & 0 & 12 & 4 & 0 \\
		43 & 16 & 125 & 1 & -2 & 40 & -61 & 93 & -109 & 16 & 0 & 13 & 4 & 0 \\
		1 & -14 & -1 & 1 & -2 & 112 & -55 & 27 & -13 & -14 & 0 & 14 & 4 & 0 \\
		37 & 34 & 45791 & 13 & -14 & 33784 & -411 & 5 & 3 & 2002 & 0 & 15 & 4 & 0 \\
		1 & -32 & -1 & 1 & -2 & 256 & -127 & 63 & -31 & -32 & 0 & 16 & 4 & 0 \\
		7 & 11 & -2 & 1 & -5 & -1496 & 307 & -63 & 13 & 240 & 0 & 17 & 4 & 0 \\
		1 & 34 & -1 & 1 & -2 & -272 & 137 & -69 & 35 & 34 & 0 & 18 & 4 & 0 \\
		13 & 10 & -37 & 1 & -2 & -104 & 77 & -57 & 47 & 10 & 0 & 19 & 4 & 0 \\
		1 & 16 & -1 & 1 & -2 & -128 & 65 & -33 & 17 & 16 & 0 & 20 & 4 & 0 \\
		14 & 29 & 811 & 2 & -1 & -728 & -755 & -783 & -797 & -14 & 0 & 21 & 4 & 0 \\
		-1 & -1 & 4 & 41 & -25 & -69680 & 30551 & -13395 & 5873 & 11600 & -973 & 22 & 4 & 0 \\
		46 & 35 & -733 & 8 & -7 & 12280 & -1229 & 123 & -11 & -1904 & 0 & 23 & 4 & 0 \\
		1 & -20 & -1 & 1 & -2 & 160 & -79 & 39 & -19 & -20 & 0 & 24 & 4 & 0 \\
		8 & 19 & 101 & 2 & -1 & -56 & -69 & -85 & -93 & -8 & 0 & 25 & 4 & 0 \\
		1 & -38 & -1 & 1 & -2 & 304 & -151 & 75 & -37 & -38 & 0 & 26 & 4 & 0 \\
		32 & 29 & -53 & 2 & -1 & 280 & 181 & 117 & 85 & -32 & 0 & 27 & 4 & 0 \\
		1 & -28 & -1 & 1 & -2 & 224 & -111 & 55 & -27 & -28 & 0 & 0 & 8 & 0 \\
		29 & 32 & -53 & 1 & -2 & -280 & 181 & -117 & 85 & 32 & 0 & 1 & 8 & 0 \\
		1 & -46 & -1 & 1 & -2 & 368 & -183 & 91 & -45 & -46 & 0 & 2 & 8 & 0 \\
		19 & 8 & 101 & 1 & -2 & 56 & -69 & 85 & -93 & 8 & 0 & 3 & 8 & 0 \\
		1 & 20 & -1 & 1 & -2 & -160 & 81 & -41 & 21 & 20 & 0 & 4 & 8 & 0 \\
		35 & 46 & -733 & 7 & -8 & -12280 & -1229 & -123 & -11 & 1904 & 0 & 5 & 8 & 0 \\
		-1 & -1 & 4 & 25 & -41 & 69680 & 30551 & 13395 & 5873 & -11600 & -973 & 6 & 8 & 0 \\
		29 & 14 & 811 & 1 & -2 & 728 & -755 & 783 & -797 & 14 & 0 & 7 & 8 & 0 \\
		1 & -16 & -1 & 1 & -2 & 128 & -63 & 31 & -15 & -16 & 0 & 8 & 8 & 0 \\
		37 & -10 & -13 & 1 & -2 & 104 & -27 & 7 & 3 & -10 & 0 & 9 & 8 & 0 \\
		1 & -34 & -1 & 1 & -2 & 272 & -135 & 67 & -33 & -34 & 0 & 10 & 8 & 0 \\
		37 & 20 & -493 & 1 & -2 & -616 & 573 & -533 & 513 & 20 & 0 & 11 & 8 & 0 \\
		1 & 32 & -1 & 1 & -2 & -256 & 129 & -65 & 33 & 32 & 0 & 12 & 8 & 0 \\
		34 & 37 & 45791 & 14 & -13 & -33784 & -411 & -5 & 3 & -2002 & 0 & 13 & 8 & 0 \\
		1 & 14 & -1 & 1 & -2 & -112 & 57 & -29 & 15 & 14 & 0 & 14 & 8 & 0 \\ 
		16 & 43 & 125 & 2 & -1 & -40 & -61 & -93 & -109 & -16 & 0 & 15 & 8 & 0 \\
		11 & 7 & -4 & 1 & -5 & -1120 & 239 & -51 & 11 & 180 & 0 & 16 & 8 & 0 \\
		11 & 8 & -35 & 1 & -2 & -88 & 67 & -51 & 43 & 8 & 0 & 17 & 8 & 0 \\
		1 & -22 & -1 & 1 & -2 & 176 & -87 & 43 & -21 & -22 & 0 & 18 & 8 & 0 \\
		11 & 2 & -5 & 1 & -8 & -2728 & 373 & -51 & 7 & 448 & 0 & 19 & 8 & 0 \\
		1 & -40 & -1 & 1 & -2 & 320 & -159 & 79 & -39 & -40 & 0 & 20 & 8 & 0 \\
		11 & 4 & 29 & 1 & -2 & 8 & -13 & 21 & -25 & 4 & 0 & 21 & 8 & 0 \\
		1 & 26 & -1 & 1 & -2 & -208 & 105 & -53 & 27 & 26 & 0 & 22 & 8 & 0 \\
		10 & 19 & -379 & 2 & -1 & 440 & 419 & 399 & 389 & -10 & 0 & 23 & 8 & 0 \\
		1 & 8 & -1 & 1 & -2 & -64 & 33 & -17 & 9 & 8 & 0 & 24 & 8 & 0 \\
		5 & 2 & 19 & 1 & -2 & 8 & -11 & 15 & -17 & 2 & 0 & 25 & 8 & 0 \\
		1 & -10 & -1 & 1 & -2 & 80 & -39 & 19 & -9 & -10 & 0 & 26 & 8 & 0 \\
		5 & 4 & -13 & 1 & -2 & -40 & 29 & -21 & 17 & 4 & 0 & 27 & 8 & 0 \\\hline
		\end{supertabular}
		
		\switchcolumn
		
		\begin{supertabular}{|w{c}{.4cm}w{c}{.4cm}w{c}{.4cm}w{c}{.35cm}w{c}{.35cm}|@{\hspace{\tabcolsep}}HHHHHHw{c}{.25cm}w{c}{.25cm}w{c}{.25cm}|}
		\hline $\alpha_1$ & $\alpha_2$ & $\alpha_3$ & $\lambda$ & $\mu$ & $A$ & $B$ & $C$ & $D$ & $u$ & $v$ & $28s_1$ & $12s_2$ & $2\tilde{s}_3$ \\ \hline
		-1 & -1 & -1 & 0 & -1 & 0 & 1 & -1 & 0 & 0 & 0 & 0 & 0 & 1 \\
		19 & 3 & -13 & 1 & -8 & -5976 & 829 & -115 & 16 & 980 & 0 & 1 & 0 & 1 \\
		47 & 3 & 7 & 1 & -8 & 912 & -151 & 25 & -4 & -140 & 0 & 2 & 0 & 1 \\
		41 & -23 & -221 & 3 & 5 & 111384 & 41651 & 15575 & 5824 & -18268 & -861 & 3 & 0 & 1 \\
		29 & 11 & -1531 & 3 & -8 & -196224 & 78401 & -31325 & 12516 & 31428 & -1575 & 4 & 0 & 1 \\
		43 & 31 & 3199 & 3 & -4 & 2376 & -109 & 5 & 12 & 138 & 0 & 5 & 0 & 1 \\
		11 & -1 & -1 & 1 & -4 & 48 & -7 & 1 & 0 & -6 & 0 & 6 & 0 & 1 \\
		23 & 5 & 51 & 1 & -4 & 1080 & -379 & 133 & -46 & -154 & 0 & 7 & 0 & 1 \\
		29 & -13 & -17 & 4 & -7 & 5856 & -1585 & 429 & -116 & -950 & 13 & 8 & 0 & 1 \\
		13 & 11 & -343 & 3 & -4 & -696 & -59 & -5 & 2 & 58 & 0 & 9 & 0 & 1 \\
		17 & 13 & -2651 & 3 & -4 & -3024 & -55 & -1 & 4 & 62 & 0 & 10 & 0 & 1 \\
		37 & 13 & -3 & 1 & -16 & -63336 & 4003 & -253 & 16 & 10520 & 0 & 11 & 0 & 1 \\
		27 & -5 & -341 & 3 & 16 & 2319168 & -732449 & 231325 & -73058 & -385448 & 11835 & 12 & 0 & 1 \\
		23 & -5 & -3 & 1 & -5 & 456 & -77 & 13 & -2 & -70 & 0 & 13 & 0 & 1 \\
		43 & -17 & -3 & 1 & -13 & 32208 & -2441 & 185 & -14 & -5330 & 0 & 14 & 0 & 1 \\
		23 & -1 & -1 & 3 & -16 & 2520 & 811 & 261 & 84 & -408 & -10 & 15 & 0 & 1 \\
		35 & 11 & -57 & 1 & -4 & -2208 & 689 & -215 & 68 & 338 & 0 & 16 & 0 & 1 \\
		45 & 17 & 137 & 3 & -4 & 264 & 133 & 67 & 28 & -10 & 0 & 17 & 0 & 1 \\
		15 & 11 & 1979 & 3 & -4 & 1680 & -41 & 1 & 4 & 50 & 0 & 18 & 0 & 1 \\
		7 & 1 & 3 & 1 & -4 & 24 & -11 & 5 & -2 & -2 & 0 & 19 & 0 & 1 \\
		37 & 3 & 121 & 1 & -12 & 155904 & -14209 & 1295 & -118 & -25762 & 0 & 20 & 0 & 1 \\
		41 & 5 & 13 & 1 & -4 & 72 & -37 & 19 & -8 & -2 & 0 & 21 & 0 & 1 \\
		1 & 1 & -11 & 3 & -4 & -48 & -7 & -1 & 0 & 6 & 0 & 22 & 0 & 1 \\
		41 & -9 & -59 & 1 & 4 & 6840 & 1331 & 259 & 50 & -1090 & 0 & 23 & 0 & 1 \\
		41 & 17 & -7 & 1 & -9 & -15936 & 1825 & -209 & 24 & 2628 & 0 & 24 & 0 & 1 \\
		29 & 11 & 89 & 3 & -4 & 168 & 85 & 43 & 18 & -6 & 0 & 25 & 0 & 1 \\
		13 & -31 & -3 & 1 & -5 & 3696 & -727 & 143 & -28 & -590 & 0 & 26 & 0 & 1 \\
		29 & 43 & 1151 & 4 & -3 & -456 & -77 & -13 & 14 & -118 & 0 & 27 & 0 & 1 \\
		7 & 13 & -181 & 2 & -1 & 224 & 209 & 195 & 188 & -7 & 0 & 0 & 2 & 1 \\
		1 & -37 & -1 & 1 & -2 & 296 & -147 & 73 & -36 & -37 & 0 & 1 & 2 & 1 \\
		29 & 17 & -197 & 1 & -2 & -304 & 265 & -231 & 214 & 17 & 0 & 2 & 2 & 1 \\
		1 & 29 & -1 & 1 & -2 & -232 & 117 & -59 & 30 & 29 & 0 & 3 & 2 & 1 \\
		29 & 11 & 91 & 1 & -2 & 32 & -47 & 69 & -80 & 11 & 0 & 4 & 2 & 1 \\
		1 & 11 & -1 & 1 & -2 & -88 & 45 & -23 & 12 & 11 & 0 & 5 & 2 & 1 \\
		19 & 11 & -139 & 1 & -2 & -208 & 183 & -161 & 150 & 11 & 0 & 6 & 2 & 1 \\
		1 & -7 & -1 & 1 & -2 & 56 & -27 & 13 & -6 & -7 & 0 & 7 & 2 & 1 \\
		37 & 17 & 419 & 1 & -2 & 320 & -351 & 385 & -402 & 17 & 0 & 8 & 2 & 1 \\
		1 & -25 & -1 & 1 & -2 & 200 & -99 & 49 & -24 & -25 & 0 & 9 & 2 & 1 \\
		37 & 47 & -61 & 1 & -2 & -400 & 249 & -155 & 108 & 47 & 0 & 10 & 2 & 1 \\
		1 & -43 & -1 & 1 & -2 & 344 & -171 & 85 & -42 & -43 & 0 & 11 & 2 & 1 \\
		19 & 17 & -43 & 1 & -2 & -160 & 111 & -77 & 60 & 17 & 0 & 12 & 2 & 1 \\
		1 & 23 & -1 & 1 & -2 & -184 & 93 & -47 & 24 & 23 & 0 & 13 & 2 & 1 \\
		19 & 35 & -443 & 2 & -1 & 560 & 519 & 481 & 462 & -19 & 0 & 14 & 2 & 1 \\
		1 & 5 & -1 & 1 & -2 & -40 & 21 & -11 & 6 & 5 & 0 & 15 & 2 & 1 \\
		47 & 13 & -7 & 1 & -11 & -24256 & 2273 & -213 & 20 & 4015 & 0 & 16 & 2 & 1 \\
		1 & -13 & -1 & 1 & -2 & 104 & -51 & 25 & -12 & -13 & 0 & 17 & 2 & 1 \\
		11 & 13 & -19 & 1 & -2 & -112 & 71 & -45 & 32 & 13 & 0 & 18 & 2 & 1 \\
		1 & -31 & -1 & 1 & -2 & 248 & -123 & 61 & -30 & -31 & 0 & 19 & 2 & 1 \\
		29 & -37 & 67 & 1 & 1 & -544 & 239 & -105 & 38 & 67 & 0 & 20 & 2 & 1 \\
		1 & -49 & -1 & 1 & -2 & 392 & -195 & 97 & -48 & -49 & 0 & 21 & 2 & 1 \\
		37 & 7 & 29 & 2 & -7 & 1520 & 569 & 213 & 80 & -225 & -1 & 22 & 2 & 1 \\
		1 & 17 & -1 & 1 & -2 & -136 & 69 & -35 & 18 & 17 & 0 & 23 & 2 & 1 \\
		35 & 17 & 1189 & 1 & -2 & 1088 & -1121 & 1155 & -1172 & 17 & 0 & 24 & 2 & 1 \\
		-1 & -1 & 1 & 1 & -2 & 8 & -5 & 3 & -2 & -1 & 0 & 25 & 2 & 1 \\
		47 & 5 & -11 & 1 & -14 & -37840 & 2839 & -213 & 16 & 6279 & 0 & 26 & 2 & 1 \\
		1 & -19 & -1 & 1 & -2 & 152 & -75 & 37 & -18 & -19 & 0 & 27 & 2 & 1 \\
		37 & -23 & -7 & 1 & -5 & 2464 & -463 & 87 & -16 & -390 & 0 & 0 & 4 & 1 \\
		37 & 29 & 7153 & 4 & -5 & 5896 & -133 & 3 & 8 & 210 & 0 & 1 & 4 & 1 \\
		23 & 19 & 2719 & 7 & -8 & 880 & -89 & 9 & 4 & 308 & 0 & 2 & 4 & 1 \\
		29 & 37 & -311 & 4 & -5 & -3080 & -461 & -69 & -8 & 450 & 0 & 3 & 4 & 1 \\
		23 & 29 & 13339 & 5 & -4 & -12320 & -111 & -1 & 6 & -170 & 0 & 4 & 4 & 1 \\
		35 & 43 & -10033 & 5 & -4 & 11656 & -187 & 3 & 8 & -270 & 0 & 5 & 4 & 1 \\
		17 & 23 & 1117 & 5 & -4 & -464 & -57 & -7 & 6 & -110 & 0 & 6 & 4 & 1 \\
		47 & 7 & 19 & 13 & -29 & 10360 & -3109 & 933 & -280 & -1608 & 10 & 7 & 4 & 1 \\
		31 & 25 & -15499 & 4 & -5 & -16640 & -129 & -1 & 6 & 190 & 0 & 8 & 4 & 1 \\
		31 & 7 & 227 & 4 & -17 & 466312 & 107493 & 24779 & 5712 & -77226 & -838 & 9 & 4 & 1 \\
		37 & 31 & -3277 & 4 & -5 & -4784 & -183 & -7 & 6 & 250 & 0 & 10 & 4 & 1 \\
		-1 & -1 & 1 & 5 & -13 & 2584 & 987 & 377 & 144 & -428 & -23 & 11 & 4 & 1 \\
		13 & 7 & 107 & 4 & -5 & 64 & 33 & 17 & 6 & 10 & 0 & 12 & 4 & 1 \\
		13 & 11 & -953 & 4 & -5 & -1496 & -67 & -3 & 2 & 90 & 0 & 13 & 4 & 1 \\
		1 & 1 & -19 & 4 & -5 & -80 & -9 & -1 & 0 & 10 & 0 & 14 & 4 & 1 \\
		13 & 17 & -275 & 7 & -8 & -4520 & -451 & -45 & -4 & 700 & 0 & 15 & 4 & 1 \\
		37 & 47 & 2951 & 8 & -7 & -128 & -65 & -33 & 10 & -476 & 0 & 16 & 4 & 1 \\
		43 & 41 & -223313 & 19 & -20 & -256376 & -877 & -3 & 2 & 5510 & 0 & 17 & 4 & 1 \\
		23 & 31 & 1847 & 17 & -16 & -15824 & 1289 & -105 & 8 & 2312 & 0 & 18 & 4 & 1 \\
		23 & 35 & 739 & 8 & -7 & -968 & 243 & -61 & 12 & 28 & 0 & 19 & 4 & 1 \\
		71 & 89 & 126379 & 5 & -4 & -123200 & -351 & -1 & 18 & -530 & 0 & 20 & 4 & 1 \\
		5 & 7 & 233 & 5 & -4 & -56 & -13 & -3 & 2 & -30 & 0 & 21 & 4 & 1 \\
		29 & 31 & -461 & 4 & -5 & -2480 & -311 & -39 & -2 & 330 & 0 & 22 & 4 & 1 \\
		47 & 37 & 11593 & 4 & -5 & 9976 & -173 & 3 & 10 & 270 & 0 & 23 & 4 & 1 \\
		1 & 1 & -1639 & 40 & -41 & -6560 & -81 & -1 & 0 & 820 & 0 & 24 & 4 & 1 \\
		37 & 17 & 181 & 13 & -17 & 9352 & -2003 & 429 & -92 & -1406 & -22 & 25 & 4 & 1 \\
		11 & -17 & -29 & 4 & -5 & 2800 & 601 & 129 & 28 & -450 & 0 & 26 & 4 & 1 \\
		19 & 13 & 2597 & 16 & -23 & 810424 & 347037 & 148607 & 63636 & -132040 & -9307 & 27 & 4 & 1 \\\hline
		\end{supertabular}
		
		\switchcolumn
		
		\begin{supertabular}{|w{c}{.4cm}w{c}{.4cm}w{c}{.4cm}w{c}{.35cm}w{c}{.35cm}|@{\hspace{\tabcolsep}}HHHHHHw{c}{.25cm}w{c}{.25cm}w{c}{.25cm}|}
		\hline $\alpha_1$ & $\alpha_2$ & $\alpha_3$ & $\lambda$ & $\mu$ & $A$ & $B$ & $C$ & $D$ & $u$ & $v$ & $28s_1$ & $12s_2$ & $2\tilde{s}_3$ \\ \hline
		19 & 5 & 21 & 1 & -2 & 0 & -1 & 11 & -16 & 5 & 0 & 0 & 6 & 1 \\
		47 & 5 & -87 & 1 & -10 & -68376 & 7547 & -833 & 92 & 11265 & 0 & 1 & 6 & 1 \\
		37 & -5 & -3 & 1 & -7 & 1104 & -137 & 17 & -2 & -175 & 0 & 2 & 6 & 1 \\
		1 & -27 & -1 & 1 & -2 & 216 & -107 & 53 & -26 & -27 & 0 & 3 & 6 & 1 \\
		19 & 9 & 341 & 1 & -2 & 288 & -305 & 323 & -332 & 9 & 0 & 4 & 6 & 1 \\
		37 & 3 & 7 & 1 & -6 & 264 & -67 & 17 & -4 & -35 & 0 & 5 & 6 & 1 \\
		29 & 23 & -39 & 2 & -5 & -3696 & -1385 & -519 & -194 & 587 & 23 & 6 & 6 & 1 \\
		5 & -1 & -1 & 1 & -3 & 24 & -5 & 1 & 0 & -3 & 0 & 7 & 6 & 1 \\
		47 & 5 & 11 & 1 & -3 & 0 & 1 & 7 & -6 & 9 & 0 & 8 & 6 & 1 \\
		11 & 17 & -7 & 1 & -3 & -504 & 181 & -65 & 24 & 75 & 0 & 9 & 6 & 1 \\
		13 & 17 & -21 & 1 & -2 & -144 & 89 & -55 & 38 & 17 & 0 & 10 & 6 & 1 \\
		29 & -1 & -1 & 1 & -6 & 120 & -11 & 1 & 0 & -15 & 0 & 11 & 6 & 1 \\
		25 & 11 & 97 & 2 & -3 & 0 & 1 & 17 & 14 & 19 & 0 & 12 & 6 & 1 \\
		1 & -33 & -1 & 1 & -2 & 264 & -131 & 65 & -32 & -33 & 0 & 13 & 6 & 1 \\
		-1 & -1 & 1 & 2 & -5 & 144 & 55 & 21 & 8 & -23 & -1 & 14 & 6 & 1 \\
		1 & 33 & -1 & 1 & -2 & -264 & 133 & -67 & 34 & 33 & 0 & 15 & 6 & 1 \\
		17 & 3 & -61 & 1 & -6 & -8256 & 1633 & -323 & 64 & 1325 & 0 & 16 & 6 & 1 \\
		13 & 7 & -17 & 1 & -3 & -312 & 131 & -55 & 24 & 45 & 0 & 17 & 6 & 1 \\
		21 & -17 & -13 & 1 & -2 & 144 & -55 & 21 & -4 & -17 & 0 & 18 & 6 & 1 \\
		1 & -3 & -1 & 1 & -2 & 24 & -11 & 5 & -2 & -3 & 0 & 19 & 6 & 1 \\
		11 & 3 & 13 & 1 & -2 & 0 & -1 & 7 & -10 & 3 & 0 & 20 & 6 & 1 \\
		41 & -1 & -1 & 1 & -7 & 168 & -13 & 1 & 0 & -21 & 0 & 21 & 6 & 1 \\
		27 & 17 & -131 & 1 & -2 & -240 & 199 & -165 & 148 & 17 & 0 & 22 & 6 & 1 \\
		43 & 17 & -137 & 1 & -3 & -1512 & 701 & -325 & 154 & 205 & 0 & 23 & 6 & 1 \\
		3 & 1 & 5 & 1 & -2 & 0 & -1 & 3 & -4 & 1 & 0 & 24 & 6 & 1 \\
		43 & 7 & 361 & 1 & -6 & 43656 & -8773 & 1763 & -354 & -6975 & 0 & 25 & 6 & 1 \\
		5 & 3 & -29 & 1 & -2 & -48 & 41 & -35 & 32 & 3 & 0 & 26 & 6 & 1 \\
		1 & 9 & -1 & 1 & -2 & -72 & 37 & -19 & 10 & 9 & 0 & 27 & 6 & 1 \\
		7 & 23 & -37 & 4 & -5 & -2464 & -463 & -87 & -16 & 390 & 0 & 0 & 8 & 1 \\
		13 & 19 & 2597 & 23 & -16 & -810424 & 347037 & -148607 & 63636 & 132040 & -9307 & 1 & 8 & 1 \\
		29 & 17 & -11 & 1 & -5 & -2800 & 601 & -129 & 28 & 450 & 0 & 2 & 8 & 1 \\
		47 & 7 & 59 & 4 & -23 & 322520 & -85029 & 22417 & -5910 & -53562 & 935 & 3 & 8 & 1 \\
		1 & 1 & -1639 & 41 & -40 & 6560 & -81 & 1 & 0 & -820 & 0 & 4 & 8 & 1 \\
		29 & 11 & -401 & 4 & -11 & -150328 & 42827 & -12201 & 3476 & 24586 & -445 & 5 & 8 & 1 \\
		31 & 29 & -461 & 5 & -4 & 2480 & -311 & 39 & -2 & -330 & 0 & 6 & 8 & 1 \\
		7 & 5 & 233 & 4 & -5 & 56 & -13 & 3 & 2 & 30 & 0 & 7 & 8 & 1 \\
		89 & 71 & 126379 & 4 & -5 & 123200 & -351 & 1 & 18 & 530 & 0 & 8 & 8 & 1 \\
		35 & 23 & 739 & 7 & -8 & 968 & 243 & 61 & 12 & -28 & 0 & 9 & 8 & 1 \\
		11 & -7 & -71 & 16 & -17 & 79376 & 4839 & 295 & 18 & -13192 & 0 & 10 & 8 & 1 \\
		41 & 43 & -223313 & 20 & -19 & 256376 & -877 & 3 & 2 & -5510 & 0 & 11 & 8 & 1 \\
		47 & 37 & 2951 & 7 & -8 & 128 & -65 & 33 & 10 & 476 & 0 & 12 & 8 & 1 \\
		17 & 13 & -275 & 8 & -7 & 4520 & -451 & 45 & -4 & -700 & 0 & 13 & 8 & 1 \\
		-1 & -1 & 19 & 4 & -5 & 80 & 9 & 1 & 0 & -10 & 0 & 14 & 8 & 1 \\
		11 & 13 & -953 & 5 & -4 & 1496 & -67 & 3 & 2 & -90 & 0 & 15 & 8 & 1 \\
		7 & 13 & 107 & 5 & -4 & -64 & 33 & -17 & 6 & -10 & 0 & 16 & 8 & 1 \\
		1 & 1 & -1 & 5 & -13 & -2584 & -987 & -377 & -144 & 428 & 23 & 17 & 8 & 1 \\
		31 & 37 & -3277 & 5 & -4 & 4784 & -183 & 7 & 6 & -250 & 0 & 18 & 8 & 1 \\
		41 & 35 & -2609 & 4 & -5 & -4360 & -219 & -11 & 6 & 290 & 0 & 19 & 8 & 1 \\
		47 & 41 & -2267 & 4 & -5 & -4384 & -273 & -17 & 6 & 350 & 0 & 20 & 8 & 1 \\
		23 & 7 & 47 & 13 & -32 & 143528 & -22835 & 3633 & -578 & -23760 & 71 & 21 & 8 & 1 \\
		23 & 17 & 1117 & 4 & -5 & 464 & -57 & 7 & 6 & 110 & 0 & 22 & 8 & 1 \\
		43 & 35 & -10033 & 4 & -5 & -11656 & -187 & -3 & 8 & 270 & 0 & 23 & 8 & 1 \\
		29 & 23 & 13339 & 4 & -5 & 12320 & -111 & 1 & 6 & 170 & 0 & 24 & 8 & 1 \\
		37 & 29 & -311 & 5 & -4 & 3080 & -461 & 69 & -8 & -450 & 0 & 25 & 8 & 1 \\
		19 & 23 & 2719 & 8 & -7 & -880 & -89 & -9 & 4 & -308 & 0 & 26 & 8 & 1 \\
		29 & 37 & 7153 & 5 & -4 & -5896 & -133 & -3 & 8 & -210 & 0 & 27 & 8 & 1 \\
		13 & 7 & -181 & 1 & -2 & -224 & 209 & -195 & 188 & 7 & 0 & 0 & 10 & 1 \\
		1 & 19 & -1 & 1 & -2 & -152 & 77 & -39 & 20 & 19 & 0 & 1 & 10 & 1 \\
		19 & 7 & 53 & 1 & -2 & 16 & -25 & 39 & -46 & 7 & 0 & 2 & 10 & 1 \\
		-1 & -1 & 1 & 2 & -1 & -8 & -5 & -3 & -2 & 1 & 0 & 3 & 10 & 1 \\
		17 & 35 & 1189 & 2 & -1 & -1088 & -1121 & -1155 & -1172 & -17 & 0 & 4 & 10 & 1 \\
		1 & -17 & -1 & 1 & -2 & 136 & -67 & 33 & -16 & -17 & 0 & 5 & 10 & 1 \\
		7 & 29 & 37 & 7 & -5 & -1520 & -569 & -213 & -80 & 225 & 1 & 6 & 10 & 1 \\
		1 & -35 & -1 & 1 & -2 & 280 & -139 & 69 & -34 & -35 & 0 & 7 & 10 & 1 \\
		37 & -29 & -67 & 1 & 1 & 544 & -305 & 171 & -104 & -67 & 0 & 8 & 10 & 1 \\
		1 & 31 & -1 & 1 & -2 & -248 & 125 & -63 & 32 & 31 & 0 & 9 & 10 & 1 \\
		19 & -13 & -11 & 1 & -2 & 112 & -41 & 15 & -2 & -13 & 0 & 10 & 10 & 1 \\
		1 & 13 & -1 & 1 & -2 & -104 & 53 & -27 & 14 & 13 & 0 & 11 & 10 & 1 \\
		7 & -13 & -47 & 10 & -11 & 24256 & 2273 & 213 & 20 & -4015 & 0 & 12 & 10 & 1 \\
		1 & -5 & -1 & 1 & -2 & 40 & -19 & 9 & -4 & -5 & 0 & 13 & 10 & 1 \\
		35 & 19 & -443 & 1 & -2 & -560 & 519 & -481 & 462 & 19 & 0 & 14 & 10 & 1 \\
		1 & -23 & -1 & 1 & -2 & 184 & -91 & 45 & -22 & -23 & 0 & 15 & 10 & 1 \\
		43 & -17 & -19 & 1 & -2 & 160 & -49 & 15 & 2 & -17 & 0 & 16 & 10 & 1 \\
		1 & -41 & -1 & 1 & -2 & 328 & -163 & 81 & -40 & -41 & 0 & 17 & 10 & 1 \\
		47 & 37 & -61 & 2 & -1 & 400 & 249 & 155 & 108 & -47 & 0 & 18 & 10 & 1 \\
		1 & 25 & -1 & 1 & -2 & -200 & 101 & -51 & 26 & 25 & 0 & 19 & 10 & 1 \\
		43 & 23 & -659 & 1 & -2 & -800 & 751 & -705 & 682 & 23 & 0 & 20 & 10 & 1 \\
		1 & 7 & -1 & 1 & -2 & -56 & 29 & -15 & 8 & 7 & 0 & 21 & 10 & 1 \\
		13 & 5 & 43 & 1 & -2 & 16 & -23 & 33 & -38 & 5 & 0 & 22 & 10 & 1 \\
		1 & -11 & -1 & 1 & -2 & 88 & -43 & 21 & -10 & -11 & 0 & 23 & 10 & 1 \\
		11 & 29 & 91 & 2 & -1 & -32 & -47 & -69 & -80 & -11 & 0 & 24 & 10 & 1 \\
		1 & -29 & -1 & 1 & -2 & 232 & -115 & 57 & -28 & -29 & 0 & 25 & 10 & 1 \\
		17 & 29 & -197 & 2 & -1 & 304 & 265 & 231 & 214 & -17 & 0 & 26 & 10 & 1 \\
		1 & -47 & -1 & 1 & -2 & 376 & -187 & 93 & -46 & -47 & 0 & 27 & 10 & 1 \\ \hline
	\end{supertabular}
	\end{paracol}
	\vspace{-.2cm}
	\captionof{table}{List of the $252$ spin cohomology $S^2\times S^5$s of the form $N_A(e)$ with $\det(N_A,e)=-1$, sorted by their $s$-invariants}
	\label{TA:det=-1}
	\vspace{.5cm}
		
		\begin{paracol}{3}
			\begin{supertabular}{|w{c}{.4cm}w{c}{.4cm}w{c}{.4cm}w{c}{.35cm}w{c}{.35cm}|@{\hspace{\tabcolsep}}HHHHHHw{c}{.25cm}w{c}{.25cm}w{c}{.25cm}|}
				\hline $\alpha_1$ & $\alpha_2$ & $\alpha_3$ & $\lambda$ & $\mu$ & $A$ & $B$ & $C$ & $D$ & $u$ & $v$ & $28s_1$ & $12s_2$ & $2\tilde{s}_3$ \\ \hline
				0 & -1 & 1 & 0 & -1 & 2 & -1 & 1 & -1 & 0 & 0 & 0 & 0 & 0 \\
				23 & 5 & -72 & 1 & -5 & -5210 & 1277 & -313 & 77 & 820 & 0 & 1 & 0 & 0 \\
				25 & -12 & -1 & 1 & -24 & 153746 & -6383 & 265 & -11 & -25576 & 0 & 2 & 0 & 0 \\
				7 & 8 & -3 & 1 & -4 & -586 & 155 & -41 & 11 & 92 & 0 & 3 & 0 & 0 \\
				12 & -1 & 1 & 0 & -1 & 2 & -1 & 1 & 11 & 0 & 0 & 4 & 0 & 0 \\
				3 & 1 & -2 & 1 & -5 & -250 & 57 & -13 & 3 & 40 & 0 & 5 & 0 & 0 \\
				23 & 12 & -553 & 1 & -2 & -626 & 601 & -577 & 565 & 12 & 0 & 6 & 0 & 0 \\
				27 & 1 & -14 & 1 & -29 & -331690 & 11817 & -421 & 15 & 55216 & 0 & 7 & 0 & 0 \\
				6 & -1 & 1 & 0 & -1 & 2 & -1 & 1 & 5 & 0 & 0 & 8 & 0 & 0 \\
				23 & 38 & -33 & 1 & -2 & -314 & 185 & -109 & 71 & 38 & 0 & 9 & 0 & 0 \\
				47 & 7 & -192 & 1 & -7 & -43826 & 7255 & -1201 & 199 & 7112 & 0 & 10 & 0 & 0 \\
				37 & 7 & 162 & 1 & -5 & 9530 & -2417 & 613 & -155 & -1480 & 0 & 11 & 0 & 0 \\
				18 & 1 & -1 & 0 & -1 & -2 & 1 & -1 & 19 & 0 & 0 & 12 & 0 & 0 \\
				37 & -10 & -3 & 1 & -10 & 7850 & -757 & 73 & -7 & -1290 & 0 & 13 & 0 & 0 \\
				9 & -4 & -1 & 1 & -8 & 1714 & -207 & 25 & -3 & -280 & 0 & 14 & 0 & 0 \\
				11 & 1 & -6 & 1 & -13 & -12554 & 1033 & -85 & 7 & 2080 & 0 & 15 & 0 & 0 \\
				18 & -1 & 1 & 0 & -1 & 2 & -1 & 1 & 17 & 0 & 0 & 16 & 0 & 0 \\
				21 & -10 & -1 & 1 & -20 & 73162 & -3639 & 181 & -9 & -12160 & 0 & 17 & 0 & 0 \\
				7 & 4 & -57 & 1 & -2 & -82 & 73 & -65 & 61 & 4 & 0 & 18 & 0 & 0 \\
				7 & 3 & -16 & 1 & -3 & -202 & 91 & -41 & 19 & 28 & 0 & 19 & 0 & 0 \\
				6 & 1 & -1 & 0 & -1 & -2 & 1 & -1 & 7 & 0 & 0 & 20 & 0 & 0 \\
				37 & -18 & -1 & 1 & -36 & 796970 & -22103 & 613 & -17 & -132720 & 0 & 21 & 0 & 0 \\
				39 & 1 & -20 & 1 & -41 & -1348882 & 33681 & -841 & 21 & 224680 & 0 & 22 & 0 & 0 \\
				30 & 47 & -217 & 2 & -1 & 410 & 337 & 277 & 247 & -30 & 0 & 23 & 0 & 0 \\
				12 & 1 & -1 & 0 & -1 & -2 & 1 & -1 & 13 & 0 & 0 & 24 & 0 & 0 \\
				35 & 1 & -18 & 1 & -37 & -890426 & 24697 & -685 & 19 & 148296 & 0 & 25 & 0 & 0 \\
				23 & 1 & -12 & 1 & -25 & -181490 & 7537 & -313 & 13 & 30200 & 0 & 26 & 0 & 0 \\
				45 & -22 & -1 & 1 & -44 & 1794586 & -40743 & 925 & -21 & -298936 & 0 & 27 & 0 & 0 \\
				14 & 1 & -1 & 0 & -1 & -2 & 1 & -1 & 15 & 0 & 0 & 0 & 4 & 0 \\
				17 & -32 & -5 & 1 & -4 & 1930 & -467 & 113 & -27 & -300 & 0 & 1 & 4 & 0 \\
				34 & -19 & -31 & 2 & -3 & 754 & 307 & 125 & 53 & -110 & 0 & 2 & 4 & 0 \\
				4 & -19 & -7 & 2 & -3 & 538 & 187 & 65 & 23 & -80 & 0 & 3 & 4 & 0 \\
				16 & -1 & -1 & 2 & -9 & 514 & 241 & 113 & 53 & -80 & -6 & 4 & 4 & 0 \\
				47 & 7 & -2 & 1 & -31 & -262490 & 8527 & -277 & 9 & 43710 & 0 & 5 & 4 & 0 \\
				34 & 23 & -4693 & 2 & -3 & -5042 & -71 & -1 & 11 & 58 & 0 & 6 & 4 & 0 \\
				29 & -14 & -1 & 1 & -28 & 287674 & -10247 & 365 & -13 & -47880 & 0 & 7 & 4 & 0 \\
				23 & 43 & -14 & 1 & -3 & -1250 & 443 & -157 & 57 & 186 & 0 & 8 & 4 & 0 \\
				5 & -2 & -1 & 1 & -4 & 106 & -23 & 5 & -1 & -16 & 0 & 9 & 4 & 0 \\
				47 & -26 & 163 & 1 & 2 & -4562 & -1571 & -541 & -189 & 678 & 0 & 10 & 4 & 0 \\
				47 & 31 & -10 & 1 & -7 & -12746 & 1879 & -277 & 41 & 2086 & 0 & 11 & 4 & 0 \\
				4 & 1 & -1 & 0 & -1 & -2 & 1 & -1 & 5 & 0 & 0 & 12 & 4 & 0 \\
				11 & 19 & -2 & 1 & -7 & -6938 & 1003 & -145 & 21 & 1134 & 0 & 13 & 4 & 0 \\
				25 & 14 & 359 & 5 & -8 & 5650 & 1793 & 569 & 181 & -760 & 31 & 14 & 4 & 0 \\
				41 & 10 & 547 & 1 & -4 & 14170 & -4763 & 1601 & -537 & -2088 & 0 & 15 & 4 & 0 \\
				4 & -1 & -1 & 2 & -5 & 130 & 57 & 25 & 11 & -20 & -1 & 16 & 4 & 0 \\
				17 & 1 & 2 & 1 & -7 & 106 & -23 & 5 & -1 & -14 & 0 & 17 & 4 & 0 \\
				10 & 7 & -421 & 2 & -3 & -530 & -23 & -1 & 3 & 18 & 0 & 18 & 4 & 0 \\
				22 & 31 & -337 & 6 & -7 & -6218 & -727 & -85 & -9 & 966 & 0 & 19 & 4 & 0 \\
				8 & 1 & -1 & 0 & -1 & -2 & 1 & -1 & 9 & 0 & 0 & 20 & 4 & 0 \\
				17 & -8 & -91 & 1 & 2 & 2410 & 787 & 257 & 83 & -356 & 0 & 21 & 4 & 0 \\
				41 & -20 & -1 & 1 & -40 & 1220722 & -30479 & 761 & -19 & -203320 & 0 & 22 & 4 & 0 \\
				44 & 25 & 10267 & 4 & -7 & 271450 & -90657 & 30277 & -10111 & -40108 & -25 & 23 & 4 & 0 \\
				2 & 1 & -1 & 0 & -1 & -2 & 1 & -1 & 3 & 0 & 0 & 24 & 4 & 0 \\
				5 & 1 & 4 & 1 & -3 & 10 & -7 & 5 & -3 & 0 & 0 & 25 & 4 & 0 \\
				22 & 19 & -193 & 2 & -3 & -530 & -83 & -13 & 3 & 54 & 0 & 26 & 4 & 0 \\
				16 & 11 & -1057 & 2 & -3 & -1226 & -35 & -1 & 5 & 28 & 0 & 27 & 4 & 0 \\
				14 & -1 & 1 & 0 & -1 & 2 & -1 & 1 & 13 & 0 & 0 & 0 & 8 & 0 \\
				8 & 5 & 241 & 2 & -3 & 170 & -13 & 1 & 3 & 12 & 0 & 1 & 8 & 0 \\
				19 & 22 & -193 & 3 & -2 & 530 & -83 & 13 & 3 & -54 & 0 & 2 & 8 & 0 \\
				31 & 11 & -256 & 1 & -3 & -2314 & 1123 & -545 & 267 & 300 & 0 & 3 & 8 & 0 \\
				2 & -1 & 1 & 0 & -1 & 2 & -1 & 1 & 1 & 0 & 0 & 4 & 8 & 0 \\
				25 & 44 & 10267 & 7 & -4 & -271450 & -90657 & -30277 & -10111 & 40108 & -25 & 5 & 8 & 0 \\
				43 & -22 & 631 & 1 & 2 & -17170 & -5767 & -1937 & -653 & 2546 & 0 & 6 & 8 & 0 \\
				31 & 8 & -331 & 1 & -4 & -9418 & 3107 & -1025 & 339 & 1404 & 0 & 7 & 8 & 0 \\
				8 & -1 & 1 & 0 & -1 & 2 & -1 & 1 & 7 & 0 & 0 & 8 & 8 & 0 \\
				31 & 22 & -337 & 7 & -6 & 6218 & -727 & 85 & -9 & -966 & 0 & 9 & 8 & 0 \\
				7 & 10 & -421 & 3 & -2 & 530 & -23 & 1 & 3 & -18 & 0 & 10 & 8 & 0 \\
				31 & -16 & 331 & 1 & 2 & -9034 & -3043 & -1025 & -347 & 1340 & 0 & 11 & 8 & 0 \\
				31 & 1 & -16 & 1 & -33 & -560194 & 17473 & -545 & 17 & 93280 & 0 & 12 & 8 & 0 \\
				43 & 20 & -31 & 1 & -4 & -2074 & 599 & -173 & 51 & 324 & 0 & 13 & 8 & 0 \\
				23 & 22 & -893 & 5 & -6 & -2770 & -217 & -17 & 1 & 310 & 0 & 14 & 8 & 0 \\
				37 & 8 & 79 & 1 & -4 & 1658 & -583 & 205 & -71 & -236 & 0 & 15 & 8 & 0 \\
				4 & -1 & 1 & 0 & -1 & 2 & -1 & 1 & 3 & 0 & 0 & 16 & 8 & 0 \\
				13 & 2 & 7 & 1 & -4 & 74 & -31 & 13 & -5 & -8 & 0 & 17 & 8 & 0 \\
				7 & 1 & -4 & 1 & -9 & -2770 & 337 & -41 & 5 & 456 & 0 & 18 & 8 & 0 \\
				19 & 7 & -100 & 1 & -3 & -970 & 463 & -221 & 107 & 128 & 0 & 19 & 8 & 0 \\
				43 & -8 & 59 & 1 & 6 & -21922 & -3179 & -461 & -67 & 3584 & 0 & 20 & 8 & 0 \\
				19 & 1 & -10 & 1 & -21 & -89242 & 4441 & -221 & 11 & 14840 & 0 & 21 & 8 & 0 \\
				14 & 11 & -185 & 2 & -3 & -370 & -43 & -5 & 3 & 30 & 0 & 22 & 8 & 0 \\
				37 & 7 & 26 & 9 & -29 & 64250 & -29057 & 13141 & -5943 & -10600 & 942 & 23 & 8 & 0 \\
				13 & 1 & 2 & 1 & -3 & 2 & 1 & 1 & -1 & 2 & 0 & 24 & 8 & 0 \\
				7 & 19 & -4 & 1 & -3 & -538 & 187 & -65 & 23 & 80 & 0 & 25 & 8 & 0 \\
				31 & 2 & -5 & 1 & -22 & -67570 & 3173 & -149 & 7 & 11242 & 0 & 26 & 8 & 0 \\
				43 & 1 & -22 & 1 & -45 & -1965130 & 44617 & -1013 & 23 & 327360 & 0 & 27 & 8 & 0 \\ \hline
				\end{supertabular}
				
				\switchcolumn
				
				\begin{supertabular}{|w{c}{.4cm}w{c}{.4cm}w{c}{.4cm}w{c}{.35cm}w{c}{.35cm}|@{\hspace{\tabcolsep}}HHHHHHw{c}{.25cm}w{c}{.25cm}w{c}{.25cm}|}
				\hline $\alpha_1$ & $\alpha_2$ & $\alpha_3$ & $\lambda$ & $\mu$ & $A$ & $B$ & $C$ & $D$ & $u$ & $v$ & $28s_1$ & $12s_2$ & $2\tilde{s}_3$ \\ \hline
				31 & 19 & 707 & 2 & -3 & 442 & -47 & 5 & 12 & 45 & 0 & 0 & 1 & 1 \\
				-1 & -1 & 157 & 13 & -12 & -626 & 25 & -1 & 0 & 78 & 0 & 1 & 1 & 1 \\
				7 & 11 & -289 & 5 & -3 & 2890 & 1407 & 685 & 334 & -385 & -7 & 2 & 1 & 1 \\
				5 & 1 & -1 & 0 & -1 & -2 & 1 & -1 & 6 & 0 & 0 & 3 & 1 & 1 \\
				47 & -41 & -17 & 1 & -3 & 1018 & -301 & 89 & -24 & -147 & 0 & 4 & 1 & 1 \\
				25 & 19 & -43 & 3 & -7 & -8594 & -2325 & -629 & -170 & 1394 & 19 & 5 & 1 & 1 \\
				31 & 19 & 431 & 5 & -6 & 202 & 91 & 41 & 12 & 45 & 0 & 6 & 1 & 1 \\
				-1 & -1 & 553 & 24 & -23 & -2210 & 47 & -1 & 0 & 276 & 0 & 7 & 1 & 1 \\
				47 & 7 & 79 & 1 & -6 & 8410 & -1723 & 353 & -72 & -1335 & 0 & 8 & 1 & 1 \\
				47 & -23 & -1 & 1 & -46 & 2147650 & -46643 & 1013 & -22 & -357765 & 0 & 9 & 1 & 1 \\
				35 & -17 & -1 & 1 & -34 & 632266 & -18563 & 545 & -16 & -105281 & 0 & 10 & 1 & 1 \\
				17 & -13 & 379 & 3 & 4 & -130370 & 37197 & -10613 & 3028 & 21286 & -379 & 11 & 1 & 1 \\
				89 & 193 & -16823 & 127 & -58 & 5671156378 & 2054755409 & 744472469 & 269734906 & -944999247 & -44885715 & 12 & 1 & 1 \\
				-1 & -1 & 421 & 21 & -20 & -1682 & 41 & -1 & 0 & 210 & 0 & 13 & 1 & 1 \\
				17 & 13 & 3481 & 7 & -9 & 24202 & -12211 & 6161 & -3108 & -2873 & -62 & 14 & 1 & 1 \\
				-1 & -1 & 73 & 9 & -8 & -290 & 17 & -1 & 0 & 36 & 0 & 15 & 1 & 1 \\
				11 & -5 & -1 & 1 & -10 & 4282 & -419 & 41 & -4 & -705 & 0 & 16 & 1 & 1 \\
				23 & 19 & -11 & 1 & -4 & -1490 & 403 & -109 & 30 & 234 & 0 & 17 & 1 & 1 \\
				19 & 11 & 251 & 2 & -3 & 106 & -23 & 5 & 8 & 25 & 0 & 18 & 1 & 1 \\
				11 & 1 & -1 & 0 & -1 & -2 & 1 & -1 & 12 & 0 & 0 & 19 & 1 & 1 \\
				1 & 1 & -79 & 47 & -42 & 39610 & -15933 & 6409 & -2578 & -6535 & 403 & 20 & 1 & 1 \\
				-1 & -1 & 13 & 4 & -3 & -50 & 7 & -1 & 0 & 6 & 0 & 21 & 1 & 1 \\
				47 & 7 & 19 & 1 & -3 & 10 & -13 & 17 & -12 & 9 & 0 & 22 & 1 & 1 \\
				29 & 19 & 989 & 7 & -10 & 17650 & -6043 & 2069 & -708 & -2445 & -47 & 23 & 1 & 1 \\
				35 & -17 & -397 & 1 & 2 & 10618 & 3505 & 1157 & 380 & -1571 & 0 & 24 & 1 & 1 \\
				13 & 37 & -361 & 3 & -1 & 3202 & 1561 & 761 & 374 & -413 & 0 & 25 & 1 & 1 \\
				137 & -247 & -19427 & 163 & 90 & 315019468618 & 129494146623 & 53230786285 & 21881426168 & -52502425581 & -3646567620 & 26 & 1 & 1 \\
				-1 & -1 & 1 & 0 & 1 & -2 & 1 & -1 & 2 & 0 & 0 & 27 & 1 & 1 \\
				-1 & -1 & 111 & 11 & -10 & -442 & 21 & -1 & 0 & 55 & 0 & 0 & 3 & 1 \\
				15 & -1 & 1 & 0 & -1 & 2 & -1 & 1 & 14 & 0 & 0 & 1 & 3 & 1 \\
				87 & 191 & 1955 & 2 & -1 & -1450 & -1607 & -1781 & -1868 & -87 & 0 & 2 & 3 & 1 \\
				29 & 3 & 49 & 1 & -9 & 22930 & -2893 & 365 & -46 & -3756 & 0 & 3 & 3 & 1 \\
				21 & 1 & -11 & 1 & -23 & -129274 & 5853 & -265 & 12 & 21505 & 0 & 4 & 3 & 1 \\
				9 & 1 & -5 & 1 & -11 & -6322 & 621 & -61 & 6 & 1045 & 0 & 5 & 3 & 1 \\
				-1 & -1 & 2451 & 50 & -49 & -9802 & 99 & -1 & 0 & 1225 & 0 & 6 & 3 & 1 \\
				33 & 1 & -17 & 1 & -35 & -711010 & 20877 & -613 & 18 & 118405 & 0 & 7 & 3 & 1 \\
				-1 & -1 & 3783 & 62 & -61 & -15130 & 123 & -1 & 0 & 1891 & 0 & 8 & 3 & 1 \\
				3 & -1 & 1 & 0 & -1 & 2 & -1 & 1 & 2 & 0 & 0 & 9 & 3 & 1 \\
				33 & 17 & -1123 & 1 & -2 & -1226 & 1191 & -1157 & 1140 & 17 & 0 & 10 & 3 & 1 \\
				49 & 3 & -313 & 2 & -33 & -9432002 & -4563971 & -2208421 & -1068614 & 1570383 & 177320 & 11 & 3 & 1 \\
				11 & 3 & -17 & 2 & -9 & -7930 & -3427 & -1481 & -640 & 1301 & 98 & 12 & 3 & 1 \\
				21 & 19 & -47 & 1 & -2 & -178 & 123 & -85 & 66 & 19 & 0 & 13 & 3 & 1 \\
				-1 & -1 & 651 & 26 & -25 & -2602 & 51 & -1 & 0 & 325 & 0 & 14 & 3 & 1 \\
				13 & 1 & 3 & 1 & -8 & 530 & -83 & 13 & -2 & -84 & 0 & 15 & 3 & 1 \\
				89 & 53 & -555 & 1 & -2 & -890 & 767 & -661 & 608 & 53 & 0 & 16 & 3 & 1 \\
				5 & 3 & -31 & 1 & -2 & -50 & 43 & -37 & 34 & 3 & 0 & 17 & 3 & 1 \\
				49 & 25 & -2451 & 1 & -2 & -2602 & 2551 & -2501 & 2476 & 25 & 0 & 18 & 3 & 1 \\
				17 & 1 & -9 & 1 & -19 & -59330 & 3277 & -181 & 10 & 9861 & 0 & 19 & 3 & 1 \\
				-1 & -1 & 183 & 14 & -13 & -730 & 27 & -1 & 0 & 91 & 0 & 20 & 3 & 1 \\
				21 & -1 & 1 & 0 & -1 & 2 & -1 & 1 & 20 & 0 & 0 & 21 & 3 & 1 \\
				-1 & -1 & 3 & 2 & -1 & -10 & -7 & -5 & -4 & 1 & 0 & 22 & 3 & 1 \\
				37 & -47 & 87 & 1 & 1 & -706 & 311 & -137 & 50 & 87 & 0 & 23 & 3 & 1 \\
				11 & 23 & 507 & 2 & -1 & -442 & -463 & -485 & -496 & -11 & 0 & 24 & 3 & 1 \\
				9 & -1 & 1 & 0 & -1 & 2 & -1 & 1 & 8 & 0 & 0 & 25 & 3 & 1 \\
				45 & 1 & -23 & 1 & -47 & -2342506 & 50877 & -1105 & 24 & 390241 & 0 & 26 & 3 & 1 \\
				37 & -25 & -9 & 1 & -4 & 1394 & -319 & 73 & -16 & -214 & 0 & 27 & 3 & 1 \\
				-1 & -1 & 7 & 2 & -3 & 26 & 5 & 1 & 0 & -3 & 0 & 0 & 5 & 1\\
				%7 & -1 & -1 & 1 & -3 & 26 & -5 & 1 & 0 & -3 & 0 & 0 & 5 & 1 \\
				41 & 19 & 187 & 2 & -3 & 2 & -7 & 25 & 22 & 35 & 0 & 1 & 5 & 1 \\
				37 & 5 & 13 & 3 & -10 & 458 & 107 & 25 & 6 & -51 & 7 & 2 & 5 & 1 \\
				37 & -7 & -5 & 3 & -13 & 11378 & 3565 & 1117 & 350 & -1871 & -50 & 3 & 5 & 1 \\
				43 & 47 & -9037 & 25 & -22 & 415418 & 137097 & 45245 & 14932 & -64711 & -980 & 4 & 5 & 1 \\
				43 & 5 & -121 & 1 & -9 & -65554 & 8149 & -1013 & 126 & 10764 & 0 & 5 & 5 & 1 \\
				47 & 43 & 32647 & 14 & -15 & 16490 & -463 & 13 & 4 & 2695 & 0 & 6 & 5 & 1 \\
				29 & 43 & -7483 & 3 & -2 & 7922 & -89 & 1 & 14 & -73 & 0 & 7 & 5 & 1 \\
				47 & 31 & 8743 & 2 & -3 & 8282 & -91 & 1 & 16 & 77 & 0 & 8 & 5 & 1 \\
				49 & 37 & 5711 & 7 & -9 & 35522 & -18059 & 9181 & -4666 & -4015 & -174 & 9 & 5 & 1 \\
				11 & 7 & 463 & 2 & -3 & 362 & -19 & 1 & 4 & 17 & 0 & 10 & 5 & 1 \\
				11 & 1 & 5 & 2 & -17 & 12050 & 5607 & 2609 & 1214 & -1995 & -196 & 11 & 5 & 1 \\
				257 & 277 & -13603 & 6 & -5 & 34490 & -2327 & 157 & 20 & -3455 & 0 & 12 & 5 & 1 \\
				19 & 5 & -127 & 1 & -4 & -3730 & 1223 & -401 & 132 & 558 & 0 & 13 & 5 & 1 \\
				35 & 23 & 4831 & 2 & -3 & 4490 & -67 & 1 & 12 & 57 & 0 & 14 & 5 & 1 \\
				29 & 19 & 3307 & 2 & -3 & 3026 & -55 & 1 & 10 & 47 & 0 & 15 & 5 & 1 \\
				43 & 11 & 71 & 1 & -3 & 314 & -185 & 109 & -60 & -27 & 0 & 16 & 5 & 1 \\
				31 & 11 & -35 & 1 & -4 & -1618 & 491 & -149 & 46 & 250 & 0 & 17 & 5 & 1 \\
				17 & -11 & -7 & 2 & -5 & 1322 & 555 & 233 & 98 & -209 & -11 & 18 & 5 & 1 \\
				31 & 17 & -19 & 1 & -4 & -1570 & 443 & -125 & 36 & 246 & 0 & 19 & 5 & 1 \\
				31 & -1 & -1 & 1 & -6 & 122 & -11 & 1 & 0 & -15 & 0 & 20 & 5 & 1 \\
				17 & 7 & 55 & 2 & -3 & 2 & 5 & 13 & 10 & 11 & 0 & 21 & 5 & 1 \\
				43 & -1 & -1 & 1 & -7 & 170 & -13 & 1 & 0 & -21 & 0 & 22 & 5 & 1 \\
				13 & 11 & 6007 & 6 & -7 & 5042 & -71 & 1 & 2 & 161 & 0 & 23 & 5 & 1 \\
				41 & 25 & 28993 & 11 & -18 & 9853370 & -2815697 & 804613 & -229926 & -1608403 & 28657 & 24 & 5 & 1 \\
				11 & 7 & -1027 & 5 & -8 & -29938 & -10037 & -3365 & -1128 & 4476 & 17 & 25 & 5 & 1 \\
				47 & 19 & 263 & 29 & -62 & 6069482 & -1472405 & 357193 & -86652 & -1010103 & 14084 & 26 & 5 & 1 \\
				43 & 11 & -631 & 1 & -4 & -17698 & 5855 & -1937 & 642 & 2634 & 0 & 27 & 5 & 1 \\ \hline
				\end{supertabular}
				
				\switchcolumn
				
				\begin{supertabular}{|w{c}{.4cm}w{c}{.4cm}w{c}{.4cm}w{c}{.35cm}w{c}{.35cm}|@{\hspace{\tabcolsep}}HHHHHHw{c}{.25cm}w{c}{.25cm}w{c}{.25cm}|}
				\hline $\alpha_1$ & $\alpha_2$ & $\alpha_3$ & $\lambda$ & $\mu$ & $A$ & $B$ & $C$ & $D$ & $u$ & $v$ & $28s_1$ & $12s_2$ & $2\tilde{s}_3$ \\ \hline
				-1 & -1 & 7 & 3 & -2 & -26 & 5 & -1 & 0 & 3 & 0 & 0 & 7 & 1 \\
				19 & -43 & 109 & 2 & 1 & -2834 & 905 & -289 & 90 & 417 & 0 & 1 & 7 & 1 \\
				19 & 263 & 47 & 62 & -33 & -6069482 & 1472405 & -357193 & 86652 & 1010103 & -14084 & 2 & 7 & 1 \\
				11 & 5 & 361 & 7 & -15 & 171730 & 21393 & 2665 & 332 & -28140 & 5 & 3 & 7 & 1 \\
				13 & 17 & 2269 & 14 & -11 & -48218 & 15909 & -5249 & 1732 & 6901 & 90 & 4 & 7 & 1 \\
				11 & 13 & 6007 & 7 & -6 & -5042 & -71 & -1 & 2 & -161 & 0 & 5 & 7 & 1 \\
				-1 & -1 & 43 & 7 & -6 & -170 & 13 & -1 & 0 & 21 & 0 & 6 & 7 & 1 \\
				29 & 7 & 271 & 1 & -4 & 6898 & -2327 & 785 & -264 & -1014 & 0 & 7 & 7 & 1 \\
				-1 & -1 & 31 & 6 & -5 & -122 & 11 & -1 & 0 & 15 & 0 & 8 & 7 & 1 \\
				19 & -17 & -31 & 3 & -4 & 1570 & 443 & 125 & 36 & -246 & 0 & 9 & 7 & 1 \\
				17 & 25 & -11 & 1 & -3 & -746 & 269 & -97 & 36 & 111 & 0 & 10 & 7 & 1 \\
				41 & 7 & -11 & 1 & -10 & -14978 & 1591 & -169 & 18 & 2475 & 0 & 11 & 7 & 1 \\
				11 & 43 & 71 & 3 & -1 & -314 & -185 & -109 & -60 & 27 & 0 & 12 & 7 & 1 \\
				5 & 1 & 7 & 1 & -4 & 130 & -47 & 17 & -6 & -18 & 0 & 13 & 7 & 1 \\
				-1 & -1 & 91 & 10 & -9 & -362 & 19 & -1 & 0 & 45 & 0 & 14 & 7 & 1 \\
				11 & 17 & 1123 & 3 & -2 & -962 & -31 & -1 & 6 & -27 & 0 & 15 & 7 & 1 \\
				277 & 257 & -13603 & 5 & -6 & -34490 & -2327 & -157 & 20 & 3455 & 0 & 16 & 7 & 1 \\
				43 & 19 & 185 & 3 & -4 & 130 & 83 & 53 & 24 & 18 & 0 & 17 & 7 & 1 \\
				7 & 11 & 463 & 3 & -2 & -362 & -19 & -1 & 4 & -17 & 0 & 18 & 7 & 1 \\
				37 & 49 & 5711 & 9 & -7 & -35522 & -18059 & -9181 & -4666 & 4015 & -174 & 19 & 7 & 1 \\
				-1 & -1 & 463 & 22 & -21 & -1850 & 43 & -1 & 0 & 231 & 0 & 20 & 7 & 1 \\
				43 & 29 & -7483 & 2 & -3 & -7922 & -89 & -1 & 14 & 73 & 0 & 21 & 7 & 1 \\
				-1 & -1 & 1123 & 34 & -33 & -4490 & 67 & -1 & 0 & 561 & 0 & 22 & 7 & 1 \\
				19 & 13 & 593 & 3 & -4 & 274 & -37 & 5 & 6 & 54 & 0 & 23 & 7 & 1 \\
				47 & 43 & -9037 & 22 & -25 & -415418 & 137097 & -45245 & 14932 & 64711 & -980 & 24 & 7 & 1 \\
				19 & 13 & -1483 & 2 & -3 & -1682 & -41 & -1 & 6 & 33 & 0 & 25 & 7 & 1 \\
				5 & 13 & 37 & 10 & -7 & -458 & -107 & -25 & -6 & 51 & -7 & 26 & 7 & 1 \\
				49 & 23 & -221 & 2 & -5 & -8450 & -2943 & -1025 & -356 & 1295 & 23 & 27 & 7 & 1 \\
				43 & -21 & -1 & 1 & -42 & 1486970 & -35363 & 841 & -20 & -247681 & 0 & 0 & 9 & 1 \\
				-1 & -1 & 381 & 20 & -19 & -1522 & 39 & -1 & 0 & 190 & 0 & 1 & 9 & 1 \\
				19 & -9 & -1 & 1 & -18 & 47594 & -2627 & 145 & -8 & -7905 & 0 & 2 & 9 & 1 \\
				9 & 1 & -1 & 0 & -1 & -2 & 1 & -1 & 10 & 0 & 0 & 3 & 9 & 1 \\
				23 & 11 & 507 & 1 & -2 & 442 & -463 & 485 & -496 & 11 & 0 & 4 & 9 & 1 \\
				47 & -37 & -87 & 1 & 1 & 706 & -395 & 221 & -134 & -87 & 0 & 5 & 9 & 1 \\
				3 & -1 & -1 & 1 & -2 & 10 & -3 & 1 & 0 & -1 & 0 & 6 & 9 & 1 \\
				-1 & -1 & 273 & 17 & -16 & -1090 & 33 & -1 & 0 & 136 & 0 & 7 & 9 & 1 \\
				27 & -13 & -1 & 1 & -26 & 212890 & -8163 & 313 & -12 & -35425 & 0 & 8 & 9 & 1 \\
				23 & 9 & 83 & 1 & -2 & 34 & -47 & 65 & -74 & 9 & 0 & 9 & 9 & 1 \\
				41 & 9 & 949 & 2 & -9 & 319274 & 136751 & 58573 & 25088 & -52105 & -3706 & 10 & 9 & 1 \\
				39 & 5 & -223 & 1 & -8 & -79010 & 11247 & -1601 & 228 & 12908 & 0 & 11 & 9 & 1 \\
				53 & 89 & -555 & 2 & -1 & 890 & 767 & 661 & 608 & -53 & 0 & 12 & 9 & 1 \\
				-1 & -1 & 813 & 29 & -28 & -3250 & 57 & -1 & 0 & 406 & 0 & 13 & 9 & 1 \\
				15 & -49 & -13 & 1 & -2 & 394 & -183 & 85 & -36 & -49 & 0 & 14 & 9 & 1 \\
				-1 & -1 & 1641 & 41 & -40 & -6562 & 81 & -1 & 0 & 820 & 0 & 15 & 9 & 1 \\
				15 & -1 & -1 & 5 & -22 & 7610 & -3083 & 1249 & -506 & -1255 & 79 & 16 & 9 & 1 \\
				43 & 17 & -39 & 1 & -4 & -2098 & 623 & -185 & 56 & 326 & 0 & 17 & 9 & 1 \\
				17 & 33 & -1123 & 2 & -1 & 1226 & 1191 & 1157 & 1140 & -17 & 0 & 18 & 9 & 1 \\
				3 & 1 & -1 & 0 & -1 & -2 & 1 & -1 & 4 & 0 & 0 & 19 & 9 & 1 \\
				21 & 37 & -311 & 2 & -1 & 442 & 395 & 353 & 332 & -21 & 0 & 20 & 9 & 1 \\
				-1 & -1 & 21 & 5 & -4 & -82 & 9 & -1 & 0 & 10 & 0 & 21 & 9 & 1 \\
				47 & -9 & -13 & 1 & -2 & 106 & -23 & 5 & 4 & -9 & 0 & 22 & 9 & 1 \\
				39 & -19 & -1 & 1 & -38 & 991954 & -26067 & 685 & -18 & -165205 & 0 & 23 & 9 & 1 \\
				7 & 3 & 43 & 1 & -2 & 26 & -31 & 37 & -40 & 3 & 0 & 24 & 9 & 1 \\
				19 & 25 & -3 & 1 & -8 & -13810 & 1747 & -221 & 28 & 2268 & 0 & 25 & 9 & 1 \\
				191 & 87 & 1955 & 1 & -2 & 1450 & -1607 & 1781 & -1868 & 87 & 0 & 26 & 9 & 1 \\
				-1 & -1 & 57 & 8 & -7 & -226 & 15 & -1 & 0 & 28 & 0 & 27 & 9 & 1 \\
				13 & -7 & 61 & 1 & 2 & -1690 & -577 & -197 & -68 & 251 & 0 & 0 & 11 & 1 \\
				-1 & -1 & 1 & 0 & -1 & 2 & -1 & 1 & -2 & 0 & 0 & 1 & 11 & 1\\
				%1 & -1 & 1 & 0 & -1 & 2 & -1 & 1 & 0 & 0 & 0 & 1 & 11 & 1 \\
				247 & -137 & 19427 & 90 & 163 & -315019468618 & 129494146623 & -53230786285 & 21881426168 & 52502425581 & -3646567620 & 2 & 11 & 1 \\
				37 & 13 & -361 & 1 & -3 & -3202 & 1561 & -761 & 374 & 413 & 0 & 3 & 11 & 1 \\
				17 & -35 & 397 & 2 & 1 & -10618 & 3505 & -1157 & 380 & 1571 & 0 & 4 & 11 & 1 \\
				37 & 43 & 8423 & 10 & -9 & -2770 & -217 & -17 & 6 & -945 & 0 & 5 & 11 & 1 \\
				37 & -19 & 469 & 1 & 2 & -12778 & -4297 & -1445 & -488 & 1895 & 0 & 6 & 11 & 1 \\
				13 & -1 & -1 & 1 & -4 & 50 & -7 & 1 & 0 & -6 & 0 & 7 & 11 & 1 \\
				1 & 1 & -79 & 42 & -47 & -39610 & -15933 & -6409 & -2578 & 6535 & 403 & 8 & 11 & 1 \\
				11 & -1 & 1 & 0 & -1 & 2 & -1 & 1 & 10 & 0 & 0 & 9 & 11 & 1 \\
				11 & 19 & 251 & 3 & -2 & -106 & -23 & -5 & 8 & -25 & 0 & 10 & 11 & 1 \\
				49 & -25 & 817 & 1 & 2 & -22210 & -7453 & -2501 & -842 & 3293 & 0 & 11 & 11 & 1 \\
				37 & 1 & -19 & 1 & -39 & -1101850 & 28957 & -761 & 20 & 183521 & 0 & 12 & 11 & 1 \\
				31 & 17 & 253 & 3 & -4 & 2 & 7 & 25 & 14 & 46 & 0 & 13 & 11 & 1 \\
				13 & 1 & -7 & 1 & -15 & -22570 & 1597 & -113 & 8 & 3745 & 0 & 14 & 11 & 1 \\
				31 & 23 & 8557 & 3 & -4 & 7922 & -89 & 1 & 8 & 106 & 0 & 15 & 11 & 1 \\
				193 & 89 & -16823 & 58 & -127 & -5671156378 & 2054755409 & -744472469 & 269734906 & 944999247 & -44885715 & 16 & 11 & 1 \\
				13 & -17 & -379 & 4 & 3 & 130370 & 37197 & 10613 & 3028 & -21286 & -379 & 17 & 11 & 1 \\
				5 & 37 & -11 & 2 & -3 & -970 & -313 & -101 & -32 & 143 & 0 & 18 & 11 & 1 \\
				49 & 17 & -625 & 1 & -3 & -5410 & 2653 & -1301 & 642 & 693 & 0 & 19 & 11 & 1 \\
				41 & 29 & -1427 & 2 & -3 & -1882 & -97 & -5 & 12 & 75 & 0 & 20 & 11 & 1 \\
				7 & -1 & 1 & 0 & -1 & 2 & -1 & 1 & 6 & 0 & 0 & 21 & 11 & 1 \\
				31 & 19 & -2209 & 3 & -5 & -19210 & 9507 & -4705 & 2330 & 2465 & -19 & 22 & 11 & 1 \\
				43 & -19 & -25 & 4 & -7 & 8594 & -2325 & 629 & -170 & -1394 & 19 & 23 & 11 & 1 \\
				17 & 41 & -47 & 2 & -3 & -1018 & -301 & -89 & -24 & 147 & 0 & 24 & 11 & 1 \\
				5 & -1 & 1 & 0 & -1 & 2 & -1 & 1 & 4 & 0 & 0 & 25 & 11 & 1 \\
				47 & 19 & -2977 & 2 & -5 & -82378 & -27555 & -9217 & -3082 & 12241 & 19 & 26 & 11 & 1 \\
				1 & 1 & -1 & 1 & -3 & -34 & 13 & -5 & 2 & 5 & 0 & 27 & 11 & 1 \\ \hline
			\end{supertabular}
		\end{paracol}
		\vspace{-.2cm}
		\captionof{table}{List of the $252$ spin cohomology $S^2\times S^5$s of the form $N_A(e)$ with $\det(N_A,e)=1$, sorted by their $s$-invariants}
		\label{TA:det=1}
		\vspace{.5cm}
		
		\begin{paracol}{4}
			\begin{supertabular}{|w{c}{.4cm}w{c}{.35cm}w{c}{.35cm}|@{\hspace{\tabcolsep}}HHHHHHw{c}{.25cm}w{c}{.25cm}w{c}{.25cm}|}
			\hline $\alpha$ & $\lambda$ & $\mu$ & $A$ & $B$ & $C$ & $D$ & $u$ & $v$ & $28s_1$ & $12s_2$ & $2\tilde{s}_3$ \\ \hline
			0 & 1 & 0 & 0 & 1 & 2 & 3 & 0 & -1 & 0 & 0 & 0 \\
			333 & 73 & -4 & -85260 & 21169 & -5256 & 1305 & 1665 & -2160 & 2 & 0 & 0 \\
			132 & 23 & 2 & 4230 & 2069 & 1012 & 495 & -66 & -440 & 4 & 0 & 0 \\
			3 & 7 & -4 & -780 & -209 & -56 & -15 & 15 & 36 & 6 & 0 & 0 \\
			12 & 7 & 2 & 390 & 181 & 84 & 39 & -6 & -36 & 8 & 0 & 0 \\
			33 & 23 & -4 & -8460 & -2161 & -552 & -141 & 165 & 360 & 10 & 0 & 0 \\
			72 & 17 & 2 & 2310 & 1121 & 544 & 264 & -36 & -236 & 12 & 0 & 0 \\
			105 & 41 & -4 & -26892 & 6641 & -1640 & 405 & 525 & -680 & 14 & 0 & 0 \\
			18 & 17 & -4 & -4620 & 1121 & -272 & 66 & 90 & -116 & 16 & 0 & 0 \\
			495 & 89 & -4 & -126732 & 31505 & -7832 & 1947 & 2475 & -3212 & 18 & 0 & 0 \\
			12 & 7 & -2 & -390 & 181 & -84 & 39 & 6 & -18 & 20 & 0 & 0 \\
			3 & 7 & 4 & 780 & -209 & 56 & -15 & -15 & 20 & 22 & 0 & 0 \\
			2 & 17 & 12 & 13860 & 5741 & 2378 & 985 & -286 & -1174 & 24 & 0 & 0 \\
			39 & 25 & -4 & -9996 & 2449 & -600 & 147 & 195 & -252 & 26 & 0 & 0 \\
			3782 & 123 & -2 & -121030 & 60269 & -30012 & 14945 & 1891 & -5673 & 2 & 2 & 0 \\
			110 & 21 & -2 & -3526 & 1721 & -840 & 410 & 55 & -165 & 6 & 2 & 0 \\\hline
			\end{supertabular}
			
			\switchcolumn
			
			\begin{supertabular}{|w{c}{.4cm}w{c}{.35cm}w{c}{.35cm}|@{\hspace{\tabcolsep}}HHHHHHw{c}{.25cm}w{c}{.25cm}w{c}{.25cm}|}
			\hline $\alpha$ & $\lambda$ & $\mu$ & $A$ & $B$ & $C$ & $D$ & $u$ & $v$ & $28s_1$ & $12s_2$ & $2\tilde{s}_3$ \\ \hline
			26 & 51 & 10 & 104030 & 10301 & 1020 & 101 & -2145 & -1425 & 10 & 2 & 0 \\
			182 & 27 & -2 & -5830 & 2861 & -1404 & 689 & 91 & -273 & 14 & 2 & 0 \\
			122 & 243 & 22 & 5196290 & 235709 & 10692 & 485 & -108031 & -30855 & 18 & 2 & 0 \\
			2 & 3 & -2 & -70 & 29 & -12 & 5 & 1 & -3 & 22 & 2 & 0 \\
			170 & 339 & -26 & -11951758 & 459005 & -17628 & 677 & 248625 & -55263 & 26 & 2 & 0 \\
			14 & 15 & -4 & -3596 & -929 & -240 & -62 & 70 & 156 & 0 & 4 & 0 \\
			473 & 87 & -4 & -121100 & -30449 & -7656 & -1925 & 2365 & 5016 & 2 & 4 & 0 \\
			20 & 9 & 2 & 646 & 305 & 144 & 68 & -10 & -62 & 4 & 4 & 0 \\
			689 & 105 & -4 & -176396 & 43889 & -10920 & 2717 & 3445 & -4472 & 6 & 4 & 0 \\
			992 & 63 & 2 & 31750 & 15749 & 7812 & 3875 & -496 & -3410 & 8 & 4 & 0 \\
			5 & 9 & -4 & -1292 & 305 & -72 & 17 & 25 & -32 & 10 & 4 & 0 \\
			410 & 81 & -4 & -104972 & 26081 & -6480 & 1610 & 2050 & -2660 & 12 & 4 & 0 \\
			203 & 57 & -4 & -51980 & 12881 & -3192 & 791 & 1015 & -1316 & 14 & 4 & 0 \\
			1040 & 129 & -4 & -266252 & 66305 & -16512 & 4112 & 5200 & -6752 & 16 & 4 & 0 \\
			2525 & 201 & -4 & -646412 & 161201 & -40200 & 10025 & 12625 & -16400 & 18 & 4 & 0 \\
			380 & 39 & 2 & 12166 & 6005 & 2964 & 1463 & -190 & -1292 & 20 & 4 & 0 \\\hline
			\end{supertabular}
			
			\switchcolumn
			
			\begin{supertabular}{|w{c}{.4cm}w{c}{.35cm}w{c}{.35cm}|@{\hspace{\tabcolsep}}HHHHHHw{c}{.25cm}w{c}{.25cm}w{c}{.25cm}|}
			\hline $\alpha$ & $\lambda$ & $\mu$ & $A$ & $B$ & $C$ & $D$ & $u$ & $v$ & $28s_1$ & $12s_2$ & $2\tilde{s}_3$ \\ \hline
			1055 & 1689 & -52 & -593365916 & 285268697 & -137146788 & 65935175 & 12357215 & -29500437 & 22 & 4 & 0 \\
			1958 & 177 & -4 & -501260 & 124961 & -31152 & 7766 & 9790 & -12716 & 24 & 4 & 0 \\
			95 & 39 & -4 & -24332 & -6161 & -1560 & -395 & 475 & 1020 & 26 & 4 & 0 \\
			30 & 11 & -2 & -966 & 461 & -220 & 105 & 15 & -45 & 2 & 6 & 0 \\
			30 & 11 & 2 & 966 & 461 & 220 & 105 & -15 & -95 & 6 & 6 & 0 \\
			6 & 5 & -2 & -198 & 89 & -40 & 18 & 3 & -9 & 10 & 6 & 0 \\
			6 & 5 & 2 & 198 & 89 & 40 & 18 & -3 & -17 & 14 & 6 & 0 \\
			10 & 19 & -6 & -8658 & 1405 & -228 & 37 & 175 & -153 & 18 & 6 & 0 \\
			306 & 35 & 2 & 9798 & 4829 & 2380 & 1173 & -153 & -1037 & 22 & 6 & 0 \\
			870 & 59 & 2 & 27846 & 13805 & 6844 & 3393 & -435 & -2987 & 26 & 6 & 0 \\
			20 & 9 & -2 & -646 & 305 & -144 & 68 & 10 & -30 & 0 & 8 & 0 \\
			473 & 87 & 4 & 121100 & -30449 & 7656 & -1925 & -2365 & 3080 & 2 & 8 & 0 \\
			68 & 33 & 4 & 17420 & 4289 & 1056 & 260 & -340 & -696 & 4 & 8 & 0 \\
			11567 & 2151 & -20 & -370144060 & -166569129 & -74958044 & -33731991 & 7692055 & 35796029 & 6 & 8 & 0 \\
			248 & 63 & 4 & 63500 & -16001 & 4032 & -1016 & -1240 & 1616 & 8 & 8 & 0 \\
			5 & 9 & 4 & 1292 & 305 & 72 & 17 & -25 & -48 & 10 & 8 & 0 \\\hline
			\end{supertabular}
			
			\switchcolumn
			
			\begin{supertabular}{|w{c}{.4cm}w{c}{.35cm}w{c}{.35cm}|@{\hspace{\tabcolsep}}HHHHHHw{c}{.25cm}w{c}{.25cm}w{c}{.25cm}|}
			\hline $\alpha$ & $\lambda$ & $\mu$ & $A$ & $B$ & $C$ & $D$ & $u$ & $v$ & $28s_1$ & $12s_2$ & $2\tilde{s}_3$ \\ \hline
			410 & 81 & 4 & 104972 & 26081 & 6480 & 1610 & -2050 & -4260 & 12 & 8 & 0 \\
			203 & 57 & 4 & 51980 & 12881 & 3192 & 791 & -1015 & -2100 & 14 & 8 & 0 \\
			380 & 39 & -2 & -12166 & 6005 & -2964 & 1463 & 190 & -570 & 16 & 8 & 0 \\
			101 & 201 & -20 & -3232060 & 161201 & -8040 & 401 & 67165 & -19200 & 18 & 8 & 0 \\
			992 & 63 & -2 & -31750 & 15749 & -7812 & 3875 & 496 & -1488 & 20 & 8 & 0 \\
			689 & 105 & 4 & 176396 & 43889 & 10920 & 2717 & -3445 & -7176 & 22 & 8 & 0 \\
			272 & 33 & -2 & -8710 & 4289 & -2112 & 1040 & 136 & -408 & 24 & 8 & 0 \\
			95 & 39 & 4 & 24332 & -6161 & 1560 & -395 & -475 & 620 & 26 & 8 & 0 \\
			3014 & 549 & -10 & -12056030 & -1206701 & -120780 & -12089 & 248655 & 167475 & 2 & 10 & 0 \\
			3782 & 123 & 2 & 121030 & 60269 & 30012 & 14945 & -1891 & -13115 & 6 & 10 & 0 \\
			26 & 51 & -10 & -104030 & 10301 & -1020 & 101 & 2145 & -1175 & 10 & 10 & 0 \\
			182 & 27 & 2 & 5830 & 2861 & 1404 & 689 & -91 & -611 & 14 & 10 & 0 \\
			650 & 51 & 2 & 20806 & 10301 & 5100 & 2525 & -325 & -2225 & 18 & 10 & 0 \\
			110 & 21 & 2 & 3526 & 1721 & 840 & 410 & -55 & -365 & 22 & 10 & 0 \\
			2 & 3 & 2 & 70 & 29 & 12 & 5 & -1 & -5 & 26 & 10 & 0 \\ \hline
		\end{supertabular}
		\end{paracol}
		\vspace{-.2cm}
		\captionof{table}{List of the $63$ spin cohomology $S^2\times S^5$s of the form $N_{(\alpha,0)}(e)$, sorted by their $s$-invariants}
		\label{TA:N_alpha}
	}
	
	\bibliographystyle{plainurl}
	\bibliography{References}

\begin{thebibliography}{10}

\bibitem{BQ26}
Tilman Bauer and J.~D. Quigley.
\newblock Infinite families of very exotic spheres with free ${S}^1$- and
  ${S}^3$-actions.
\newblock {\em arXiv e-prints}, 2026.
\newblock URL: \url{https://arxiv.org/abs/2603.23241}, \href
  {https://arxiv.org/abs/2603.23241} {\path{arXiv:2603.23241}}.

\bibitem{BB78}
Lionel B\'{e}rard-Bergery.
\newblock Certains fibr\'{e}s \`a courbure de {R}icci positive.
\newblock {\em C. R. Acad. Sci. Paris S\'{e}r. A-B}, 286(20):A929--A931, 1978.

\bibitem{BB83}
L~B{\'e}rard Bergery.
\newblock Scalar curvature and isometry group.
\newblock {\em Spectra of Riemannian Manifolds, Tokyo}, pages 9--28, 1983.

\bibitem{Br72a}
Glen~E. Bredon.
\newblock {\em Introduction to compact transformation groups}, volume Vol. 46
  of {\em Pure and Applied Mathematics}.
\newblock Academic Press, New York-London, 1972.

\bibitem{Bu19a}
Bradley~Lewis Burdick.
\newblock {\em Metrics of {P}ositive {R}icci {C}urvature on {C}onnected {S}ums:
  {P}rojective {S}paces, {P}roducts, and {P}lumbings}.
\newblock ProQuest LLC, Ann Arbor, MI, 2019.
\newblock Thesis (Ph.D.)--University of Oregon.
\newblock URL:
  \url{http://gateway.proquest.com/openurl?url_ver=Z39.88-2004&rft_val_fmt=info:ofi/fmt:kev:mtx:dissertation&res_dat=xri:pqm&rft_dat=xri:pqdiss:13898429}.

\bibitem{Bu19}
Bradley~Lewis Burdick.
\newblock Ricci-positive metrics on connected sums of projective spaces.
\newblock {\em Differential Geom. Appl.}, 62:212--233, 2019.
\newblock \href {https://doi.org/10.1016/j.difgeo.2018.11.005}
  {\path{doi:10.1016/j.difgeo.2018.11.005}}.

\bibitem{Bu20}
Bradley~Lewis Burdick.
\newblock Metrics of positive {R}icci curvature on the connected sums of
  products with arbitrarily many spheres.
\newblock {\em Ann. Global Anal. Geom.}, 58(4):433--476, 2020.
\newblock \href {https://doi.org/10.1007/s10455-020-09732-7}
  {\path{doi:10.1007/s10455-020-09732-7}}.

\bibitem{DK01}
James~F. Davis and Paul Kirk.
\newblock {\em Lecture notes in algebraic topology}, volume~35 of {\em Graduate
  Studies in Mathematics}.
\newblock American Mathematical Society, Providence, RI, 2001.

\bibitem{Du22}
Haibao Duan.
\newblock Circle actions and suspension operations on smooth manifolds.
\newblock {\em arXiv e-prints}, 2022.
\newblock \href {https://arxiv.org/abs/2202.06225} {\path{arXiv:2202.06225}}.

\bibitem{DL05}
Haibao Duan and Chao Liang.
\newblock Circle bundles over 4-manifolds.
\newblock {\em Arch. Math. (Basel)}, 85(3):278--282, 2005.
\newblock \href {https://doi.org/10.1007/s00013-005-1214-4}
  {\path{doi:10.1007/s00013-005-1214-4}}.

\bibitem{EK62}
James Eells, Jr. and Nicolaas~H. Kuiper.
\newblock An invariant for certain smooth manifolds.
\newblock {\em Ann. Mat. Pura Appl. (4)}, 60:93--110, 1962.
\newblock \href {https://doi.org/10.1007/BF02412768}
  {\path{doi:10.1007/BF02412768}}.

\bibitem{EZ14}
Christine Escher and Wolfgang Ziller.
\newblock Topology of non-negatively curved manifolds.
\newblock {\em Ann. Global Anal. Geom.}, 46(1):23--55, 2014.
\newblock \href {https://doi.org/10.1007/s10455-013-9407-8}
  {\path{doi:10.1007/s10455-013-9407-8}}.

\bibitem{EM16}
Christine~M. Escher and Pongdate Montagantirud.
\newblock Classifying seven dimensional manifolds of fixed cohomology type.
\newblock {\em Differential Geom. Appl.}, 49:312--325, 2016.
\newblock \href {https://doi.org/10.1016/j.difgeo.2016.09.006}
  {\path{doi:10.1016/j.difgeo.2016.09.006}}.

\bibitem{GR25}
Fernando Galaz-Garc\'ia and Philipp Reiser.
\newblock Free torus actions and twisted suspensions.
\newblock {\em Forum Math. Sigma}, 13:Paper No. e3, 31, 2025.
\newblock \href {https://doi.org/10.1017/fms.2024.141}
  {\path{doi:10.1017/fms.2024.141}}.

\bibitem{GPT98}
Peter~B. Gilkey, JeongHyeong Park, and Wilderich Tuschmann.
\newblock Invariant metrics of positive {R}icci curvature on principal bundles.
\newblock {\em Math. Z.}, 227(3):455--463, 1998.
\newblock \href {https://doi.org/10.1007/PL00004385}
  {\path{doi:10.1007/PL00004385}}.

\bibitem{GL71}
Richard~Z. Goldstein and Lloyd Lininger.
\newblock A classification of {$6$}-manifolds with free {$S\sp{1}$} actions.
\newblock In {\em Proceedings of the {S}econd {C}onference on {C}ompact
  {T}ransformation {G}roups ({U}niv. {M}assachusetts, {A}mherst, {M}ass.,
  1971), {P}art {I}}, Lecture Notes in Math., Vol. 298, pages 316--323.
  Springer, Berlin, 1972.

\bibitem{GW09}
Detlef Gromoll and Gerard Walschap.
\newblock {\em Metric foliations and curvature}, volume 268 of {\em Progress in
  Mathematics}.
\newblock Birkh\"{a}user Verlag, Basel, 2009.
\newblock \href {https://doi.org/10.1007/978-3-7643-8715-0}
  {\path{doi:10.1007/978-3-7643-8715-0}}.

\bibitem{Hs66}
Wu-chung Hsiang.
\newblock A note on free differentiable actions of {$S\sp{1}$} and {$S\sp{3}$}
  on homotopy spheres.
\newblock {\em Ann. of Math. (2)}, 83:266--272, 1966.
\newblock \href {https://doi.org/10.2307/1970431} {\path{doi:10.2307/1970431}}.

\bibitem{Hu25}
Ruizhi Huang.
\newblock Sphere bundles over $4$-manifolds are trivial after looping.
\newblock {\em arXiv e-prints}, 2025.
\newblock URL: \url{https://arxiv.org/abs/2210.17352}, \href
  {https://arxiv.org/abs/2210.17352} {\path{arXiv:2210.17352}}.

\bibitem{Hu94}
Dale Husemoller.
\newblock {\em Fibre bundles}, volume~20 of {\em Graduate Texts in
  Mathematics}.
\newblock Springer-Verlag, New York, third edition, 1994.

\bibitem{Ji14}
Yi~Jiang.
\newblock Regular circle actions on 2-connected 7-manifolds.
\newblock {\em J. Lond. Math. Soc. (2)}, 90(2):373--387, 2014.
\newblock \href {https://doi.org/10.1112/jlms/jdu028}
  {\path{doi:10.1112/jlms/jdu028}}.

\bibitem{JS25}
Yi~Jiang and Yang Su.
\newblock Free circle actions on {$(n-1)$}-connected {$(2n+1)$}-manifolds.
\newblock {\em Pacific J. Math.}, 338(1):1--17, 2025.
\newblock \href {https://doi.org/10.2140/pjm.2025.338.1}
  {\path{doi:10.2140/pjm.2025.338.1}}.

\bibitem{Ju73}
P.~E. Jupp.
\newblock Classification of certain {$6$}-manifolds.
\newblock {\em Proc. Cambridge Philos. Soc.}, 73:293--300, 1973.
\newblock \href {https://doi.org/10.1017/s0305004100076854}
  {\path{doi:10.1017/s0305004100076854}}.

\bibitem{Ko58}
Shoshichi Kobayashi.
\newblock Fixed points of isometries.
\newblock {\em Nagoya Math. J.}, 13:63--68, 1958.
\newblock URL: \url{http://projecteuclid.org/euclid.nmj/1118800030}.

\bibitem{KS88}
Matthias Kreck and Stephan Stolz.
\newblock A diffeomorphism classification of {$7$}-dimensional homogeneous
  {E}instein manifolds with {${\rm SU}(3)\times{\rm SU}(2)\times{\rm
  U}(1)$}-symmetry.
\newblock {\em Ann. of Math. (2)}, 127(2):373--388, 1988.
\newblock \href {https://doi.org/10.2307/2007059} {\path{doi:10.2307/2007059}}.

\bibitem{KS91}
Matthias Kreck and Stephan Stolz.
\newblock Some nondiffeomorphic homeomorphic homogeneous {$7$}-manifolds with
  positive sectional curvature.
\newblock {\em J. Differential Geom.}, 33(2):465--486, 1991.
\newblock URL: \url{http://projecteuclid.org/euclid.jdg/1214446327}.

\bibitem{KS98}
Matthias Kreck and Stephan Stolz.
\newblock A correction on: ``{S}ome nondiffeomorphic homeomorphic homogeneous
  {$7$}-manifolds with positive sectional curvature'' [{J}.\ {D}ifferential
  {G}eom.\ {\bf 33} (1991), no. 2, 465--486; {MR}1094466 (92d:53043)].
\newblock {\em J. Differential Geom.}, 49(1):203--204, 1998.
\newblock URL: \url{http://projecteuclid.org/euclid.jdg/1214460941}.

\bibitem{Kr97}
Bernd Kruggel.
\newblock A homotopy classification of certain {$7$}-manifolds.
\newblock {\em Trans. Amer. Math. Soc.}, 349(7):2827--2843, 1997.
\newblock \href {https://doi.org/10.1090/S0002-9947-97-01962-4}
  {\path{doi:10.1090/S0002-9947-97-01962-4}}.

\bibitem{Kr98}
Bernd Kruggel.
\newblock Kreck-{S}tolz invariants, normal invariants and the homotopy
  classification of generalised {W}allach spaces.
\newblock {\em Quart. J. Math. Oxford Ser. (2)}, 49(196):469--485, 1998.
\newblock \href {https://doi.org/10.1093/qjmath/49.196.469}
  {\path{doi:10.1093/qjmath/49.196.469}}.

\bibitem{MS74}
John~W. Milnor and James~D. Stasheff.
\newblock {\em Characteristic classes}.
\newblock Annals of Mathematics Studies, No. 76. Princeton University Press,
  Princeton, N. J.; University of Tokyo Press, Tokyo, 1974.

\bibitem{MY66}
D.~Montgomery and C.~T. Yang.
\newblock Differentiable actions on homotopy seven spheres.
\newblock {\em Trans. Amer. Math. Soc.}, 122:480--498, 1966.

\bibitem{MY68}
Deane Montgomery and C.~T. Yang.
\newblock Differentiable actions on homotopy seven spheres. {II}.
\newblock In {\em Proc. {C}onf. on {T}ransformation {G}roups ({N}ew {O}rleans,
  {L}a., 1967)}, pages 125--134. Springer-Verlag New York, Inc., New York,
  1968.

\bibitem{Re22}
Philipp Reiser.
\newblock {\em Generalized Surgery on {R}iemannian Manifolds of Positive
  {R}icci Curvature}.
\newblock PhD thesis, Karlsruher Institut für Technologie (KIT), 2022.
\newblock \href {https://doi.org/10.5445/IR/1000150280}
  {\path{doi:10.5445/IR/1000150280}}.

\bibitem{Re23}
Philipp Reiser.
\newblock Generalized surgery on {R}iemannian manifolds of positive {R}icci
  curvature.
\newblock {\em Trans. Amer. Math. Soc.}, 376(5):3397--3418, 2023.
\newblock \href {https://doi.org/10.1090/tran/8789}
  {\path{doi:10.1090/tran/8789}}.

\bibitem{Re24b}
Philipp Reiser.
\newblock Metrics of positive {R}icci curvature on simply-connected manifolds
  of dimension {$6k$}.
\newblock {\em J. Topol.}, 17(4):Paper No. e70007, 50, 2024.
\newblock \href {https://doi.org/10.1112/topo.70007}
  {\path{doi:10.1112/topo.70007}}.

\bibitem{Sc71}
Reinhard Schultz.
\newblock The nonexistence of free {$S\sp{1}$} actions on some homotopy
  spheres.
\newblock {\em Proc. Amer. Math. Soc.}, 27:595--597, 1971.
\newblock \href {https://doi.org/10.2307/2036505} {\path{doi:10.2307/2036505}}.

\bibitem{Zu75}
A.~V. \v{Z}ubr.
\newblock Classification of simply connected six-dimensional spin manifolds.
\newblock {\em Izv. Akad. Nauk SSSR Ser. Mat.}, 39(4):839--859, 1975.

\bibitem{Wa66}
C.~T.~C. Wall.
\newblock Classification problems in differential topology. {V}. {O}n certain
  {$6$}-manifolds.
\newblock {\em Invent. Math.}, 1:355--374; corrigendum, ibid. 2 (1966), 306,
  1966.
\newblock \href {https://doi.org/10.1007/BF01425407}
  {\path{doi:10.1007/BF01425407}}.

\bibitem{Wa22}
Xueqi Wang.
\newblock On the classification of certain 1-connected 7-manifolds and related
  problems.
\newblock {\em Q. J. Math.}, 73(2):711--727, 2022.
\newblock \href {https://doi.org/10.1093/qmath/haab053}
  {\path{doi:10.1093/qmath/haab053}}.

\bibitem{Wr97}
David Wraith.
\newblock Exotic spheres with positive {R}icci curvature.
\newblock {\em J. Differential Geom.}, 45(3):638--649, 1997.
\newblock URL: \url{http://projecteuclid.org/euclid.jdg/1214459846}.

\bibitem{Xu25}
Fupeng Xu.
\newblock Free circle actions on certain simply connected 7-manifolds.
\newblock {\em Topology Appl.}, 375:Paper No. 109548, 2025.
\newblock \href {https://doi.org/10.1016/j.topol.2025.109548}
  {\path{doi:10.1016/j.topol.2025.109548}}.

\bibitem{Zh00}
A.~V. Zhubr.
\newblock Closed simply connected six-dimensional manifolds: proofs of
  classification theorems.
\newblock {\em Algebra i Analiz}, 12(4):126--230, 2000.

\end{thebibliography}

\end{document}